\let\oldFootnote\footnote
\newcommand\nextToken\relax
\renewcommand\footnote[1]{%
    \oldFootnote{#1}\futurelet\nextToken\isFootnote}
\newcommand\isFootnote{%
    \ifx\footnote\nextToken\textsuperscript{,}\fi}
\let\epsilon\varepsilon
\newcommand{\abs}[1]{\left|#1\right|}
\newcommand{\R}{{\mathbb{R}}}
\newcommand{\Z}{\mathbb{Z}}
\newcommand{\C}{\mathbb{C}}
\newcommand{\N}{\mathbb{N}}
\newcommand{\T}{\mathsf{T}} 
\newcommand{\identity}{\mathsf{id}}
\newcommand{\inp}[1]{\left\langle#1\right\rangle} 
\newcommand{\eps}{\varepsilon}
\newcommand{\GL}{\mathsf{GL}}
\newcommand{\st}{\,\colon\,} 
\newcommand{\brac}[1]{\left(#1\right)}
\newcommand{\D}{\mathsf{d}} 
\newcommand{\interior}[1]{\mathsf{int}\brac{#1}}
\newcommand{\trace}{\mathsf{tr}}
\newcommand{\Ad}{\mathsf{Ad}}
\newcommand{\Reps}[1]{\mathscr R(#1,G)}
\newcommand{\CharVar}[1]{\mathscr X(#1,G)}
\newcommand{\vv}{\mathbf{v}}
\newcommand{\yy}{\mathbf{y}}
\newcommand{\leaf}{\mathscr L}
\newcommand{\PSL}[1]{\mathsf{PSL}(#1,\R)}
\newcommand{\SL}[1]{\mathsf{SL}(#1,\R)}
\newcommand{\flags}{\mathscr F}
\newcommand{\Hom}{\mathsf{Hom}}
\newcommand{\Hit}[1]{\mathsf{Hit}(#1,S)}
\newcommand{\ub}[1]{\underline{#1}}
\newcommand{\B}{\mathfrak{B}}
\newcommand{\gfr}{\mathfrak{g}}
\newcommand{\afr}{\mathfrak{a}}
\newcommand{\gfrk}{\gfr^{\oplus k}}
\newcommand{\HGroupCohom}[1]{H^{#1}(\pi,\gfr_{\Ad\rho})}
\newcommand{\HGroup}[1]{H_{#1}(\pi,\gfr_{\Ad\rho})}
\newcommand{\HSurfaceCohom}[1]{H^{#1}(S,\gfr_{\Ad\rho})}
\newcommand{\Hm}{\mathsf{H}} 
\newcommand{\HReal}[1]{H_{#1}(S,\R)}
\newcommand{\HCoReal}[1]{H^{#1}(S,\R)}
\newcommand{\coai}[2]{#1_{o(y_{#2})}} 
\newcommand{\ctai}[2]{#1_{t(y_{#2})}}
\newcommand{\proxGTheta}{\mathscr P_\Theta}
\newcommand{\bdryMap}{\flags_\rho}
\newcommand{\ThetaAnosovReps}{\mathscr X_\Theta(\pi,G)}
\newcommand{\figeight}{\delta}
\newcommand{\LiftTeich}{\widetilde{\mathcal T}(S)}
\newcommand{\Teich}{\mathcal T(S)}
\newcommand{\pml}{\mathcal{PML}(S)}
\newcommand{\tw}{\mathsf{tw}} 
\newcommand{\Sp}[1]{\mathsf{Sp(2#1,\R)}}
\newtheorem*{theorem*}{Theorem}
\newtheorem{theorem}{Theorem}[section]
\newtheorem{corollary}[theorem]{Corollary}
\newtheorem{proposition}[theorem]{Proposition}
\newtheorem{lemma}[theorem]{Lemma}
\newtheorem{thmA}{Theorem}
\newtheorem{corA}[thmA]{Corollary}
\theoremstyle{definition}
\newtheorem{notation}[theorem]{Notation}
\newtheorem{definition}[theorem]{Definition}
\newtheorem{remark}[theorem]{Remark}
\newtheorem{example}[theorem]{Example}
\newtheorem{convention}[theorem]{Convention}
\renewenvironment{proof}{{\bfseries Proof }}{\qed\\}
\theoremstyle{remark}
\title[Invariant multi--functions and Hamiltonian flows]{Invariant multi-functions and Hamiltonian flows for surface group representations}
\author[F. Camacho--Cadena]{Fernando Camacho--Cadena}
\address{Max Planck Institute for Mathematics in the Sciences, Inselstr. 22, 04103 Leipzig, Germany\vspace{.2cm}}
\email{fernando.camacho@mis.mpg.de}
\author[J. Farre]{James Farre}
\email{james.farre@mis.mpg.de}
\author[A. Wienhard]{Anna Wienhard}
\email{anna.wienhard@mis.mpg.de}
\thanks{This project was funded by the Deutsche Forschungsgemeinschaft (DFG, German Research Foundation) – Project-ID 281071066 – TRR 191. A.W. and F.C.C. were supported by European Research Council under ERC-Advanced Grant 101018839, A.W. thanks the Klaus Tschira Stiftung and the Hector Fellow Academy for support. F.C.C acknowledges support from the U.S. National Science Foundation grants DMS 1107452, 1107263, 1107367 "RNMS: Geometric Structures and Representation Varieties" (the GEAR Network), and from HITS gGmbH (Isabel Rojas Travel Award 2021).
}
\begin{document}

\maketitle

\begin{abstract}
    Goldman defined a symplectic form on the smooth locus of the $G$-character variety of a closed, oriented surface $S$ for a Lie group $G$ satisfying very general hypotheses.  
    He then studied the Hamiltonian flows associated to $G$-invariant functions $G \to \mathbb R$ obtained by evaluation on a simple closed curve and proved that they are \emph{generalized twist flows.}
    In this article, we investigate the Hamiltonian flows on (subsets of the) $G$-character variety induced by evaluating a $G$-invariant multi-function $G^k \to \mathbb R$ on a tuple $ \ub \alpha \in \pi_1(S)^k$.
    We introduce the notion of a \emph{subsurface deformation} along a \emph{supporting subsurface} $S_0$ for $\ub \alpha$ and prove that the Hamiltonian flow of an induced invariant multi-function is of this type.
    We also give a formula for the Poisson bracket between two functions induced by invariant multi-functions and prove that they Poisson commute if their supporting subsurfaces are disjoint. 
   We give many examples of functions on character varieties that arise in this way and discuss applications, for example, to the flow associated to the trace function for non-simple closed curves on $S$.
\end{abstract}
\tableofcontents

\section{Introduction}
The character variety\footnote{By character variety, we will always mean the space of closed orbits of the action by conjugation of the Lie group on the space of representations.} of the fundamental group of a surface into a reductive Lie group (or at least its smooth part) carries a natural symplectic structure \cite{SymplecticNature_Goldman}. Thus, there is a natural Poisson structure on the space of smooth functions, and any smooth function on the character variety gives rise to a Hamiltonian vector field. The associated Hamiltonian flows provide important ways to move in the character variety or within subsets thereof.\\ 

Probably the best known example for this is the Fricke--Teichm\"uller space of an oriented closed surface $S$ of genus $g\geq 2$ , which can be realized as a connected component of the character variety $\Teich \subset \Hom(\pi_1(S), \mathsf{PSL}(2,\mathbb{R}))// \mathsf{PSL}(2,\mathbb{R})$. The Weil--Petersson form gives a symplectic structure on $\mathcal{T} (S)$ \cite{Wolpert_Symplectic}.\\

Important functions on $\Teich$ are hyperbolic length functions. The length function associated to an element $\gamma \in \pi_1(S)$ associates to a point in $\Teich$ the length of the closed geodesic in the free homotopy class of $\gamma$ with respect to the hyperbolic metric determined by this point. Wolpert proved that the Hamiltonian vector field associated to the length function of a simple closed curve $\alpha$ is the Fenchel-Nielsen twist vector field. He further gives an explicit formula for the Poisson pairing of length functions, and deduces that Fenchel-Nielsen length and twist parameters $(\ell_i, \theta_i)_{i= 1, \dots, 3g-3}$ associated to a pair of pants decomposition of $S$ provide Darboux coordinates, leading to the magic formula for the Weil-Petersson form \cite{Wolpert_FNtwist,Wolpert_Symplectic,Wolpert_TheFNDef}: 
$$
\omega = \sum_{i=1}^{3g-3} d\ell_i \wedge d\theta_i. 
$$
A further consequence of Wolpert's work is that the action of a Dehn twist along a simple closed curve $\alpha$ is realized as the autonomous Hamiltonian diffeomorphism (i.e. the time 1 map of a Hamiltonian flow) of the function $\frac{1}{2} \ell_\alpha^2$. It has been recently shown by Farre that not only the Dehn twist, but also pseudo-Anosov diffeomorphisms can be realized as autonomous Hamiltonian diffeomorphisms of functions involving length functions of measures and transverse H\"older distributions on laminations \cite{Farre:hamiltonian}. \\

Goldman broadly generalized this picture. In \cite{SymplecticNature_Goldman} he introduced the symplectic structure on character varieties of fundamental groups of surfaces into any reductive Lie group $G$. Building upon the relation between the length function $\ell_\gamma$ associated to $\gamma \in \pi_1(S)$ for a hyperbolic structure on $S$ and the trace of the element $\rho(\gamma) \in \mathsf{SL}(2,\mathbb{R})$, where $\rho$ is a lift of the holonomy representation of the hyperbolic structure, he investigates invariant functions and their Hamiltonian flows in \cite{InvFct_Goldman}. 
Any function $f:G \to \mathbb{R}$ which is invariant under the action of $G$ on itself by conjugation induces functions on the character variety $\CharVar{\pi_1(S)}\coloneqq\Hom(\pi_1(S),G)//G$. Namely fixing an element $\gamma \in \pi_1(S)$, the function $f_\gamma: \CharVar{\pi_1(S)} \rightarrow \mathbb{R}$ is defined by $f_\gamma([\rho]) = f(\rho(\gamma))$.\\

Goldman obtains an explicit formula for the Poisson bracket of two such invariant functions, which in fact descends to a Lie algebra structure on the set of free homotopy classes of closed curves on $S$ $-$ the Goldman algebra. For a simple closed curve $\alpha$, Goldman shows that the Hamiltonian flow of $f_\alpha$ is covered by a generalized twist flow, which we will describe in more detail later. For closed curves with self-intersection, no nice geometric description of the Hamiltonian flows has been known. We provide some geometric information about such Hamiltonian flows as an application of our more general considerations. \\

In this article we consider invariant multi-functions, their associated functions on character varieties and prove several results about the structure of the associated Hamiltonian flows. 
An \emph{invariant multi-function} is a (smooth) function $f: G^k \to \mathbb{R}$ (or more generally to a field $\mathbb{K}$) that is invariant by the diagonal action of $G$ on $G^k$ by conjugation. Given a invariant multi-function, and a $k$-tuple $\underline{\gamma} = (\gamma_1, \cdots , \gamma_k)$ of elements of $\pi_1(S)$ we obtain a function $f_{\underline {\gamma}}: \CharVar{\pi_1(S)} \rightarrow \mathbb{R}$ defined by the rule  $f_{\underline {\gamma}}([\rho]) = f(\rho(\gamma_1), \cdots, \rho(\gamma_k))$. We give explicit formulas for the Poisson bracket of two such invariant multi-functions and show that the Hamiltonian flows associated to these multi-functions correspond to \emph{subsurface deformations} associated to the subsurface that \emph{supports $\underline{\gamma}$}. In the case when $k=1$ and the curve is a simple closed curve, this subsurface is a cylinder, and we recover Goldman's result. When $k=1$, but the curve has self-intersection, the supporting subsurface will  be more complicated, and our result gives a qualitative geometric description of the Hamiltonian flow. \\

We now describe the results in more detail. 

\subsection{Invariant multi-functions}
Let $G$ be a reductive Lie group, and let $\B$ be the Killing form on the Lie algebra $\gfr$ of $G$. 
An invariant function on $G^k$ is a $C^1-$ smooth function $f\colon G^k\to \R$, that is invariant under the diagonal action of $G$ on $G^k$ by conjugation. Given a 
$k$-tuple $\underline{\gamma} = (\gamma_1, \dots , \gamma_k)$ of elements of $\pi_1(S)$, we obtain an induced function on the character variety by setting 
$f_{\underline {\gamma}}: \CharVar{\pi_1(S)} \rightarrow \mathbb{R}$, defined as  $f_{\underline {\gamma}}([\rho]) = f(\rho(\gamma_1), \dots, \rho(\gamma_k))$. \\

We want to stress that it is important that $\underline{\gamma}$ is indeed a $k$-tuple of elements of the fundamental group of $S$, so $\gamma_1, \dots , \gamma_k$ are based loops, all based at the same base point and {\bf not} free homotopy classes of loops. 
Since we consider functions on $G^k$ that are only invariant by the diagonal action of $G$, not by the action of $G^k$, the common base point matters. Only in the case when $k=1$, we can pass to free homotopy classes of loops. \\

Our aim in this article is to investigate the Hamiltonian vector fields and the Hamiltonian flows associated to these induced functions. 
Passing from just invariant functions on $G$ to invariant multi-functions on $G^k$ vastly enhances the class of functions on $\CharVar{\pi_1(S)}$ one can realize as induced invariant functions $f_{\underline {\gamma}}$. Let us give some examples; see Section~\ref{sec : examples}:

\begin{enumerate}
\item When $G<\GL(n,\mathbb K)$ any trace function $\trace_\gamma: \CharVar{\pi_1(S)}\to \mathbb K$ arises as induced invariant functions of the trace $\trace:G \to \mathbb{K}$. Those trace functions have been investigated in 
\cite{InvFct_Goldman}. It is immediate that any polynomial in trace functions can be realized as an induced invariant function of a multi-function. 
\item A similar idea can be used to write the length function of measured laminations not as a limit of length functions of simple closed curves, but as a function induced from an invariant multi-function.
\item 
Invariant functions induced from multi-functions can be useful to give a better understanding of invariant functions $f_\gamma: \CharVar{\pi_1(S)} \to \R$ associated to a non-simple closed curve $\gamma$. Writing $\gamma$ as a product of simple closed curves on $S$, e.g. $\gamma = \alpha \cdot\beta $ we can realize $f_\gamma$ as an invariant function $f_{(\alpha,\beta)}$ arising form an invariant multicurve. This can also be used to realize the trace function of a web as a function induced by an invariant multi-function. We make use of this in Section \ref{sec : self intersecting curves} to give a geometric interpretation of the Hamiltonian flows of trace functions associated to the self-intersecting figure eight curve on a pair of pants. 
\item It is of interest also to consider functions that are not defined on the entire character variety $\CharVar{\pi_1(S)}$, but only on subsets. We focus in particular on functions on Hitchin components and $\Theta-$Anosov representations and show that many of those functions, for example triple ratios and cross ratios  \cite{FockGoncharov}, coordinate functions defined by Bonahon-Dreyer \cite{BonahonDreyer_FiniteLaminations, BonahonDreyer_GeneralLaminations}, correlation functions \cite{Labourie_SwappingAlgebra,SimpleLengthRigidity_BCL,GhostPolygons}, and the Darboux coordinates of \cite{DarbouxOnHitchin_SunZhang, FlowsPGLVHitchin_SWZ} all arise as induced invariant functions. 
\end{enumerate}

In order to get a description of the Hamiltonian vector fields and flows of functions induced by invariant multi-functions, a key step is to determine the Poisson bracket between two such functions. \\

For this we associate to an invariant multi-function $f\colon G^k\to \R$ its variation function $F\colon G^k\to \gfrk$, and denote by $F_i\colon G^k\to \gfr$ its $i-$th component. The variation functions $F_i$ measure the variation of $f$ in the $i-$th component. 
It is implicitly defined by 
\[
\B(F_i(g_1, \dots, g_k),X)=\frac{d}{dt}\bigg|_{t=0}f(g_1,\dots,g_{i-1},\exp(tX)g_i,g_{i+1},\dots,g_k),
\]
for $X\in\gfr$. The Poisson bracket between two such induced invariant functions is then nicely described in terms of their variation functions. 

\begin{thmA}[The product formula (Theorem \ref{thm: product formula})]\label{thm:product}
   Let $f\colon G^k\to \R$ and $f'\colon G^n\to\R$ be invariant multi-functions, and let $F$ and $F'$ be their variation functions.  Let $\ub{\alpha}=(\alpha_1,\dots,\alpha_k)\in\pi_1(S)^k$ and $\ub{\beta}=(\beta_1,\dots,\beta_n)\in\pi_1(S)^n$. The Poisson bracket $\{f_{\ub \alpha},f'_{\ub \beta}\}$ is the function $\CharVar{\pi_1(S)} \to\R$ given by
   \begin{equation*}
   \{f_{\ub \alpha},f'_{\ub \beta}\}([\rho])\mapsto \sum_{i,j}\sum_{q\in\beta_j\#\alpha_i}\eps(q;\beta_j,\alpha_i)\B(F'_j(\rho(\ub\beta))(q),F_i(\rho(\ub\alpha))(q)),
   \end{equation*}
   where $\beta_j\#\alpha_i$ is the set of transverse intersections between $\beta_j$ and $\alpha_i$, and
   \begin{align*}
    F_i(\rho(\ub \alpha)) &\coloneqq F_i(\rho(\alpha_1),\dots,\rho(\alpha_k)),\\
     F'_j(\rho(\ub \beta)) &\coloneqq F'_j(\rho(\beta_1),\dots,\rho(\beta_n)).
\end{align*}
\end{thmA}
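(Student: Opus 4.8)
The plan is to run everything through Goldman's symplectic form on the (smooth locus of the) character variety. Recall that at $[\rho]$ one has $T_{[\rho]}\CharVar{\pi_1(S)}\cong H^1(\pi_1(S),\gfr_{\Ad\rho})\cong\HSurfaceCohom 1$, that $\omega$ is the cup product followed by the Killing pairing $\B$, and that, since $\gfr_{\Ad\rho}$ is self-dual via $\B$ and $S$ is closed and oriented, Poincar\'e duality $\mathsf{PD}\colon\HSurfaceCohom 1\xrightarrow{\ \sim\ }H_1(S,\gfr_{\Ad\rho})$ carries $\omega$ to the algebraic intersection pairing of twisted $1$-cycles; consequently the Poisson bracket of two functions evaluated at $[\rho]$ equals the intersection number of the twisted $1$-cycles Poincar\'e dual to their differentials, up to a global sign fixed by the chosen conventions. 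So the task reduces to identifying, for each induced multi-function, the twisted cycle dual to its differential, and then intersecting.

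Differentiating $f_{\ub\alpha}$ along a cocycle $u\in Z^1(\pi_1(S),\gfr_{\Ad\rho})$, the chain rule together with the defining property of the variation functions gives
\[
d(f_{\ub\alpha})_{[\rho]}(u)=\sum_{i=1}^{k}\B\bigl(F_i(\rho(\ub\alpha)),\,u(\alpha_i)\bigr).
\]
The new feature, absent from Goldman's single-curve case, is that $F_i(\rho(\ub\alpha))$ need not be fixed by $\Ad\rho(\alpha_i)$, so the individual summands do not descend to $\HSurfaceCohom 1$; what makes the sum descend is the identity $\sum_i\bigl(F_i(\rho(\ub\alpha))-\Ad\rho(\alpha_i)^{-1}F_i(\rho(\ub\alpha))\bigr)=0$, which I would obtain by differentiating the diagonal $G$-invariance of $f$ in the "simultaneous conjugation" direction. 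Geometrically this is a balance (cycle) condition at the common base point $x_0$: it is exactly what is needed to assemble the loops $\alpha_i$, each carrying the section of $\gfr_{\Ad\rho}$ obtained by transporting $F_i(\rho(\ub\alpha))$ along it, into a single twisted $1$-cycle $c_{\ub\alpha}$ supported on a graph carrying $\ub\alpha$. A computation in the style of Goldman's identification of the Poincar\'e dual of $d f_\gamma$ — comparing group cohomology with de Rham cohomology with local coefficients, and using that the $\B$-pairing of a parallel section along an arc with a closed form computes the corresponding period — then shows $c_{\ub\alpha}=\mathsf{PD}(d(f_{\ub\alpha})_{[\rho]})$, equivalently that the Hamiltonian vector field is $\mathsf{PD}^{-1}(c_{\ub\alpha})$ (up to the global sign); this $c_{\ub\alpha}$ is the infinitesimal form of the subsurface deformation studied elsewhere in the paper. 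The same produces $c_{\ub\beta}$ for $f'_{\ub\beta}$.

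It remains to compute $\{f_{\ub\alpha},f'_{\ub\beta}\}([\rho])=c_{\ub\alpha}\cdot c_{\ub\beta}$. Placing the representatives of the $\alpha_i$ and $\beta_j$ in the taut position fixed earlier in the paper, so that away from $x_0$ they meet in finitely many transverse points and the pieces of $c_{\ub\alpha}$, $c_{\ub\beta}$ near $x_0$ contribute nothing (again by the balance condition), the intersection number is computed crossing by crossing: a transverse point $q\in\beta_j\#\alpha_i$ contributes the local orientation sign times the value of $\B$ on the two transported sections at $q$, namely $\eps(q;\beta_j,\alpha_i)\,\B\bigl(F'_j(\rho(\ub\beta))(q),F_i(\rho(\ub\alpha))(q)\bigr)$, where $F_i(\rho(\ub\alpha))(q)$ denotes $F_i(\rho(\ub\alpha))$ transported along $\alpha_i$ from $x_0$ to $q$ and similarly for $\beta_j$. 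Summing over all $i,j$ and over $q\in\beta_j\#\alpha_i$ yields the asserted formula, once the local sign convention is matched with $\eps(q;\beta_j,\alpha_i)$ and the order of the arguments of $\B$ with the orientation conventions.

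I expect the main obstacle to be two pieces of geometric bookkeeping rather than the chain rule. First, pinning down the Poincar\'e dual: one must fix compatible cellular or de Rham representatives and carry the $\gfr_{\Ad\rho}$-valued pairing through the comparison of group and sheaf cohomology (this is where Goldman's original proof does its real work), and — crucially in the multi-function setting — verify that the balance identity at $x_0$ is precisely what promotes the naive "sum of weighted curves" to an honest twisted cycle. Second, the common base point: since every $\alpha_i$ and $\beta_j$ runs through $x_0$, both the notion of transverse intersection and the signs $\eps(q;\beta_j,\alpha_i)$ are defined only relative to the taut representatives fixed earlier, and one must check both that the crossings accumulating near $x_0$ genuinely cancel and that the resulting count is independent of these choices and comes out with the stated signs.
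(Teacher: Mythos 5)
Your proposal is correct and follows essentially the same route as the paper: Proposition \ref{prop: explicit cycle for Poincare dual of Hamiltonian vector field} identifies the Poincar\'e dual of $\Hm f_{\ub\alpha}([\rho])$ with the twisted cycle $\sum_i\alpha_i\otimes F_i(\rho(\ub\alpha))$ by the same Goldman-style dualization of $\D_{[\rho]}f_{\ub\alpha}([u])=\sum_i\B\bigl(F_i(\rho(\ub\alpha)),u(\alpha_i)\bigr)$ (your balance identity at the base point is exactly what makes this chain a cycle), and the bracket is then the intersection pairing of the two dual cycles computed crossing by crossing, as in the paper's proof of Theorem \ref{thm: product formula}. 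The one bookkeeping difference is the base point: in the paper's precise statement $\ub\alpha$ and $\ub\beta$ are based at two distinct points $p\neq r$, so the representatives already intersect transversely at double points and no cancellation argument for crossings accumulating near a common base point is needed (homology invariance of the intersection pairing lets you push one bouquet off the other, which is what the two-base-point formulation accomplishes).
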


The statement looks exactly like Goldman's formula when $k=n=1$ \cite[Theorem 3.5]{InvFct_Goldman}, but some difficulty is hidden in the statement and we refer the reader to Theorem~\ref{thm: product formula} for a precise statement. In the formula, the $k$-tuple of curves $\ub{\alpha}$ and $\ub{\beta}$ have to be chosen with respect to different base points, and the Lie algebra elements $ F_i(\rho(\ub \alpha))$ and $ F'_j(\rho(\ub \beta)) $ are interpreted as sections of a flat bundle.

\subsection{Subsurface deformations}

In order to describe the Hamiltonian vector fields and flows of functions on the character variety that are associated to invariant multi-functions, we need to consider decompositions of surfaces and associated {\em subsurface deformations} in the representation variety. \\

The fundamental group of a surface can be decomposed in many ways. For example, given a separating simple closed curve $\gamma \in \pi_1(S)$ that separates $S$ into two surfaces $S_1$ (to the left of $\gamma$) and $S_2$ (to the right of $\gamma$), the fundamental group can be written as an amalgamated product of the fundamental groups of $S_1$ and $S_2$ over the cyclic group generated by $\gamma$
\[
\pi_1(S) = \pi_1(S_1)*_{\langle\gamma\rangle} \pi_1(S_2).
\]
If $\gamma$ is non-separating, there is a similar decomposition of $\pi_1(S)$ as an HNN-extension.\\ 

The fundamental group of a surface can be more generally decomposed by writing it as a graph of groups (see Section~\ref{sec : Constructing graph of groups}). Of particular importance to us is the graph of groups decomposition associated to a   decomposition of $S = \bigcup_{i = 0}^m S_i$  into subsurfaces with boundary. 
The decomposition into subsurfaces gives a realization of $\pi_1(S)$ as the fundamental group of a graph of groups, where the vertex groups are the fundamental groups $\pi_1(S_i)$ and the edge groups arise from the gluing pattern along the boundary curves of the $S_i$'s. The decomposition associated to a simple closed curve is a special case of this, namely the case when the subsurface $S_0$ is the cylinder arising as a tubular neighborhood of the simple closed curve $\gamma$.\\

Given such a decomposition of $S = \bigcup_{i = 0}^m S_i$, we introduce the notion of a {\em subsurface deformation} of a representation $\rho: \pi_1(S) \to G$ along $S_0$. For this we consider the subset ${\leaf}_{\rho}(S_0) \subset \CharVar{\pi_1(S)}$, which consists of all (equivalence classes of) representations $[\rho'] \subset \CharVar{\pi_1(S)}$ such that $[\rho'|_{\pi_1(S_i)}] = [\rho|_{\pi_1(S_i)}]$ for $i= 1, \cdots, m$, that is, the restricted representations lie in the same conjugacy classes.\\ 

A continuous path of representations $[\rho_t]$ is then said to be a subsurface deformation along $S_0$ if $[\rho_t]$ is contained in $\leaf_{\rho_0}(S_0)$ for all $t$. 
A vector field on $\CharVar{\pi_1(S)}$ is said to be an infinitesimal subsurface deformation along $S_0$ if it is tangent to $\leaf_{\rho_0}(S_0)$. \\

Our main result then says that the Hamiltonian vector field of any function $f_{\underline{\gamma}}$ associated to an invariant multi-function is an infinitesimal subsurface deformation along a supporting subsurface (Definition \ref{def : supportin subsurface}) of $\underline{\gamma}$, and under additional smoothness conditions, the corresponding Hamiltonian flow is a subsurface deformation. 

\begin{thmA}[Theorem \ref{thm : hamiltonian vector field is infinitesimal subsurface deformation} and Theorem \ref{thm : Hamiltonian flow is a subsurface deformation}]\label{Theorem_subsurface}
	Let $f\colon G^k\to \R$ be an invariant function, $\ub\alpha\in\pi_1(S)^k$, and $S_0$ a supporting subsurface for $\ub\alpha$. 
 
 \begin{enumerate} 
 \item \label{thm: intro infinitesimal subsurface}The Hamiltonian vector field $\Hm f_{\ub\alpha}$ of $f_{\ub\alpha}$ is an infinitesimal subsurface deformation along $S_0$, i.e.
	\[
	\Hm f_{\ub\alpha}\in\T_{[\rho]}\leaf_\rho(S_0)
	\]
	for every $[\rho]\in \CharVar{\pi_1(S)}$.
\item \label{thm : intro subsurface def}
Assume there is an open subset $\mathscr O\subset\CharVar{\pi_1(S)}$ such that the decomposition into leaves $\leaf^\mathscr O(S_{0})$ is a smooth foliation of $\mathscr O$. Then the Hamiltonian flow of $f_{\ub\alpha}$ is a subsurface deformation along $S_{0}$.
\end{enumerate}
\end{thmA}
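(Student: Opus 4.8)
The plan is to deduce both parts from the product formula (Theorem \ref{thm:product}) together with a characterization of the tangent space $\T_{[\rho]}\leaf_\rho(S_0)$ as a symplectic-geometric object.

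For part \eqref{thm: intro infinitesimal subsurface}, the key observation is that $\leaf_\rho(S_0)$ is cut out by the conditions that the conjugacy classes $[\rho|_{\pi_1(S_i)}]$ be fixed for $i=1,\dots,m$. Infinitesimally, a tangent vector $v\in\T_{[\rho]}\CharVar{\pi_1(S)}$, represented by a cocycle $u\in\ZOne$, lies in $\T_{[\rho]}\leaf_\rho(S_0)$ if and only if its restriction $u|_{\pi_1(S_i)}$ is a coboundary for each $i=1,\dots,m$; equivalently, $v$ pairs to zero under Goldman's symplectic form with every tangent vector that is supported on (a cocycle pulled back from) one of the $S_i$, $i\ge 1$. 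So I would first establish the linear-algebra fact that $\T_{[\rho]}\leaf_\rho(S_0)$ is exactly the $\omega$-annihilator of the subspace $\mathcal V_{\ge 1}\subset \T_{[\rho]}\CharVar{\pi_1(S)}$ spanned by the images of the maps $H^1(\pi_1(S_i),\gfrrho)\to H^1(\pi_1(S),\gfrrho)$ induced by restriction-duals / the inclusions $\pi_1(S_i)\hookrightarrow\pi_1(S)$, for $i=1,\dots,m$. Given that, it suffices to show that for every invariant multi-function $f'\colon G^n\to\R$ and every $\ub\beta\in\pi_1(S_i)^n$ with $i\ge 1$, the Poisson bracket $\{f_{\ub\alpha},f'_{\ub\beta}\}$ vanishes identically. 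This is where the notion of \emph{supporting subsurface} does its work: by definition $\ub\alpha$ can be homotoped off $S_i$ for $i\ge 1$ (it is carried by $S_0$ together with the gluing annuli), so the intersection sets $\beta_j\#\alpha_i$ in the product formula are empty after a suitable representative choice, and the sum is zero. Since functions of the form $f'_{\ub\beta}$ with $\ub\beta$ based in $\pi_1(S_i)$ have Hamiltonian vector fields spanning $\mathcal V_{\ge1}$ (at least a dense/spanning subset of it — one should check this, e.g.\ using trace functions on $S_i$), $\Hm f_{\ub\alpha}$ annihilates $\mathcal V_{\ge1}$ and hence is tangent to $\leaf_\rho(S_0)$. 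The main obstacle here is the careful bookkeeping of base points and flat-bundle identifications needed to make "$\ub\alpha$ and $\ub\beta$ are disjoint" literally force the product formula to vanish — the statement of Theorem \ref{thm: product formula} warns that $\ub\alpha,\ub\beta$ must be taken with respect to different base points and the $F_i$ read as sections of a flat bundle, so I must make sure the disjointness is realized geometrically by representatives whose supports lie in $S_0\cup(\text{annuli})$ and $S_i$ respectively.

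For part \eqref{thm : intro subsurface def}, once $\leaf^{\mathscr O}(S_0)$ is a smooth foliation of the open set $\mathscr O$, the content of part \eqref{thm: intro infinitesimal subsurface} is that the Hamiltonian vector field $\Hm f_{\ub\alpha}$ restricted to $\mathscr O$ is everywhere tangent to the leaves. A smooth vector field tangent to a smooth foliation has its (local) flow preserving each leaf: this is the standard fact that an integral curve of a vector field tangent to a foliation, starting on a leaf, stays on that leaf (uniqueness of integral curves within the leaf, which is an immersed submanifold invariant under the flow). Hence for every $[\rho_0]\in\mathscr O$ the Hamiltonian flow line $t\mapsto[\rho_t]$ stays inside $\leaf_{\rho_0}(S_0)$ for as long as it remains in $\mathscr O$, which is precisely the assertion that the flow is a subsurface deformation along $S_0$. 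The only subtlety is ensuring $\Hm f_{\ub\alpha}$ is genuinely a smooth (at least $C^1$) vector field on $\mathscr O$ so that the ODE-theoretic statement applies; this follows from $f$ being $C^1$-smooth and invariant, so that $f_{\ub\alpha}$ is $C^1$ on the smooth locus and its symplectic gradient is defined and tangent to the leaves there, and one restricts attention to $\mathscr O$ inside the smooth locus. I expect the hardest single step overall to be the first one — proving that $\T_{[\rho]}\leaf_\rho(S_0)$ is exactly the $\omega$-annihilator of the span of the subsurface classes $\mathcal V_{\ge1}$, i.e.\ matching the naive "fix the conjugacy classes on $S_i$" description with a clean cohomological/symplectic-orthogonality description via Poincaré–Lefschetz duality for the pair $(S_i,\partial S_i)$ with coefficients in $\gfrrho$, and checking that the relevant restriction maps have the images one needs.
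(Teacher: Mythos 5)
Your part (2) is exactly the paper's argument: once $\leaf^{\mathscr O}(S_0)$ is a smooth foliation, tangency of the ($C^1$) Hamiltonian vector field to the leaves forces the flow to preserve them. Your route to part (1), however, is genuinely different from the paper's: the paper does not argue by symplectic orthogonality at all, but computes an explicit representative cocycle for $\Hm f_{\ub\alpha}([\rho])$ (Proposition \ref{prop : Hamiltonian vector field for general subsurface}) by pairing its Poincar\'e dual against test cycles adapted to the graph-of-groups generators, reads off that its restriction to each vertex group $\Gamma_i$, $i\neq 0$, is a coboundary, and concludes via Lemma \ref{lem: tangent space "relative representation variety" in larger variety}.

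As written, your argument has a genuine gap at the spanning step. You need that at \emph{every} $[\rho]\in\CharVar{\pi}$ the Hamiltonian vector fields of induced invariant functions $f'_{\ub\beta}$ with $\ub\beta\in\pi_1(S_i)^n$ span the $\omega$-orthogonal complement $\mathcal V_{\ge 1}$ of $\T_{[\rho]}\leaf_\rho(S_0)$; equivalently, by Proposition \ref{prop: explicit cycle for Poincare dual of Hamiltonian vector field}, that cycles of the form $\sum_j\beta_j\otimes F'_j(\rho(\ub\beta))$ span the image of $H_1(S_i,\xi_\rho)\to H_1(S,\xi_\rho)$. The tuples $(F'_1,\dots,F'_n)$ realizable as variations of invariant functions at the point $\rho(\ub\beta)\in G^n$ are exactly constrained to annihilate the tangent space of the conjugation orbit (the cycle condition), but whether every such covector is attained by a smooth invariant function depends on the local structure of that orbit. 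Even though $[\rho]$ lies in the good locus, the restriction $\rho|_{\pi_1(S_i)}$ may be reducible, have positive-dimensional centralizer, or non-closed orbit, and at such points differentials of invariant functions need not fill out the conormal to the orbit -- so the spanning claim can fail precisely where it is most needed; nor does a density-plus-limits repair obviously work, since the condition ``$u|_{\Gamma_i}$ is a coboundary'' is not closed in $(\rho,u)$ when $\dim Z(\rho|_{\Gamma_i})$ jumps. A smaller but real issue: there is no map $H^1(\pi_1(S_i),\gfrrho)\to H^1(\pi_1(S),\gfrrho)$ induced by inclusion; $\mathcal V_{\ge 1}$ must be taken as the image of $H^1(S_i,\partial S_i;\xi_\rho)$ (dually, of $H_1(S_i,\xi_\rho)$), and the identification of $\T_{[\rho]}\leaf_\rho(S_0)$ with its $\omega$-annihilator is a twisted Poincar\'e--Lefschetz argument you would still need to supply. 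A cleaner repair of your idea -- and essentially what the paper's proof does -- is to drop the invariant functions on the $S_i$ side entirely and pair the explicit Poincar\'e-dual cycle of $\Hm f_{\ub\alpha}$, supported on a bouquet homotopic into $S_0$, directly against arbitrary twisted $1$-cycles carried by the complementary pieces, using disjointness of representatives.
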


Essentially, a subsurface deformation arises from a deformation in the relative character variety of $S_0$. In general, relative character varieties are quite complicated, and thus we cannot give more geometric information on the subsurface deformation. However, when $S_0$ is a separating $m$-holed sphere, we give a geometric realization of the Hamiltonian flow of $f_{\ub\alpha}$ similar to Goldman's generalized twist flows. This is possible since the representation of the fundamental group of an $m$-holed sphere is completely determined by the boundary curves. 

\begin{corA}[Corollary \ref{cor : hamiltonian flow for fully separating subsurface}]\label{cor:fullysep}
	Assume that the supporting surface $S_0$ is a fully separating $m$-holed sphere, such that $S_1, \cdots, S_m$ are the connected components of the complement, where $S_i$ and $S_0$ are glued along a boundary curve  $c_i$. Then there exists a family of paths $g^t_1,\dots,g^t_m\in G$ with $g_i^0 = e$ such that for every $\gamma\in\Gamma_i=\pi_1(S_i)$, the path $\Xi_\rho\colon I(\rho)\to\Hom(\pi_1(S),G)$ of representations given by
	\[
		\Xi_{\rho}(t)(\gamma) = g_i^t\rho_i(\gamma)(g_i^t)^{-1},
	\]
	for $i = 1,\dots,m$ covers the Hamiltonian flow of $f_{\ub\alpha}$ restricted to $\Gamma_i$. In particular, we have that
	\[
	\Xi_{\rho}(t)(c_i) = g_i^t\rho_0(c_i)(g_i^t)^{-1}.
	\]
\end{corA}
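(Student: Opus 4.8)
The plan is to extract the corollary from Theorem~\ref{Theorem_subsurface}\eqref{thm : intro subsurface def} by exploiting the special structure of the relative character variety of an $m$-holed sphere. By that theorem, once we know the leaf decomposition near $[\rho]$ is a smooth foliation, the Hamiltonian flow of $f_{\ub\alpha}$ stays inside the leaf $\leaf_\rho(S_0)$; so every point $[\rho_t]$ along the flow has the property that $[\rho_t|_{\pi_1(S_i)}]=[\rho|_{\pi_1(S_i)}]$ for each $i=1,\dots,m$. The first step is therefore to upgrade ``same conjugacy class on each $\pi_1(S_i)$'' to an actual family of conjugating elements: for each $i$ and each $t$ in the interval of definition $I(\rho)$ of the flow, choose $g_i^t\in G$ with $\rho_t|_{\pi_1(S_i)}=\conj{\rho|_{\pi_1(S_i)}}{g_i^t}$, normalized so that $g_i^0=e$. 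The content here is that one can make this choice continuously (indeed smoothly) in $t$; this follows because the leaf foliation is smooth and the map sending a representation of a holed sphere to its restrictions to the pieces $S_i$ is, on the leaf, literally parametrized by the conjugating elements modulo the (finite, for Zariski-dense or generic $\rho$) centralizer. Concretely one uses a local section of the orbit map $G\to G\cdot(\rho|_{\pi_1(S_i)})$ near $e$ and continues it along the flow path.

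The second, and conceptually central, step is to identify the boundary behavior. Because $S_0$ is a \emph{fully separating} $m$-holed sphere, $\pi_1(S)$ is the fundamental group of a graph of groups whose central vertex group is $\pi_1(S_0)=\langle c_1,\dots,c_m \mid c_1\cdots c_m\rangle$ (a free group of rank $m-1$), the peripheral vertex groups are the $\Gamma_i=\pi_1(S_i)$, and the edge groups are the cyclic groups $\langle c_i\rangle$ identifying the boundary $c_i$ of $S_0$ with the corresponding boundary curve of $S_i$. Since a representation of the fundamental group of an $m$-holed sphere is determined up to conjugacy by the conjugacy classes of its boundary holonomies (this is exactly the remark made just before the corollary: the relative character variety of an $m$-holed sphere embeds into the product of conjugacy classes of the boundary elements), the deformation $[\rho_t]$ restricted to $\pi_1(S_0)$ is controlled entirely by how the boundary holonomies $\rho_t(c_i)$ move. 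On the other hand, $c_i$ lies in $\Gamma_i$, so $\rho_t(c_i)=g_i^t\rho(c_i)(g_i^t)^{-1}=g_i^t\rho_0(c_i)(g_i^t)^{-1}$, giving the displayed formula for $\Xi_\rho(t)(c_i)$. One then has to check that the representations $\Xi_\rho(t)$ defined piecewise by $\gamma\mapsto g_i^t\rho_i(\gamma)(g_i^t)^{-1}$ on each $\Gamma_i$, together with the induced action on $\pi_1(S_0)$ determined by the boundary values, actually glue to a well-defined homomorphism $\pi_1(S)\to G$ via the graph-of-groups presentation — the only relations to verify are the edge relations $c_i^{\text{(in }\Gamma_i)}=c_i^{\text{(in }\pi_1(S_0))}$, which hold by construction, plus the single relation $c_1\cdots c_m = e$ in $\pi_1(S_0)$, which is preserved because $[\rho_t]$ is a genuine representation of $\pi_1(S)$.

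The third step is bookkeeping: verify that $\Xi_\rho$ so defined is a lift of the Hamiltonian flow, i.e. that $[\Xi_\rho(t)]=[\rho_t]$ in $\CharVar{\pi_1(S)}$ for all $t\in I(\rho)$, and that $g_i^0=e$ gives $\Xi_\rho(0)=\rho$. The first follows because $\Xi_\rho(t)$ and $\rho_t$ agree up to conjugacy on each $\Gamma_i$ by construction and agree on $\pi_1(S_0)$ because both are determined on the holed sphere by the same boundary data $\{\rho_t(c_i)\}_i$; since $\pi_1(S)$ is generated by $\bigcup_i\Gamma_i$ together with $\pi_1(S_0)$, and in fact by the graph-of-groups data the subgroups $\Gamma_i$ and $\pi_1(S_0)$ generate everything, agreement up to a \emph{single} global conjugacy follows from a standard rigidity argument for graph-of-groups decompositions (choose the $g_i^t$ compatibly so the ambiguities match along edges).

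The main obstacle I expect is the second step: making precise and rigorous the claim that ``a representation of an $m$-holed sphere is determined by its boundary holonomies'' \emph{in a way compatible with the flow}, i.e. controlling the conjugating elements $g_i^t$ simultaneously for all $i$ so that they descend to a single coherent deformation rather than $m$ independent ones. This is where the hypothesis that $S_0$ is \emph{fully} separating (so that the complementary pieces are glued to $S_0$ along all of their relevant boundary and the graph of groups is a star with center $S_0$) is essential, and where one must invoke the smoothness of the leaf foliation from the hypothesis of Theorem~\ref{Theorem_subsurface}\eqref{thm : intro subsurface def} to guarantee the continuous (smooth) dependence on $t$. Everything else is a transcription of the graph-of-groups formalism set up earlier in the paper.
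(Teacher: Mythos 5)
Your overall strategy---invoke Theorem \ref{thm : Hamiltonian flow is a subsurface deformation}, use the star--shaped graph of groups attached to a fully separating $m$-holed sphere, and exploit that $c_i$ lies in both $\Gamma_i$ and $\Gamma_0$---is the paper's, but the middle of your argument has a genuine gap. You choose, for each $i$ separately, a conjugator $g_i^t$ realizing $[\rho_t|_{\Gamma_i}]=[\rho|_{\Gamma_i}]$, and then try to assemble the piecewise formula into a representation of $\pi_1(S)$ via the graph-of-groups presentation. But $m$ independently chosen conjugators need not assemble at all: each $g_i^t$ is pinned down only up to the relevant centralizer (indeed, only up to the conjugacy-class condition), and your verification of the relation $c_1\cdots c_m=1$, namely that it ``is preserved because $[\rho_t]$ is a genuine representation,'' is circular---it presupposes $g_i^t\rho_0(c_i)(g_i^t)^{-1}=\rho_t(c_i)$, which is exactly the compatibility you have not arranged. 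The appeal to ``a standard rigidity argument'' does not close this: the ambiguity in the gluing data along the edges is measured by the relative character variety of $S_0$ with boundary holonomies constrained to fixed conjugacy classes, which is positive-dimensional in general. Relatedly, your auxiliary claim that a representation of an $m$-holed sphere is determined up to conjugacy by the conjugacy classes of its boundary holonomies (equivalently, that its relative character variety embeds in the product of boundary conjugacy classes) is false in general (already for a four-holed sphere in $\SL 2$, or a pair of pants in $\SL d$, $d\geq 3$); the remark the paper relies on is the different, correct statement that a representation of $\pi_1(S_0)$, as an actual homomorphism, is determined by the images of the boundary curves, since those curves generate $\pi_1(S_0)$.

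The repair is exactly the paper's shortcut, and it makes your gluing and comparison steps unnecessary. First lift the Hamiltonian flow to a path $\rho_t$ in $\Reps{\pi}$ with $\rho_0=\rho$ (possible on the smooth locus, where the quotient admits local sections); then Theorem \ref{thm : Hamiltonian flow is a subsurface deformation} together with Remark \ref{rmk : alternative definition for leaves} gives, for this single lift, elements $g_i^t$ with $\rho_t|_{\Gamma_i}=g_i^t\rho_i(\cdot)(g_i^t)^{-1}$ and $g_i^0=e$ (your continuity argument via local sections of the orbit map is fine here). Because the graph of groups is a tree, the edge monomorphisms identify $c_i\in\Gamma_i$ with $c_i\in\Gamma_0$, so $\rho_i(c_i)=\rho_0(c_i)$, and since $\Gamma_0$ is generated by the $c_i$, the group $\pi_1(S)$ is generated by $\Gamma_1,\dots,\Gamma_m$ alone. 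Hence the displayed formula on the $\Gamma_i$ already determines the lift $\Xi_\rho(t)=\rho_t$ on all of $\pi_1(S)$: there is nothing to glue, no relation to re-verify, no residual ``single global conjugacy'' to chase, and the boundary formula $\Xi_\rho(t)(c_i)=g_i^t\rho_0(c_i)(g_i^t)^{-1}$ is immediate.
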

Let us point out that this is a generalization of \cite[Theorem 4.5]{InvFct_Goldman}, because any separating simple closed curve admits a supporting subsurface that is an cylinder, i.e. a separating 2-holed sphere.\\ 

It is in general difficult to write explicit paths of conjugating matrices $g_i^t$; even though we have computed a few explicit examples, it is an open question how to extract information on the $g_i^t$ out of the function $f$ and its variation function $F$. \\

For a general point, the smoothness condition in Theorem~\ref{Theorem_subsurface} might not be satisfied. There are however examples where we can ensure the smoothness condition for all points in a given connected component of the character variety. We prove that for the Hitchin components $\Hit d$ when $G = \SL d$ satisfy the smoothness condition and thus obtain the following consequence. 

\begin{corA}[Corollary \ref{cor : subsurface deformations in Hitchin}]\label{cor:Hitchin}
	Let $f\colon \SL d^k\to\R$ be an invariant function, let $\ub\alpha\in\pi^k$, and let $S_{0}$ be a supporting subsurface.  Then the Hamiltonian flow of $f_{\ub\alpha}$ restricted to $\Hit d$ is a subsurface deformation along the supporting subsurface $S_{0}$.
\end{corA}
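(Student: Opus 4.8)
The goal is Corollary~\ref{cor:Hitchin}: that the Hamiltonian flow of $f_{\ub\alpha}$ restricted to the Hitchin component $\Hit d$ is a subsurface deformation. By Theorem~\ref{Theorem_subsurface}\eqref{thm : intro subsurface def}, this follows once we verify the smoothness hypothesis on all of $\Hit d$, namely that the partition into leaves $\leaf^{\Hit d}(S_0)$ is a smooth foliation of $\Hit d$. So the plan is to take $\mathscr O = \Hit d$ and check the foliation condition there, then invoke Theorem~\ref{thm : Hamiltonian flow is a subsurface deformation} as a black box. Note also that Theorem~\ref{Theorem_subsurface}\eqref{thm: intro infinitesimal subsurface} already gives, unconditionally, that the Hamiltonian vector field is tangent to the leaves; the only thing the corollary adds over that is that the flow (time-$t$ map) stays in a single leaf, which is exactly what the foliation hypothesis buys us.

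First I would recall the relevant structure of $\Hit d$. The Hitchin component is a smooth manifold (it is a cell, by Hitchin), and crucially it consists entirely of Anosov — in particular discrete, faithful, and \emph{irreducible} (even Zariski-dense in $\SL d$, by Labourie and Guichard) — representations. Fix the decomposition $S = \bigcup_{i=0}^m S_i$ with $S_0$ the supporting subsurface, giving $\pi_1(S)$ as the fundamental group of a graph of groups with vertex groups $\pi_1(S_i)$. The leaf through $[\rho]$ is $\leaf_\rho(S_0) = \{[\rho'] : [\rho'|_{\pi_1(S_i)}] = [\rho|_{\pi_1(S_i)}],\ i = 1,\dots,m\}$. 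The map to control is the "restriction" map $r\colon \Hit d \to \prod_{i=1}^m \CharVar{\pi_1(S_i)}$, $[\rho]\mapsto ([\rho|_{\pi_1(S_i)}])_i$; the leaves are precisely the fibers of $r$, so it suffices to show $r$ is a submersion onto a smooth submanifold (a subimmersion of constant rank), which makes its fibers a smooth foliation. The key infinitesimal input is a Mayer--Vietoris / graph-of-groups long exact sequence in group cohomology with coefficients in $\gfr_{\Ad\rho}$: the tangent space $\T_{[\rho]}\CharVar{\pi_1(S)} = \HGroupCohom{1}$ surjects onto (or relates by an exact sequence to) $\bigoplus_i H^1(\pi_1(S_i),\gfr_{\Ad\rho})$ with kernel and cokernel controlled by the edge groups $\langle c_i\rangle$, i.e. by $H^*$ of cyclic groups. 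Because $\rho$ is Anosov, each $\rho(c_i)$ is loxodromic/proximal, so the centralizer of $\rho(c_i)$ in $\gfr$ is a Cartan subalgebra of dimension $d-1$, and $H^0(\langle c_i\rangle, \gfr_{\Ad\rho}) = H^1(\langle c_i\rangle,\gfr_{\Ad\rho})$ has locally constant dimension $d-1$. This constancy across $\Hit d$ is what forces $r$ to have constant rank, hence to be a subimmersion, hence to have smoothly-varying fibers — a foliation.

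More concretely, the steps in order: (1) Recall $\Hit d$ is a smooth connected manifold of dimension $(2g-2)\dim\SL d$ and every $[\rho]\in\Hit d$ is Anosov (with respect to the Borel), so $[\rho]$ is a smooth point of $\CharVar{\pi_1(S)}$ and the restrictions $[\rho|_{\pi_1(S_i)}]$ lie in smooth loci of the relative character varieties of the $S_i$. (2) Set up the graph-of-groups Mayer--Vietoris sequence for $\pi_1(S)$ built from the $\pi_1(S_i)$ and the edge groups $\langle c_i\rangle$ (the edges of the graph correspond to the gluing curves $c_i$, each cyclic), with $\gfr_{\Ad\rho}$ coefficients, and identify $d r_{[\rho]}$ with the natural map $H^1(\pi_1(S),\gfr_{\Ad\rho}) \to \bigoplus_i H^1(\pi_1(S_i),\gfr_{\Ad\rho})$ appearing there. (3) Compute, using that each $\rho(c_i)$ is proximal for $\rho\in\Hit d$, that $\dim H^0(\langle c_i\rangle,\gfr_{\Ad\rho}) = \dim H^1(\langle c_i\rangle,\gfr_{\Ad\rho}) = d-1$ independently of $[\rho]$, and that $H^0(\pi_1(S),\gfr_{\Ad\rho}) = 0$ and $H^0(\pi_1(S_i),\gfr_{\Ad\rho}) = 0$ by Zariski-density / irreducibility of the restrictions (here one uses that Hitchin representations restrict to the vertex subgroups as Zariski dense, or at least with trivial centralizer — if this fails for some $S_i$ one replaces the vertex contribution by the appropriate centralizer dimension, still locally constant on $\Hit d$). (4) Conclude from the exact sequence that $\mathrm{rk}\, dr_{[\rho]}$ is locally constant on the connected manifold $\Hit d$, hence constant; by the constant rank theorem $r$ is a subimmersion, so its fibers $\leaf_\rho(S_0)$ form a smooth foliation $\leaf^{\Hit d}(S_0)$ of $\mathscr O = \Hit d$. (5) Apply Theorem~\ref{thm : Hamiltonian flow is a subsurface deformation} with this $\mathscr O$ to conclude the Hamiltonian flow of $f_{\ub\alpha}$ restricted to $\Hit d$ is a subsurface deformation along $S_0$.

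The main obstacle I anticipate is step (3)–(4): rigorously extracting \emph{constancy} of the rank of $dr$ across the whole component, as opposed to just at generic points. One must be careful that the relevant cohomology dimensions — particularly $H^0$ and $H^1$ of the edge (cyclic) groups, and the $H^0$ of the vertex groups — do not jump anywhere in $\Hit d$; this is where the input "every Hitchin representation has proximal (regular) boundary holonomy and Zariski-dense image" is doing real work, and it should be cited rather than reproved. A secondary, more bookkeeping-type difficulty is setting up the graph-of-groups Mayer--Vietoris sequence cleanly when the graph underlying $S = \bigcup S_i$ has cycles (HNN pieces) and when some $S_i$ or $S_0$ are not connected or have multiple boundary components glued to $S_0$; but this is topological bookkeeping already implicit in the construction of Section~\ref{sec : Constructing graph of groups}, and does not affect the local-constancy argument. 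Once constancy of rank is in hand, everything else is a direct appeal to earlier results in the paper.
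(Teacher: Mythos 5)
Your overall strategy is the same as the paper's: take $\mathscr O=\Hit d$ (which is open in the character variety), verify that the decomposition into leaves is a smooth foliation there, and then invoke Theorem \ref{thm : Hamiltonian flow is a subsurface deformation} as a black box. Where you genuinely diverge is in how you prove the smoothness of the foliation. The paper (Lemma \ref{lem : leaves are smooth for hitchin}) works upstairs in the representation variety: it shows each leaf $\widehat\leaf^{\mathsf{Hit}}_\rho$ is an \emph{algebraic} variety (boundary holonomies of Hitchin representations are loxodromic, so their conjugacy classes are closed), and then uses the explicit dimension formula of Lemma \ref{lem: dimension of "relative representation variety"} to show the Zariski tangent spaces have constant dimension --- the only variable term being $\dim Z(\varphi_0)$, which is pinned down by irreducibility of restrictions of Hitchin representations to subsurfaces (Labourie--McShane, Canary--Zhang--Zimmer, Sikora) --- and finally quotients by the free and proper conjugation action. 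You instead work downstairs, viewing the leaves as fibers of the restriction map $r\colon \Hit d\to\prod_i\mathscr X(\pi_1(S_i),G)$ and arguing via a graph-of-groups Mayer--Vietoris sequence that $dr$ has locally constant rank, so the constant-rank theorem makes the fibers a smooth foliation. This is a legitimate alternative and is perhaps more standard differential topology, but note it is not cheaper: to apply the constant-rank theorem you need the target to be a smooth manifold near the images and $r$ to be smooth there, and you need the $H^0$-terms of the vertex groups to have locally constant dimension --- both of which require exactly the same representation-theoretic input the paper uses (proximal boundary holonomy, so edge-group cohomology has dimension $d-1$; irreducible restrictions with centralizer a finite extension of the center, so vertex-group $H^0$ is constant). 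The paper's algebraic route sidesteps the need for the target quotients to be manifolds.

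One point in your write-up is not acceptable as stated: the fallback ``if Zariski density fails for some $S_i$ one replaces the vertex contribution by the appropriate centralizer dimension, still locally constant on $\Hit d$.'' Local constancy of that centralizer dimension is precisely the nontrivial assertion; it does not come for free and is exactly what the irreducibility results of Labourie--McShane and Canary--Zhang--Zimmer are invoked for in the paper. So you should delete the fallback and instead cite those results (restrictions of Hitchin representations to essential subsurfaces of negative Euler characteristic are Hitchin, hence irreducible, hence have centralizer of dimension $\dim Z(G)$), and handle the cylinder case separately (there $\Gamma_0$ is cyclic with loxodromic image, so the centralizer is a Cartan of constant dimension --- or simply observe, as the paper does, that Theorem \ref{thm : Hamiltonian flow when supporting subsurface is a cylinder} already covers it without any smoothness hypothesis). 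With those citations in place, your constant-rank argument closes correctly.
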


\subsection{Hamiltonian flow associated to self-intersecting curves}
We discuss now applications to the Hamiltonian flows arising from an invariant function evaluated at a curve $\gamma$ with self-intersections. \\

As a first application, we get an extension of Goldman's explicit formula for the Hamiltonian flows associated to powers of simple closed curves, which is the case when the supporting subsurface is a cylinder.  Namely, they are generalized twist flows, \`a la Goldman (Theorem \ref{thm : Hamiltonian flow of power of simple closed curve}). 
\\

The simplest example of a \emph{primitive} self-intersecting curve is the \emph{figure-eight curve}  $\gamma$ supported on a (separating) pair of pants $S_0\subset S$.  That is, $\gamma$ is freely homotopic to $ab^{-1}\in \pi_1(S_0)= \langle a, b, c ~\vert ~abc = 1\rangle$; see Figure \ref{fig : figure 8 curve}.
The induced invariant function 
\[\trace_\gamma : \mathscr X(\pi_1(S),\SL d) \to \R\]
can be ``factored'' as an induced invariant multi-function $f_{(a,b)}$, where
\begin{align*}
    f\colon \SL d^2&\to\R\\
    (g,h)&\mapsto \trace(gh^{-1})
\end{align*}
By Theorem \ref{thm : intro subsurface def} and Corollary \ref{cor:fullysep}, we find that the Hamiltonian flow of $f_{(a,b)}$ is a subsurface deformation along the pair of pants $S_0$, determined entirely by paths of conjugations in the boundary curves.  We give an explicit formula for a  cocycle representing the corresponding Hamiltonian vector field, see Equation \eqref{eq : cocycle fully separating pair of pants figure 8 curve} in Section \ref{sec : self intersecting curves}. \\

In the case that $G = \SL2$, we can say much more.  
Indeed, for $\eta: \pi_1(S_0) \to G$,  the trace identity 
\[\trace(
\eta(ab^{-1})) = \trace(\eta(a))\trace(\eta(b)) - \trace(\eta(ab))\]
holds, where all three curves $a$, $b$ and $ab$ represent pairwise homotopically disjoint distinct simple closed separating curves.\footnote{In general, one can use the $\SL2$-trace identity to ``resolve'' the self-intersections of a non-simple curve to obtain an integer polynomial in traces of simple curves, but the simple curves appearing in this polynomial need not be disjoint.  }
We deduce that the Hamiltonian flow of $\trace_\gamma$ is defined for all time and is covered by a composition of pairwise commuting twist flows with speeds that depend on the initial point in $\mathscr X (\pi_1(S), \SL2)$; see Corollary \ref{cor : Hamiltonian flow of figure 8 in Teich}. \\

The function $f_\gamma$ and its flow descend to $\mathcal T(S)$, and from the formula we conclude that flow lines have well defined limiting behavior along orbits in forward and backward time to a point in the open $2$-simplex of positive weights on the curves $a$, $b$, and $c$  in Thurston's boundary (Corollary \ref{cor : limits in thurstons compactification}).  
As the limit depends on the initial hyperbolic structure, this behavior is reminiscent but distinct from the behavior of earthquake flow lines in simple, weighted multi-curves.

\subsection*{Structure of the paper}
In Section \ref{sec : Prelim} we recall some background on the character variety, the Atiyah-Bott-Goldman symplectic form and cohomology with local coefficients. Then in Section \ref{sec : invariant functions}, we introduce invariant functions and their associated variation functions. The content of Section \ref{sec : Poisson bracket} is the proof of the product formula Theorem \ref{thm:product} (Theorem \ref{thm: product formula}). The main tool is the computation of the Poincar\'e dual to the Hamiltonian vector field of an invariant function, see Proposition \ref{prop: explicit cycle for Poincare dual of Hamiltonian vector field}. In Section \ref{sec : examples}, we explain how certain functions on character varieties, or subsets thereof, can be realized as induced invariant functions. We note that Section \ref{sec : examples} will not be used elsewhere, as it discusses applications of our setting of invariant functions.\\

The rest of the paper is devoted to understanding the Hamiltonian vector field and the Hamiltonian flow of an invariant function. For this, we recall graphs of groups decompositions and define a decomposition of character varieties into leaves in Section \ref{sec : graph of groups}. In Section \ref{sec : subsurface deformations} we define (infinitesimal) subsurface deformations (Definition \ref{def : inf and subsurface deformation}) and prove that Hamiltonian flows of invariant functions are of this type (Theorem \ref{thm : hamiltonian vector field is infinitesimal subsurface deformation} and Theorem \ref{thm : Hamiltonian flow is a subsurface deformation}). The paper finishes with applying our framework to study Hamiltonian flows associated to self-intersecting curves in Section \ref{sec : self intersecting curves}. As a further application, we describe in Corollary \ref{cor : limits in thurstons compactification} the limit points in Thurston's compactification of the twist along the figure $8$ curve in Teichm\"uller space.

\subsection*{Acknowledgements}
We would like to thank Bill Goldman and Tengren Zhang for enlightening conversations.  A.W. thanks Yael Karshon for discussions on Goldman's symplectic form, which motivated parts of this project.

\section{Preliminaries}\label{sec : Prelim}
Here we collect some facts about the character variety and the Atiyah-Bott-Goldman symplectic form.
\subsection{The local structure of character varieties}
Fix once and for all a closed oriented surface $S$ of genus $g\geq 2$, as well as a universal covering $\tilde S\to S$ to which we associate the group of deck transformations $\pi$. We follow Sections 1.2 and 1.3 of \cite{SymplecticNature_Goldman}; see also \cite[Section 1]{JohnsonMillson}. Let $G$ be a real Lie group with center $Z(G)$ and whose Lie algebra $\gfr$ admits an \textit{orthogonal structure} $\B$, i.e., a non-degenerate symmetric bilinear form $\B\colon\gfr\times\gfr\to\R$ that is invariant under the adjoint action $\Ad_G$. \\

The group $\mathsf{Inn}(G) \cong G/Z(G)$ of inner automorphisms acts on $\Hom(\pi,G)$.  
We are interested in the smooth points in the quotient.
More precisely, denote by $\Reps \pi\subset\Hom(\pi,G)$ the subset of representations $\rho$ satisfying
\begin{itemize}
    \item $ Z(\rho) = Z(G)$, where $Z(\rho)$ is the centralizer of $\rho$;
    \item the image of $\rho$ does not lie in a proper parabolic subgroup of $G$.
\end{itemize}
Then 
\[
	\CharVar{\pi}\coloneqq \Reps\pi/G\coloneqq \Reps\pi/\mathsf{Inn}(G),
\]
is an analytic manifold. When $G$ carries a (real) algebraic structure, then $\CharVar{\pi}$ includes into the  smooth points of a certain \emph{$G$-character variety}, which is a real semi-algebraic set; see \cite{RichardsonSlodowy}.
In this paper, we are only interested in the smooth structure of $\CharVar{\pi}$.\\

The (Zariski) tangent space to an equivalence class of representations $[\rho]\in \CharVar{\pi}$ is represented by the cohomology groups 
\[
	H^1(\pi,\gfr_{\Ad\rho}) = \frac{Z^1(\pi,\gfr_{\Ad\rho})}{B^1(\pi,\gfr_{\Ad\rho})}.
\]
Here, $Z^1(\pi,\gfr_{\Ad\rho})$ is the set of \emph{cocycles}, i.e., functions $u\colon\pi\to\gfr$ satisfying the cocycle condition 
\[
u(\alpha\beta) = u(\alpha)+\Ad_{\rho(\alpha)}(u(\beta))
\]
for every $\alpha,\beta\in\pi$. 
The set $B^1(\pi,\gfr_{\Ad\rho})$ denotes the set of \emph{coboundaries}, i.e., cocycles $u\colon\pi\to\gfr$ for which there exists $u_0\in\gfr$ such that
\[
u(\alpha) = \Ad_{\rho(\alpha)}(u_0) - u_0
\]
for every $\alpha\in\pi$.
\subsection{The Atiyah-Bott-Goldman symplectic form}\label{sec : symplectic structure on character varieties}
We now recall the construction of the Atiyah-Bott-Goldman symplectic form \cite{AtiyahBott,SymplecticNature_Goldman} and follow the construction from \cite[Section 1.4]{SymplecticNature_Goldman}. At an equivalence class of representations $[\rho]\in \CharVar{\pi}$, Goldman defined the pairing $\omega_{[\rho]}\colon \HSurfaceCohom{1}\times\HSurfaceCohom{1}\to\R$ given by the following diagram

\[
\begin{tikzcd}
  \HSurfaceCohom{1}\times\HSurfaceCohom{1} \arrow[r, "\omega_{[\rho]}"] \arrow[d, "\smile" left]
    & \R \\
  H^2(S,\gfr_{\Ad\rho}\otimes\gfr_{\Ad\rho}) \arrow[r, "\B_*" below] &\arrow[u, "\frown {[S]}" right]
 H^2(S,\R) \end{tikzcd}
\]

where $\gfr_{\Ad\rho}$ is the $\pi$ module $\gfr$ through the action of $\pi$ by the adjoint representation, $\smile$ and $\frown$ are the usual cup and cap products in group cohomology (see \cite[Chapter V]{Brown_CohomologyOfGroups} for these definitions). The pairing is nondegenerate, skew-symmetric and bilinear. Goldman showed that this is a symplectic form on the smooth locus, that is, the differential $2$-form $\omega$ is closed.\\

The symplectic form gives a way to assign a vector field to $C^1$ functions as follows. For any smooth function $f\colon \CharVar\pi\to\R$, there is a unique vector field $\Hm f$ defined by 
\[
    \D f(\cdot)=\omega(\cdot,\Hm f)
\]
called the \textit{Hamiltonian vector field of $f$}, whose flow is called the \textit{Hamiltonian flow of }$f$. \\

Moreover, the form $\omega$ gives rise to a Poisson bracket $\{\cdot,\cdot\}$ on the space $C^\infty(\CharVar{\pi})$ of smooth functions given by \[
    \{f,f'\}=\D f(\Hm f') = \omega (\Hm f',\Hm f) = -\D f'(\Hm f)
\]
for any $f,f'\in C^\infty(\CharVar{\pi})$.

\subsection{(Co)homology with local coefficients}\label{sec : cohomology with local coefficients}
An alternative description of the local structure of the character variety is through cohomology with local coefficients on a flat bundle over the surface. This cohomology is isomorphic to the twisted group cohomology $H^*(\pi,\gfr_{\Ad\rho})$. The cohomology with local coefficients description will be useful when computing the Hamiltonian vector field of certain functions.\\

We recall the background following Section $2$ of \cite{InvFct_Goldman}. Let $V$ be a vector space, and let $\phi\colon\pi\to\GL(V)$ be a representation. Out of this data, we may build a flat vector bundle $\xi_\phi\coloneqq \tilde S\times_\phi V$, which is defined as the quotient of $\tilde S\times V$ by the action $\gamma\cdot (q,v) = (\gamma\cdot q,\phi(\gamma)(v))$ for $\gamma\in\pi$. This is a flat vector bundle over $S$ which comes equipped with a flat connection, namely the one induced by the trivial connection on $\tilde S\times V$. Let $C_*(S,\xi_\phi)$ be the complex of smooth singular chains on $S$ with values in $\xi_\phi$. A basis for the complex is given by elements of the form $\sigma\otimes s$, where $\sigma\colon \Delta^k\to S$ is a smooth map from the standard $k$-simplex $\Delta^k$, and $s$ is a flat section of the bundle $\sigma^*\xi_\phi$ over $\Delta^k$. Since simplices are simply connected, a flat section is given by an element in the fiber over a point (and then extending the section using parallel transport). Thus from now on, we will adopt the following notation.
\begin{notation}\label{notation : flat sections}
  We will write chains as $\sigma\otimes X_{\tilde r}$, where $X\in V$ and $\tilde{r}\in\tilde S$ is a lift of $r = \sigma(0)$, the first vertex of the simplex $\sigma$. We will often abuse notation and write $\sigma\otimes X$ for the flat section $\sigma\otimes X_{\tilde r}$ when there is no ambiguity. Moreover, we will often write $X(q)$ to refer to the section $\sigma\otimes X_{\tilde r}$ at the point $q$. 
\end{notation}
The chains come equipped with boundary maps defined as follows. For $\sigma\otimes s\in C_{k}(S,\xi_\phi)$,
\[
	\partial(\sigma\otimes s) = \sum_{i = 0}^k(-1)^{i}\partial_i\sigma\otimes s_i,
\]

where $\partial_i\sigma$ is the $i-$th face of $\sigma$, and $s_i$ is the restriction of $s$ to the $i-$th face.\\

The boundary map turns $C_*(S,\xi_\phi)$ into a chain complex. The cycles are denoted by $Z_1(S,\xi_\phi)$, the boundaries by $B_1(S,\xi_\phi)$, and the homology is  $H_k(S,\xi_\phi) = Z_k(S,\xi_\phi)/B_k(S,\xi_\phi)$.
Note that $H_k(S,\xi_\phi)$ is isomorphic to $H_k(\pi,V_\phi)$, where $V_\phi$ denotes the $\pi$ module $V$ with the action of $\pi$ induced by the representation $\phi$.\\

The cohomology of $S$ with coefficients in the bundle $\xi_\phi$ is defined similarly. A $\xi_\phi-$valued cochain on $S$ is a function which assigns to a $k$-simplex $\sigma$ a flat section of $\sigma^*\xi_\phi$ over $\Delta^k$. The space of cochains is denoted by $C^k(S,\xi_\phi)$. The complex $C^*(S,\xi_\phi)$ comes with coboundary maps turning the complex into a cochain complex. The cocycles are denoted by $Z^1(S,\xi_\phi)$, the coboundaries by $B^1(S,\xi_\phi)$, and the cohomology is $H^k(S,\xi_\phi) = Z^k(S,\xi_\phi)/B^k(S,\xi_\phi)$, which is isomorphic to $H^k(\pi,V_\phi)$.\\

These (co)homologies come equipped with cap and cup products. Let $\xi_1,\xi_2$ and $\xi_3$ be flat vector bundles over $S$ and $B\colon\xi_1\times\xi_2\to\xi_3$ be a bilinear pairing of flat bundles, that is, for each point $p\in S$, $B_{p}\colon (\xi_1)_p\times(\xi_2)_p\to(\xi_3)_p$ is a bilinear pairing between the fibers which varies continuously as $p$ varies over $S$. The pairing $B$ induces cup and cap products:
\begin{align*}
    B_*(\smile)\colon C^k(S,\xi_1)\times C^l(S,\xi_2)&\to C^{k+l}(S,\xi_3)\\
    B_*(\frown)\colon C^k(S,\xi_1)\times C_l(S,\xi_2)&\to C_{l-k}(S,\xi_3),
\end{align*}
which descend to maps on (co)homology.\\

The cap product induces the Poincar\'e duality isomorphism $H^k(S,\xi)\cong H_{2-k}(S,\xi)$ for any flat vector bundle $\xi$ as follows. Consider the trivial bundle $\xi_0\coloneqq S\times \R$, and let $B_0\colon\xi_0\times\xi\to \xi$ be given by fiberwise multiplication, i.e. $B_0(x_p,Y_p) = x_p\cdot Y_p$ for any $p\in S$, and where $x_p\in(\xi_0)_p=\R$ and $Y_p\in\xi_p$, which is isomorophic to a vector space. Let $[S]\in H_2(S,\xi_0)$ be a fundamental class (given by the orientation of $S$) for the homology. Then the cap product
\begin{align*}
\frown [S]\colon H^k(S,\xi)&\to H_{2-k}(S,\xi)\\
[C]&\mapsto (B_0)_*([C]\frown[S])
\end{align*}
is an isomorphism which is known as  \emph{Poincar\'e duality} (see \cite[Section 2]{InvFct_Goldman} or \cite{Cohen}).\\

Now consider a bilinear pairing $B\colon \xi_1\times \xi_2\to\xi_3$. Poincar\'e duality induces an \textit{intersection pairing} $\bullet_B\colon H_1(S,\xi_1)\times H_{1}(S,\xi_2)\to H_0(S,\xi_3)$ as follows. For $[A]\in H_1(S,\xi_1)$ and $[C]\in H_1(S,\xi_2)$, the intersection pairing is defined by
\[
[A]\bullet_B [C] = \left(B_*((\frown [S])^{-1}([A])\smile (\frown [S])^{-1}([C]))\right)\frown[S].
\]
We now describe how to compute the intersection pairing. For $[A]$ and $[C]$ as above, choose representatives $A = \sum_i\sigma_i\otimes a_i\in Z_1(S,\xi_1)$ and $C = \sum_j \tau_j\otimes c_j\in Z_{1}(S,\xi_2)$. Moreover, assume that the simplices $\sigma_i$ and $\tau_j$ \textit{intersect transversely at double points}, that is, the maps $\sigma_i,\tau_j\colon \textnormal{Int}([v_0,\dots,v_k])\to S$ are transverse, and for each $q\in S$, $\sigma_i^{-1}(q)\cup \tau_j^{-1}(q)$ has at most two elements; such representatives always exist. Denote by $\sigma_i\#\tau_j$ the set of transverse intersections at double points. Then as in Section $2$ of \cite{InvFct_Goldman} and Section 4 of \cite{JohnsonMillson}, the intersection pairing is computed as
\begin{equation}\label{eq : formula for intersection pairing}
[A]\bullet_B [C] = \sum_{i,j}\sum_{q\in\sigma_i\#\tau_j}\eps(q;\sigma_i,\tau_j)B(a_i(q),c_j(q))\in H_0(S,\xi_3),
\end{equation}
where $\eps(q;\sigma_i,\tau_j)\in\{\pm 1\}$ is the algebraic intersection number at $q$ between $\sigma_i$ and $\tau_j$.\\

The intersection pairing can be computed explicitly by going to the universal cover. Assume that $\xi_l = \widetilde S\times_{\phi_l}V_l$ for representations $\phi_l\colon\pi\to\GL(V_l)$ and for $l = 1,2,3$. Consider a lift of the pairing $B$ to a pairing $\widetilde B\colon V_1\times V_2\to V_3$, meaning that it is in particular invariant under the action by $\pi$, that is
\[
\widetilde B(\phi_1(\gamma)(v_1),\phi_2(\gamma)(v_2)) = \widetilde B(v_1,v_2)
\]
for every $v_1,\in V_1$, $v_2\in V_2$ and $\gamma\in\pi$. Now lift the sections $a_i$ and $c_j$ to sections $\tilde a_i,\tilde c_j$ in the trivial bundles $\widetilde S\times V_1$ and $\widetilde S\times V_2$ respectively. Then for any lift $\tilde q$ of a point $q\in\sigma_i\#\tau_j$, we have that
\begin{equation}\label{eq : intersection in universal cover}
B(a_i(q),c_j(q)) = \widetilde B(\tilde a_i(\tilde q),\tilde c_j(q)).
\end{equation}
Since $\widetilde B$ is invariant under the action by $\pi$, the above  quantity is invariant under the choice of lift of $q$.\\

The vector bundles we will be working with are the bundles $\xi_\rho \coloneqq \tilde S\times_{\Ad\rho}\gfr$ and $\xi_\rho^*=\tilde S\times_{\Ad^*\rho}\gfr^*$, where $\rho\colon\pi\to G$ is a representation, and $\Ad_\rho$ and $\Ad^*_\rho$ are the corresponding adjoint representations on $\gfr$ and $\gfr^*$ respectively.\\
 
  We will also consider two possible pairings. The first is the pairing $\B\colon\xi_\rho\times\xi_\rho\to \xi_0=S\times \R$ induced by the orthogonal structure $\B$. This gives another way to define the symplectic structure. Namely, for $[u],[v]\in H^1(S,\xi_\rho)$, we have that 
  \begin{equation}\label{eq : alternative definition of symplectic form}
  \omega_{[\rho]}([u],[v]) =([u] \frown [S])\bullet_\B([v]\frown [S]).
  \end{equation}
  
  The second pairing we will use, is the pairing $\langle\cdot,\cdot\rangle\colon\xi_\rho\times\xi_\rho^*\to \xi_0=S\times\R$ coming from pairing the vector bundle with its dual. This will be used in Section \ref{sec : subsurface deformations} through the following lemma.

\begin{lemma}\label{lem : Poincare duality}
	A cocycle $u\in Z^k(S,\xi_\rho)$ is Poincar\'e dual to a cycle $A\in Z_{2-k}(S,\xi_\rho)$, i.e. $[u]\frown [S] = [A]$, if and only if for every cycle $C\in Z_{2-k}(S,\xi_\rho^*)$
\[
\inp{[u]\frown [C]}_* = [A]\bullet_{\langle\cdot,\cdot\rangle} [C]\in H_0(S,\R)\cong \R.
\]
\end{lemma}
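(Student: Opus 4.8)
The plan is to derive the lemma from a single \emph{universal identity} relating the cap product against $[C]$, the fundamental class $[S]$, and the intersection pairing; with that identity in hand the two implications become formal. Concretely, I would first prove that for every class $[u]\in H^k(S,\xi_\rho)$ and every class $[C]\in H_{2-k}(S,\xi_\rho^*)$ one has
\[
\inp{[u]\frown[C]}_*=\bigl([u]\frown[S]\bigr)\bullet_{\langle\cdot,\cdot\rangle}[C]
\]
in $H_{2-2k}(S,\R)$, which in the relevant case $2-k=1$ is $H_0(S,\R)\cong\R$. Both sides of the equation in the lemma depend only on the (co)homology classes of $u$, $A$, $C$, since cup, cap and the intersection pairing all descend to (co)homology, so it suffices to work with classes. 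Granting the universal identity, the forward implication is immediate on substituting $[u]\frown[S]=[A]$. For the converse, the hypothesis says the two linear functionals $[A]\bullet_{\langle\cdot,\cdot\rangle}(\,\cdot\,)$ and $\bigl([u]\frown[S]\bigr)\bullet_{\langle\cdot,\cdot\rangle}(\,\cdot\,)$ agree on all of $H_{2-k}(S,\xi_\rho^*)$, and non-degeneracy of $\bullet_{\langle\cdot,\cdot\rangle}$ then forces $[A]=[u]\frown[S]$.

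To establish the universal identity I would set $[c]\coloneqq(\frown[S])^{-1}([C])\in H^k(S,\xi_\rho^*)$, which exists and is unique since $\frown[S]$ is the Poincar\'e duality isomorphism. As $\frown[S]$ is in particular injective, $(\frown[S])^{-1}([u]\frown[S])=[u]$, so unwinding the definition of the intersection pairing with $B=\langle\cdot,\cdot\rangle$ gives
\[
\bigl([u]\frown[S]\bigr)\bullet_{\langle\cdot,\cdot\rangle}[C]=\Bigl(\langle\cdot,\cdot\rangle_*\bigl([u]\smile[c]\bigr)\Bigr)\frown[S].
\]
Then I would invoke the standard associativity relation $(\alpha\smile\beta)\frown\sigma=\alpha\frown(\beta\frown\sigma)$ for cup and cap products in the bundle-valued setting: the coefficient operations here assemble into the triple pairing $\xi_\rho\otimes\xi_\rho^*\otimes\xi_0\to\xi_0$ obtained by contracting the first two factors via $\langle\cdot,\cdot\rangle$ and multiplying by the scalar factor $\xi_0=S\times\R$, and this triple pairing is associative, so the relation applies as stated (this is the same bookkeeping underlying the definition of $\bullet_B$ in \cite[Section~2]{InvFct_Goldman}, \cite[Section~4]{JohnsonMillson}, \cite{Cohen}). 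Using $[c]\frown[S]=[C]$ this yields
\[
\Bigl(\langle\cdot,\cdot\rangle_*\bigl([u]\smile[c]\bigr)\Bigr)\frown[S]=\langle\cdot,\cdot\rangle_*\bigl([u]\frown([c]\frown[S])\bigr)=\inp{[u]\frown[C]}_*,
\]
completing the proof of the universal identity.

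For the non-degeneracy of $\bullet_{\langle\cdot,\cdot\rangle}$ used in the converse (needed only in degree $2-k=1$, where the target is $\R$), I would note that by the computation just made this pairing is transported by the Poincar\'e duality isomorphisms to the cup-product pairing $H^k(S,\xi_\rho)\times H^k(S,\xi_\rho^*)\longrightarrow H^{2k}(S,\R)$ postcomposed with $\frown[S]\colon H^{2k}(S,\R)\to H_{2-2k}(S,\R)$. Since $S$ is a closed oriented surface, $\xi_\rho^*$ is the flat bundle dual to $\xi_\rho$ under $\langle\cdot,\cdot\rangle$, and the coefficients are $\R$-vector spaces, this is a perfect pairing by Poincar\'e duality with local coefficients; in particular it is non-degenerate, which is all that is needed.

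The step I expect to be the main obstacle is the associativity invocation: one must check that cup/cap associativity survives when the non-trivial coefficient pairing $\langle\cdot,\cdot\rangle\colon\xi_\rho\times\xi_\rho^*\to\xi_0$ and the scalar action of $\xi_0$ are placed in the correct slots, rather than a trivialized pairing. This is routine once one records that the relevant triple pairing of flat bundles is associative and that the Alexander--Whitney cup and cap products are compatible with bilinear coefficient pairings, but it is the only point requiring genuine care. A more hands-on alternative, avoiding abstract cup/cap manipulations entirely, would be to pick smooth singular representatives of $A$ and $C$ meeting transversely in double points and compare both sides directly through the explicit sum in \eqref{eq : formula for intersection pairing}.
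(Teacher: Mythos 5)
Your proof is correct. Note that the paper itself states this lemma without proof, treating it as standard Poincar\'e-duality bookkeeping (with pointers to \cite[Section 2]{InvFct_Goldman} and \cite{Cohen}), and your argument is exactly the one implicit in the paper's setup: unwind the definition of $\bullet_{\langle\cdot,\cdot\rangle}$ via $(\frown[S])^{-1}$, apply cup/cap associativity for the compatible triple pairing $\xi_\rho\otimes\xi_\rho^*\otimes\xi_0\to\xi_0$ to get the universal identity $\inp{[u]\frown[C]}_*=\bigl([u]\frown[S]\bigr)\bullet_{\langle\cdot,\cdot\rangle}[C]$, and use non-degeneracy of the duality pairing (perfectness of $H^1(S,\xi_\rho)\times H^1(S,\xi_\rho^*)\to\R$, which is Poincar\'e duality with local coefficients over a field) for the converse, which is the direction the paper actually uses in Propositions \ref{prop: explicit cycle for Poincare dual of Hamiltonian vector field} and \ref{prop : Hamiltonian vector field for general subsurface}. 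The only convention-sensitive point is the one you flag, the placement of the coefficient pairings (and possible signs) in the associativity relation, and this is routine and does not affect the argument.
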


\section{Invariant functions}\label{sec : invariant functions}
In this section, we define invariant functions and describe how they induce functions on the character variety.

\subsection{Invariant functions on \texorpdfstring{$G^k$}{Gk}}
We begin with a definition.
\begin{definition}
Let $k\in\N$. A $C^1$ function $f\colon G^k\to \R$ which is invariant under the diagonal action of $G$ by inner automorphisms on $G^k$, is said to be an \textit{invariant function}. 
\end{definition}
We will adopt the following notation. An element in $G^k$ will be denoted by $\ub g = (g_1,\dots,g_k)$, and for $h\in G$, $h\ub g = (hg_1,\dots,hg_k)$. Similarly, for an element $(X_1,\dots,X_k)\in\gfrk$, we have the diagonal adjoint action $\Ad_h(X_1,\dots,X_k) = (\Ad_h(X_1),\dots,\Ad_h(X_k))$.\\

We now turn to the variation function of an invariant function. Let $f\colon G^k\to\R$ be an invariant function. The differential $\D f$ is a 1-form on $G^k$, which is given by
\begin{align*}
    \D_{\ub{g}}f\colon \T_{g_1}G\oplus\cdots\oplus\T_{g_k}G&\to \R\\
    (X_{1},\dots,X_{k})&\mapsto \sum_{j = 1}^k\brac{\frac{\partial f}{\partial g_j}}_{\ub{g}}(X_{j}),
\end{align*}
where the partial derivative $(\partial f/\partial g_j)_{\ub{g}}$ defines an element of  $\T^*_{g_j}G$.\\

Now identify $\gfr$ with the right-invariant vector fields on $G$ and $\gfr^*$ with the right-invariant 1-forms on $G$. There are similar identifications for $\gfrk$ and $(\gfrk)^*$. With these, we uniquely extend the covector $(\partial f/\partial g_j)_{\ub{g}}$ to a right-invariant 1-form on $G$, 
which we denote by $(\partial_j f)_{\ub{g}}$. Similarly, extend the vectors $X_{j}\in \T_{g_j}G$ to right-invariant vector fields $X_j\in\gfr$. The differential can therefore be rewritten as
\begin{align}
    \D_{\ub{g}}f\colon \gfrk&\to\R\nonumber\\
    (X_1,\dots, X_k)&\mapsto\sum_{j = 1}^k (\partial_j f)_{\ub{g}}(X_j).\label{eq:differential of multi-function is sum of partial differentials}
\end{align}
Explicitly, the 1-forms applied to elements in the Lie algebra are given by
\begin{equation}\label{eq:explicit formula for left invariant 1 form applied to an element in the Lie algebra}
    (\partial_j f)_{\ub{g}}(X_j)=\frac{d}{dt}\bigg|_{t=0}f(g_1,\dots,g_{j-1},\exp(tX_{j})g_j,g_{j+1},\dots,g_k).
\end{equation}

Thus, for $\ub{g}\in G^k$, the derivative $\D_{\ub{g}}f$ is an element in the dual $(\gfrk)^*$. We define
\begin{align*}
    \widehat{F}\colon G^k&\to (\gfrk)^*\\
    \ub{g}&\mapsto \D_{\ub{g}}f.
\end{align*}
By invariance of $f$, we get that the map $\widehat{F}$ is equivariant with respect to the diagonal action. That is, for $h\in G$ and $\ub{g}\in G^k$, we have that 
\[
	\widehat{F}(h\ub{g}h^{-1})= \Ad^*_h( \widehat{F}(\ub{g})),
\]
where $\Ad^*_h$ is the adjoint action on the dual of $\gfr$.\\

The orthogonal structure $\B$ on $G$ induces a pairing $\B_k=\B\oplus\cdots\oplus\B\colon\gfr^{\oplus k}\to\R$, which in turn induces an isomorhpism $\widetilde{\B}_k\colon \gfrk \to(\gfrk)^*$. 

\begin{definition}
The \textit{variation function of }$f$ (relative to $\B$) is the function $F\colon G^k\to \gfrk$ defined by the composition $\widetilde{\B}_k^{-1}\circ \widehat{F}$.
\end{definition}

By the invariance of $\B_k$ under the adjoint action and the equivariance of $\widehat{F}$, the function $F$ itself is equivariant under the diagonal action. That is
\[
	F(h\ub{g}h^{-1})=\Ad_h(F(\ub{g})),
\]
for $\ub{g}\in G^k$ and $h\in G$. Using the orthogonal structure, it is possible to give an implicit definition of the variation function. To do this, denote by $F_1,\dots,F_k\colon G^k\to \gfr$ the component functions of $F$, that is $F(\ub{g})=(F_1(\ub{g}),\dots, F_k(\ub{g}))$. From the above equivariance of the variation function, we see that for each $i = 1,\dots,k$,
\[
    F_i(h\ub{g}h^{-1}) = \Ad_h(F_i(\ub{g}))
\]
for every $h\in G$ and $\ub{g}\in G^k$.

\begin{lemma}\label{lemma:variation function of multi-function  is given implicitly wrt to orthogonal structure}
For $\ub{g}\in G^k$, the variation function is implicitly defined by
\[
	\B(F_j(\ub{g}),X)=\frac{d}{dt}\bigg|_{t=0}f(g_1,\dots,g_{j-1},\exp(tX)g_j,g_{j+1},\dots,g_k),
\]
for any $X\in\gfr$, and for $j=1,\dots,k$.
\end{lemma}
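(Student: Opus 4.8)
The plan is to unwind the chain of definitions that produces the variation function $F$ and to show that the implicit formula in the statement is just a repackaging of Equation \eqref{eq:explicit formula for left invariant 1 form applied to an element in the Lie algebra} via the isomorphism $\widetilde{\B}_k$. Concretely, fix $\ub g \in G^k$, fix an index $j \in \{1,\dots,k\}$, and fix $X \in \gfr$. I want to compute $\B(F_j(\ub g), X)$ and recognize it as the stated $t$-derivative.

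First I would observe that, by definition of $F$ as $\widetilde{\B}_k^{-1} \circ \widehat F$, we have $\widetilde{\B}_k(F(\ub g)) = \widehat F(\ub g) = \D_{\ub g} f \in (\gfrk)^*$. Now feed the element $(0,\dots,0,X,0,\dots,0) \in \gfrk$ (with $X$ in the $j$-th slot, which I'll call $X e_j$) into both sides. On the left, since $\widetilde{\B}_k$ is induced by the block-diagonal form $\B_k = \B \oplus \cdots \oplus \B$, pairing $\widetilde{\B}_k(F(\ub g))$ with $X e_j$ picks out exactly the $j$-th block, giving $\B(F_j(\ub g), X)$. On the right, $\D_{\ub g} f (X e_j)$ is, by Equation \eqref{eq:differential of multi-function is sum of partial differentials}, the single surviving term $(\partial_j f)_{\ub g}(X)$, and then Equation \eqref{eq:explicit formula for left invariant 1 form applied to an element in the Lie algebra} identifies this with $\frac{d}{dt}\big|_{t=0} f(g_1,\dots,\exp(tX)g_j,\dots,g_k)$. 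Chaining these equalities gives the claim. I would also remark that $F_j$ is the unique element of $\gfr$ satisfying this relation for all $X$, precisely because $\B$ is non-degenerate — so "implicitly defined" is justified.

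There is essentially no obstacle here; the only point requiring a word of care is the bookkeeping around the identification of $\gfr$ (resp.\ $\gfrk$) with right-invariant vector fields and the corresponding extension of the partial covector $(\partial f / \partial g_j)_{\ub g} \in \T^*_{g_j} G$ to the right-invariant $1$-form $(\partial_j f)_{\ub g}$, so that evaluating on $X \in \gfr$ makes sense. But this identification has already been set up in the paragraph preceding Equation \eqref{eq:differential of multi-function is sum of partial differentials}, and Equation \eqref{eq:explicit formula for left invariant 1 form applied to an element in the Lie algebra} already records the resulting explicit formula, so the lemma is genuinely just the statement that running $\widehat F$ through $\widetilde{\B}_k^{-1}$ transports that formula from $(\gfrk)^*$ to $\gfrk$. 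Thus the proof is short: decompose $\D_{\ub g} f$ over the $k$ summands, test against $X$ in the $j$-th summand, and invoke non-degeneracy of $\B$.
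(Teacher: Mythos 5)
Your argument is correct and is essentially the paper's own proof: both test $\widetilde{\B}_k(F(\ub g)) = \widehat F(\ub g)$ against the element of $\gfrk$ with $X$ in the $j$-th slot, use bilinearity of $\B_k$ to isolate $\B(F_j(\ub g),X)$, and invoke Equations \eqref{eq:differential of multi-function is sum of partial differentials} and \eqref{eq:explicit formula for left invariant 1 form applied to an element in the Lie algebra} for the right-hand side. Your closing remark about non-degeneracy of $\B$ justifying the word ``implicitly'' is a small, correct addition not spelled out in the paper.
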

\begin{proof}
By definition of $F$, we have that for every $\ub{g}\in G^k$ and $(X_1,\dots, X_k)\in \gfrk$,
\[
    \B_k(F(\ub{g}),(X_1,\dots X_k))=(\widehat{F}(\ub{g}))(X_1,\dots, X_k).
\]
Then let $X\in\gfr$, and set $X_j=(0,\dots,0, X, 0,\dots,0)\in\gfrk$, that is, $0$ in all positions except the $j-$th. We see that
\[
    \B_k(F(\ub{g}),X_j)=\B(F_j(\ub{g}),X),
\]
where we used the bilinearity of $\B$. Then from \eqref{eq:differential of multi-function is sum of partial differentials} and \eqref{eq:explicit formula for left invariant 1 form applied to an element in the Lie algebra}, it follows that
\begin{align*}
    (\widehat{F}(\ub{g}))(X_j) &= (\partial_jf)_{\ub{g}}(X)\\
    &=\frac{d}{dt}\bigg|_{t=0}f(g_1,\dots,g_{j-1},\exp(tX)g_j,g_{j+1},\dots,g_k).
\end{align*}
\end{proof}
\subsection{Invariant functions through word maps}\label{sec : invariant functions through word maps}
A function $f\colon G\to\R$ gives rise to functions $G^k\to\R$ in the following way. Let $w$ be a word in $k$ letters, and consider the induced word map $w\colon G^k\to G$ given by substitution. We then obtain another invariant function
\begin{align*}
    f^w\colon G^{k}&\to\R\\
    (g_1,\dots,g_{k})&\mapsto f(w(g_1,\dots,g_{k})).
\end{align*}
A special case of this is when the word $w$ induces the word map which multiplies the elements or their inverses in the order they appear. That is, words whose word map is of the form

\begin{align*}
    w\colon G^k&\to G\\
    (g_1,\dots,g_k)&\mapsto g_1^{\eps_1}g_2^{\eps_2}\cdots g_k^{\eps_k},
\end{align*}
for some $\eps_i\in\{\pm 1\}$. For such a word, we have that for every $\ub g=(g_1,\dots,g_k)\in G^k$, $f^w(\ub g) = f(g_1^{\eps_1}\cdots g_k^{\eps_k})$. In the case when $\eps_1 = \cdots = \eps_k=1$, we say that $f^w$ is a \textit{reducible function}. In the following lemma, we describe the variation function of functions satisfying the above condition.

\begin{lemma}\label{lemma: variation function of reducible function}
    Assume that for an invariant function $f\colon G^k\to \R$, there exists another invariant function $f'\colon G\to\R$ such that for all $(g_1,\dots,g_k)\in G^k$, $f(g_1,\dots,g_k) = f'(g_1^{\eps_1}\cdots g_k^{\eps_k})$ for $\eps_i\in\{\pm 1\}$. Letting $F\colon G^k\to \gfrk$ and $F'\colon G\to \gfr$ be the corresponding variation functions, we have that
    \[
    F_i(g_1,\dots,g_k)=\begin{cases}
        F'(g_ig_{i+1}^{\eps_{i+1}}\cdots g_k^{\eps_k}g_1^{\eps_1}\cdots g_{i-1}^{\eps_{i-1}}),\quad&\textnormal{if }\eps_i = 1\\
        -F'(g_{i+1}^{\eps_{i+1}}\cdots g_k^{\eps_k}g_1^{\eps_1}\cdots g_i^{-1}),\quad&\textnormal{if }\eps_i = -1.
    \end{cases}
    \]
    In particular, if $\eps_1=\cdots=\eps_k=1$,
    \[
    F_i(g_1,\dots,g_k) = \Ad_{(g_1\cdots g_{i-1})^{-1}}F_1(g_1,\dots,g_k).
    \]
\end{lemma}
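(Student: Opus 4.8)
The plan is to compute the variation functions directly from the implicit characterization in Lemma~\ref{lemma:variation function of multi-function  is given implicitly wrt to orthogonal structure}, using the chain rule on the word map together with the $\Ad$-invariance of $\B$. Fix $\ub g=(g_1,\dots,g_k)$ and $X\in\gfr$, and write $w_i=g_i^{\eps_i}$ so that $f(\ub g)=f'(w_1\cdots w_k)$. By Lemma~\ref{lemma:variation function of multi-function  is given implicitly wrt to orthogonal structure} applied to $f$, we have $\B(F_i(\ub g),X)=\frac{d}{dt}\big|_{t=0}f(g_1,\dots,\exp(tX)g_i,\dots,g_k)$, so the main task is to differentiate $t\mapsto f'\big(w_1\cdots w_{i-1}\cdot(\exp(tX)g_i)^{\eps_i}\cdot w_{i+1}\cdots w_k\big)$ at $t=0$ and recognize the answer as $\B(F'(\cdot),X)$ for the appropriate argument.

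First I would treat the case $\eps_i=1$. Then the perturbed argument is $P\exp(tX)g_i Q$ where $P=w_1\cdots w_{i-1}$ and $Q=w_{i+1}\cdots w_k$, and since $\exp(tX)g_iQ = \exp(tX)(g_iQ)$, the derivative is $\frac{d}{dt}\big|_{t=0}f'\big(P\exp(tX)(g_iQ)\big)$. Conjugating inside $f'$ by $P^{-1}$ (legal by invariance of $f'$) turns this into $\frac{d}{dt}\big|_{t=0}f'\big(\exp(tP^{-1}XP)\cdot(g_iQP)\big) = \B\big(F'(g_iQP),\Ad_{P^{-1}}X\big)$ by the implicit definition of $F'$. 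Now $\Ad$-invariance of $\B$ gives $\B\big(F'(g_iQP),\Ad_{P^{-1}}X\big)=\B\big(\Ad_P F'(g_iQP),X\big)$, but actually we want it in the first form: comparing with $\B(F_i(\ub g),X)$ for all $X$ yields, after one more conjugation step, $F_i(\ub g)=F'(g_iQP)=F'(g_i g_{i+1}^{\eps_{i+1}}\cdots g_k^{\eps_k}g_1^{\eps_1}\cdots g_{i-1}^{\eps_{i-1}})$. (One must be slightly careful about which conjugate is absorbed; the cleanest route is to conjugate the whole expression for $f'$ by $(g_iQP)$ versus $P$ and choose so the $\exp$ factor ends up on the left in the form $\exp(tX')(\text{fixed element})$ with $X'=X$, which is what the stated formula encodes.) For $\eps_i=-1$, the perturbed factor is $(\exp(tX)g_i)^{-1}=g_i^{-1}\exp(-tX)$, so the argument becomes $Pg_i^{-1}\exp(-tX)Q$; differentiating and using $\frac{d}{dt}\big|_{t=0}f'(R\exp(-tX)Q)=-\B\big(F'(QRg_i^{-1}\cdot g_i) \text{-type argument}\big)$ — more precisely after cyclically conjugating so $\exp(-tX)$ sits on the left, one gets $-\B(F'(\,\cdot\,),X)$ with argument $g_{i+1}^{\eps_{i+1}}\cdots g_k^{\eps_k}g_1^{\eps_1}\cdots g_{i-1}^{\eps_{i-1}}g_i^{-1}$, matching the stated sign and cyclic word.

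For the ``in particular'' clause, set all $\eps_i=1$. The general formula gives $F_i(\ub g)=F'(g_i g_{i+1}\cdots g_k g_1\cdots g_{i-1})$ and $F_1(\ub g)=F'(g_1 g_2\cdots g_k)$. These arguments are conjugate: $g_i\cdots g_k g_1\cdots g_{i-1} = (g_1\cdots g_{i-1})^{-1}(g_1\cdots g_k)(g_1\cdots g_{i-1})$. Applying the equivariance $F'(hgh^{-1})=\Ad_h F'(g)$ with $h=(g_1\cdots g_{i-1})^{-1}$ gives $F_i(\ub g)=\Ad_{(g_1\cdots g_{i-1})^{-1}}F'(g_1\cdots g_k)=\Ad_{(g_1\cdots g_{i-1})^{-1}}F_1(\ub g)$, as claimed.

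The main obstacle is purely bookkeeping: keeping track of \emph{which} cyclic conjugate of the word appears and on which side the $\exp(tX)$ factor ends up, since the implicit definition of $F'$ in Lemma~\ref{lemma:variation function of multi-function  is given implicitly wrt to orthogonal structure} places the perturbation on the \emph{left} (as $\exp(tX)g$). Every step uses only the chain rule, invariance of $f'$ under conjugation, and $\Ad$-invariance of $\B$; there is no analytic subtlety. I would organize the write-up by first proving the $\eps_i=1$ case carefully, then noting the $\eps_i=-1$ case follows by the same manipulation applied to $g_i^{-1}\exp(-tX)$ with an overall sign, and finally deriving the special case from $F'$-equivariance.
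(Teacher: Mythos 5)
Your strategy coincides with the paper's: differentiate the identity $f(\ub g)=f'(g_1^{\eps_1}\cdots g_k^{\eps_k})$ via the implicit characterization of the variation function, use conjugation invariance of $f'$ to rotate the word so that the exponential factor sits on the left, read off $F'$, and obtain the ``in particular'' clause from the equivariance of $F'$; your treatment of the $\eps_i=-1$ case and of the final clause is correct and matches the paper.

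However, the computation you actually carry out in the $\eps_i=1$ case contains a concrete conjugation error. With $P=w_1\cdots w_{i-1}$ and $Q=w_{i+1}\cdots w_k$, invariance of $f'$ gives $f'\bigl(P\exp(tX)g_iQ\bigr)=f'\bigl(P^{-1}\,P\exp(tX)g_iQ\,P\bigr)=f'\bigl(\exp(tX)\,g_iQP\bigr)$: the factor $\exp(tX)$ is \emph{not} conjugated, so differentiating gives $\B(F'(g_iQP),X)$ directly and the stated formula follows at once. Your expression $f'\bigl(\exp(tP^{-1}XP)\,g_iQP\bigr)$ is not equal to $f'\bigl(P\exp(tX)g_iQ\bigr)$: since $\exp(tP^{-1}XP)\,g_iQP=P^{-1}\exp(tX)P\,g_iQP$ is conjugate to $\exp(tX)\,Pg_iQ$, it corresponds to inserting the perturbation at the front of the whole word rather than in the $i$-th slot. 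Following your displayed chain literally yields $\B(F_i(\ub g),X)=\B(\Ad_P F'(g_iQP),X)$ for all $X$, i.e.\ $F_i(\ub g)=\Ad_P F'(g_iQP)=F'(Pg_iQ)$, which differs from the lemma's formula by $\Ad_P$; there is no ``one more conjugation step'' that reconciles this, one has to redo the rotation correctly. Your parenthetical remark does identify the right fix (arrange the rotation so the exponential lands on the left with the Lie algebra element unchanged), so the repair is one line, but as written the chain of equalities is internally inconsistent and should be replaced by the computation above, which is exactly the paper's proof.
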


\begin{proof}
    Using Lemma \ref{lemma:variation function of multi-function  is given implicitly wrt to orthogonal structure}, we get that
    \begin{align*}
        \B(F_i(g_1,\dots,g_k),X) &= \frac{d}{dt}\bigg|_{t = 0}f(g_1,\dots,\exp(tX)g_i,\dots,g_k)\\
        &=\frac{d}{dt}\bigg|_{t = 0} f'(g_1^{\eps_1}\cdots (\exp(tX)g_i)^{\eps_i}\cdots g_k^{\eps_k}).
    \end{align*}
    In the case when $\eps_i = 1$, using the conjugacy invariance of $f'$, we have that the above expression equals
    \begin{align*}
        \frac{d}{dt}\bigg|_{t = 0} f'(\exp(tX)g_{i}g_{i+1^{\eps_k}}\cdots g_k^{\eps_k} g_1^{\eps_1}\cdots g_{i-1}^{\eps_{i-1}})=\B(F'(g_{i}g_{i+1}^{\eps_{i+1}}\cdots g_k^{\eps_k} g_1^{\eps_1}\cdots g_{i-1}^{\eps_{i-1}}),X).
    \end{align*}
    Since this holds for all $X\in \gfr$, we obtain the first part of the lemma. In the case when $\eps_i = -1$, we have that 
    \begin{align*}
        \B(F_i(g_1,\dots,g_k),X) &= \frac{d}{dt}\bigg|_{t = 0}f'(g_i^{-1}\exp(-tX)g_{i+1}^{\eps_{i+1}}\cdots g_k^{\eps_k}g_1^{\eps_1}\cdots g_{i-1}^{\eps_{i-1}})\\
        &=\frac{d}{dt}\bigg|_{t = 0} f'(\exp(-tX)g_{i+1}^{\eps_{i+1}}\cdots g_k^{\eps_k}g_1^{\eps_1}\cdots g_i^{-1})\\
        &= \B(-F'(g_{i+1}^{\eps_{i+1}}\cdots g_k^{\eps_k}g_1^{\eps_1}\cdots g_i^{-1}),X)
    \end{align*}
    giving the second case. The last claim follows by using the equivariance of $F'$.
\end{proof}

\subsection{Induced functions on the character variety}
From an invariant function $f\colon G^k\to\R$, we obtain a function on the character variety similarly to the procedure in \cite[Section 3.1]{InvFct_Goldman}. 

\begin{definition}
	Let $\ub \alpha = (\alpha_1,\dots,\alpha_k)\in \pi^k$ be a $k$-tuple of deck transformations. The \textit{invariant function induced by }$\ub\alpha$ (or simply the \textit{induced invariant function} when it is clear from the context) is the function
	\begin{align*}
		f_{\ub\alpha}\colon \CharVar{\pi}&\to\R\\
		[\rho]&\mapsto f(\rho(\alpha_1),\dots,\rho(\alpha_k)),
	\end{align*}
	which is well defined due to the invariance of $f$.
\end{definition}

\begin{convention}
	We will always assume that the $k-$tuple $\ub\alpha$ consists of non-trivial curves.
\end{convention}

Alternatively, we may define the invariant function induced by a tuple by picking loops on the surface as follows. Let $p\in S$ be a base point, and $\ub \alpha = (\alpha_1,\dots,\alpha_k)\in \pi_1(S,p)^k$ a $k$-tuple of based loops. Fix a lift $\tilde p\in\tilde S$ of $p$, and let $\phi\colon\pi_1(S,p)\to\pi$ be the induced isomorphism. For an invariant function $f\colon G^k\to\R$, define
\begin{align*}
    f_{\ub \alpha}\colon \CharVar\pi&\to\R\\
    \rho&\mapsto f(\rho(\phi(\alpha_1)),\dots,\rho(\phi(\alpha_k))),
\end{align*}
which is once again well defined by the invariance of $f$.\\

\begin{remark}
	When picking based loops, the invariance of $f$ implies that $f_{\ub\alpha}$ is independent of the choice of isomorphism between $\pi_1(S,p)$ and $\pi$ (i.e. $f_{\ub\alpha}$ is also independent of the conjugacy class of based loops $\ub\alpha$). A further consequence of the invariance of $f$ is that we may pick a different base point $q\neq p$, and an intermediate isomorphism between $\pi_1(S,p)$ and $\pi_1(S,q)$ to induce a function on the character variety.
\end{remark}

\begin{remark}
In Goldman's work, $\alpha$ is allowed to be a curve in a fixed free homotopy class. In our setting, the curves in a tuple $(\alpha_1,\dots,\alpha_k)$ must be based at a common point, and we cannot take free homotopy classes of the curves. This is because an invariant function is only invariant under the diagonal action by conjugation of $G$ on $G^k$ and not necessarily by the action by conjugation of $G^k$ on $G^k$.
\end{remark}

\begin{remark}\label{rem : defining invariant functions on subsets of G}
    We may also consider functions $f\colon E^k\to \R$ for an open subset $E\subset G$ which is invariant under conjugation. In this case, such functions define a function on a subset of the character variety. Precisely, let $\ub\alpha\in\pi^k$ and define the subset
    \[
    \mathscr R^{E,\ub\alpha}(\pi,G)\coloneqq \{\rho\in\Reps{\pi}\st \rho(\alpha_i)\in E\,\,\forall i = 1,\dots,k\},
    \]
   and by $\mathscr X^{E,\ub\alpha}(\pi,G)$ the quotient. Then $f$ induces the function $f_{\ub\alpha}\colon\mathscr X^{E,\ub\alpha}(\pi,G)\to\R$ in the way described above.
\end{remark}

\section{Poisson bracket between induced invariant functions}\label{sec : Poisson bracket}
The symplectic structure on $\CharVar{\pi}$ induces a Poisson bracket on $C^\infty(\CharVar{\pi})$ given by 
\[
\{f,f'\}=\omega(\Hm f',\Hm f).
\]
The main goal of this section is to prove Theorem \ref{thm: product formula}, which computes Poisson pairing of two induced invariant functions as an intersection pairing in homology over a flat bundle. The main tool for the proof is Proposition \ref{prop: explicit cycle for Poincare dual of Hamiltonian vector field}, which identifies the Poincar\'e dual to the Hamiltonian vector field in terms of the variation function. 
The product formula allows us to deduce special cases of commuting functions based on the intersection data of the based curves inducing the functions. For this, we introduce the notion of a supporting subsurface in Section \ref{sec : commuting flows}, and apply it to examples in Section \ref{sec : examples}.\\

We recall Notation \ref{notation : flat sections} briefly, noting that flat sections over curves are written as $\alpha\otimes X$, for $X\in\gfr$, where we omit the reference to the base point in the universal cover. Moreover, $X(q)$ will refer to the section $\alpha\otimes X$ evaluated at the point $q$ in $S$.\\

\subsection{The product formula}
The main result of this section is the following product formula.

\begin{theorem}[The product formula]\label{thm: product formula}
   Let $f\colon G^k\to \R$ and $f'\colon G^n\to\R$ be invariant functions. Let $p\neq r\in S$ and choose lifts $\tilde p,\tilde r\in\tilde S$ respectively, inducing isomorphisms $\phi_p\colon\pi_1(S,p)\to\pi$ and $\phi_r\colon\pi_1(S,r)\to\pi$. Let $\ub{\alpha}=(\alpha_1,\dots,\alpha_k)\in\pi_1(S,p)^k$ and $\ub{\beta}=(\beta_1,\dots,\beta_n)\in\pi_1(S,r)^n$. The Poisson bracket $\{f_{\ub \alpha},f'_{\ub \beta}\}$ is the function $\CharVar\pi\to\R$ given by
   \begin{equation}\label{eq : Poisson bracket}
   [\rho]\mapsto \sum_{i,j}\sum_{q\in\beta_j\#\alpha_i}\eps(q;\beta_j,\alpha_i)\B(F'_j(\rho(\ub\beta))(q),F_i(\rho(\ub\alpha))(q)),
   \end{equation}
   where
   \begin{align*}
    F_i(\rho(\ub\alpha)) &= F_i(\rho(\phi_p(\alpha_1)),\dots,\rho(\phi_p(\alpha_k)))\\
    F'_j(\rho(\ub\beta)) &= F'_j(\rho(\phi_r(\beta_1)),\dots,\rho(\phi_r(\beta_n))).
   \end{align*}
\end{theorem}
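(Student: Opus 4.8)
The plan is to reduce the Poisson bracket to an intersection pairing in homology with local coefficients, exactly as Goldman does in the $k=n=1$ case, but keeping careful track of the two base points $p$ and $r$. Recall from Section~\ref{sec : symplectic structure on character varieties} that $\{f_{\ub\alpha},f'_{\ub\beta}\} = \omega(\Hm f'_{\ub\beta},\Hm f_{\ub\alpha})$, and from \eqref{eq : alternative definition of symplectic form} that $\omega_{[\rho]}$ is computed as an intersection pairing of the Poincar\'e duals of the relevant cohomology classes under the Killing pairing $\B\colon\xi_\rho\times\xi_\rho\to\xi_0$. So the first and central step is to identify the Poincar\'e dual of $\Hm f_{\ub\alpha}$ as an explicit $1$-cycle in $Z_1(S,\xi_\rho)$ built from the variation function $F$; this is the content of Proposition~\ref{prop: explicit cycle for Poincare dual of Hamiltonian vector field}, which I would invoke. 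The expectation is that this dual cycle is supported on (a cycle freely homotopic to) the curves $\alpha_i$, carrying the flat section determined by the Lie algebra element $F_i(\rho(\ub\alpha))$ parallel-transported from the base point $p$; concretely $\Hm f_{\ub\alpha}$ has Poincar\'e dual $\sum_i \alpha_i\otimes F_i(\rho(\ub\alpha))$, interpreted via Notation~\ref{notation : flat sections}, and similarly for $f'_{\ub\beta}$ with the curves $\beta_j$ and sections $F'_j(\rho(\ub\beta))$ based at $r$.

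Once both Poincar\'e duals are in hand, the second step is purely formal: plug the explicit cycle representatives into the intersection-pairing formula \eqref{eq : formula for intersection pairing}, after homotoping the $\alpha_i$ and $\beta_j$ so that all intersections are transverse double points (such representatives exist, as noted in Section~\ref{sec : cohomology with local coefficients}). This immediately yields
\[
\{f_{\ub\alpha},f'_{\ub\beta}\}([\rho]) = \sum_{i,j}\sum_{q\in\beta_j\#\alpha_i}\eps(q;\beta_j,\alpha_i)\,\B\bigl(F'_j(\rho(\ub\beta))(q),F_i(\rho(\ub\alpha))(q)\bigr),
\]
where the signs and the ordering of the two arguments of $\B$ are dictated by \eqref{eq : alternative definition of symplectic form} together with the skew-symmetry of $\omega$ (this is where one must be careful: writing $\{f,f'\}=\omega(\Hm f',\Hm f)$ rather than $\omega(\Hm f,\Hm f')$ fixes which class plays which role, hence why $F'_j$ appears first and $\alpha_i$ indexes the inner sum of $\beta_j\#\alpha_i$). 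The evaluation $F_i(\rho(\ub\alpha))(q)$ at an intersection point $q$ is the value of the flat section at $q$; by \eqref{eq : intersection in universal cover} one computes it by lifting to $\tilde S$, where $\B$-invariance of the Killing form under $\Ad_G$ guarantees the answer is independent of the choice of lift of $q$.

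The genuinely subtle point — and the one I expect to be the main obstacle — is the bookkeeping around the \emph{two distinct base points} $p$ and $r$. In Goldman's original statement both curves are based (or freely homotoped) to the same point, so the sections live in the same fiber; here $\alpha_i$ carries a section parallel-transported along a path from $p$, while $\beta_j$ carries a section parallel-transported from $r$, and at an intersection point $q$ the two sections of $\xi_\rho$ must be compared in the \emph{same} fiber $(\xi_\rho)_q$ before applying $\B$. The resolution is that the flat-bundle formalism handles this automatically: a flat section of $\xi_\rho$ over a simplex is determined by its value in any one fiber together with parallel transport, so $F_i(\rho(\ub\alpha))(q)$ and $F'_j(\rho(\ub\beta))(q)$ are both honest elements of $(\xi_\rho)_q$ regardless of which base point was used to trivialize them — but one must check that the cocycle/cycle representing $\Hm f_{\ub\alpha}$ really does transform correctly under change of base point and change of isomorphism $\phi_p$, which is exactly the equivariance $F_i(h\ub g h^{-1}) = \Ad_h F_i(\ub g)$ established after Lemma~\ref{lemma:variation function of multi-function  is given implicitly wrt to orthogonal structure}. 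A secondary technical nuisance is that the Poincar\'e dual cycle of $\Hm f_{\ub\alpha}$ need not literally be a disjoint union of the $\alpha_i$'s (the $\alpha_i$ may themselves intersect, and the cycle arising from Proposition~\ref{prop: explicit cycle for Poincare dual of Hamiltonian vector field} may involve connecting arcs near the base point); one must argue these extra pieces either cancel or can be pushed off to not contribute to the pairing with the $\beta_j$'s, or absorb them into the homotopy-invariance of the intersection pairing on homology. I would handle this by working at the level of homology classes throughout, invoking that the intersection pairing $\bullet_\B$ is well defined on $H_1\times H_1$ and only using a convenient geometric representative at the very last step.
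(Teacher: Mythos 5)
Your proposal is correct and follows essentially the same route as the paper: invoke Proposition \ref{prop: explicit cycle for Poincare dual of Hamiltonian vector field} to write the Poincar\'e duals of $\Hm f_{\ub\alpha}$ and $\Hm f'_{\ub\beta}$ as the explicit cycles $\sum_i\alpha_i\otimes F_i(\rho(\ub\alpha))$ and $\sum_j\beta_j\otimes F'_j(\rho(\ub\beta))$, then compute $\omega_{[\rho]}(\Hm f'_{\ub\beta},\Hm f_{\ub\alpha})$ via \eqref{eq : alternative definition of symplectic form} and the intersection-pairing formula \eqref{eq : formula for intersection pairing}, with the two base points handled by the flat-bundle formalism exactly as you describe.
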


The main step of the proof of this theorem, is to find explicitly the Poincar\'e dual of the Hamiltonian vector field at an equivalence class of representations. The Poincar\'e dual is an explicit homology class, which we also use to compute the Hamiltonian vector field of an induced invariant function in Section \ref{sec : subsurface deformations}.

\begin{proposition}\label{prop: explicit cycle for Poincare dual of Hamiltonian vector field}
Let $f\colon G^k\to\R$, $\ub{\alpha}=(\alpha_1,\dots,\alpha_k)\in\pi_1(S,p)^k$ and $\phi_p\colon\pi_1(S,p)\to\pi$ the isomorphism arising from choosing a lift $\tilde p\in\tilde S$. Let $\rho\colon\pi\to G$ be a representation. Then the Poincar\'e
 dual to the Hamiltonian vector field at $[\rho]$, $\Hm f_{\ub{\alpha}}([\rho])\in H^1(S,\xi_\rho)$, has representative 
 \[
    \sum_{i = 1}^k\alpha_i\otimes F_i(\rho(\ub\alpha))_{\tilde p}\in Z_1(S,\xi_\rho).
 \]
\end{proposition}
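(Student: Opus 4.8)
The plan is to characterize the Poincar\'e dual of $\Hm f_{\ub\alpha}$ using Lemma \ref{lem : Poincare duality}: it suffices to exhibit a cycle $A \in Z_1(S,\xi_\rho)$ such that for every cycle $C \in Z_1(S, \xi_\rho^*)$ one has $\inp{[\Hm f_{\ub\alpha}] \frown [C]}_* = [A] \bullet_{\langle\cdot,\cdot\rangle} [C]$, and then show $A = \sum_i \alpha_i \otimes F_i(\rho(\ub\alpha))_{\tilde p}$ works. Here I should think of $\Hm f_{\ub\alpha}([\rho])$ as lying in $H^1(S,\xi_\rho) \cong H^1(\pi, \gfrrho)$, so I first need a cocycle representative. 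The key computation is the differential $\D f_{\ub\alpha}$ at $[\rho]$, regarded as a linear functional on $H^1(\pi,\gfrrho) = Z^1/B^1$: given a cocycle $u \colon \pi \to \gfr$ representing a tangent vector, the first-order variation of $\rho$ in the direction $u$ sends $\rho(\gamma) \rightsquigarrow \exp(tu(\gamma))\rho(\gamma)$, so by the chain rule and the implicit definition of the variation function (Lemma \ref{lemma:variation function of multi-function  is given implicitly wrt to orthogonal structure}),
\[
\D_{[\rho]} f_{\ub\alpha}(u) = \sum_{i=1}^k (\partial_i f)_{\rho(\ub\alpha)}(u(\phi_p(\alpha_i))) = \sum_{i=1}^k \B\!\left(F_i(\rho(\ub\alpha)), u(\phi_p(\alpha_i))\right).
\]

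Next I would interpret the right-hand side as a pairing in group cohomology. The assignment $u \mapsto \sum_i \B(F_i(\rho(\ub\alpha)), u(\phi_p(\alpha_i)))$ is (after checking it descends to $H^1$, i.e. vanishes on coboundaries, which follows from $\sum_i F_i$ being constrained by the $G$-invariance of $f$ exactly as in Goldman's computation) the evaluation against a class in $H_1(\pi, \gfr_{\Ad^*\rho})^{\vee}$-type duality; concretely, in the chain model $H_1(S,\xi_\rho)$, the curve $\alpha_i$ with the flat section determined by the Lie algebra element $F_i(\rho(\ub\alpha))_{\tilde p}$ represents a $1$-cycle, and the cap/cup pairing of a cochain $u$ (dual to a tangent vector) with this cycle reproduces precisely $\B(F_i(\rho(\ub\alpha)), u(\phi_p(\alpha_i)))$ via the standard identification of the cap product of a loop-cycle with a group $1$-cocycle (see \cite[Section 2]{InvFct_Goldman} and \cite[Section 4]{JohnsonMillson}). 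Summing over $i$ gives the claimed cycle $A = \sum_{i=1}^k \alpha_i \otimes F_i(\rho(\ub\alpha))_{\tilde p}$.

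To conclude via Lemma \ref{lem : Poincare duality}, I pair with an arbitrary cycle $C \in Z_1(S,\xi_\rho^*)$: on one side, $[\Hm f_{\ub\alpha}] \frown [C]$ paired into $H_0(S,\R)$ equals $\D f_{\ub\alpha}$ applied to the cohomology class Poincar\'e dual to $C$ (using $\D f(\cdot) = \omega(\cdot, \Hm f)$ together with the description \eqref{eq : alternative definition of symplectic form} of $\omega$ as an intersection pairing); on the other side, $[A] \bullet_{\langle\cdot,\cdot\rangle}[C]$ is computed by the transverse-intersection formula \eqref{eq : formula for intersection pairing}, and after homotoping $A$ and $C$ to be transverse this is exactly $\sum_i \sum_{q} \eps(q) \langle F_i(\rho(\ub\alpha))(q), c(q)\rangle$ — the same expression, once we match the cocycle representing the PD of $C$ with its values along $\alpha_i$. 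Since this holds for all $C$, Lemma \ref{lem : Poincare duality} forces $[\Hm f_{\ub\alpha}] \frown [S] = [A]$.

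The main obstacle I anticipate is bookkeeping the base points and the identification of flat sections along $\alpha_i$ with the Lie algebra element $F_i(\rho(\ub\alpha))_{\tilde p}$: the variation function is defined on $G^k$ via \emph{right}-invariant vector fields and the induced-function definition involves $\phi_p$, so I must be careful that the flat section attached to the $1$-cell $\alpha_i$ is parallel-transported from the common lift $\tilde p$ (hence the subscript $\tilde p$ in the statement), and that the orientation conventions $\eps(q;\beta_j,\alpha_i)$ in the intersection pairing match the sign in $\D f(\cdot) = \omega(\cdot, \Hm f)$. Once the dictionary between the group-cohomology pairing and the chain-level cap product is pinned down carefully — essentially a local-coefficients version of the classical fact that a closed $1$-form evaluated on a loop computes the corresponding homology pairing — the argument is a direct translation of Goldman's $k=1$ computation in \cite[Section 3]{InvFct_Goldman}.
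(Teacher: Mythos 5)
Your proposal is correct and follows essentially the same route as the paper: the heart of both arguments is the computation $\D_{[\rho]}f_{\ub\alpha}([u])=\sum_i\B\bigl(F_i(\rho(\ub\alpha)),u(\phi_p(\alpha_i))\bigr)$, followed by identifying this functional, via the compatibility of the symplectic pairing with the cap/intersection pairing (Goldman's Lemma 3.8 in the paper, Lemma \ref{lem : Poincare duality} in your write-up), with evaluation against the chain $\sum_i\alpha_i\otimes F_i(\rho(\ub\alpha))_{\tilde p}$ — i.e.\ a direct adaptation of Goldman's Proposition 3.7 to $k>1$. One small imprecision: an individual term $\alpha_i\otimes F_i(\rho(\ub\alpha))_{\tilde p}$ is generally \emph{not} a cycle (its boundary is $p\otimes(\Ad_{\rho(\phi_p(\alpha_i))^{-1}}F_i-F_i)$); only the sum over $i$ is, by the infinitesimal diagonal invariance $\sum_i(\Ad_{\rho(\phi_p(\alpha_i))^{-1}}F_i-F_i)=0$ — the same invariance you invoke for vanishing on coboundaries — so the conclusion is unaffected.
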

We will prove Theorem \ref{thm: product formula} and then prove the proposition afterwards.\\

\begin{proof}[of Theorem \ref{thm: product formula} assuming \ref{prop: explicit cycle for Poincare dual of Hamiltonian vector field}]
Adopting Notation \ref{notation : flat sections}, by Proposition \ref{prop: explicit cycle for Poincare dual of Hamiltonian vector field}, $\Hm f_{\ub \alpha}([\rho])\frown [S] = \left[\sum_i \alpha_i\otimes F_i(\rho(\ub \alpha))\right]$ and $\Hm f'_{\ub \beta}([\rho])\frown[S] = [\sum_j \beta_j\otimes F'_j(\rho(\ub\beta))]$. By the definition of the symplectic structure (Equation \eqref{eq : alternative definition of symplectic form}), and the duality between the cup product and the intersection pairing from Section \ref{sec : cohomology with local coefficients},
\begin{align*}
    \{f_{\ub \alpha},f'_{\ub \beta}\}([\rho]) &= \omega_{[\rho]}(\Hm f'_{\ub \beta}([\rho]),\Hm f_{\ub \alpha}([\rho])) = \B_*(\Hm f'_{\ub \beta}([\rho])\smile\Hm f_{\ub \alpha}([\rho]))\frown [S]\\
    &=\left(\sum_j \beta_j\otimes F'_j(\rho(\ub\beta))\right)\bullet_\B \left(\sum_i \alpha_i\otimes F_i(\rho(\ub \alpha))\right)\\
    &= \sum_{i,j}\sum_{q\in\beta_j\#\alpha_i}\eps(q;\beta_j,\alpha_i)\B(F'_j(\rho(\ub\beta))(q),F_i(\rho(\ub\alpha))(q)),
\end{align*}
where the last line follows by the formula for the intersection pairing in Equation \eqref{eq : formula for intersection pairing}.
\end{proof}

We now prove the intermediate proposition.\\

\begin{proof}[of Proposition \ref{prop: explicit cycle for Poincare dual of Hamiltonian vector field}]
We follow the proof of Proposition 3.7 in \cite{InvFct_Goldman}, and we reproduce it here with the necessary alterations. We begin with some definitions. Denote by $ ^t\widetilde{\B}\colon H^1(\pi;\gfr_{\Ad\rho}^*)^*\to \HGroupCohom{1}^*$ the dual of the isomorphism $\HGroupCohom{1}\to H^1(\pi;\gfr_{\Ad\rho}^*)$ induced by the isomorphism $\widetilde{\B}\colon \gfr\to\gfr^*$ (which itself is induced by the nondegenerate bilinear form $\B$). Let $\eta\colon \HGroup{1}\to H^1(\pi;\gfr_{\Ad\rho}^*)^*$ be the map induced by the cap product $H^1(\pi;\gfr_{\Ad\rho}^*)\times\HGroup{1}\to H_0(\pi;\R)\cong\R$. Finally, let $\theta\colon \HGroupCohom{1}\to H^1(\pi,\gfr_{\Ad\rho}^*)^*$ be the map induced by the pairing
\[
    \HGroupCohom{1}\times H^1(\pi;\gfr_{\Ad\rho}^*)\xrightarrow{\inp{\cdot\smile\cdot}} H^2(\pi,\R)\cong\HCoReal{2}\xrightarrow{\frown [S]}\HReal{0}\cong\R.
\]
Lemma 3.8 in \cite{InvFct_Goldman} states that the diagram
\[
\begin{tikzcd}
  \HGroupCohom{1} \arrow[r, "\frown {[S]}"] \arrow[d, "\widetilde{\omega}" left] \arrow[dr,"\theta"]
    & \HGroup{1} \arrow[d,"\eta"]\\
  \HGroupCohom{1}^*  &
H^1(\pi,\gfr_{\Ad\rho}^*)^*\arrow[l, " ^t\widetilde{\B}" below] \end{tikzcd}
\]
commutes. Thus, to find the Poincar\'e dual to the Hamiltonian vector field, we follow the lower part of the diagram. Let $[u]\in\HGroupCohom{1}$, with $u$ a representative cocycle. Then $\widetilde{\omega}(\Hm f_{\ub{\alpha}}([\rho]))([u])$ is given by the differential
\begin{align*}
    \widetilde{\omega}(\Hm f_{\ub{\alpha}}([\rho]))([u]) &= \D_{[\rho]}f_{\ub{\alpha}}([u])\\
    &= \hat{F}(\rho(\phi_p(\alpha_1)),\dots,\rho(\phi_p(\alpha_k)))(u(\phi_p(\alpha_1)),\dots, u(\phi_p(\alpha_k)))\\
    &= \B_k(F(\rho(\phi_p(\alpha_1)),\dots,\rho(\phi_p(\alpha_k))),u(\phi_p(\alpha_1)),\dots, u(\phi_p(\alpha_k)))\\
    &= \sum_{i = 1}^k\B(F_i(\rho(\phi_p(\alpha_1)),\dots,\rho(\phi_p(\alpha_k))),u(\phi_p(\alpha_i))).
\end{align*}
The rest of the proof is now identical to Proposition 3.7 in \cite{InvFct_Goldman}, which we reproduce here. It follows from the above computation that 
\[
    ( ^t\widetilde{\B})^{-1}(\D_{[\rho]}f_{\ub{\alpha}})([u]) = \eta \left(\sum_{i = 1}^k\phi_p(\alpha_i)\otimes F_i(\rho(\ub\alpha))\right).
\]
Therefore,
\begin{align*}
    \sum_{i = 1}^k\phi_p(\alpha_i)\otimes F_i(\rho(\ub\alpha)) &= \eta^{-1}\circ (\D_{[\rho]}f_{\ub{\alpha}}([u]))\\
    &= (\widetilde{\omega}^{-1}(\D_{[\rho]}f_{\ub{\alpha}}([u])))\frown [S]\\
    &= \Hm f_{\ub{\alpha}}([\rho])\frown [S].
\end{align*}
Using Lemma \ref{lem : Poincare duality}, we conclude that $\sum_{i = 1}^k\phi_p(\alpha_i)\otimes F_i(\rho(\ub\alpha))\in Z_1(\pi,\gfr_{\Ad\rho})$ is the Poincar\'e dual cocycle to $\Hm f_{\ub \alpha}([\rho])$. Using the canonical isomorphism between group cohomology with local coefficients and cohomology of flat sections given by picking the lift $\tilde p$ of $p$, we have that the Poincare dual to $\Hm f_{\ub\alpha}([\rho])$ is 
\[
\sum_{i = 1}^k\alpha_i\otimes F_i(\rho(\ub\alpha))_{\tilde p}\in Z_1(S,\xi_\rho).
\]
\end{proof}

\begin{remark}\label{prop : Variation function for reducible functions}
In the special case when $f\colon G^k\to \R$ is a reducible function as in Section \ref{sec : invariant functions through word maps}, the Poincar\'e dual to the Hamiltonian vector field has two equivalent descriptions. Indeed, if $f$ is reducible, there exists an invariant function $f'\colon G\to \R$ such that for every $(g_1,\dots,g_k)\in G^k$, $f(g_1,\dots,g_k) = f'(g_1\cdots g_k)$. Let $(\alpha_1,\dots,\alpha_k)\in\pi_1(S,p)^k$. Then by Proposition \ref{prop: explicit cycle for Poincare dual of Hamiltonian vector field}, the Poincar\'e dual to $\Hm f_{\ub\alpha}([\rho])$ has representative
\[
\sum_{i = 1}^k\alpha_i\otimes F_i(\rho(\ub\alpha)).
\]
Moreover, since $f_{(\alpha_1,\dots,\alpha_k)} = f'_{(\alpha_1\cdots\alpha_k)}$, Lemma \ref{lemma: variation function of reducible function} implies that the Poincar\'e dual to $\Hm f_{\ub\alpha}([\rho])$ also has representative
\[
\alpha_1\cdots\alpha_k\otimes F_1(\rho(\ub\alpha)).
\]
These two cycles are homologous and differ by the boundary 
\[
\partial\left(\sum_{i = 1}^{k - 1}\sigma_i\otimes F_1(\rho(\ub\alpha))\right),
\]
where $\sigma_i$ is the $2-$simplex given by $\partial_0\sigma_i = \alpha_{i+1}$, $\partial_1\sigma_i = \alpha_1\cdots\alpha_{i+1}$, and $\partial_2\sigma_i = \alpha_1\cdots\alpha_i$.
\end{remark}

\subsection{Commuting flows}\label{sec : commuting flows}
With the formula for the Poisson pairing \eqref{eq : Poisson bracket}, it immediately follows that 
\[
\{f_{\ub\alpha},f'_{\ub\beta}\}\equiv 0
\]
if there are representatives of the curves $\alpha_i$ and $\beta_j$ such that $\alpha_i\#\beta_j = \emptyset$ for every $i,j$. In particular, the Hamiltonian flows of $f_{\ub\alpha}$ and $f_{\ub\beta}'$ commute (see for example \cite[Corollary 9 pp. 218]{MathMethodsOfClassicalMechanics_Arnold}).\\ 

Say that a subsurface $S_0\subset S$ is \emph{essential} if every loop in $S_0$ that bounds a disk in $S$ also bounds a disk in $S_0$.
The isotopy classes of essential subsurfaces of $S$ are partially ordered by inclusion of representatives.\\

An element $\underline \alpha \in \pi_1(S,p)$ defines, up to homotopy preserving the basepoint, a continuous map 
\[ (\vee_{i = 1}^k S^1, \star) \to (S,p)\]
called a \emph{bouquet of curves}.
\begin{definition}\label{def : supportin subsurface}
    For $\underline \alpha \in \pi_1(S,p)^k$, say that an essential subsurface $S_0 \subset S$ is a \emph{supporting subsurface of $\underline \alpha$} if the corresponding bouquet of curves can be (freely) homotoped into $S_0$, and the isotopy class of $S_0$ is the smallest essential subsurface of $S$ with this property.
\end{definition}

\begin{remark}
    Any supporting surface $S_0$  for $\underline \alpha \in \pi_1(S,p)^k$ containing $p$  satisfies the following property: There are $\gamma_1, ..., \gamma_k \in \pi_1(S_0, p)$ and $\gamma \in \pi_1(S, p)$ such that that $\gamma(\gamma_1, ..., \gamma_k)\gamma^{-1} = \underline \alpha \in \pi_1(S,p)^k$. 
\end{remark}

Using this setup, we obtain a condition on when two induced invariant functions Poisson--commute. We will use this in Section \ref{sec : examples} to describe when certain Hamiltonian flows commute.

\begin{corollary}\label{cor : if supporting subsurfaces are disjoint, flows commute}
    Let $f\colon G^k\to \R$ and $f'\colon G^n\to\R$ be invariant functions. Let $\ub\alpha\in\pi_1(S,p)^k$ and $\ub\beta\in\pi_1(S,r)^n$. If there are supporting subsurfaces $S_1$ and $S_2$ of $\ub\alpha$ and $\ub\beta$ respectively such that $S_1\cap S_2 = \emptyset$, then
    \[
    \{f_{\ub\alpha},f'_{\ub\beta}\}\equiv 0.
    \]
\end{corollary}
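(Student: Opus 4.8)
The plan is to reduce the claim to the vanishing criterion already noted right before the corollary: if there are representatives of the curves $\alpha_i$ and $\beta_j$ with $\alpha_i\#\beta_j=\emptyset$ for all $i,j$, then $\{f_{\ub\alpha},f'_{\ub\beta}\}\equiv 0$ by the product formula \eqref{eq : Poisson bracket}. So the entire content is to arrange such disjoint representatives out of the hypothesis that there are supporting subsurfaces $S_1$ of $\ub\alpha$ and $S_2$ of $\ub\beta$ with $S_1\cap S_2=\emptyset$.

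First I would recall that by Definition \ref{def : supportin subsurface}, the bouquet of curves associated to $\ub\alpha$ can be freely homotoped into $S_1$, and likewise the bouquet for $\ub\beta$ into $S_2$. Concretely, this means each $\alpha_i$ is freely homotopic to a loop contained in $S_1$ and each $\beta_j$ is freely homotopic to a loop contained in $S_2$. Since $S_1$ and $S_2$ are disjoint essential subsurfaces, we may further isotope them to have disjoint closed representatives (essential subsurfaces are realized by compact subsurfaces with boundary, and disjointness of isotopy classes lets us take disjoint representatives). Then the loops representing the $\alpha_i$ lie in $S_1$, the loops representing the $\beta_j$ lie in $S_2$, and $S_1\cap S_2=\emptyset$, so $\alpha_i\#\beta_j=\emptyset$ for all $i,j$ with these representatives.

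Next I would address the subtlety that the product formula \eqref{eq : Poisson bracket} is stated with $\ub\alpha$ and $\ub\beta$ based at possibly different points $p\neq r$, while the intersection pairing counts transverse double points of chain representatives; the formula is insensitive to free homotopy of each individual curve (it is computed via the intersection pairing on $H_1(S,\xi_\rho)$, which only depends on homology classes), so we are free to replace each $\alpha_i$ and each $\beta_j$ by the disjoint representatives produced above without changing $\{f_{\ub\alpha},f'_{\ub\beta}\}([\rho])$. With $\beta_j\#\alpha_i=\emptyset$ for every pair, every term in the double sum in \eqref{eq : Poisson bracket} is empty, hence $\{f_{\ub\alpha},f'_{\ub\beta}\}([\rho])=0$ for every $[\rho]$, i.e. $\{f_{\ub\alpha},f'_{\ub\beta}\}\equiv 0$.

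The main obstacle is the topological bookkeeping rather than anything deep: making precise that "disjoint isotopy classes of essential subsurfaces" yields genuinely disjoint compact representatives into which the two bouquets can be simultaneously homotoped, and confirming that the product formula indeed only sees the free homotopy classes of the individual curves (so that basepoints $p$ and $r$ and the based-loop structure play no role in this particular computation). Once those two points are in hand, the conclusion is immediate from \eqref{eq : Poisson bracket}. One could also remark, as the authors do, that $\{f_{\ub\alpha},f'_{\ub\beta}\}\equiv 0$ implies the corresponding Hamiltonian flows commute, citing \cite[Corollary 9, pp. 218]{MathMethodsOfClassicalMechanics_Arnold}.
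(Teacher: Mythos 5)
Your proposal is correct and takes essentially the same route as the paper, whose proof is exactly the one-line reduction you spell out: disjoint supporting subsurfaces give representatives with $\alpha_i\#\beta_j=\emptyset$ for all $i,j$, and then Theorem \ref{thm: product formula} forces every term of \eqref{eq : Poisson bracket} to vanish. The only nuance is that, since the curves in each tuple must share a basepoint, one homotopes each \emph{bouquet} as a whole (moving $p$ into $S_1$ and $r$ into $S_2$) rather than each curve by a separate free homotopy --- the individual chains $\alpha_i\otimes F_i(\rho(\ub\alpha))$ are not cycles, only their sum is --- but this is precisely what Definition \ref{def : supportin subsurface} supplies, and Theorem \ref{thm: product formula} already accommodates the two distinct basepoints.
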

\begin{proof}
    The condition on the disjointness of supporting subsurfaces implies that there are representatives of the curves $\alpha_i$ and $\beta_j$ such that $\alpha_i\#\beta_j = \emptyset$ for every $i,j$. The result then follows by Theorem \ref{thm: product formula}. 
\end{proof}

\section{Examples}\label{sec : examples}
We now describe a variety of examples of functions on the character variety, which can be realized as induced invariant functions.
\subsection{Polynomials in trace functions}\label{polynomials in trace functions}
In this subsection, we work with a reductive subgroup $G<\GL(d,\mathbb K)$, where $\mathbb K$ is either $\R$ or $\C$. Notice that in the case that $\mathbb K = \C$, we may consider a $\C$-valued orthogonal structure on $\gfr$ and with that a $\C-$valued symplectic form, following the same definition as in Section \ref{sec : symplectic structure on character varieties}. There is a natural invariant function to consider on these groups, namely the trace $\trace\colon G\to\mathbb K$. Such functions, their variations and their Poisson brackets are studied extensively in \cite[Sections 1 and 3]{InvFct_Goldman}.\\

For a curve $\gamma\in\pi$, we call $\trace_\gamma\colon\CharVar\pi\to\R$ the \textit{trace function of $\gamma$}. A \textit{polynomial in trace functions} is an element of $\mathbb K[\trace_\gamma\st\gamma\in\pi]$. Any polynomial in trace functions is realized as an induced invariant function as follows. Let $T\in \mathbb K[\trace_\gamma\st\gamma\in\pi]$ and define the invariant function 
\begin{align*}
    \tau\colon G^k&\to\mathbb K \\
    (g_1,\dots,g_k)&\mapsto T(\trace(g_1),\dots,\trace(g_k)).
\end{align*}
Let $\gamma_1,\dots,\gamma_k\in\pi$ be the curves appearing in the polynomial $T$. We then obtain the function $\tau_{(\gamma_1,\dots,\gamma_k)}$ on $\CharVar{\pi}$ which is equal to $T$.
\begin{remark}
    \sloppy The function $\tau$ in this case is not only invariant under the diagonal action of $G$ on $G^k$, but also invariant under the action by conjugation of $G^k$ on $G^k$. That is, for any $(h_1,\dots,h_k),(g_1,\dots,g_k)\in G^k$,
    \[
    \tau(h_1g_1h_1^{-1},\dots,h_kg_kh_k^{-1}) = \tau(g_1,\dots,g_k).
    \]
    In particular, this implies that the variation function of $\tau$ is also equivariant under the action of $G^k$.
\end{remark}

In general, the curves $\gamma_1,\dots,\gamma_k$ may be complicated and have self intersections. One can resolve these curves by writing them as words in simple curves.

\begin{lemma}\label{lemma : polynomial in traces is invariant in simple}
	Let $G<\mathsf{GL}(d,\mathbb K)$ and $T$ a polynomial in trace functions. Then there is an integer $n$, an invariant function $\tau\colon G^n\to\mathbb K$, and an $n-$tuple $\ub\alpha\in\pi^n$ consisting of simple closed curves such that $T = \tau_{\ub\alpha}$.
\end{lemma}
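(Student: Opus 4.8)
The plan is to reduce an arbitrary polynomial in trace functions to one expressed only in traces of simple closed curves, by repeatedly using the fact that every element of $\pi$ can be written as a product of simple closed curves, together with the reducible-function mechanism of Section~\ref{sec : invariant functions through word maps}. The key conceptual point is that a monomial $\trace_{\gamma_1}\cdots\trace_{\gamma_m}$ in the polynomial $T$ only constrains us to find, for each $\gamma_i$, a word in simple curves multiplying to $\gamma_i$; then the induced function of the tuple of all these simple curves, through the appropriate word map, reproduces $T$.

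\textbf{Step 1: resolving a single curve.} First I would recall the standard fact that for a closed oriented surface $S$ of genus $g \geq 2$, every element $\gamma \in \pi = \pi_1(S,p)$ can be written as a product $\gamma = s_1 s_2 \cdots s_\ell$ of (based representatives of) simple closed curves. This is classical: one can take a standard generating set $a_1, b_1, \dots, a_g, b_g$ in which each generator is simple, write $\gamma$ as a word in these generators, and observe that each generator and its inverse is represented by a simple closed curve. (One should be slightly careful that based simple \emph{loops} are wanted, not just free homotopy classes; but the standard generators based at $p$ already have simple representatives, so this causes no trouble.) Thus for each curve $\gamma_i$ appearing in $T$ we fix such an expression $\gamma_i = s_{i,1}\cdots s_{i,\ell_i}$.

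\textbf{Step 2: assembling the multi-function.} Let $n = \sum_i \ell_i$ and let $\ub\alpha \in \pi^n$ be the concatenated tuple $(s_{1,1},\dots,s_{1,\ell_1},\dots,s_{k,1},\dots,s_{k,\ell_k})$ of simple closed curves. Define $w\colon G^n \to G^k$ by grouping the $n$ coordinates into $k$ blocks and multiplying each block in order, i.e.\ the $i$-th component of $w(\ub g)$ is the product of the coordinates in the $i$-th block. Then set
\[
\tau \colon G^n \to \mathbb K, \qquad \tau(\ub g) = T\bigl(\trace(w(\ub g)_1),\dots,\trace(w(\ub g)_k)\bigr).
\]
This is $C^1$ (indeed polynomial) and invariant under the diagonal conjugation action of $G$ on $G^n$, since $w$ is equivariant for the diagonal actions on $G^n$ and $G^k$ and trace is conjugation-invariant; hence $\tau$ is an invariant function in the sense of the paper. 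Finally, evaluating at a representation $\rho$ and using $\rho(s_{i,1})\cdots\rho(s_{i,\ell_i}) = \rho(\gamma_i)$ gives $\tau_{\ub\alpha}([\rho]) = T(\trace_{\gamma_1}([\rho]),\dots,\trace_{\gamma_k}([\rho])) = T([\rho])$, so $T = \tau_{\ub\alpha}$ with $\ub\alpha$ consisting of simple closed curves, as required.

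\textbf{Main obstacle.} The only genuine content is Step~1 — the fact that a surface group has a generating set of simple closed curves based at a common point, so that every element factors as a product of based simple loops. Everything else (building $w$, checking invariance, and evaluating on $\rho$) is formal and follows the reducible-function discussion already in Section~\ref{sec : invariant functions through word maps}. One minor subtlety worth flagging is that the $s_{i,j}$ need to be \emph{based} simple loops rather than free homotopy classes, but since the standard symplectic generators $a_i,b_i$ are themselves based simple loops this is immediate; alternatively, if one wanted embedded representatives one could push them off the basepoint, at the cost of replacing them by conjugates, which does not affect $\tau_{\ub\alpha}$ by the invariance of $\tau$.
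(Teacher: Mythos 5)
Your proposal is correct and follows essentially the same route as the paper: write each curve $\gamma_i$ appearing in $T$ as a product of simple closed curves, define $\tau$ by composing $T$ with traces of the corresponding word maps, and observe that $\tau_{\ub\alpha} = T$ for the concatenated tuple of simple curves. The only difference is that you explicitly justify the factorization via the standard simple generators of $\pi$, a point the paper asserts in the proof and addresses in the remark following it.
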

\begin{proof}
    As above, let $T\in\mathbb K[\trace_\gamma\st\gamma\in\pi]$ and $\gamma_1,\dots,\gamma_k\in\pi$ the curves appearing in the polynomial $T$. Now let $\alpha_1^i,\dots,\alpha^i_{m(i)}\in\pi$ be deck transformations associated to simple closed curves such that $\gamma_i = \alpha^i_1\cdots\alpha^i_{m(i)}$ for each $i=1,\dots,k$. Then let
    \begin{align*}
	\tau\colon G^{m(1)}\times\cdots\times G^{m(k)}&\to\mathbb K\\
	(\ub {g_1},\dots,\ub{g_k})&\mapsto T(\trace(\gamma_1(\ub{g_1})),\dots,\trace(\gamma_k(\ub{g_k}))),
\end{align*}
where $\gamma_i$ is the word map on $G^{m(i)}$ to $G$ given by substitution. Thus, $\tau_{\ub\alpha} = T\colon\CharVar{\pi}\to\mathbb K$, where $\ub{\alpha} = (\alpha_1^1,\dots,\alpha^1_{m(1)},\dots,\alpha_1^k,\dots,\alpha^k_{m(k)})$.
\end{proof}

\begin{remark}
    Another way to do this, is to fix a generating set for $\pi$ given by simple closed curves and having $2g$ elements. Then every $\gamma_i$ defines a word in the generators, which in turn defines a function $G^{2g}\to G$ given by substitution. Following the last part of the proof of Lemma \ref{lemma : polynomial in traces is invariant in simple}, we see that any polynomial in trace functions comes from an invariant function $G^{2g}\to\mathbb K$ associated to simple closed curves. However, the words in this case may be very complicated, and the generators not well adapted to the curves $\gamma_i$.
\end{remark}
\begin{remark}
    The function $\tau$ in Lemma \ref{lemma : polynomial in traces is invariant in simple} associated to a polynomial in trace functions is only invariant under the diagonal conjugation action by $G$, and not necessarily under the conjugation action by $G^k$.
\end{remark}
\begin{example}
Consider a pair of pants inside $S$ with fundamental group $\langle a,b,c\,|\,abc=1\rangle$ with $a,b,c$ corresponding to the three boundary curves. We work with the curve $ab^{-1}$, which is a self intersecting curve, see Figure \ref{fig : figure 8 curve}. The trace function $\trace\colon G\to\mathbb K$ gives rise to the function $\trace_{ab^{-1}}\colon\CharVar\pi\to\mathbb K$. The function on the character variety also arises in another way. Namely, let $\tau\colon G^2\to\mathbb K$ given by $(g_1,g_2)\mapsto \trace(g_1g_2^{-1})$. Then we see that $\tau_{(a,b)} = \trace_{ab^{-1}}$, with the advantage that in this case the invariant function $\tau_{(a,b)}$ is induced by simple closed curves.
\end{example}

\begin{figure}[ht]
	\centering
	\includesvg[scale=0.5]{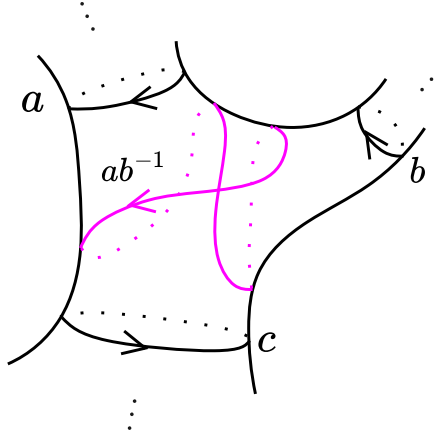}
	\caption{Self intersecting curve in a pair of pants.}
	\label{fig : figure 8 curve}
\end{figure}
\begin{corollary}
    Let $\mathbb K = \C$, and $G$ be one of $\mathsf{GL}(d,\C),\mathsf{SL}(d,\C)$ with $d\geq 2$, or $\mathsf{O}(d,\C)^\circ, \mathsf{SO}(d,\C)$ with $d\geq 3$, or $\mathsf{Sp}(2d,\C)$. Then for any regular invariant function $T$ on $\CharVar{\pi}$, there is an integer $n$, an invariant function $\tau\colon G^n\to\mathbb K$, and an $n-$tuple $\ub\alpha\in\pi^n$ consisting of simple curves such that $T = \tau_{\ub\alpha}$.
\end{corollary}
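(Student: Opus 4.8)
The plan is to reduce the statement to Lemma~\ref{lemma : polynomial in traces is invariant in simple} via the classical first fundamental theorems of invariant theory for the listed classical groups. The key input is: for each $G$ in the list, the coordinate ring of the GIT quotient $\Hom(\pi,G)/\!/G$ (of which $\CharVar\pi$ is the smooth locus) is generated as a $\C$-algebra by the trace functions $\trace_\gamma$, $\gamma\in\pi$. For $G=\mathsf{GL}(d,\C)$ and $G=\mathsf{SL}(d,\C)$ this is Procesi's theorem on invariants of tuples of matrices under simultaneous conjugation: the invariant ring is generated by traces of words in the matrices (and their inverses in the $\mathsf{SL}$ case, with $\det\equiv 1$ contributing only constants). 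For $G=\mathsf{O}(d,\C)^\circ$, $\mathsf{SO}(d,\C)$, and $\mathsf{Sp}(2d,\C)$ one invokes the corresponding first fundamental theorems for invariants of the orthogonal and symplectic groups; here the point is that each such $G$ preserves a nondegenerate bilinear form, so the (form-)transpose of a matrix is again a word-type operation, and the invariant ring is still generated by traces of words.

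Granting this, I would argue as follows. Since $\pi$ is finitely generated (fix a geometric generating set by simple closed curves $a_1,b_1,\dots,a_g,b_g$), any regular invariant function $T$ on $\CharVar\pi$ lies in the image of the coordinate ring of $\Hom(\pi,G)/\!/G$ and hence, by the above, is a polynomial with coefficients in $\mathbb K=\C$ in finitely many trace functions $\trace_{\gamma_1},\dots,\trace_{\gamma_k}$; that is, $T\in\mathbb K[\trace_\gamma\st\gamma\in\pi]$ in the sense of Subsection~\ref{polynomials in trace functions}. Now Lemma~\ref{lemma : polynomial in traces is invariant in simple} applies verbatim: it produces an integer $n$, an invariant function $\tau\colon G^n\to\mathbb K$, and an $n$-tuple $\ub\alpha\in\pi^n$ consisting of simple closed curves such that $T=\tau_{\ub\alpha}$ on $\CharVar\pi$. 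The only fact Lemma~\ref{lemma : polynomial in traces is invariant in simple} needs is that each $\gamma_i$ can be written as a product of simple closed curves, which is automatic once we expand $\gamma_i$ in the generating set $\{a_j^{\pm1},b_j^{\pm1}\}$ above, since inverses of simple closed curves are simple closed curves.

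\textbf{Main obstacle.} The surface-group/simple-curve part is essentially free once Lemma~\ref{lemma : polynomial in traces is invariant in simple} is in hand, so the real content is citing and correctly stating the first fundamental theorems. The delicate point is the orthogonal case and the distinction between $\mathsf{O}(d,\C)^\circ$ and $\mathsf{SO}(d,\C)$: one must check that on the relevant group (respectively, on the image of $\Hom(\pi,G)$) the trace invariants genuinely generate, without needing an extra Pfaffian-type invariant, and that restricting from the full quotient to its smooth locus $\CharVar\pi$ loses no generators. I expect this to be the only place requiring care; everything else is bookkeeping.
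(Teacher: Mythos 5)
Your proposal is correct and follows essentially the same route as the paper: the paper's proof simply cites Procesi's theorem to conclude that any regular invariant function is a polynomial in trace functions, and then applies Lemma~\ref{lemma : polynomial in traces is invariant in simple}. The extra care you flag about the orthogonal cases and the smooth locus is reasonable but is not addressed in the paper, which relies on the Procesi citation for all listed groups.
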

\begin{proof}
    By \cite{Procesi}, any regular invariant function in one of the above cases is a polynomial in trace functions. The corollary then follows by Lemma \ref{lemma : polynomial in traces is invariant in simple}.
\end{proof}

\subsection{Trace of a web}\label{sec : trace of a web}
Let $V$ be a $d-$dimensional vector space over a field $\mathbb K$ and let $G \le \GL(V)$ be a reductive subgroup.
A \emph{$d-$web} $\mathcal G$ is an embedded $d-$regular bipartite graph on an oriented surface $S$.
We consider isotopic $d-$webs as identical.
The vertices are colored black and white, and we consider the edges to be oriented such that all white vertices are sinks and all black vertices are sources.
\\

Place identical copies of $V$ at the vertices of an embedded $d-$web $\mathcal G$.
A \emph{$G$-connection} on $\mathcal G$ is, for every edge $e$ of $\mathcal G$ with black vertex $b_e$ and white vertex $w_e$, an isomorphism $A_e \in G$ mapping the copy of $V$ over $b_e$ to the copy of $V$ over $w_e$.
Say that a $G$-connection is \emph{flat} if whenever a cycle in $\mathcal G$ bounds a disk in $S$, then the holonomy  around that cycle, the composition of linear maps along the edges, is trivial.
A flat connection defines a representation of the fundamental group of the subsurface $S_0\subset S$ filled by $\mathcal G$, i.e., the homotopy class of the smallest  homotopically essential  embedded subsurface such that $\mathcal G$ is freely homotopic into $S_0$ (compare with Definition \ref{def : supportin subsurface}).\\

Two connections $\{A_e\}$ and $\{A_e'\}$ are \emph{equivalent} if for each vertex $v$  there is $B_v \in G$ such that $ A_e' = B_{w(e)}^{-1} A_e B_{b(e)}$.   The conjugacy class of the holonomy representation $\pi_1(\mathcal G,v) \to G$ is clearly an invariant of the equivalence class of the connection, and similarly for the induced representation $\pi_1(S_0) \to G$, if the connection is flat.
Conversely, the conjugacy class of a representation $\pi \to G$ induces an equivalence class of flat connections on any $d$-web $\mathcal G$ embedded in $S$.\\

The \emph{trace} of a flat connection $\{A_e\}$ on $\mathcal G$ can be given in terms of tensor networks (see \cite[Section 4]{Sikora:graphs} or \cite{DKS:webs}).
Briefly, one contracts certain very symmetric tensors (the co-determinants and dual co-determinants) along the edges of $\mathcal G$ using the flat connection.  Summing then produces a function to $\mathbb K$ that is an invariant of the equivalence class of the flat connection. Thus every $d-$web $\mathcal G$ defines a function $\trace_{\mathcal G}$ on the space of $G-$conjugacy classes of representations $\pi \to G$.\\
\begin{figure}
    \centering
    \includesvg[width=0.5\linewidth]{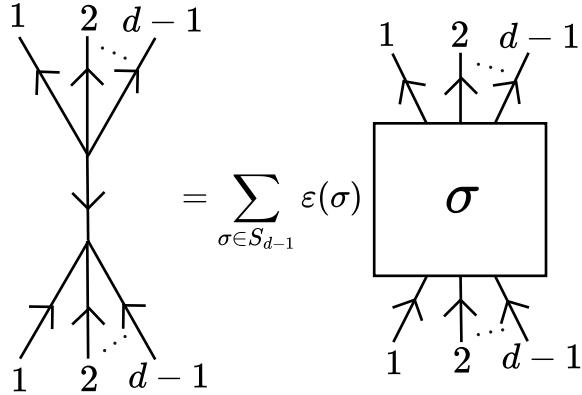}
    \caption{Notationally, $\epsilon(\sigma) \in \{\pm 1\}$ is the parity of the permutation $\sigma$ in the symmetric group $S_{d-1}$, and the box containing $\sigma$ on the right hand side of the equality represents the braid where the $i-$th strand on the bottom is joined to the $\sigma(i)-$th strand on the top of the box.  We do not keep track of crossings.}
    \label{fig : skein}
\end{figure}

Sikora proved \cite[Theorem 3.6]{Sikora:graphs} that the trace of a $d-$web is invariant under \emph{skein operations}.
The skein operations on $\mathcal G$ are depicted in Figure \ref{fig : skein}, and in the \emph{skein algebra}, $\mathcal G$ is equivalent to a formal $\Z-$linear combination of \emph{immersed} $d-$webs and closed, oriented loops \cite[\S4]{Sikora:graphs}.
So by applying a sequence of skein operations, the trace $\trace_{\mathcal G}$ of each $d-$web $\mathcal G$ can be expressed as a polynomial with integer coefficients in the usual trace functions in closed curves in $S_0$; see Figure \ref{fig : theta web}.
This polynomial expression is generally not unique, but rather depends on the sequence of skein relations applied to reduce the $d-$web through immersed $d-$webs to curves.
However, the corresponding functions on $\CharVar{\pi}$ are all equal.

\begin{figure}
    \centering
    \includesvg[width=0.5\linewidth]{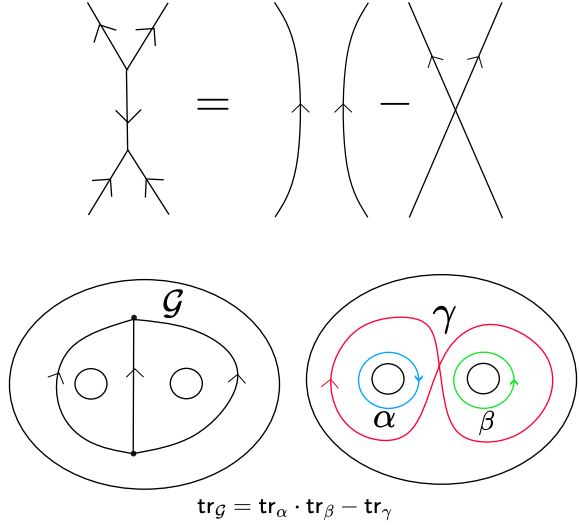}
    \caption{The skein relation for $\dim V = 3$ is pictured on top.  The simplest non-trivial $3-$web (the $\Theta-$graph $\mathcal G$), supported on a pair of pants, is depicted beneath.  We have applied the skein relation to see the trace function for $\mathcal G$ as a polynomial in the traces of oriented closed curves.}
    \label{fig : theta web}
\end{figure}

\begin{corollary}
    The trace of a $d-$web $\mathcal G$ in $S$ can be realized as an induced invariant function on $\CharVar{\pi}$.
    If two webs $\mathcal G_1$ and $\mathcal G_2$ fill homotopically disjoint essential subsurfaces $S_1$ and $S_2$, then the trace functions $\mathsf {tr}_{\mathcal G_1}$ and $\mathsf{tr}_{\mathcal G_2}$ Poisson-commute.  
\end{corollary}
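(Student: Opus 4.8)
The plan is to prove the first assertion by combining Sikora's skein reduction with the constructions of Section~\ref{polynomials in trace functions}, and then to deduce the Poisson-commutation from Corollary~\ref{cor : if supporting subsurfaces are disjoint, flows commute} once one checks that the skein procedure does not enlarge the supporting subsurface.

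For the first statement I would argue exactly as in the discussion preceding the corollary: by applying a sequence of skein operations (Figure~\ref{fig : skein}) and invoking Sikora's invariance theorem \cite[Theorem 3.6]{Sikora:graphs}, the function $\trace_{\mathcal G}$ is expressed as a polynomial $T \in \Z[\trace_\gamma \st \gamma \in \pi_1(S_0)]$ in trace functions of closed curves in the filled subsurface $S_0$; since $S_0\subset S$, this is in particular an element of $\Z[\trace_\gamma \st \gamma \in \pi]$. The construction of Section~\ref{polynomials in trace functions} (formalized in Lemma~\ref{lemma : polynomial in traces is invariant in simple}) then produces an integer $n$, an invariant function $\tau\colon G^n\to\mathbb K$, and a tuple $\ub\alpha\in\pi^n$ with $\tau_{\ub\alpha}=T=\trace_{\mathcal G}$ on $\CharVar{\pi}$. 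Moreover the curves in $\ub\alpha$ may be taken to lie in $S_0$ and to be based at any chosen point of $S_0$, since trace functions are invariant under free homotopy and one is therefore free to conjugate the curves to a common base point. (When $\mathbb K=\C$ one uses the $\C$-valued symplectic structure of Section~\ref{polynomials in trace functions}, and everything below goes through verbatim.)

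For the second statement, apply the above to each $\mathcal G_i$ with a chosen base point $p_i\in S_i$. The step I expect to need care is the observation that every skein operation is \emph{local}: it alters a web only inside an embedded disk, and hence carries a web supported in $S_i$ to a $\Z$-linear combination of immersed webs and closed loops still supported in $S_i$. It follows that every curve appearing in the polynomial expressing $\trace_{\mathcal G_i}$, and therefore every entry of the corresponding tuple $\ub\alpha^{(i)}\in\pi_1(S,p_i)^{k_i}$, can be realized inside $S_i$; consequently the bouquet of curves determined by $\ub\alpha^{(i)}$ homotopes into $S_i$, so a supporting subsurface of $\ub\alpha^{(i)}$ is isotopic into $S_i$ by Definition~\ref{def : supportin subsurface}. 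Since $S_1$ and $S_2$ are isotopic to disjoint subsurfaces, we obtain disjoint supporting subsurfaces of $\ub\alpha^{(1)}$ and $\ub\alpha^{(2)}$, and Corollary~\ref{cor : if supporting subsurfaces are disjoint, flows commute} gives $\{\trace_{\mathcal G_1},\trace_{\mathcal G_2}\}=\{\tau^{(1)}_{\ub\alpha^{(1)}},\tau^{(2)}_{\ub\alpha^{(2)}}\}\equiv 0$. The only genuine obstacle here is the locality claim for skein operations — this is precisely where the hypothesis that $\mathcal G_i$ fills $S_i$ gets used — while the rest is a direct appeal to results already established.
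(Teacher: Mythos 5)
Your proposal is correct and follows essentially the same route the paper intends: Sikora's skein reduction expresses $\trace_{\mathcal G}$ as an integer polynomial in trace functions of closed curves contained in the filled subsurface (the paper states this locality explicitly in the paragraph before the corollary), and then the construction of Section \ref{polynomials in trace functions} together with Corollary \ref{cor : if supporting subsurfaces are disjoint, flows commute} gives both claims. Your extra observation that the multi-function built from a polynomial in traces is invariant under the full $G^k$-conjugation action, so base points and free homotopy classes cause no trouble, is exactly the point the paper relies on as well.
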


\subsection{Invariants of \texorpdfstring{$\Theta-$}{Theta}Anosov representations}\label{sec : Invariants of Anosov representations}
In this section, we describe how a variety of functions associated to $\Theta-$Anosov representations in $\SL d$ can be realized as induced invariant functions. In particular, we describe correlation functions as defined in \cite{Labourie_SwappingAlgebra,SimpleLengthRigidity_BCL,GhostPolygons}, as well as cross ratios and triple ratios as defined for example in \cite{FockGoncharov,BonahonDreyer_GeneralLaminations,CollarTheta}.\\

Fix $G = \SL d$ and a subset $\Theta\subseteq \{1,\dots,d-1\}$ which is symmetric, i.e. if $k\in\Theta$, then $d-k\in\Theta$. The flag variety $\flags_\Theta$ associated to $\Theta=\{i_1,\dots,i_r\}$ is the set of filtered linear subspaces $\{0\}=V_0\subsetneq V_{i_1}\subsetneq \cdots\subsetneq V_{i_{r}}\subsetneq V_{d}= \R^d$, where $\dim V_{i_j}=i_j$. For a flag $F\in\flags_\Theta$, we denote by $F^{(k)}$ the $k-$th dimensional part of the flag $F$. We say that two flags $F,E\in\flags_\Theta$ are \textit{transverse} if $\R^d = F^{(i_j)}\oplus E^{(d-i_j)}$ is a direct sum for every $i_j\in\Theta$. With this, we define for $k\in\N$, the space $\flags_\Theta^{[k]}$ of ordered $k-$tuples of pairwise transverse flags.\\

Taking the definition from \cite{GGKW_17}, we say that an element $g\in G$ is \textit{proximal in $\flags_\Theta$} if $g$ has an attracting fixed point in $\flags_\Theta$, and denote the subset of such elements by $\proxGTheta$. Elements in $\proxGTheta$ have unique attracting fixed points \cite[Proposition 3.3 (c)]{GGKW_17}, which allows us to define the map $g\mapsto g^+$ sending an element $g\in \proxGTheta$ to its attracting flag in $\flags_\Theta$. We also get a map $g\mapsto g^-$ sending $g$ to the attracting fixed flag of $g^{-1}$. The flags $g^+$ and $g^{-}$ are transverse. Denote by $\proxGTheta^{[k]}\subset\proxGTheta^k$ the tuples $(g_1,\dots,g_k)$ such that $(g_1^+,\dots,g_k^+)\in\flags_\Theta^{[k]}$ is a tuple of pairwise transverse flags. The set $\proxGTheta^{[k]}$ is an open subset of $G^k$ which is invariant under conjugation.\\

Anosov representations were defined originally in \cite{Labourie_AnosovReps}, and generalized in \cite{AnosovReps_GuichardWienhard}. We will not give a definition of $\Theta-$Anosov representations, but we will give their main properties given in \cite[Theorem 1.7]{GGKW_17}. If $\rho\colon\pi\to G$ is a $\Theta-$Anosov representation, then there exists a continuous $\rho-$equivariant boundary map $\bdryMap\colon\partial_\infty\pi\to\flags_\Theta$ which is
\begin{enumerate}
    \item \textit{transversal}, that is, for every $x\neq y\in\partial_\infty\pi$, the flags $\flags_\rho(x)$ and $\flags_\rho(y)$ are transverse, and
    \item \textit{dynamics preserving}, that is, for every non-trivial $\gamma\in\pi$ with attracting fixed point $\gamma^+\in\partial_\infty\pi$, the flag $\flags_\rho(\gamma^+)$ is the attracting fixed point of $\rho(\gamma)$, in particular $\rho(\gamma)$ is proximal in $\flags_\Theta$.
\end{enumerate}

We will denote by $\Hom^\Theta(\pi,G)$ the set of $\Theta-$Anosov representations, and by $\ThetaAnosovReps$ the set of conjugacy classes of $\Theta-$Anosov representations.\\

A class of representations that will reappear throughout the article is Hitchin representations, which we now describe. A representation is \textit{Hitchin} if it is in the same connected component of $\Hom(\pi,G)$ as representations which factor through an irreducible representation $\SL 2\to\SL d$. The quotient of the space of Hitchin representations will be denoted by $\Hit d$. It was shown in \cite{Labourie_AnosovReps} that every Hitchin representation is $\Delta = \{1,\dots,d-1\}-$Anosov.

\subsubsection{Length functions}\label{sec : length functions}
An important function on Teichm\"uller space is the length function. Namely, for a closed curve $\alpha$ in $S$, let $\ell_\alpha\colon\Teich\to\R$ be the function measuring the length of the geodesic representative of $\alpha$ in a given hyperbolic metric. The Hamiltonian flow of this function is the well known twist flow along $\alpha$ \cite{Wolpert_FNtwist}. Another way to realize the length function is through Goldman's setting of invariant functions \cite{InvFct_Goldman}. Let 
\begin{align*}
    \ell^1\colon \PSL 2&\to\R\\
    g&\mapsto \log\frac{\lambda_1(\widetilde g)}{\lambda_2(\widetilde g)},
\end{align*}
where $\widetilde g$ is a lift of $g$ to a matrix in $\SL 2$, and $\lambda_i(\widetilde g)$ is the modulus of the eigenvalue of $\widetilde g$ in non-decreasing order. Then the induced function $\ell^1_\alpha$ on the character variety $\mathscr X(\pi,\PSL 2)$ restricted to the Teichm\"uller component is equal to the function $\ell_\alpha$.\\

Length functions generalize to the higher rank setting when $G = \SL d$ and $\Theta-$Anosov representations in the context of Goldman's invariant functions \cite{InvFct_Goldman}. For an element $g\in\SL d$, let $\lambda_1(g)\geq\lambda_2(g)\geq\cdots\geq\lambda_d(g)$ be the modulus of the eigenvalues of $g$ counted with multiplicity. The for each $j\in\Theta$, the \textit{$j-$th simple root length} is defined to be
\begin{align*}
    \ell^j\colon\proxGTheta &\to\R\\
    g&\mapsto \log\frac{\lambda_j(g)}{\lambda_{j+1}(g)},
\end{align*}
which by \cite[Definition 2.25]{GGKW_17} is always positive for elements that are proximal in $\flags_\Theta$. Hence, picking a curve $\alpha\in\pi$, we obtain a function
\begin{align*}
    \ell^j_\alpha\colon \ThetaAnosovReps & \to\R\\
    [\rho]&\mapsto \ell^j(\rho(\alpha))
\end{align*}
which is positive. The Hamiltonian flow of these functions $\ell^j_\alpha$ was already described by \cite{InvFct_Goldman}.\\

In the case of Hitchin representations, Bonahon-Dreyer extended these length functions to oriented geodesic currents in \cite[Section 7]{BonahonDreyer_GeneralLaminations} by approximating currents with weighted curves. For an oriented geodesic current $\mu$ and $j\in\{1,\dots,d-1\}$, denote by $\ell^j_\mu$ the $j-$th simple root length of the geodesic current. In our setting with invariant multi-functions, it is possible to realize the length function of an oriented geodesic current as an induced invariant function. 
A supporting subsurface $S_0 \subset S$ for a geodesic current $\mu$ is defined in a similar fashion as in Definition \ref{def : supportin subsurface}.

\begin{corollary}\label{cor : Bonahon Dreyer length functions are invariant functions}
    Let $j\in\{1,\dots,d-1\}$, $\mu$ an oriented geodesic current, and $S_0$ the supporting subsurface for the support of $\mu$. Then the length function $\ell^j_\mu\colon\Hit d\to\R$ can be realized as an invariant function induced by generators of the fundamental group of the supporting subsurface $S_0$ of $\mu$.
\end{corollary}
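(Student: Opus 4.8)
The plan is to realize $\ell^j_\mu$ as a limit of simple-root-length functions $\ell^j_{\alpha_n}$ for a sequence of weighted multicurves $\alpha_n$ converging to $\mu$ (as in Bonahon--Dreyer \cite[\S 7]{BonahonDreyer_GeneralLaminations}), and then show that for each $n$ a \emph{single} invariant multi-function on $\SL d^{k}$ captures $\ell^j_{\alpha_n}$, with $k$ the number of generators of $\pi_1(S_0)$, and that these multi-functions can be taken to converge (on the Hitchin locus) to an invariant multi-function inducing $\ell^j_\mu$. The key point enabling this, and the reason we must restrict to $\Hit d$, is that on Hitchin representations the relevant element is always proximal in $\flags_\Delta$, so $\ell^j$ is a genuine smooth ($C^1$) function on the conjugation-invariant open set $\proxGTheta \subset \SL d$, and the length of a weighted multicurve is a positive linear combination of the values $\ell^j(\rho(\gamma_i))$ for closed curves $\gamma_i$ supported in $S_0$.

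First I would fix a finite generating set $\gamma_1,\dots,\gamma_k$ of $\pi_1(S_0,p)$ consisting of simple closed curves, so that every closed curve in $S_0$ is a word $w_i(\gamma_1,\dots,\gamma_k)$; by the convention in Remark~\ref{rem : defining invariant functions on subsets of G} and the word-map construction of Section~\ref{sec : invariant functions through word maps}, any function of the form $g \mapsto \sum_i c_i\, \ell^j(w_i(g_1,\dots,g_k))$ with $w_i(\underline g) \in \proxGTheta$ is an invariant (multi-)function defined on an open conjugation-invariant subset $E^k$ containing the image of every Hitchin representation, hence induces a function on $\Hit d$. Second, for a weighted multicurve $\mu_n = \sum_i c_i^{(n)}[\alpha_i^{(n)}]$ with each $\alpha_i^{(n)}$ a closed curve in $S_0$, write $\alpha_i^{(n)} = w_i^{(n)}(\gamma_1,\dots,\gamma_k)$ and set $f^{(n)}(\underline g) = \sum_i c_i^{(n)} \ell^j\!\left(w_i^{(n)}(\underline g)\right)$; then $f^{(n)}_{(\gamma_1,\dots,\gamma_k)} = \ell^j_{\mu_n}$ on $\Hit d$ by definition of the current length on weighted multicurves. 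Third, I would invoke Bonahon--Dreyer's continuity: the length of a geodesic current varies continuously, so $\ell^j_{\mu_n} \to \ell^j_\mu$ uniformly on compact subsets of $\Hit d$; I then need the limiting function itself to be induced by a single invariant multi-function on $E^k$, which I would obtain by expressing $\ell^j_\mu$ directly as an integral/limit against the Anosov boundary map and checking that, as a function of $(\rho(\gamma_1),\dots,\rho(\gamma_k))$, it factors through a $C^1$ conjugation-invariant function of the tuple of flags $(g_1^+,\dots,g_k^+, g_1^-,\dots)$, which only depends on $\underline g \in \proxGTheta^{[k]}$.

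The main obstacle is this last step: while each $\ell^j_{\mu_n}$ is patently an induced invariant function, it is not automatic that the \emph{limit} $\ell^j_\mu$ is one, because a priori it might only be a limit of such functions rather than a single invariant multi-function evaluated on a single tuple. To handle this I would use that the support of $\mu$ lies in $S_0$, so $\mu$ is determined by its "transverse data" seen through the $\rho$-equivariant boundary map $\flags_\rho$ restricted to the limit set attached to $S_0$; concretely, $\ell^j_\mu([\rho])$ can be written as a continuous functional of the finitely many flags $\rho(\gamma_i)^\pm \in \flags_\Theta$ (equivalently, of $(\rho(\gamma_1),\dots,\rho(\gamma_k)) \in \proxGTheta^{[k]}$), because $\flags_\rho$ is Hölder and $\pi_1(S_0)$-equivariantly reconstructs the whole limit set of $S_0$ from the generators. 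Packaging that functional as $f\colon E^k \to \R$ with $E \subseteq \proxGTheta$ open and conjugation-invariant, and verifying $C^1$-smoothness in the Hitchin range (which follows from smoothness of $g \mapsto g^\pm$ on $\proxGTheta$ and smoothness of the length functional, cf. \cite[\S 7]{BonahonDreyer_GeneralLaminations}), gives $f_{(\gamma_1,\dots,\gamma_k)} = \ell^j_\mu$ on $\Hit d$, as claimed.
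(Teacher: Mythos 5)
Your overall strategy (approximate $\mu$ by weighted closed curves supported in $S_0$, write them as words in a fixed generating set of $\pi_1(S_0)$, and pass to the limit using Bonahon--Dreyer) is the same as the paper's, and you correctly identify the one real issue: why the \emph{limit} is itself a single induced invariant multi-function. But the way you propose to close that gap does not work. You claim that $\ell^j_\mu([\rho])$ can be written as a ($C^1$, conjugation-invariant) functional of the tuple of flags $(\rho(\gamma_1)^\pm,\dots,\rho(\gamma_k)^\pm)$, ``equivalently'' of $(\rho(\gamma_1),\dots,\rho(\gamma_k))\in\proxGTheta^{[k]}$. These are not equivalent, and the flag version is false: simple root lengths are eigenvalue data, not flag data. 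Already in the simplest case where $\mu$ is a single weighted simple closed curve $a$ and $\pi_1(S_0)=\langle a\rangle$, the function $\ell^j_\mu([\rho])=\ell^j(\rho(a))$ is not determined by $\rho(a)^{\pm}$; more generally the attracting/repelling flags of a generating tuple do not determine the eigenvalues of words in the generators, nor the restricted representation up to conjugacy. So the mechanism you invoke (``the boundary map reconstructs the limit set of $S_0$ from finitely many flags'') cannot produce the desired multi-function, and the factorization through $\flags_\Theta$-data in the style of Lemma \ref{lem : functions on boundary are limits of invariant functions} is the wrong tool here.

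The paper's resolution is simpler and avoids this entirely: take the limit at the level of functions on $\proxGTheta^{k}$, not on the character variety. Writing the approximating weighted curves as word maps $\gamma_n$ in the generators $(\alpha_1,\dots,\alpha_k)$ of $\pi_1(S_0)$, one defines
\[
L^j(h_1,\dots,h_k)\coloneqq\lim_{n\to\infty}x_n\,\ell^j\bigl(\gamma_n(h_1,\dots,h_k)\bigr),
\]
which is by construction a single conjugation-invariant function of the tuple $(h_1,\dots,h_k)$, and then Bonahon--Dreyer's Theorem 7.1 gives $\ell^j_\mu=L^j_{(\alpha_1,\dots,\alpha_k)}$ on $\Hit d$. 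Your first two steps (the functions $f^{(n)}$ inducing $\ell^j_{\mu_n}$) are exactly the partial sums of this limit, so the fix is to replace your flag-based packaging with this pointwise limit of the $f^{(n)}$ as functions on (a conjugation-invariant subset of) $\SL d^{\,k}$; the remaining regularity caveats are then no worse than in the paper's own argument.
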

\begin{proof}
    The space of oriented geodesic currents can be described as a closure of the space of free homotopy classes of curves with real positive weights \cite[Section 1, Proposition 2]{TeichviaCurrents_Bonahon}. Let $\mu$ be an oriented geodesic current, and let $(x_n\gamma_n)_{n\in\N}$, with $x_n\in\R_+$ and $\gamma_n\in\pi$ be a sequence convering to $\mu$. We may moreover require that the curves $\gamma_n$ be contained in the supporting subsurface $S_0$. Now pick generators $\{\alpha_1,,\dots,\alpha_k\}$ of the fundamental group of $S_0$, and denote also by $\gamma_{n}$ the word maps sending the generators to the curve $\gamma_n$. Then define
    \begin{align*}
        L^j\colon\proxGTheta^{k}&\to\R\\
        (h_1,\dots,h_{k})&\mapsto \lim_{n\to\infty}x_n\cdot \ell^j(\gamma_n(h_1,\dots,h_{k})).
    \end{align*}
    This is an invariant function and by Theorem 7.1 in \cite{BonahonDreyer_GeneralLaminations}, it follows that 
    \[
        \ell^j_\mu = L^j_{(\alpha_1,\dots,\alpha_k)}
    \]
    when restricted to the Hitchin component of $\SL d$.
\end{proof}

We can further use this corollary combined with Corollary \ref{cor : if supporting subsurfaces are disjoint, flows commute} to deduce when the Hamiltonian flows of length functions of geodesic currents commute.

\begin{corollary}
    Let $j_1,j_2\in\{1,\dots, d-1\}$, and $\mu_1,\mu_2$ be two oriented geodesic currents. If $\mu_1$ and $\mu_2$ admit supporting subsurfaces $S_1$ and $S_2$ such that $S_1\cap S_2 = \emptyset$, then
    \[
    \left\{\ell^{j_1}_{\mu_1},\ell^{j_2}_{\mu_2}\right\}\equiv 0
    \]
    on $\Hit d$, and in particular their Hamiltonian flows commute.
\end{corollary}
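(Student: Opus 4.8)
The plan is to combine the two previous corollaries in a completely formal way. First I would recall that by Corollary~\ref{cor : Bonahon Dreyer length functions are invariant functions}, the length function $\ell^{j_1}_{\mu_1}$ is realized as an induced invariant function $L^{j_1}_{\ub\alpha}$, where $\ub\alpha$ is a tuple of generators of $\pi_1(S_1)$ and $L^{j_1}\colon\proxGTheta^{k_1}\to\R$ is the invariant (multi-)function built as a limit of weighted simple-root-length functions along curves supported in $S_1$. Symmetrically, $\ell^{j_2}_{\mu_2} = L^{j_2}_{\ub\beta}$ for a tuple $\ub\beta$ of generators of $\pi_1(S_2)$.

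The key point is then that $S_1$ and $S_2$ are, by hypothesis, disjoint essential subsurfaces, so they serve as supporting subsurfaces for $\ub\alpha$ and $\ub\beta$ respectively: each bouquet of generating curves can be homotoped into the corresponding $S_i$, and the disjointness gives representatives of the $\alpha_i$ and $\beta_j$ with $\alpha_i\#\beta_j = \emptyset$ for all $i,j$. I would therefore apply Corollary~\ref{cor : if supporting subsurfaces are disjoint, flows commute} directly to conclude $\{L^{j_1}_{\ub\alpha},L^{j_2}_{\ub\beta}\}\equiv 0$, hence $\{\ell^{j_1}_{\mu_1},\ell^{j_2}_{\mu_2}\}\equiv 0$ on $\Hit d$. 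The vanishing of the Poisson bracket implies the associated Hamiltonian flows commute (wherever both are defined), by the standard fact cited earlier (\cite[Corollary 9 pp. 218]{MathMethodsOfClassicalMechanics_Arnold}).

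The one subtlety I expect to be the main (minor) obstacle is bookkeeping about domains of definition: $L^{j_i}$ is only defined on (an open conjugation-invariant subset of) $\proxGTheta^{k_i}$, not all of $G^{k_i}$, so strictly one is working with functions defined on a subset of the character variety as in Remark~\ref{rem : defining invariant functions on subsets of G}. Since $\Hit d$ consists of $\Delta$-Anosov representations, every Hitchin representation sends the generating curves into $\proxGTheta$, so $\Hit d$ lies in the common domain and the product formula of Theorem~\ref{thm: product formula} applies there without change; the intersection-pairing sum is empty because $\alpha_i\#\beta_j=\emptyset$. Hence the argument goes through verbatim, and the proof is essentially one line once the two corollaries are invoked.

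\begin{proof}
    By Corollary~\ref{cor : Bonahon Dreyer length functions are invariant functions}, there are invariant functions $L^{j_1}$ and $L^{j_2}$ on suitable powers of $\proxGTheta$, and tuples $\ub\alpha$ of generators of $\pi_1(S_1)$ and $\ub\beta$ of generators of $\pi_1(S_2)$, such that $\ell^{j_1}_{\mu_1} = L^{j_1}_{\ub\alpha}$ and $\ell^{j_2}_{\mu_2} = L^{j_2}_{\ub\beta}$ on $\Hit d$. Since $S_1$ is a supporting subsurface for $\ub\alpha$ and $S_2$ is a supporting subsurface for $\ub\beta$, and $S_1\cap S_2 = \emptyset$, Corollary~\ref{cor : if supporting subsurfaces are disjoint, flows commute} gives
    \[
    \{\ell^{j_1}_{\mu_1},\ell^{j_2}_{\mu_2}\} = \{L^{j_1}_{\ub\alpha},L^{j_2}_{\ub\beta}\}\equiv 0
    \]
    on $\Hit d$. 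The vanishing of the Poisson bracket implies that the Hamiltonian flows commute (see \cite[Corollary 9 pp. 218]{MathMethodsOfClassicalMechanics_Arnold}).
\end{proof}
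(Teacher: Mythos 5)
Your proposal is correct and follows exactly the route the paper intends: realize $\ell^{j_1}_{\mu_1}$ and $\ell^{j_2}_{\mu_2}$ as induced invariant functions via Corollary~\ref{cor : Bonahon Dreyer length functions are invariant functions} using generators of $\pi_1(S_1)$ and $\pi_1(S_2)$, then apply Corollary~\ref{cor : if supporting subsurfaces are disjoint, flows commute} to the disjoint supporting subsurfaces. Your remark about the domain being $\proxGTheta^{k_i}$ rather than all of $G^{k_i}$ (handled by Remark~\ref{rem : defining invariant functions on subsets of G} and the Anosov property of Hitchin representations) is a reasonable point of care that the paper leaves implicit.
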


\subsubsection{Correlation functions}\label{sec : correlation functions}
Correlation functions were first introduced in \cite{Labourie_SwappingAlgebra} to study the swapping algebra defined in the same paper, and in \cite{SimpleLengthRigidity_BCL} in order to establish simple marked length rigidity for Hitchin representations. In \cite{GhostPolygons}, Bridgeman and Labourie generalize correlation functions in particular to $\Theta-$Anosov representations and establish a formula for the Poisson bracket between such functions. We now explain what correlation functions are and how they can be realized as induced invariant functions.\\

Begin by associating to an element $g\in\proxGTheta$ a splitting of $\R^d$. Let $L_j(g) = (g^+)^{(i_j)}\cap (g^-)^{(d-i_{j-1})}$ for $j = 1,\dots,r+1$, where we use the convention that $i_0 = 0$, $i_{r+1}=d$ and $F^{(d)}= \R^d$ for any flag $F$. Since the flags $g^+$ and $g^-$ are transverse, we have that $\dim L_j(g) = i_j-i_{j-1}$ and moreover,
\[
\R^d = L_1(g)+L_2(g)+\cdots+L_{r+1}(g),
\]
    is a direct sum. The \textit{projector} $p_j(g)\colon\R^d\to\R^d$ is defined to be the projection onto $L_j(g)$ with kernel $\bigoplus_{i\neq j}L_i(g)$ for any $j \in\{1,\dots,r+1\}$.\\

Let $\rho\colon\pi\to G$ be a $\Theta-$Anosov representation. In particular, we have that for every non-trivial $\gamma\in\pi$, $\rho(\gamma)\in\proxGTheta$, which allows us to define a projector $p_{j}(\gamma)$ for any $j\in\Theta$. Consider a $k-$tuple $\ub\alpha = (\alpha_1,\dots,\alpha_k)\in\pi^k$ as a well as a $k-$tuple $I = (i_j)_{j = 1}^k$ with $i_j\in \Theta$. Then the \textit{correlation function associated to $\ub\alpha$ and $I$} is the function
\begin{align*}
\T^I(\ub\alpha)\colon\ThetaAnosovReps&\to\R\\
[\rho]&\mapsto \trace\left(p_{i_k}(\rho(\alpha_k))p_{i_{k-1}}(\rho(\alpha_{k-1}))\cdots p_{i_1}(\rho(\alpha_1))\right).
\end{align*}

Since this function can be realized as an induced invariant function through the function

\begin{align*}
	C^I\colon \proxGTheta^k&\to\R\\
	(g_1,\dots,g_k)&\mapsto \trace\left(\prod_{j = 1}^kp_{i_j}(g_j)\right),
\end{align*}
we immediately obtain the following corollary.

\begin{corollary}
    The correlation functions $T^I(\ub\alpha)$ on $\ThetaAnosovReps$ are realized as invariant functions induced by the tuple $\ub\alpha$.
\end{corollary}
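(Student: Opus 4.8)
The plan is to observe that the correlation function $T^I(\ub\alpha)$ is \emph{literally} the evaluation of an invariant multi-function, so the only real work is checking that the multi-function in question is well-defined, invariant under the diagonal conjugation action, and of class $C^1$ on its (open, conjugation-invariant) domain. First I would recall from Remark~\ref{rem : defining invariant functions on subsets of G} that it suffices to exhibit an open, conjugation-invariant subset $E^{[k]}\subset G^k$ together with a $C^1$, $G$-diagonally-invariant function $E^{[k]}\to\R$ whose composition with $[\rho]\mapsto(\rho(\alpha_1),\dots,\rho(\alpha_k))$ agrees with $T^I(\ub\alpha)$ on $\ThetaAnosovReps$. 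The natural candidate for $E^{[k]}$ is $\proxGTheta^k$ (or, if one wants the projectors to interact transversally, $\proxGTheta^{[k]}$), which was already noted above to be open in $G^k$ and invariant under conjugation.

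The key steps, in order, are: (1) \emph{Well-definedness of the projectors as functions on $\proxGTheta$.} For $g\in\proxGTheta$ the attracting flags $g^+$ and $g^-$ exist, are unique, and are transverse by \cite[Proposition 3.3]{GGKW_17}, so the subspaces $L_j(g)=(g^+)^{(i_j)}\cap(g^-)^{(d-i_{j-1})}$ give a genuine direct-sum decomposition of $\R^d$, and hence $p_j(g)$ is a well-defined linear projector. (2) \emph{Smoothness.} The maps $g\mapsto g^{\pm}$ are continuous on $\proxGTheta$; in fact, on the open subset where the relevant eigenvalue gaps are strict they are real-analytic (the attracting flag is cut out by a spectral projection / can be read off from convergent powers $g^n/\|g^n\|$), so $g\mapsto p_j(g)$ is at least $C^1$ there, and therefore so is $(g_1,\dots,g_k)\mapsto\trace(\prod_{j=1}^k p_{i_j}(g_j))$. (If one only wants continuity, replace "$C^1$" by the corresponding statement and note Remark~\ref{rem : defining invariant functions on subsets of G} still applies in spirit; but the cleanest route is to restrict to the locus where everything is analytic, which contains all images of Anosov representations.) (3) \emph{Conjugation-invariance.} For $h\in G$ one has $(hgh^{-1})^{\pm}=h\cdot g^{\pm}$, hence $L_j(hgh^{-1})=h\,L_j(g)$ and $p_j(hgh^{-1})=h\,p_j(g)\,h^{-1}$; substituting into $C^I$ and using cyclicity of the trace,
\[
C^I(hg_1h^{-1},\dots,hg_kh^{-1})=\trace\Big(\prod_{j=1}^k h\,p_{i_j}(g_j)\,h^{-1}\Big)=\trace\Big(\prod_{j=1}^k p_{i_j}(g_j)\Big)=C^I(g_1,\dots,g_k),
\]
so $C^I$ is invariant under the diagonal action of $G$ (though not under the $G^k$-action, since the $p_{i_j}(g_j)$ do not commute). (4) \emph{Identification.} For a $\Theta$-Anosov representation $\rho$ and any non-trivial $\gamma\in\pi$, property (2) of the boundary map (dynamics-preserving) gives $\rho(\gamma)\in\proxGTheta$ and $\rho(\gamma)^{\pm}=\bdryMap(\gamma^{\pm})$, so $p_{i}(\gamma)$ as defined via $\rho$ equals $p_i(\rho(\gamma))$; thus $C^I(\rho(\alpha_1),\dots,\rho(\alpha_k))=T^I(\ub\alpha)([\rho])$, i.e. $T^I(\ub\alpha)=C^I_{\ub\alpha}$.

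The main obstacle, such as it is, is Step (2): confirming the regularity of $g\mapsto g^{\pm}$ (and hence of the projectors) on a conjugation-invariant open set large enough to contain the image of every $\Theta$-Anosov representation, so that the resulting function genuinely falls under the hypotheses of Remark~\ref{rem : defining invariant functions on subsets of G}. This is where one must be a little careful about which open set $E^{[k]}$ to use — $\proxGTheta$ itself is the honest domain, and continuity on it follows from the dynamical characterization of $g^{\pm}$; the stronger $C^1$ (indeed analytic) statement holds on the subset of $g$ with the appropriate strict eigenvalue gaps, and since Anosov holonomies $\rho(\gamma)$ are $\proxGTheta$-proximal this subset already suffices for all the applications (e.g. computing Poisson brackets via Theorem~\ref{thm: product formula}). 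Everything else is immediate from cyclicity of the trace and equivariance of attracting flags.
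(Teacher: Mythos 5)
Your proposal is correct and follows essentially the same route as the paper: the paper simply introduces the function $C^I\colon\proxGTheta^k\to\R$, $(g_1,\dots,g_k)\mapsto\trace\bigl(\prod_{j=1}^k p_{i_j}(g_j)\bigr)$, and observes (via Remark \ref{rem : defining invariant functions on subsets of G} and the dynamics-preserving property of the boundary map) that the corollary is immediate, which is exactly your Steps (1)--(4) with the routine verifications of equivariance of $g\mapsto g^{\pm}$, conjugation-invariance, and the identification $C^I_{\ub\alpha}=\T^I(\ub\alpha)$ spelled out. Your extra care about regularity and about $\proxGTheta^k$ versus $\proxGTheta^{[k]}$ is harmless; the paper works directly on $\proxGTheta^k$, since only proximality of each $g_j$ individually is needed for the projectors.
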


\subsubsection{Functions associated to boundary maps}\label{sec : functions associated to boundary maps}
Here we define functions depending on distinct points on the boundary $\partial_\infty\pi$ by heavily using the existence and dynamical properties of the boundary map attached to $\Theta-$Anosov representations. The main examples of such functions will be cross and triple ratios.\\

Denote by $(\partial_\infty\pi)^{[k]}$ the set of pairwise distinct ordered $k-$tuples in $(\partial_\infty\pi)^k$. Let $\phi\colon\flags_\Theta^{[k]}\to\R$ be a smooth function invariant under the diagonal action of $G$ on $\flags_\Theta^{[k]}$. Given a tuple $\ub x = (x_1,\dots,x_k)\in(\partial_\infty\pi)^{[k]}$, we obtain a function 
\begin{align*}
\phi_{\ub x}\colon\ThetaAnosovReps&\to\R\\
[\rho]&\mapsto \phi(\flags_\rho(x_1),\dots,\bdryMap(x_k)),
\end{align*}
which is well defined by the $\rho-$equivariance of the limit map, the invariance of $\phi$ and the transversality of the boundary map. In the following lemma, we show that the function $\phi_{\ub x}$ is a limit of smooth induced invariant functions.

\begin{lemma}\label{lem : functions on boundary are limits of invariant functions}
    Let $\phi\colon\flags_\Theta^{[k]}\to\R$ be a smooth function invariant under the diagonal action of $G$. The function
    \begin{align*}
        f^\phi\colon \proxGTheta^{[k]}&\to\R\\
        (g_1,\dots,g_k)&\mapsto \phi(g_1^+,\dots,g_k^+),
    \end{align*}
    is a smooth $G-$invariant function. Moreover, let $\ub x = (x_1,\dots,x_k)\in(\partial_\infty\pi)^{[k]}$ and let $(\ub\gamma^n)_{n\in\N}=(\gamma_1^n,\dots,\gamma_k^n)_{n\in\N}\subset \pi^k$ be a sequence such that $((\gamma_1^n)^+,\dots,(\gamma_k^n)^+)\in (\partial_\infty\pi)^{[k]}$ for every $n\in\N$ and $(\gamma_i^n)^+\xrightarrow[]{n\to\infty}x_i$ for $i = 1,\dots,k$. Then 
    \[
    \lim_{n\to\infty}f^\phi_{\ub\gamma^n}([\rho]) = \phi_{\ub x}([\rho])
    \]
    for every $[\rho]\in\ThetaAnosovReps$.
\end{lemma}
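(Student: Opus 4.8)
The plan is to verify the two claims separately, starting with smoothness and $G$-invariance of $f^\phi$, and then establishing the limit formula by combining the convergence $(\gamma_i^n)^+ \to x_i$ with the continuity of the boundary map $\bdryMap$ and the continuity of $\phi$. First I would address $f^\phi$: the map $g \mapsto g^+$ sending a proximal element to its attracting flag in $\flags_\Theta$ is smooth on $\proxGTheta$ (the attracting flag varies analytically with $g$, as it is cut out by spectral data — an eigenspace decomposition depending on a simple eigenvalue gap, which persists under the open condition defining $\proxGTheta$). On the open set $\proxGTheta^{[k]}$, the tuple $(g_1^+,\dots,g_k^+)$ lands in $\flags_\Theta^{[k]}$ by definition, so the composition $f^\phi = \phi \circ (g_1^+,\dots,g_k^+)$ is smooth there. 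For $G$-invariance: since $(hgh^{-1})^+ = h \cdot g^+$ for $h \in G$ (conjugation carries the attracting flag of $g$ to that of $hgh^{-1}$), the diagonal $G$-invariance of $\phi$ on $\flags_\Theta^{[k]}$ immediately gives $f^\phi(h g_1 h^{-1}, \dots, h g_k h^{-1}) = \phi(h\cdot g_1^+, \dots, h\cdot g_k^+) = \phi(g_1^+,\dots,g_k^+) = f^\phi(\ub g)$.

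For the limit statement, fix $[\rho] \in \ThetaAnosovReps$ and choose a representative $\rho$. By definition, $f^\phi_{\ub\gamma^n}([\rho]) = f^\phi(\rho(\gamma_1^n), \dots, \rho(\gamma_k^n)) = \phi\big((\rho(\gamma_1^n))^+, \dots, (\rho(\gamma_k^n))^+\big)$. The key point is that, because $\rho$ is $\Theta$-Anosov, the boundary map $\bdryMap\colon \partial_\infty\pi \to \flags_\Theta$ is dynamics-preserving, so for each non-trivial $\gamma_i^n$ we have $(\rho(\gamma_i^n))^+ = \bdryMap\big((\gamma_i^n)^+\big)$. Hence $f^\phi_{\ub\gamma^n}([\rho]) = \phi\big(\bdryMap((\gamma_1^n)^+), \dots, \bdryMap((\gamma_k^n)^+)\big)$. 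Now let $n \to \infty$: by hypothesis $(\gamma_i^n)^+ \to x_i$ in $\partial_\infty\pi$, so by continuity of $\bdryMap$ we get $\bdryMap((\gamma_i^n)^+) \to \bdryMap(x_i)$ in $\flags_\Theta$. Since the $x_i$ are pairwise distinct, transversality of $\bdryMap$ guarantees $(\bdryMap(x_1),\dots,\bdryMap(x_k)) \in \flags_\Theta^{[k]}$, and for $n$ large the approximating tuples also lie in $\flags_\Theta^{[k]}$ (this is an open condition, or one may invoke the hypothesis that $((\gamma_i^n)^+)_i \in (\partial_\infty\pi)^{[k]}$ together with dynamics-preservation and transversality). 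Continuity of $\phi$ on $\flags_\Theta^{[k]}$ then yields $\phi\big(\bdryMap((\gamma_1^n)^+), \dots, \bdryMap((\gamma_k^n)^+)\big) \to \phi(\bdryMap(x_1),\dots,\bdryMap(x_k)) = \phi_{\ub x}([\rho])$, which is the desired conclusion.

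The one point requiring care — and the main obstacle — is ensuring that the approximating flag tuples remain in the domain $\flags_\Theta^{[k]}$ where $\phi$ is defined and continuous, so that the convergence argument is legitimate rather than formal. This is handled by noting that $\flags_\Theta^{[k]} \subset (\flags_\Theta)^k$ is open: a limit of pairwise transverse tuples that is itself pairwise transverse is approached from within the set, and the hypothesis $((\gamma_1^n)^+,\dots,(\gamma_k^n)^+) \in (\partial_\infty\pi)^{[k]}$ combined with the dynamics-preserving and transversality properties of $\bdryMap$ forces $(\bdryMap((\gamma_1^n)^+),\dots,\bdryMap((\gamma_k^n)^+)) \in \flags_\Theta^{[k]}$ for every $n$. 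A secondary subtlety is that $\phi_{\ub x}$ itself is well-defined (independent of the representative $\rho$ of $[\rho]$), which follows from the $\rho$-equivariance of $\bdryMap$ and the $G$-invariance of $\phi$; this is already asserted in the text preceding the lemma and does not need re-proving.
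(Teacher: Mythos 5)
Your proposal is correct and takes essentially the same route as the paper's proof: smoothness and $G$-invariance of $f^\phi$ via smoothness of $g\mapsto g^+$ and the identity $(hgh^{-1})^+ = h\cdot g^+$, and the limit via the dynamics-preserving property $\rho(\gamma)^+ = \bdryMap(\gamma^+)$ combined with continuity of $\bdryMap$ and of $\phi$. Your extra care about the approximating flag tuples lying in $\flags_\Theta^{[k]}$ is a point the paper leaves implicit, but it is a harmless (and correct) elaboration rather than a different argument.
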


\begin{proof}
    The function $f^\phi$ is smooth since the map $\proxGTheta^{[k]}\to\flags_\Theta^{[k]}$ sending elements to their attracting flags and $\phi$ are smooth. To see that $f$ is an invariant function, we compute that
    \[
    f^\phi((hg_1h^{-1}),\dots,hg_kh^{-1}) = \phi(h\cdot g_1^+,\dots,h\cdot g_k^+) = \phi(g_1^+,\dots,g_k^+) = f^\phi(g_1,\dots,g_k),
    \]
    where the second equality follows from the $G-$invariance of $\phi$. For the last part, let $[\rho]\in\ThetaAnosovReps$. Since for any $\Theta-$Anosov representation $\rho$ the boundary map $\bdryMap\colon\partial_\infty\pi\to \flags_\Theta$ is dynamics preserving, we have that $\rho(\gamma)^+ = \bdryMap(\gamma^+)$ for any $\gamma\in\pi$. Hence,
    \[
    f^\phi_{\ub\gamma^n}([\rho]) = \phi(\flags_\rho((\gamma_1^n)^+),\dots,\flags_\rho((\gamma_k^n)^+)).
    \]
    Then
    \begin{align*}
        \lim_{n\to\infty}f^\phi_{\ub\gamma^n}([\rho]) &= \lim_{n\to\infty} \phi(\flags_\rho((\gamma_1^n)^+),\dots,\flags_\rho((\gamma_k^n)^+))\\
        &= \phi(\flags_\rho(x_1),\dots,\flags_\rho(x_k)) = \phi_{\ub x}([\rho]),
    \end{align*}
    where the second equality follows by the smoothness of $\phi$, and the continuity of $\flags_\rho$.
\end{proof}

With this construction and Corollary \ref{cor : if supporting subsurfaces are disjoint, flows commute}, we deduce when Hamiltonian flows of functions on tuples in $\partial_\infty\pi$ commute.
For $\ub x \in (\partial_\infty \pi)^{k}$, say that (the isotopy class of) an essential subsurface $S_{\ub x}$ supports $\ub x$ if the projection of set of all geodesics joining points in $\ub x$ is contained in $S_{\ub x}$, for some hyperbolic metric on $S$.

\begin{corollary}
    Let $k_1,k_2\in\N$ and $\phi^i\colon \flags_\Theta^{[k_i]}\to\R$ be a smooth invariant function for $i = 1,2$. Let $\ub x\in(\partial_\infty\pi)^{[k_1]}$ and $\ub y\in(\partial_\infty\pi)^{[k_2]}$. If there are supporting subsurfaces $S_{\ub x}$ and $S_{\ub y}$  for $\ub x$ and for $\ub y$
    respectively such that $S_{\ub x}\cap S_{\ub y}=\emptyset$, then 
    \[
    \{\phi^1_{\ub x},\phi^2_{\ub y}\} \equiv 0
    \]
    on $\ThetaAnosovReps$.
\end{corollary}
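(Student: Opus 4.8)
The strategy is to realize both functions as pointwise limits of induced invariant functions carried by curves lying inside $S_{\ub x}$ and $S_{\ub y}$ respectively, to apply Corollary \ref{cor : if supporting subsurfaces are disjoint, flows commute} to each approximating pair, and then to pass to the limit in the Poisson bracket. First I would arrange the approximating tuples to live in the supporting subsurfaces. Since $S_{\ub x}$ is essential and every geodesic joining two points of $\ub x$ projects into $S_{\ub x}$, each $x_i$ belongs to the limit set of the quasiconvex subgroup $\pi_1(S_{\ub x})<\pi$, and attracting fixed points of primitive elements of $\pi_1(S_{\ub x})$ are dense in that limit set; hence there are tuples $\ub\gamma^n=(\gamma_1^n,\dots,\gamma_{k_1}^n)\in\pi_1(S_{\ub x})^{k_1}$ with pairwise distinct attracting fixed points and $(\gamma_i^n)^+\to x_i$ for every $i$. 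Choose $\ub\delta^m\in\pi_1(S_{\ub y})^{k_2}$ analogously for $\ub y$. By Lemma \ref{lem : functions on boundary are limits of invariant functions}, $f^{\phi^1}$ and $f^{\phi^2}$ are smooth $G$-invariant functions on $\proxGTheta^{[k_1]}$ and $\proxGTheta^{[k_2]}$ which induce (Remark \ref{rem : defining invariant functions on subsets of G}) functions $f^{\phi^1}_{\ub\gamma^n}$ and $f^{\phi^2}_{\ub\delta^m}$ on open subsets of $\CharVar\pi$ containing $\ThetaAnosovReps$, with $f^{\phi^1}_{\ub\gamma^n}\to\phi^1_{\ub x}$ and $f^{\phi^2}_{\ub\delta^m}\to\phi^2_{\ub y}$ pointwise on $\ThetaAnosovReps$.

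For each fixed $n$ and $m$, a supporting subsurface of $\ub\gamma^n$ is contained in $S_{\ub x}$ and a supporting subsurface of $\ub\delta^m$ is contained in $S_{\ub y}$, hence these are disjoint; the product formula (Theorem \ref{thm: product formula}) and Corollary \ref{cor : if supporting subsurfaces are disjoint, flows commute}, which hold verbatim for functions defined on the conjugation-invariant open subsets of $G^{k_i}$ appearing here (Remark \ref{rem : defining invariant functions on subsets of G}), then give $\{f^{\phi^1}_{\ub\gamma^n},f^{\phi^2}_{\ub\delta^m}\}\equiv 0$ on $\ThetaAnosovReps$, since disjoint representatives of the curves have empty transverse intersection.

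It remains to let $n,m\to\infty$. Fix $[\rho]\in\ThetaAnosovReps$; it suffices to show that $\Hm f^{\phi^1}_{\ub\gamma^n}([\rho])\to\Hm\phi^1_{\ub x}([\rho])$ and $\Hm f^{\phi^2}_{\ub\delta^m}([\rho])\to\Hm\phi^2_{\ub y}([\rho])$ in the finite-dimensional space $H^1(S,\xi_\rho)$, for then, since $\omega_{[\rho]}$ is a continuous bilinear form,
\[
\{\phi^1_{\ub x},\phi^2_{\ub y}\}([\rho])=\omega_{[\rho]}\!\big(\Hm\phi^2_{\ub y}([\rho]),\Hm\phi^1_{\ub x}([\rho])\big)=\lim_{n,m}\omega_{[\rho]}\!\big(\Hm f^{\phi^2}_{\ub\delta^m}([\rho]),\Hm f^{\phi^1}_{\ub\gamma^n}([\rho])\big)=0 .
\]
By Proposition \ref{prop: explicit cycle for Poincare dual of Hamiltonian vector field}, the Poincaré dual of $\Hm f^{\phi^1}_{\ub\gamma^n}([\rho])$ is represented by $\sum_i\gamma_i^n\otimes F_i(\rho(\ub\gamma^n))$, and since $\omega_{[\rho]}$ is nondegenerate it is enough that these cycles converge in $H_1(S,\xi_\rho)$; as $F=F^{\phi^1}$ is a smooth function of the attracting flags and $[\rho]\mapsto\flags_\rho(x_i)$ is differentiable at $[\rho]$ and the $C^1$-limit there of $[\rho]\mapsto\rho(\gamma_i^n)^+$, both the chains and the attached Lie algebra sections converge. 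The main obstacle is precisely this last convergence statement: the required input is that the Anosov boundary map, together with its derivative in the representation variable, depends continuously on $[\rho]$ and is approximated — locally uniformly, with exponential rate — by its values at the periodic points $(\gamma_i^n)^+\to x_i$. One addresses this by invoking the smooth (indeed analytic) dependence of Anosov limit maps on the representation, which is the same fact that guarantees $\phi^i_{\ub x}$ is $C^1$ and hence that the Poisson bracket $\{\phi^1_{\ub x},\phi^2_{\ub y}\}$ is defined in the first place.
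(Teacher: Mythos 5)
Your proposal follows the paper's own argument: approximate $\ub x$ and $\ub y$ by attracting fixed points of tuples in $\pi_1(S_{\ub x})^{k_1}$ and $\pi_1(S_{\ub y})^{k_2}$, apply Corollary \ref{cor : if supporting subsurfaces are disjoint, flows commute} to each approximating pair, and pass to the limit via Lemma \ref{lem : functions on boundary are limits of invariant functions}. The only difference is that you handle the limiting step more carefully than the paper, which concludes directly from pointwise convergence of the approximating functions; your observation that one really needs convergence of the differentials (equivalently of the Hamiltonian vector fields), supplied by smooth dependence of the Anosov limit maps on the representation, is a strengthening of the same route rather than a different proof.
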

\begin{proof}
    Since the subsurfaces $S_{\ub x}$ and $S_{\ub y}$ are disjoint, we may pick sequences of tuples $(\ub\alpha_n)_{n\in\N}$ in $\pi_1(S_{\ub x})^{k_1}$ and $(\ub\beta_n)_{n\in\N}$ in $\pi_1(S_{\ub y})^{k_2}$ whose attracting endpoints approximate $\ub x$ and $\ub y$ respectively. Then by Corollary \ref{cor : if supporting subsurfaces are disjoint, flows commute}
    \[
    \{f^{\phi^1}_{\ub\alpha_n},f^{\phi^2}_{\ub\beta_n}\}\equiv 0
    \]
    for every $n\in\N$ and using the notation from Lemma \ref{lem : functions on boundary are limits of invariant functions}. We conclude by using Lemma \ref{lem : functions on boundary are limits of invariant functions} again to get that $f^{\phi^1}_{\ub\alpha_n}\xrightarrow[]{n\to\infty}\phi^1_{\ub x}$ and $f^{\phi^2}_{\ub\beta_n}\xrightarrow[]{n\to\infty}\phi^2_{\ub y}$ point-wise.
\end{proof}

There is a variety of functions defined on $\flags_\Theta^{[k]}$. When $k = 4$, there are different notions of cross ratios (also called quadruple or double ratios), see for example \cite[Section 9.4]{FockGoncharov}, \cite[Definitions 3.2, 4.2]{CrossRatios_Labourie}, \cite[Definition 3.5]{CollarLemmaHyperconvex_BeyrerPozzetti}, \cite[Section 1.4]{BonahonDreyer_GeneralLaminations}, \cite{HartnickStrubel_CrossMaximal},\cite{CollarTheta}. In the case when $k = 3$, there is the triple ratio, defined in different ways in \cite[Section 9.4]{FockGoncharov}, \cite[Remark 4, pp. 149]{CrossRatios_Labourie}, \cite[Section 1.2]{BonahonDreyer_GeneralLaminations}. By Lemma \ref{lem : functions on boundary are limits of invariant functions}, and the proof of Corollary \ref{cor : Bonahon Dreyer length functions are invariant functions} we obtain the following.

\begin{corollary}\label{cor : Labourie's cross ratios are limits of induced invariant functions}
	Cross and triple ratio functions on $\ThetaAnosovReps$ can be realized as induced invariant functions.
\end{corollary}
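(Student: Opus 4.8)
The plan is to recognize a cross ratio or triple ratio function on $\ThetaAnosovReps$ as a function of the form $\phi_{\ub x}$ appearing in Lemma~\ref{lem : functions on boundary are limits of invariant functions} and then to invoke that lemma together with the approximation argument used in the proof of Corollary~\ref{cor : Bonahon Dreyer length functions are invariant functions}. First I would recall the relevant definitions: for the appropriate symmetric subset $\Theta\subseteq\{1,\dots,d-1\}$, the triple ratio is a smooth function $\phi\colon\flags_\Theta^{[3]}\to\R$ and each cross (quadruple, double) ratio is a smooth function $\phi\colon\flags_\Theta^{[4]}\to\R$, and in both cases $\phi$ is a projective invariant of the configuration of flags, hence invariant under the diagonal action of $G=\SL d$ on pairwise transverse tuples; this is the content of \cite{FockGoncharov}, \cite{CrossRatios_Labourie}, \cite{BonahonDreyer_GeneralLaminations}, \cite{CollarTheta}. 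By construction, the associated function on $\ThetaAnosovReps$ evaluates $\phi$ on the flags $\flags_\rho(x_1),\dots,\flags_\rho(x_k)$ attached to a tuple $\ub x=(x_1,\dots,x_k)\in(\partial_\infty\pi)^{[k]}$ of pairwise distinct boundary points; this tuple of flags is pairwise transverse by transversality of $\flags_\rho$, so the function is exactly $\phi_{\ub x}$.

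Second, I would apply Lemma~\ref{lem : functions on boundary are limits of invariant functions}. It gives that $f^\phi\colon\proxGTheta^{[k]}\to\R$, $(g_1,\dots,g_k)\mapsto\phi(g_1^+,\dots,g_k^+)$, is a smooth $G$-invariant function in the sense of Remark~\ref{rem : defining invariant functions on subsets of G}, and that for any sequence $(\ub\gamma^n)_{n\in\N}\subset\pi^k$ with $((\gamma_1^n)^+,\dots,(\gamma_k^n)^+)\in(\partial_\infty\pi)^{[k]}$ and $(\gamma_i^n)^+\to x_i$ one has $f^\phi_{\ub\gamma^n}([\rho])\to\phi_{\ub x}([\rho])$ for every $[\rho]\in\ThetaAnosovReps$. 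Exactly as in the proof of Corollary~\ref{cor : Bonahon Dreyer length functions are invariant functions}, such sequences exist because attracting fixed points of infinite-order elements of $\pi$ are dense in $\partial_\infty\pi$, so the $(\gamma_i^n)^+$ can be chosen converging to the $x_i$ and automatically pairwise distinct for $n$ large. Hence every cross or triple ratio function is a pointwise limit of induced invariant functions.

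I would also record the sharper statement in the situation relevant to coordinate systems: when each $x_i$ is itself a fixed point of a group element, $x_i=\gamma_i^{\eps_i}$ with $\eps_i\in\{+,-\}$ and $\gamma_i\in\pi$ nontrivial (the case of the Fock--Goncharov and Bonahon--Dreyer cross ratio coordinates, and of triple ratios at vertices of complementary ideal triangles), dynamics-preservation of $\flags_\rho$ gives $\flags_\rho(\gamma_i^{+})=\rho(\gamma_i)^{+}$ and $\flags_\rho(\gamma_i^{-})=\rho(\gamma_i^{-1})^{+}$. Setting $\alpha_i=\gamma_i$ if $\eps_i=+$ and $\alpha_i=\gamma_i^{-1}$ if $\eps_i=-$, we obtain $\phi_{\ub x}=f^\phi_{\ub\alpha}$ identically; moreover for a $\Theta$-Anosov $\rho$ the flags $\rho(\alpha_i)^+=\flags_\rho(x_i)$ are pairwise transverse because the $x_i$ are distinct, so $(\rho(\alpha_1),\dots,\rho(\alpha_k))\in\proxGTheta^{[k]}$ on all of $\ThetaAnosovReps$ and $\phi_{\ub x}$ is an honest induced invariant function, with no limit needed.

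The main obstacle is bookkeeping rather than analysis: one must match each normalization of the cross ratio or triple ratio in the literature with a choice of symmetric $\Theta$ and with which slots require the attracting versus the repelling flag, and check that the resulting $\phi$ is defined and smooth on the whole of $\flags_\Theta^{[k]}$ (all pairwise transverse tuples), not just on a dense open subset, so that Lemma~\ref{lem : functions on boundary are limits of invariant functions} applies as stated; given the explicit formulas, smoothness and $G$-invariance on that domain are then immediate.
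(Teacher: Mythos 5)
Your overall route is the same as the paper's (combine Lemma \ref{lem : functions on boundary are limits of invariant functions} with the argument of Corollary \ref{cor : Bonahon Dreyer length functions are invariant functions}), and your ``sharper statement'' for the case where every $x_i$ is a fixed point of an element of $\pi$ is correct and is indeed how the coordinate-type cross and triple ratios are handled. But as written your argument does not prove the corollary in general: for an arbitrary tuple $\ub x\in(\partial_\infty\pi)^{[k]}$ (e.g.\ endpoints of leaves of an irrational lamination, which is exactly what is needed for the Bonahon--Dreyer coordinates of a general lamination in the next corollary) your second paragraph only concludes that $\phi_{\ub x}$ is a \emph{pointwise limit} of the induced invariant functions $f^\phi_{\ub\gamma^n}$, where the inducing tuple $\ub\gamma^n$ changes with $n$. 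That is strictly weaker than the assertion that $\phi_{\ub x}$ \emph{is} an induced invariant function, and the gap is precisely the step you skipped from the proof of Corollary \ref{cor : Bonahon Dreyer length functions are invariant functions}.

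The missing idea is to absorb the limit into the invariant function itself rather than into the inducing tuple: fix a generating tuple $\ub\alpha=(\alpha_1,\dots,\alpha_{k'})$ of $\pi$ (or of the fundamental group of a supporting subsurface for the geodesics joining the points of $\ub x$), write each approximating element $\gamma_i^n$ as a word map $w_i^n\colon G^{k'}\to G$ in these generators, and define
\[
\Phi(h_1,\dots,h_{k'})\;\coloneqq\;\lim_{n\to\infty}\phi\bigl((w_1^n(h_1,\dots,h_{k'}))^+,\dots,(w_k^n(h_1,\dots,h_{k'}))^+\bigr)
\]
on the conjugation-invariant subset of $G^{k'}$ where the limit makes sense (cf.\ Remark \ref{rem : defining invariant functions on subsets of G}); invariance of $\Phi$ follows from the equivariance of $g\mapsto g^+$ and the $G$-invariance of $\phi$, and for $[\rho]\in\ThetaAnosovReps$ the dynamics-preserving and continuity properties of $\flags_\rho$ give $\Phi_{\ub\alpha}([\rho])=\lim_n\phi(\flags_\rho((\gamma_1^n)^+),\dots,\flags_\rho((\gamma_k^n)^+))=\phi_{\ub x}([\rho])$, i.e.\ $\phi_{\ub x}=\Phi_{\ub\alpha}$ is an honest induced invariant function evaluated at the \emph{fixed} tuple $\ub\alpha$. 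Without this packaging step (or a restriction of the statement to tuples of fixed points), your proof establishes only the weaker limit statement for general $\ub x$.
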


Bonahon-Dreyer use the cross and triple ratios defined in \cite{BonahonDreyer_GeneralLaminations} to give coordinates on the Hitchin component.

\begin{corollary}\label{cor : Bonahon-Dreyer coordinates are limits of induced invariant functions}
    Each Bonahon-Dreyer coordinate of $\Hit d$ can be realized as an induced invariant function.
\end{corollary}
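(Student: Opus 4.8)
The plan is to recognize that each Bonahon--Dreyer coordinate is, after unwinding its definition, one of the cross or triple ratio functions already discussed in Corollary~\ref{cor : Labourie's cross ratios are limits of induced invariant functions}, evaluated at flags $\bdryMap(x)$ over points $x\in\partial_\infty\pi$ that happen to be \emph{attracting (or repelling) fixed points of deck transformations}. Once this is observed, the statement falls out of Lemma~\ref{lem : functions on boundary are limits of invariant functions} with the approximating sequence taken to be constant, so that no limit is actually needed and the coordinate is an honest induced invariant function.

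First I would recall the set-up of \cite{BonahonDreyer_FiniteLaminations,BonahonDreyer_GeneralLaminations}: one fixes a maximal geodesic lamination $\lambda$ on $S$ with finitely many leaves --- a pants decomposition, together with two bi-infinite leaves in each pair of pants spiralling onto the boundary curves, so that the complementary regions are ideal triangles. Each Bonahon--Dreyer coordinate is then, by definition, either a triple ratio of the three flags $\bdryMap(x_1),\bdryMap(x_2),\bdryMap(x_3)$ at the ideal vertices of a complementary triangle, or a (double/cross) ratio of four flags $\bdryMap(x_1),\dots,\bdryMap(x_4)$ attached to endpoints of leaves adjacent to a closed leaf of $\lambda$. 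The key geometric point I would then record is that, because $\lambda$ has only finitely many leaves, every leaf is a closed geodesic or a bi-infinite geodesic whose two ends spiral onto closed geodesics; hence every leaf endpoint $x\in\partial_\infty\pi$ is a fixed point $\gamma^{\pm}$ of the deck transformation $\gamma\in\pi$ carrying, or spiralled onto by, the relevant closed leaf. Using $\gamma^- = (\gamma^{-1})^+$ we may arrange that each such endpoint is of the form $\gamma^+$ for a suitable $\gamma\in\pi$.

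With this in hand I would let $\phi\colon\flags_\Delta^{[m]}\to\R$ (with $m=3$ or $4$) be the $\SL d$-invariant rational function cutting out the coordinate in question; since Hitchin representations are $\Delta$-Anosov, $\bdryMap$ is defined, transverse and dynamics preserving, so the relevant flags are pairwise transverse and $\bdryMap(x_i)=\bdryMap(\gamma_i^+)=\rho(\gamma_i)^+$. Applying Lemma~\ref{lem : functions on boundary are limits of invariant functions} to $\phi$ with the \emph{constant} sequence $\ub\gamma^n\equiv(\gamma_1,\dots,\gamma_m)$ --- a legitimate choice precisely because $(\gamma_1^+,\dots,\gamma_m^+)$ already equals the tuple $\ub x$ --- identifies the coordinate with $f^\phi_{(\gamma_1,\dots,\gamma_m)}$, where $f^\phi(g_1,\dots,g_m)=\phi(g_1^+,\dots,g_m^+)$ is a smooth $G$-invariant function on the conjugation-invariant open set $\proxGTheta^{[m]}$ in the sense of Remark~\ref{rem : defining invariant functions on subsets of G}. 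This exhibits each Bonahon--Dreyer coordinate as an induced invariant function on $\Hit d$; this is exactly the strategy already used for the length functions in the proof of Corollary~\ref{cor : Bonahon Dreyer length functions are invariant functions}.

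The hard part is not conceptual but organizational: one must run through the two families of Bonahon--Dreyer invariants (the triangle/triple-ratio invariants and the leaf/shearing invariants) and, for each, write down the precise finite set of leaf endpoints the invariant depends on, check pairwise transversality of the associated flags so that $\phi$ is defined and smooth there, and read off the deck transformations realizing those endpoints from the spiralling pattern of $\lambda$ --- all routine given Bonahon--Dreyer's explicit construction, but this is where the actual content sits. A secondary point worth flagging: if one instead uses a lamination with infinitely many leaves (the general setting of \cite{BonahonDreyer_GeneralLaminations}), a leaf endpoint need not be a fixed point of a deck transformation, and one would then have to invoke the genuine limiting form of Lemma~\ref{lem : functions on boundary are limits of invariant functions}, approximating each relevant endpoint by attracting fixed points $\gamma_n^+$, so that the coordinate is realized as a limit of induced invariant functions rather than as one outright.
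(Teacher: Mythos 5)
Your proposal is correct, and it lives in the same framework as the paper's treatment (cross and triple ratios as $G$-invariant functions of tuples of flags, fed through Lemma \ref{lem : functions on boundary are limits of invariant functions}), but your execution is genuinely different in the step that does the work. The paper deduces the corollary from Corollary \ref{cor : Labourie's cross ratios are limits of induced invariant functions}, which handles arbitrary tuples $\ub x\in(\partial_\infty\pi)^{[k]}$ and therefore goes through the limiting device from the proof of Corollary \ref{cor : Bonahon Dreyer length functions are invariant functions}: approximate the boundary points by attracting fixed points, write the approximating elements as word maps in generators of a supporting subsurface, and define the invariant multi-function as the resulting limit of compositions. You instead exploit the specific structure of a finite-leaved maximal lamination: every leaf endpoint is already of the form $\gamma^{\pm}$ for a deck transformation (closed leaves lift to axes, spiralling ends are asymptotic to axes), so the constant sequence is admissible in Lemma \ref{lem : functions on boundary are limits of invariant functions} and each triangle or shearing invariant is literally $f^\phi_{(\gamma_1,\dots,\gamma_m)}$ with $f^\phi(g_1,\dots,g_m)=\phi(g_1^+,\dots,g_m^+)$ on the conjugation-invariant open set $\proxGTheta^{[m]}$, in the sense of Remark \ref{rem : defining invariant functions on subsets of G}; since Hitchin representations are $\Delta$-Anosov, $\Hit d$ lies in the relevant subset of the character variety. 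What your route buys is a limit-free realization with completely explicit inducing tuples; what the paper's route buys is uniformity over all boundary tuples, including coordinates attached to laminations with infinitely many leaves, where an endpoint need not be a fixed point. On that last point, note that your fallback for the infinite-leaved case ("a limit of induced invariant functions") is weaker than the corollary as stated; to recover the full claim there you would still need the paper's device of defining the invariant multi-function itself as a limit of $f^\phi$ composed with word maps in generators of the supporting subsurface. For the standard Bonahon--Dreyer coordinates coming from a finite-leaved maximal lamination, which is the situation the paper has in mind (it is also the one used for the eruption and hexagon flow corollary), your direct argument fully proves the statement. One shared caveat, not a gap relative to the paper: triple ratios are defined and smooth only on a suitable open subset of $\flags_\Delta^{[3]}$, which both you and the paper pass over; for Hitchin representations the flags along the boundary map form positive tuples, so the functions are defined where they are used.
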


In \cite{FlowsPGLVHitchin_SWZ}, Sun-Wienhard-Zhang define eruption and hexagon flows on $\Hit d$. The Hamiltonian functions of these flows are given in a reparametrization of Bonahon-Dreyer coordinates adapted to a maximal lamination with finitely many leaves, together with length functions on closed curves \cite[Theorem 8.22]{FlowsPGLVHitchin_SWZ}.

\begin{corollary}
    The Hamiltonian functions of eruption and hexagon flows on $\Hit d$ can be realized as induced invariant functions.
\end{corollary}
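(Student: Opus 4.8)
The plan is to reduce the statement to two facts already in hand. By Corollary~\ref{cor : Bonahon-Dreyer coordinates are limits of induced invariant functions}, every Bonahon--Dreyer coordinate on $\Hit d$ is an induced invariant function, and by Section~\ref{sec : length functions} (see also Corollary~\ref{cor : Bonahon Dreyer length functions are invariant functions}) the length functions of closed curves are induced invariant functions. It then remains to observe that the class of induced invariant functions is stable under the operations needed to build the eruption and hexagon Hamiltonians out of these pieces, the only subtlety being that induced invariant functions associated to different tuples must be re-based at a common point.

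\textbf{Step 1.} By \cite[Theorem~8.22]{FlowsPGLVHitchin_SWZ}, working in the reparametrization of Bonahon--Dreyer coordinates adapted to a maximal geodesic lamination with finitely many leaves, each Hamiltonian function $h$ of an eruption flow or a hexagon flow is a smooth function of finitely many Bonahon--Dreyer coordinates $c_1,\dots,c_N$ together with finitely many length functions $\ell^{j_1}_{\gamma_1},\dots,\ell^{j_m}_{\gamma_m}$ of closed curves $\gamma_s\in\pi$; write $h = H\bigl(c_1,\dots,c_N,\ell^{j_1}_{\gamma_1},\dots,\ell^{j_m}_{\gamma_m}\bigr)$ with $H$ smooth on the relevant open region of $\R^{N+m}$.

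\textbf{Step 2.} Each $c_i$ is of the form $(f^{(i)})_{\ub\alpha_i}$ for an invariant function $f^{(i)}$ on an open conjugation-invariant subset $E_i^{k_i}\subset G^{k_i}$ (Remark~\ref{rem : defining invariant functions on subsets of G}) and a tuple $\ub\alpha_i\in\pi_1(S,p_i)^{k_i}$, and each $\ell^{j_s}_{\gamma_s}$ is $(\ell^{j_s})_{\gamma_s}$ with $\ell^{j_s}$ defined on $\proxGTheta\subset G$ (taking $\Theta=\{1,\dots,d-1\}$). Since each $f^{(i)}$ and each $\ell^{j_s}$ is invariant under the diagonal conjugation of $G$, the induced function is unchanged if we conjugate the whole tuple $\ub\alpha_i$ (resp.\ $\gamma_s$) by a fixed group element; equivalently, using the remarks following the definition of the induced invariant function (independence of basepoint and of the conjugacy class of the tuple), we may transport all tuples so that they lie in $\pi_1(S,p)^{\bullet}$ for one common basepoint $p$. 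Concatenating, set $\ub\alpha=(\ub\alpha_1,\dots,\ub\alpha_N,\gamma_1,\dots,\gamma_m)\in\pi_1(S,p)^{k_1+\dots+k_N+m}$ and
\[
f(\ub{x}_1,\dots,\ub{x}_N,g_1,\dots,g_m)
= H\!\left(f^{(1)}(\ub{x}_1),\dots,f^{(N)}(\ub{x}_N),\ell^{j_1}(g_1),\dots,\ell^{j_m}(g_m)\right).
\]
This is a $C^1$ function, invariant under the diagonal conjugation of $G$, defined on the open conjugation-invariant subset $E_1^{k_1}\times\dots\times E_N^{k_N}\times\proxGTheta^{m}\subset G^{k_1+\dots+k_N+m}$. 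Because Hitchin representations are $\Delta$-Anosov, for $[\rho]\in\Hit d$ the image of every curve appearing in $\ub\alpha$ lies in the corresponding domain, so $f_{\ub\alpha}$ is defined on all of $\Hit d$ and equals $h$ there. Hence $h$ is an induced invariant function.

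\textbf{Main obstacle.} The only genuinely delicate point is Step~1: one must unwind the definition of the reparametrized coordinates of \cite{FlowsPGLVHitchin_SWZ} to confirm that, on the relevant open region, the eruption and hexagon Hamiltonians really are smooth functions of Bonahon--Dreyer coordinates and closed-curve length functions (and that the reparametrization map is itself smooth there). If one prefers not to rely on the honest-function version of Corollary~\ref{cor : Bonahon-Dreyer coordinates are limits of induced invariant functions} and instead use only Lemma~\ref{lem : functions on boundary are limits of invariant functions}, the same construction still works at the level of pointwise limits, since a smooth function of finitely many pointwise limits of induced invariant functions is again a pointwise limit of such functions (apply the concatenation of Step~2 to the approximating tuples).
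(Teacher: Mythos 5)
Your proposal is correct and follows essentially the same route as the paper, which states this corollary without a separate proof as an immediate consequence of \cite[Theorem 8.22]{FlowsPGLVHitchin_SWZ} together with the realizations of Bonahon--Dreyer coordinates and length functions as induced invariant functions. Your Step 2 simply makes explicit the concatenation-and-rebasing construction (in the spirit of Section~\ref{polynomials in trace functions}) that the paper leaves implicit, so there is nothing genuinely different to compare.
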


\section{Graph of groups decomposition, its representations and leaves}\label{sec : graph of groups}
We begin this section by recalling some background about graphs of groups and their fundamental groups. We then describe the representations of fundamental groups of graphs of groups into $G$, as well as the Zariski tangent spaces of the spaces of representations. Using this language, we introduce the main notion of this section, namely leaves supported on vertices, which are particular subsets of the character variety associated to vertices in the graph of groups. This definition is used to define (infinitesimal) subsurface deformations in Definition \ref{def : inf and subsurface deformation}. In the following subsection, we construct an explicit graph of groups decomposition associated to a decomposition of the surface, and describe its fundamental group in Proposition \ref{prop: fundamental group of graph of groups form of gluing loops}. This description is used in the proof of Theorem \ref{thm : hamiltonian vector field is infinitesimal subsurface deformation}, which states that the Hamiltonian vector field of an invariant function is an infinitesimal subsurface deformation. We finish by describing the Zariski tangent spaces to the leaves, which will also be used to prove Theorem \ref{thm : hamiltonian vector field is infinitesimal subsurface deformation} and Lemma \ref{lem : leaves are smooth for hitchin}, which says that the foliation by leaves in the Hitchin component is smooth.

\subsection{The graph of groups and representations}
We begin by recalling some definitions and notation from \cite{Serre_Trees}. Given an oriented graph $Y$ with vertices $V$ and edges $E$, we denote by $o(y)$ and $t(y)$ the origin and respectively the endpoint of the edge $y\in E$. Moreover, we denote by $\overline{y}\in E$ the edge $y$ with the opposite orientation.

\begin{definition}\cite[Section 4.4, Definition 8; and Section 5.1]{Serre_Trees}\label{def:graph of groups and fundamental group} 
 \begin{enumerate}
     \item A \textit{graph of groups} $(\Gamma,Y)$ consists of an oriented graph $Y=(V,E)$, a group $\Gamma_v$ for each vertex $v\in V$, and a group $\Gamma_y$ for each edge $y\in E$ together with monomorphisms $\Gamma_y\to\Gamma_{t(y)}$ (denoted by $a\mapsto a^y$); with the additional condition that $\Gamma_y=\Gamma_{\overline{y}}$.
     \item Given a graph of groups $(\Gamma,Y)$, choose a maximal tree $T$ in $Y$. Then the \textit{fundamental group of $(\Gamma,Y)$ at $T$}, denoted by $\pi_1(\Gamma,Y,T)$, is the group generated by $\{\Gamma_v,g_y\st v\in V, y\in E\}$, where $g_y$ is an added letter we call a \textit{gluing loop}, subject to the relations

\begin{align}
	g_ya^yg_y^{-1}=a^{\overline{y}}  \textnormal{ for every }y\in E\textnormal{ and } a\in \Gamma_y,\label{relation:graph of groups relation 1} \\
	g_{\overline{y}}=g_y^{-1} \textnormal{ for every }y\in E\label{relation:graph of groups relation 1 prime}\textnormal{ and}\\
	 g_y = 1 \textnormal{ if } y\in T.\label{relation:graph of groups relation 2}
\end{align}
     \end{enumerate}
\end{definition}

As stated in I.5 Proposition 20 in \cite{Serre_Trees}, different choices of a maximal tree in $Y$ define isomorphic fundamental groups.\\

We now describe the space of representations from the fundamental group of a graph of groups to a Lie group $G$. Given a graph of groups $(\Gamma,Y)$ together with a choice of maximal tree $T$ in $Y$, we consider the space of representations $\Hom(\pi_1(\Gamma,Y,T),G)$.\\

A representation $\rho\in\Hom(\pi_1(\Gamma,Y,T),G)$ gives rise to representations $\rho_v\colon\Gamma_v\to G$ for each vertex $v\in V$ as well as $\rho_y\colon \langle g_y\rangle\to G$ for every edge $y\in E$. The relations \eqref{relation:graph of groups relation 1},\eqref{relation:graph of groups relation 1 prime} and \eqref{relation:graph of groups relation 2} in $\pi_1(\Gamma,Y,T)$ imply the following relations on the induced representations:
\begin{align}
	\rho_y(g_y)\rho_{t(y)}(a^y)\rho_y(g_y)^{-1} = \rho_{o(y)}(a^{\overline{y}}) \textnormal{ for every } y\in E\textnormal{ and every } a\in \Gamma_y,\label{condition: representation on graph of groups non tree edges}\\
	\rho_{\overline{y}}(g_{\overline{y}}) = \rho_y(g_y)^{-1} \textnormal{ for every } y\in E,\textnormal{ and}\label{condition: representation on graph of groups gy and gy^-1}\\
	\rho_y(g_y) = e\textnormal{ if }y\in T.\label{condition: representation on graph of groups tree edges}
\end{align}

Conversely, given a family of representations $\{\rho_v\colon \Gamma_v\to G,\rho_y\colon\langle g_y\rangle \to G \st v\in V, y\in E\}$ satisfying conditions \eqref{condition: representation on graph of groups non tree edges},\eqref{condition: representation on graph of groups gy and gy^-1}, and \eqref{condition: representation on graph of groups tree edges}, we obtain a unique representation $\rho\colon \pi_1(\Gamma,Y,T)\to G$ (by definition of the fundamental group $\pi_1(\Gamma,Y,T)$). By condition \eqref{condition: representation on graph of groups gy and gy^-1} and \eqref{condition: representation on graph of groups tree edges}, we have that $\rho_y\equiv e$ whenever $y\in T\cup \overline{T}$; and we denote $T\cup \overline{T}$ by $T_u$ (the unoriented tree) and its complement by $T_u^c$.  Dropping the representations $\rho_y$ for $y\in T_u$, we get the additional condition
\begin{equation}\label{condition : conjugation along tree edges is nothing}
	\rho_{t(y)}(a^y) = \rho_{o(y)}(a^{\overline y}) \quad\textnormal{ for every } y\in T_u\textnormal{ and }a\in \Gamma_y.
\end{equation}

Thus, there is an identification
\begin{gather*}
\Hom(\pi_1(\Gamma,Y,T),G)\cong \\
\{\rho_v\colon \Gamma_v\to G,\rho_y\colon \langle g_y\rangle \to G \st v\in V, y\in T_u^c, \textnormal{ satisfying } \eqref{condition: representation on graph of groups non tree edges},\eqref{condition: representation on graph of groups gy and gy^-1},\eqref{condition : conjugation along tree edges is nothing}\}.
\end{gather*}

Through this identification, we fix the following notation.
\begin{notation}
	A representation $\rho\in\Hom(\pi_1(\Gamma,Y,T),G)$ will be written as a tuple $(\rho_\vv,\rho_\yy)$, where $\rho_\vv = \{\rho_v\colon \Gamma_v\to G\,|\, v\in V\}$ and $\rho_\yy = \{\rho_y\colon \langle g_y\rangle\to G\,|\, y\in T_u^c\}$ are the components of the representation $\rho$.
\end{notation}

\subsection{Zariski tangent spaces}
In this section, we describe the Zarisiki tangent spaces of $\Hom(\pi_1(\Gamma,Y,T),G)$.\\

\begin{lemma}\label{lemma : conditions for tangent space graph of groups representations}
	\sloppy Let $\rho = (\rho_\vv,\rho_\yy)\in\Hom(\pi_1(\Gamma,Y,T),G)$. The Zariski tangent space $\T_\rho\Hom(\pi_1(\Gamma,Y,T),G)$ is identified with the subspace of pairs of cocycles $(u_\vv,u_\yy)$, where $u_\vv = \{u_v\in Z^1(\Gamma_v,\gfr_{\Ad\rho_v})\st v\in V\}$ and $u_\yy = \{u_y\in Z^1(\langle g_y\rangle,\gfr_{\Ad\rho_y})\st y\in T_u^c\}$ which satisfy the following conditions
		\begin{align}
		u_y(g_y)+\Ad_{\rho_y(g_y)}(u_{t(y)}(a^y))-\Ad_{\rho_{o(y)}(a^{\overline{y}})}(u_y(g_y)) = u_{o(y)}(a^{\overline y}),\,\forall y\in T_u^c,\forall a\in \Gamma_y\label{tangent space condition graph of groups conjugation part},\\
		u_{\overline y}(g_{\overline y}) = -\Ad_{\rho_{\overline y}(g_{\overline y})}(u_y(g_y)),
		\quad\forall y\in T_u^c, \label{tangent space condition graph of groups orientation flip part}\textnormal{ and}\\
		u_{o(y)}(a^{\overline y}) = u_{t(y)}(a^y),\quad\forall y\in T_u \label{tangent space condition graph of groups tree edges}.
	\end{align}
\end{lemma}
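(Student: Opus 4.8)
The plan is to compute the Zariski tangent space directly from the presentation of $\pi_1(\Gamma,Y,T)$ given in Definition \ref{def:graph of groups and fundamental group}, together with the identification of $\Hom(\pi_1(\Gamma,Y,T),G)$ with families $\{\rho_v,\rho_y\}$ satisfying \eqref{condition: representation on graph of groups non tree edges}, \eqref{condition: representation on graph of groups gy and gy^-1}, \eqref{condition : conjugation along tree edges is nothing}. A tangent vector at $\rho$ is (the derivative of) a smooth path $\rho_t$ of representations with $\rho_0 = \rho$; writing $\rho_t = (\rho_{\vv,t}, \rho_{\yy,t})$ and differentiating, the vertex pieces $\rho_{v,t}$ produce cocycles $u_v \in Z^1(\Gamma_v, \gfr_{\Ad\rho_v})$ and the edge pieces $\rho_{y,t}$ (which are homomorphisms from the free group $\langle g_y\rangle$, so are determined by $\rho_{y,t}(g_y) \in G$) produce cocycles $u_y \in Z^1(\langle g_y\rangle, \gfr_{\Ad\rho_y})$ — here $u_y$ is determined by $u_y(g_y) = \frac{d}{dt}\big|_{t=0}\big(\rho_{y,t}(g_y)\rho_y(g_y)^{-1}\big)$ using the convention (from the preliminaries, cf. Notation \ref{notation : flat sections}) that cocycles are normalized by right translation. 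So the tangent space injects into $\bigoplus_v Z^1(\Gamma_v,\gfr_{\Ad\rho_v}) \oplus \bigoplus_{y\in T_u^c} Z^1(\langle g_y\rangle,\gfr_{\Ad\rho_y})$, and the only question is which pairs $(u_\vv, u_\yy)$ arise — equivalently, which pairs are tangent to the constraint subvariety cut out by the three conditions.

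The three linear conditions \eqref{tangent space condition graph of groups conjugation part}, \eqref{tangent space condition graph of groups orientation flip part}, \eqref{tangent space condition graph of groups tree edges} are obtained by differentiating conditions \eqref{condition: representation on graph of groups non tree edges}, \eqref{condition: representation on graph of groups gy and gy^-1}, \eqref{condition : conjugation along tree edges is nothing} respectively. For \eqref{tangent space condition graph of groups tree edges}: differentiating the identity $\rho_{t(y),t}(a^y) = \rho_{o(y),t}(a^{\overline y})$ for $y \in T_u$, $a \in \Gamma_y$, and converting the $G$-valued derivatives into cocycle values, immediately gives $u_{t(y)}(a^y) = u_{o(y)}(a^{\overline y})$. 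For \eqref{tangent space condition graph of groups orientation flip part}: differentiating $\rho_{\overline y, t}(g_{\overline y}) = \rho_{y,t}(g_y)^{-1}$ and using the standard formula for the derivative of an inverse in a Lie group (in right-translation normalization, the cocycle of $g^{-1}$ at the generator is $-\Ad_{g^{-1}}$ of the cocycle of $g$) yields $u_{\overline y}(g_{\overline y}) = -\Ad_{\rho_{\overline y}(g_{\overline y})}(u_y(g_y))$. For \eqref{tangent space condition graph of groups conjugation part}: differentiate the conjugation relation $\rho_{y,t}(g_y)\,\rho_{t(y),t}(a^y)\,\rho_{y,t}(g_y)^{-1} = \rho_{o(y),t}(a^{\overline y})$ using the Leibniz rule for a triple product in $G$; after translating each factor's derivative into the corresponding cocycle value and collecting terms (the conjugation by $\rho_y(g_y)$ acting on the middle factor's variation produces the $\Ad_{\rho_y(g_y)}(u_{t(y)}(a^y))$ term, and the variation of the outer $g_y^{\pm1}$ factors, after using the orientation-flip relation and the unperturbed conjugation identity \eqref{condition: representation on graph of groups non tree edges}, produces $u_y(g_y) - \Ad_{\rho_{o(y)}(a^{\overline y})}(u_y(g_y))$) one arrives at exactly \eqref{tangent space condition graph of groups conjugation part}. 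Conversely, any pair $(u_\vv,u_\yy)$ satisfying these conditions integrates: build vertex paths $\rho_{v,t}$ realizing the $u_v$ (possible since $Z^1$ is the full Zariski tangent space of $\Hom(\Gamma_v,G)$) and edge paths $\rho_{y,t}$ realizing $u_y(g_y)$, and check that the first-order constraints being satisfied means the constructed family satisfies the defining conditions to first order, hence defines a genuine tangent vector; alternatively, and more cleanly, invoke that the scheme $\Hom(\pi_1(\Gamma,Y,T),G)$ has Zariski tangent space $Z^1(\pi_1(\Gamma,Y,T),\gfr_{\Ad\rho})$ and identify a cocycle on $\pi_1(\Gamma,Y,T)$ with its restrictions to the vertex and edge generators, where the cocycle relation applied to the defining relations \eqref{relation:graph of groups relation 1}--\eqref{relation:graph of groups relation 2} gives precisely \eqref{tangent space condition graph of groups conjugation part}--\eqref{tangent space condition graph of groups tree edges}.

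The main obstacle is purely bookkeeping: getting the $\Ad$-factors and signs in \eqref{tangent space condition graph of groups conjugation part} correct, since this is where the product rule on $G$ interacts with the right-invariant (versus left-invariant) normalization of cocycles and with the substitution of the unperturbed relation \eqref{condition: representation on graph of groups non tree edges}. I would organize the computation by writing $\rho_{y,t}(g_y) = \exp(t\, u_y(g_y) + o(t))\,\rho_y(g_y)$ and similarly for the vertex factors, substituting into the perturbed conjugation relation, expanding to first order in $t$, and then using \eqref{condition: representation on graph of groups non tree edges} to cancel the zeroth-order terms — the surviving linear term is the asserted identity. The equivalence of "the presentation-generator restriction of a $1$-cocycle on $\pi_1(\Gamma,Y,T)$" with "a compatible family of vertex/edge cocycles" is standard (this is how one computes group cohomology of an amalgam / HNN extension from a presentation), so I would state it briefly rather than belabor it.
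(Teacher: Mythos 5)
Your proposal is correct and follows essentially the same route as the paper: write the deformation to first order as $\rho^s(\cdot)=\exp(su(\cdot)+\mathcal O(s^2))\rho(\cdot)$, observe (as in Goldman) that the vertex and edge pieces are cocycles, and differentiate the defining conditions \eqref{condition: representation on graph of groups non tree edges}, \eqref{condition: representation on graph of groups gy and gy^-1}, \eqref{condition : conjugation along tree edges is nothing}, using the unperturbed relations to simplify, which yields exactly \eqref{tangent space condition graph of groups conjugation part}--\eqref{tangent space condition graph of groups tree edges}. Your closing remark identifying these with the restriction of a $1$-cocycle on $\pi_1(\Gamma,Y,T)$ to the generators is a harmless additional observation consistent with how the paper uses the lemma.
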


\sloppy Before the proof of the lemma, we fix some notation. For a representation $(\rho_\vv,\rho_\yy)\in\Hom(\pi_1(\Gamma,Y,T),G)$, we denote by $Z^1(\pi_1(\Gamma,Y,T),\gfr_{\Ad(\rho_\vv,\rho_\yy)})\cong\T_{(\rho_\vv,\rho_\yy)}\Hom(\pi_1(\Gamma,Y,T),G)$ the set of pairs of cocycles $(u_\vv,u_\yy)$ satisfing conditions \eqref{tangent space condition graph of groups conjugation part}, \eqref{tangent space condition graph of groups orientation flip part} and \eqref{tangent space condition graph of groups tree edges}.\\

\begin{proof}
	Consider a small path of representation $(\rho_\vv^s,\rho_\yy^s)$ for $s\in(-\eps,\eps)$ with $(\rho^0_\vv,\rho^0_\yy) = (\rho_\vv,\rho_\yy)$. As in \cite{SymplecticNature_Goldman}, the paths of representations are infinitesimally described by cocycles $(u_\vv,u_\yy)$ so that
\begin{equation}\label{eq: variation of graph of groups representations with cocycles}
	\rho^s_v(\gamma_v) = \exp(su_v(\gamma_v)+\mathcal O(s^2))\rho_v(\gamma_v),\quad \rho^s_y(g_y) = \exp(su_y(g_y)+\mathcal O(s^2))\rho_y(g_y),
\end{equation}
for every $v\in V$, every $\gamma_v\in \Gamma_v$ and any $y\in T_u^c$. Arguing as in \cite[Section 1.2]{SymplecticNature_Goldman}, we see that $u_v\in Z^1(\Gamma_v,\gfr_{\Ad\rho_v})$ and $u_y\in Z^1(\langle g_y\rangle,\gfr_{\Ad\rho_y})$.\\

Differentiating condition \eqref{condition: representation on graph of groups non tree edges} and evaluating at $s = 0$, we get that
	\begin{align*}
		u_y(g_y)\rho_y(g_y)\rho_{t(y)}(a^y)\rho_y(g_y)^{-1}+\rho_y(g_y)u_{t(y)}(a^y)\rho_{t(y)}(a^y)\rho_y(g_y)^{-1}\\
		+\rho_y(g_y)\rho_{t(y)}(a^y)\rho_y(g_y)^{-1}(-u_y(g_y)) = u_{o(y)}(a^{\overline y})\rho_{o(y)}(a^{\overline y}),
	\end{align*}
	where we abuse notation for the derivative of left and right multiplication by an element of $G$ by simply multiplication by that element. This computation together with using conditions \eqref{condition: representation on graph of groups non tree edges} and \eqref{condition: representation on graph of groups gy and gy^-1} yields condition \eqref{tangent space condition graph of groups conjugation part}. Differentiating condition \eqref{condition: representation on graph of groups gy and gy^-1}, we get that 
	\[
		u_{\overline y}(g_{\overline y})\rho_{\overline y}(g_{\overline y}) = -\rho_{y}(g_y)^{-1}u_y(g_y).
	\]
	Reusing condition \eqref{condition: representation on graph of groups gy and gy^-1}, we get \eqref{tangent space condition graph of groups orientation flip part}. Differentiating condition \eqref{condition : conjugation along tree edges is nothing} immediately gives \eqref{tangent space condition graph of groups tree edges}.
\end{proof}

\subsection{Leaves supported on vertices}\label{sec : leaves supported on vertices}
Associated to a graph of groups, we define the projection

\begin{align*}
	\widehat p\colon \Hom(\pi_1(\Gamma,Y,T),G)&\to\prod_{v\in V}\Hom(\Gamma_v,G)/G\\
	(\rho_\vv,\rho_\yy)&\mapsto([\rho_v])_{v\in V}
\end{align*}

\begin{remark}
	The projection of a smooth point does not necessarily consist of smooth points.
\end{remark}

Moreover, for a vertex $w\in V$, we define the \textit{projection onto the complement of $w$}
\begin{align*}
	\widehat p_w\colon \Hom(\pi_1(\Gamma,Y,T),G)&\to\prod_{v\in V\smallsetminus \{w\}}\Hom(\Gamma_v,G)/G\\
	(\rho_\vv,\rho_\yy)&\mapsto ([\rho_v])_{v\in V\smallsetminus \{w\}}
\end{align*}

Both of the above projection maps descend to the quotient by conjugation, and we obtain a pair of projections
\begin{align}
	p\colon \Hom(\pi_1(\Gamma,Y,T),G)/G&\to \prod_{v\in V}\Hom(\Gamma_v,G)/G\label{eq: definition projection map to character varieties of vertex groups}\\
	[\rho_{\vv},\rho_\yy]&\mapsto ([\rho_v])_{v\in V}, \nonumber
\end{align}
and
\begin{align}
	p_w\colon \Hom(\pi_1(\Gamma,Y,T),G)/G &\to \prod_{v\in V\smallsetminus \{w\}}\Hom(\Gamma_v,G)/G\\
	[\rho_\vv,\rho_\yy]&\mapsto ([\rho_v])_{v\in V\smallsetminus \{w\}}.\nonumber
\end{align}
Denote by $\widehat{\mathbf H}_w\coloneqq \prod_{v\in V\smallsetminus \{w\}}\Hom(\Gamma_v,G)$ and by $\mathbf H_w\coloneqq \prod_{v\in V\smallsetminus\{w\}}\Hom(\Gamma_v,G)/G$, the set of \textit{representations in the complement of $w$}. With these projections at hand, we make the following definition.

\begin{definition}\label{def : leaves supported on vertices}
	Let $w$ be a vertex in $V$, and $\rho$ a representation in $\Hom(\pi_1(\Gamma,Y,T),G)$. The \textit{leaf supported on $w$ at $\rho$}, denoted by $\widehat \leaf_{w,\rho}$, is the preimage
	
	\[
		\widehat \leaf_{w,\rho}\coloneqq \widehat p_w^{-1}(\widehat p_w(\rho)).
	\]
	Similarly, we define 
	\[
		\leaf_{w,[\rho]}\coloneqq p_w^{-1}(p_w([\rho])), 
	\]
	and also call it the leaf supported on $w$ at $[\rho]$. In the case that the vertex $w$ is clear from the context, we will drop this subscript in the notation.
\end{definition}

\begin{remark}\label{rmk : alternative definition for leaves}
	By definition of the projection, we see that the leaves can also be described as
	\[
		\widehat\leaf_{w,\rho}\coloneqq \left\{\varphi = (\varphi_\vv,\varphi_\yy)\in\Hom(\pi_1(\Gamma,Y,T),G)\st [\varphi_v]=[\rho_v],\,\,\forall v\in V\smallsetminus \{w\}\right\},
	\]
	and similarly
	\[
		\leaf_{w,[\rho]}\coloneqq \left\{[\varphi] = [(\varphi_\vv,\varphi_\yy)]\in\Hom(\pi_1(\Gamma,Y,T),G)/G\st [\varphi_v]=[\rho_v],\,\,\forall v\in V\smallsetminus \{w\}\right\}.
	\]
	This allows us to make some abuse of notation and denote $\leaf_{w,[\rho]}$ by $\leaf_{w,\rho}$.
\end{remark}

\begin{remark}
	In some cases, instead of defining the leaves over a representation, we define it over a conjugacy class of a tuple of conjugacy classes. Namely, for a tuple $\mathcal R = (\mathcal R_v)_{v\in V\smallsetminus \{w\}}\in \widehat{\mathbf H}_w$, we denote by  $[\mathcal R] = ([\mathcal R_v])_{v\in V\smallsetminus \{w\}}\in\mathbf H_w$, and abuse notation by writing
	\[
		\widehat \leaf_{w,\mathcal R}\coloneqq \widehat p_w^{-1}([\mathcal R]).
	\]
	Similarly, we define $\leaf_{w,\mathcal R}=\leaf_{w,[\mathcal R]}$ as the preimage $p_w^{-1}([\mathcal R])$, which by Remark \ref{rmk : alternative definition for leaves} is well defined. It may be the case however, that $\leaf_{w,\mathcal R}$ is empty if $[\mathcal R]$ does not lie in the image of the projection $\widehat p_w$.
\end{remark}

Since the leaves are defined as the fibers of a map, we immediately obtain the following lemma; from which the name for leaf becomes apparent.

\begin{lemma}\label{lem: disjoint union "relative representation varieties"}
	 The representation variety $\Hom(\pi_1(\Gamma,Y,T),G)$ decomposes as a disjoint union
	\[
	\Hom(\pi_1(\Gamma,Y,T),G) = \bigsqcup_{[\mathcal R]\in \mathbf H_w} \widehat\leaf_{w,\mathcal R},
	\]
	which descends to a decomposition 
	\[
	\Hom(\pi_1(\Gamma,Y,T),G)/G = \bigsqcup_{[\mathcal R]\in \mathbf H_w} \leaf_{w,\mathcal R}.
	\]
\end{lemma}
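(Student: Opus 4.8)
The plan is to observe that both displayed decompositions are nothing more than the partitions of the two spaces into the fibers of the projection maps $\widehat p_w$ and $p_w$, and that the first partition is carried onto the second by the quotient map because $\widehat p_w$ descends to $p_w$.

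First I would invoke the elementary fact that for any map $\Phi\colon X\to Y$ one has $X=\bigsqcup_{y\in Y}\Phi^{-1}(y)$, with the understanding that some fibers may be empty. Applying this to $\widehat p_w\colon\Hom(\pi_1(\Gamma,Y,T),G)\to\mathbf H_w$ and using the definition $\widehat\leaf_{w,\mathcal R}=\widehat p_w^{-1}([\mathcal R])$ yields
\[
\Hom(\pi_1(\Gamma,Y,T),G)=\bigsqcup_{[\mathcal R]\in\mathbf H_w}\widehat p_w^{-1}([\mathcal R])=\bigsqcup_{[\mathcal R]\in\mathbf H_w}\widehat\leaf_{w,\mathcal R},
\]
and applying the same principle to $p_w\colon\Hom(\pi_1(\Gamma,Y,T),G)/G\to\mathbf H_w$ together with $\leaf_{w,\mathcal R}=p_w^{-1}([\mathcal R])$ yields the second decomposition directly.

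To connect the two, let $q$ denote the quotient map $\Hom(\pi_1(\Gamma,Y,T),G)\to\Hom(\pi_1(\Gamma,Y,T),G)/G$. Since $\widehat p_w$ is constant on conjugation orbits it factors as $\widehat p_w=p_w\circ q$, and since $q$ is surjective we get $q(\widehat\leaf_{w,\mathcal R})=q(\widehat p_w^{-1}([\mathcal R]))=p_w^{-1}([\mathcal R])=\leaf_{w,\mathcal R}$; thus pushing the first disjoint union forward by $q$ produces exactly the second. I do not expect any obstacle here, as the statement is purely formal given the definitions already in place; the only mild subtlety, already flagged in the preceding remark, is that $\leaf_{w,\mathcal R}$ is empty whenever $[\mathcal R]$ lies outside the image of $p_w$, which is harmless for a disjoint-union decomposition indexed by all of $\mathbf H_w$.
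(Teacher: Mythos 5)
Your proposal is correct and follows exactly the route the paper takes: the paper treats the lemma as immediate precisely because the leaves are by definition the fibers of $\widehat p_w$ and $p_w$, and the first partition descends to the second since $\widehat p_w = p_w\circ q$ with $q$ the surjective quotient map. Your write-up simply makes this explicit, including the harmless possibility of empty fibers, so there is nothing to add.
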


We will thus denote by
\begin{equation}\label{eq : def of decomposition into leaves associated to w}
\widehat\leaf(w)\coloneqq \{\widehat\leaf_{w,\mathcal R}\st[\mathcal R]\in \mathbf H_w\}
\end{equation}
the \textit{decomposition into leaves associated to $w$}. Similarly, we obtain a decomposition
\[
\leaf(w)\coloneqq \{\leaf_{w,\mathcal R}\st[\mathcal R]\in \mathbf H_w\}.
\]

The leaves carry an additional structure, depending on the Lie group $G$ and the representation it is supported at.
\begin{lemma}\label{lem: "relative representation varieties" are analytic algebraic or semialgebraic}
	Let $(\Gamma,Y)$ be a graph of groups such that $\Gamma_v$ is finitely presented for every $v\in V$, and let $T$ be a maximal tree in $Y$. Fix a vertex $w\in V$ and a tuple of representations $\mathcal R = (\mathcal R_v)_{v\in V\smallsetminus\{w\}}\in\widehat{\mathbf H}_w$ in the complement of $w$. 
 
 \begin{enumerate}
     \item Assume $G$ is a Lie group equipped with an analytic atlas. The leaf $\widehat \leaf_{w,\mathcal R}$ supported on $w$ at $\mathcal R$ is naturally an analytic subvariety of $G^k$ for some $k\in\N$.\label{lemma : if analytic Lie group}
     \item If $G$ is a complex algebraic group, then $\widehat \leaf_{w,\mathcal R}$ is an algebraic subvariety.\label{lemma : if complex}
     \item If $G$ is a real algebraic group, then $\widehat \leaf_{w,\mathcal R}$ is in general a semialgebraic subvariety.\label{lemma : if real}
     \begin{enumerate}
         \item In the case when the conjugacy classes $\left[\mathcal R_v\right]$ are all algebraic, the leaf $\widehat \leaf_{w,\mathcal R}$ is algebraic.\label{lemma : if conjugacy classes are algebraic then algebraic}
     \end{enumerate}
 \end{enumerate}
\end{lemma}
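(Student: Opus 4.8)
Throughout, I will use that the underlying graph $Y$ is finite (as it is in all applications, coming from a finite subsurface decomposition) and that each $\Gamma_v$ is finitely presented.

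\medskip

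\textbf{Reduction to a statement about conjugation orbits.} First I would realize the ambient representation variety inside a power of $G$. Since $Y$ is finite and each $\Gamma_v$ is finitely presented, $\pi_1(\Gamma,Y,T)$ is finitely presented: take as generators the generators of the $\Gamma_v$ together with the gluing letters $g_y$ for $y\in T_u^c$, and as relations the relators of each $\Gamma_v$ together with \eqref{relation:graph of groups relation 1}, \eqref{relation:graph of groups relation 1 prime} and \eqref{relation:graph of groups relation 2}. Evaluation on this generating set embeds $\Hom(\pi_1(\Gamma,Y,T),G)$ into $G^N$ as the common zero locus of finitely many maps of the form $W_r\colon G^N\to G$ (evaluation of a relator word); since $W_r^{-1}(e)$ is cut out by: analytic equations (case \ref{lemma : if analytic Lie group}), a Zariski-closed condition (case \ref{lemma : if complex}), or a (semi)algebraic condition (case \ref{lemma : if real}), the variety $\Hom(\pi_1(\Gamma,Y,T),G)$ is a closed subvariety of $G^N$ of the respective type. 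The same reasoning embeds each $\Hom(\Gamma_v,G)$ as a closed subvariety of $G^{n_v}$, where $n_v$ is the number of generators of $\Gamma_v$.

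\textbf{The leaf as a preimage.} Let $q_w\colon\Hom(\pi_1(\Gamma,Y,T),G)\to\prod_{v\neq w}\Hom(\Gamma_v,G)$ be the restriction map $(\rho_\vv,\rho_\yy)\mapsto(\rho_v)_{v\neq w}$; in the coordinates above it is a projection onto a subset of the coordinates, hence a morphism of the relevant type. By Remark \ref{rmk : alternative definition for leaves},
\[
\widehat\leaf_{w,\mathcal R}=q_w^{-1}\Bigl(\prod_{v\neq w}\mathcal O_v\Bigr),
\]
where $\mathcal O_v\subset\Hom(\Gamma_v,G)\subset G^{n_v}$ is the orbit of $\bigl(\mathcal R_v(\gamma^v_1),\dots,\mathcal R_v(\gamma^v_{n_v})\bigr)$ under the diagonal conjugation action of $G$ on $G^{n_v}$ (this orbit automatically lies in the conjugation-invariant subvariety $\Hom(\Gamma_v,G)$). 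As preimages of subvarieties of a given type under morphisms of that type are again of that type, the whole lemma reduces to showing that $\mathcal O_v$ is an analytic subvariety in case \ref{lemma : if analytic Lie group}, a locally closed algebraic subvariety in case \ref{lemma : if complex}, and a semialgebraic set in case \ref{lemma : if real} — algebraic under the extra hypothesis of \ref{lemma : if conjugacy classes are algebraic then algebraic}.

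\textbf{The orbit claim, case by case.} For a complex algebraic group, any orbit of an algebraic group action is open in its Zariski closure, hence a locally closed algebraic subvariety; this settles \ref{lemma : if complex}. For a real algebraic group, $G=G(\R)$ is a semialgebraic set and $g\mapsto g\,\underline r\,g^{-1}$ is a polynomial map, so $\mathcal O_v$ is semialgebraic by the Tarski--Seidenberg theorem; and when the conjugacy class $[\mathcal R_v]$ is (real) algebraic, i.e. $\mathcal O_v$ coincides with a locally closed algebraic subset of $G^{n_v}$, the preimage $q_w^{-1}(\prod\mathcal O_v)$ is algebraic — this gives \ref{lemma : if real} and \ref{lemma : if conjugacy classes are algebraic then algebraic}. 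For an analytic Lie group, $\mathcal O_v\cong G/C_G(\underline r)$ carries its natural structure of injectively immersed analytic submanifold of $G^{n_v}$, and pulling back along $q_w$ equips $\widehat\leaf_{w,\mathcal R}$ with the corresponding analytic subvariety structure. I expect the genuine point requiring care to be case \ref{lemma : if real}: over $\R$ a conjugacy class need not be Zariski locally closed, only semialgebraic, so an algebraic structure is not available without the extra hypothesis — the model example is already visible in $\SL2$, where the elliptic elements of a fixed trace in $(-2,2)$ form two conjugacy classes, distinguished by the sign of a polynomial in the matrix entries, each having the whole trace level set as Zariski closure. One must likewise read ``analytic subvariety'' in \ref{lemma : if analytic Lie group} in the immersed-submanifold sense (orbits of analytic actions are in general only injectively immersed, not embedded), which is exactly what is needed for the later analysis of tangent spaces to leaves.
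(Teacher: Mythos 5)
Your argument is correct in substance and runs on the same engine as the paper's proof -- evaluate on generators to embed $\Hom(\pi_1(\Gamma,Y,T),G)$ into a finite power of $G$, then use that orbits of algebraic group actions are Zariski locally closed over $\C$, Tarski--Seidenberg over $\R$, and the extra hypothesis in (3)(a) to upgrade semialgebraic to algebraic -- but your key reduction is genuinely different from the paper's. The paper constrains each generator image \emph{separately}: it identifies $\widehat{\leaf}_{w,\mathcal R}$ inside $\bigl(\prod_{v\neq w}C_v^1\times\cdots\times C_v^{n_v}\bigr)\times G^{n_w}\times G^{|T_u^c|}$, where $C_v^i$ is the conjugacy class of the single element $\mathcal R_v(\gamma_v^i)$, and then cuts by the relations; accordingly, the hypothesis of (3)(a) is there interpreted (and later verified, for Hitchin restrictions, via closedness of conjugacy classes of loxodromic elements) as algebraicity of these individual classes. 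You instead constrain the whole generator tuple to lie in the diagonal-conjugation orbit $\mathcal O_v$ and realize the leaf as the preimage of $\prod_{v\neq w}\mathcal O_v$ under the restriction morphism $q_w$. Your description is the one that literally matches Remark \ref{rmk : alternative definition for leaves}: conjugacy of representations is \emph{simultaneous} conjugacy, so the preimage of $\prod_{v\neq w}\mathcal O_v$ is exactly the leaf, whereas the relation-cut subset of the product of individual classes only contains it in general. What the paper's route buys is a concrete ambient object (a product of conjugacy-class manifolds and copies of $G$, inside which one cuts by analytic/algebraic equations) and an elementwise hypothesis that is easy to check in the applications; what your route buys is an exact identification of the leaf and a reading of (3)(a) that matches the statement as written ($[\mathcal R_v]$ algebraic as an orbit of the representation), at the cost of requiring algebraicity of the full orbit when the lemma is applied.

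The one step you should tighten is the analytic case: an orbit of a Lie group action is in general only an injectively immersed submanifold, and the preimage of an immersed submanifold under an analytic map does not automatically inherit a subvariety structure -- you need either a transversality or local-triviality argument for $q_w$ over the orbit, or you can argue as the paper does, working inside the product of conjugacy classes and cutting by the (analytic) relations. You flag the immersed-versus-embedded issue yourself, and the paper's treatment of this point is no more precise, so this is a matter of polish rather than a failure of the strategy.
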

\begin{proof}
	For part \ref{lem: "relative representation varieties" are analytic algebraic or semialgebraic} \eqref{lemma : if analytic Lie group} note that a conjugacy class of an element $g\in G$ is a smooth submanifold isomorphic to $G/Z(g)$, giving the conjugacy class the structure of an analytic manifold. Assume now that for every $v\in V$, the vertex group $\Gamma_v$ is generated by $n_v$ elements and has presentation $\langle \gamma_v^1,\dots,\gamma_v^{n_v} |Q_v\rangle$. Consider the injection
	\begin{align*}
		\Hom(\pi_1(\Gamma,Y,T),G)&\to \left(\prod_{v\in V\smallsetminus\{w\}}G^{n_v}\right)\times G^{n_w}\times G^{|T_u^c|}\\
		(\varphi_\vv,\varphi_\yy)&\mapsto \left((\varphi_v(\gamma_v^i)_{i = 1}^{n_v})_{v\in V\smallsetminus \{w\}},(\varphi_w(\gamma_w^i))_{i = 1}^{n_w},(\varphi_y(g_y))_{y\in T_u^c}\right).
	\end{align*}
If $\leaf_{w,\mathcal R}$ is empty, there is nothing to prove. So we assume that it is nonempty, and denote by $\rho_v\colon\Gamma_v\to G$ the representation $\mathcal R_v$ for every $v\in V\smallsetminus \{w\}$. Let $C_v^i\coloneqq [\rho_{v}(\gamma_v^i)]$ be the conjugacy class of the generators for $i = 1,\dots,n_v$ and for every $v\in V\smallsetminus \{w\}$. Then the direct product of conjugacy classes $C_v\coloneqq C_v^1\times\cdots\times C_v^{n_v}\subset G^{n_v}$ is an analytic submanifold. By the alternative definition of the leaves in Remark \ref{rmk : alternative definition for leaves}, the above map identifies the leaf $\widehat\leaf_{w,\mathcal R}$ with the subset of 
\[
	\left(\prod_{v\in V\smallsetminus \{w\}} C_v^i\right)\times G^{n_{w}}\times G^{|T_u^c|}
\]
cut out by the relations $Q_v$, all of which are analytic maps $G^k\to G$. Thus, $\widehat\leaf_{w,\mathcal R}$ carries the structure of an anlytic variety.\\

The other cases follow similarly by noticing that when $G$ is a complex algebraic group, conjugacy classes are algebraic subvarieties. On the other hand, in a real algebraic group, conjugacy classes are only semialgebraic varieties in general. In the special case when all the conjugacy classes are algebraic varieties, the leaves become algebraic varieties as well.
\end{proof}

\subsection{Graph of groups associated to a surface decomposition}\label{sec : Constructing graph of groups}
In this section, we describe how a decomposition of a surface into subsurfaces gives rise to a graph of groups, and describe its fundamental group in terms of this decomposition with explicit gluing loops in Proposition \ref{prop: fundamental group of graph of groups form of gluing loops}. We use the explicit description in the proof of Theorem \ref{thm : hamiltonian vector field is infinitesimal subsurface deformation} to show that the Hamiltonian vector field of induced invariant functions is an infinitesimal subsurface deformation. \\

 For this section let $S = \bigcup_{i = 1}^kS_i$ be a decomposition of $S$ into compact connected surfaces with boundary. We assume for the rest of the article, that each surface $S_i$ is essential and homotopically non-trivial. Assume that the intersections of the subsurfaces are either empty, or boundary components.

\subsubsection{Construction of the graph of groups}\label{sec : construction for graph of groups}
Choose base points $p_1,\dots,p_k$ in the interior of each $S_1,\dots,S_k$ respectively. Up to relabelling we may assume that $p = p_1$. Then we specify (arbitrarily) an orientation on each curve in the boundary of the subsurfaces. Let $a_1,\dots,a_n$ be the collection of boundary curves.\\

\begin{figure}[ht]
\centering
\includesvg[scale=0.5]{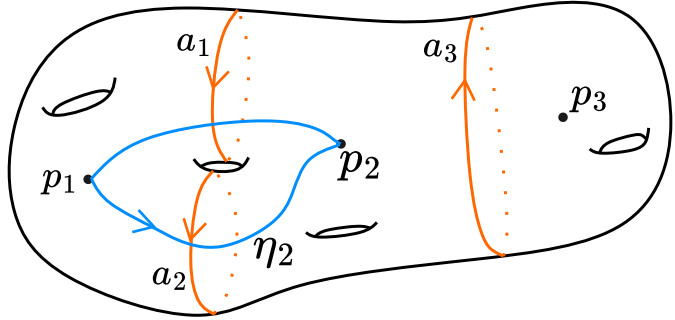}	
\caption{Example of a graph of groups decomposition into $3$ subsurfaces. The gluing loop is in blue according the maximal tree as in Figure \ref{fig : graph for graph of groups example}.}
\label{fig : Graph of groups example decomposition}
\end{figure}

With the chosen orientations, the graph of groups $(\Gamma,Y)$ is given as follows. Let the set of vertices be $V = \{v_1,\dots,v_k\}$ associated to each subsurface, and let $\pi_1(S_i,p_i)$ be the vertex group of $v_i$ for each $i = 1,\dots,k$. Then construct the following edges:

\begin{itemize}
    \item For $j = 1,\dots,n$, let $y_{j}$ be an edge from the subsurface to the left of $a_j$ to the subsurface to the right of $a_j$;
    \item For each of the above edges, also add the edge with the opposite orientation, i.e. add $\overline{y}_{1},\dots,\overline{y}_{n}$.
\end{itemize}

The graph of groups associated to the decomposition in Figure \ref{fig : Graph of groups example decomposition} is shown in Figure  \ref{fig : graph for graph of groups example}. \\

\begin{figure}[ht]
	\centering
	\includesvg[scale=0.5]{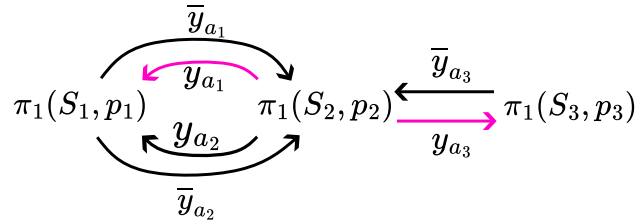}
	\caption{Graph of groups associated to the decomposition in Figure \ref{fig : Graph of groups example decomposition} with a choice of maximal tree in pink.}
	\label{fig : graph for graph of groups example}
\end{figure}

For ease of notation, we will often confuse a vertex $v_i$ with either the vertex group $\pi_1(S_i,p_i)$ or with the index $i$ itself, which will be clear from the context.\\

Now assign to each edge the group $\Z$ and we define the monomorphisms from the edge groups to the vertex groups. The monomorphsisms are specific homotopy classes of loops associated to the boundary components. For each $j\in\{1,\dots,n\}$ and $i\in\{1,\dots,k\}$ such that $a_j$ is a boundary component of $S_i$, choose a path $\nu^j_i$ from the base point $p_i$ to the curve $a_j$, with the additional conditions that $\coai{\nu}{j}^j = \nu_{t(\overline{y}_{j})}^j$ and that the endpoints of $\coai{\nu}{j}^j$ and $\ctai{\nu}{j}^j$ are the same. These paths allow us to define loops in the fundamental groups of the subsurfaces. Namely, let
\begin{equation}\label{eq: definition boundary curves based at different subsurfaces}
        \alpha^j_i\coloneqq[\nu^j_i*a_j*(\nu_i^j)^{-1}]\in\pi_1(S_i,p_i),
\end{equation}
where once again, $i$ is such that $a_j$ is a boundary curve of $S_i$. With these paths, we obtain the monomorphisms
\begin{align}
\Z&\to\pi_1(\coai{S}{j},\coai{p}{j})\label{eq:GraphOfGroups:Monomorphsisms a_i}\\
1&\mapsto 1^{y_{j}}\coloneqq \coai{\alpha}{j}^j,\nonumber
\end{align}
and
\begin{align}
\Z&\to\pi_1(S_{o(\overline{y}_{j})},p_{o(\overline{y}_{j})})\label{eq:GraphOfGroups:Monomorphsisms a_i reversed}\\
1&\mapsto 1^{\overline{y}_{j}}\coloneqq \alpha^j_{o(\overline{y}_{j})}=\ctai{\alpha}{j}^j,\nonumber
\end{align}

\begin{figure}[ht]
	\centering
	\includesvg[scale=0.5]{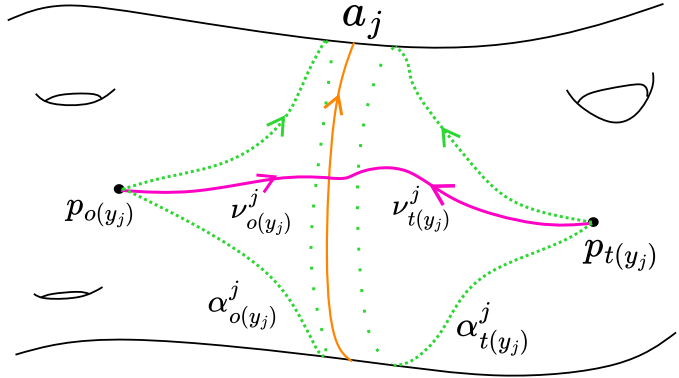}
	\caption{Identifying boundary curves ($a_j$ in solid orange) with elements $\alpha^j_{o(y_j)}$ (and $\alpha^j_{t(y_j)}$) in dotted green in the different fundamental groups through paths $\nu^i_j$ in solid pink.}
	\label{fig : connecting paths}
\end{figure}
see for example Figure \ref{fig : connecting paths}.  This finishes the construction of the graph of groups.

\subsubsection{The fundamental group of the graph of groups}
Begin by choosing a maximal tree $T$ in $Y$, as for example in pink in Figure \ref{fig : graph for graph of groups example}. Before describing the additional letters from Definition \ref{def:graph of groups and fundamental group}, we define homomorphisms from the vertex groups to the fundamental group $\pi_1(S,p)$.\\

Let $v_j\in V$ be a vertex in the graph of groups and recall that $p = p_1$. Since $T$ is a tree, there exists a unique geodesic $(y_{i_1},\dots,y_{i_m})$ in $T$ from $v_1$ to $v_j$. This geodesic defines a path from $p_1$ to $p_j$ as follows. Let
\[
    z_j\coloneqq (\coai{\nu}{i_1}^{i_1}*(\ctai{\nu}{i_1}^{i_1})^{-1})*(\coai{\nu}{i_2}^{i_2}*(\ctai{\nu}{i_2}^{i_2})^{-1})*\cdots * (\coai{\nu}{i_m}^{i_m}*(\ctai{\nu}{i_m}^{i_m})^{-1}).
\]
The arcs in the above path are paired to see that each term is a path from $\coai{p}{i_l}$ to $\ctai{p}{i_l}$, for $l = 1,\dots,m$. This allows us to define the isomorphisms for $j = 1,\dots,k$
\begin{align}
    \psi_j\colon\pi_1(S,p_j)&\to\pi_1(S,p_1)
    \label{eq:definition of inclusion of vertex groups in graph of groups}\\
    [\gamma]&\mapsto[z_j*\gamma*z_j^{-1}].\nonumber
\end{align}
In particular, the isomorphisms induce homomorphisms $\pi_1(S_j,p_j)\to\pi_1(S,p)$.
\begin{convention}\label{convention : Gammaj is image of psij}
    For the rest of the article, we will write
    \begin{equation}\label{eq : Gamma_j is image of psi_j}
        \Gamma_j\coloneqq \psi_j\left(\pi_1(S_j,p_j)\right).
    \end{equation}
    We will now replace the vertex groups $\pi_1(S_j,p_j)$ with the groups $\Gamma_j$ in the graph of groups $(\Gamma,Y)$ (which depend on the choice of maximal tree $T$).
\end{convention}
\begin{remark}\label{rmk : Gamma_j not necessarily isomorphic to fundamental group of subsurface}
    Note that $\Gamma_j$ is only isomorphic to $\pi_1(S_j,p_j)$ if the subsurface $S_j$ is incompressible. However, the fact that $\psi_j$ is not necessarily injective when restricted to $\pi_1(S_j,p_j)$ will not be important in the rest of the article.
\end{remark}

We now give explicit descriptions for what the gluing loops from Definition \ref{def:graph of groups and fundamental group} will be. Let  $\{y_{i_1},\dots,y_{i_N}\}\subset T^c_u$ be a subset such that if $y\in \{y_{i_1},\dots,y_{i_N}\}$, then $\overline{y} \notin\{y_{i_1},\dots,y_{a_{i_N}}\}$. For $l = 1,\dots,N$, define
\begin{equation}\label{eq:definition gluing loops eta}
    \eta_l\coloneqq [\ctai{z}{i_l}*\ctai{\nu}{i_l}^{i_l}*(\coai{\nu}{i_l}^{i_l})^{-1}*\coai{z}{i_l}^{-1}]\in\pi_1(S,p),
\end{equation}
see for example Figure \ref{fig : graph for graph of groups example} (where $i_l = 2$).\\

With this setup, we can now describe the fundamental group of the surface $S$ using the vertex groups and the above gluing loops.

\begin{proposition}\label{prop: fundamental group of graph of groups form of gluing loops}
Consider the graph of groups $(\Gamma,Y)$ constructed in Section \ref{sec : Constructing graph of groups}, choose a maximal tree $T$, and consider the corresponding construction of the isomorphisms $\psi_j$ in \eqref{eq:definition of inclusion of vertex groups in graph of groups} mapping the vertex groups to subgroups of $\pi_1(S,p)$. The group generated by 
\[
	\Gamma_1 = \psi_1(\pi_1(S_1,p_1)),\Gamma_2 = \psi_2(\pi_1(S_2,p_2)),\dots,\Gamma_k = \psi_k(\pi_1(S_k,p_k)),\eta_1,\dots,\eta_N,
\]
subject to the graph of group relations in Definition \ref{def:graph of groups and fundamental group}, where the vertex groups are $\Gamma_j=\psi_j(\pi_1(S_j,p_j))$ for $j = 1,\dots,k$ and the gluing loops $\eta_1,\dots,\eta_N$ defined in \eqref{eq:definition gluing loops eta}, is the fundamental group $\pi_1(S,p)$.

\end{proposition}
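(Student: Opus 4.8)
The plan is to produce an explicit isomorphism from the group with the asserted presentation onto $\pi_1(S,p)$, building on the classical identification of the fundamental group of a surface cut along a multicurve with the fundamental group of a graph of groups.

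\emph{Step 1: the abstract isomorphism.} First I would realize $S$, up to homotopy equivalence, as the total space of a graph of spaces modeled on $Y$: over the vertex $v_i$ put the surface $S_i$, over the edge $y_j$ put a closed collar annulus $A_j\simeq S^1$ of the boundary curve $a_j$, and let the two attaching maps $A_j\to S_{o(y_j)}$ and $A_j\to S_{t(y_j)}$ be the inclusions of the two sides of the collar. Iterated van Kampen (equivalently, the structure theorem for fundamental groups of graphs of spaces) identifies $\pi_1$ of this total space with the fundamental group of the graph of groups whose vertex groups are the $\pi_1(S_i,p_i)$, whose edge groups are $\Z$, and whose edge monomorphisms, after using the connecting arcs $\nu^j_i$, are exactly \eqref{eq:GraphOfGroups:Monomorphsisms a_i} and \eqref{eq:GraphOfGroups:Monomorphsisms a_i reversed}. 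This produces an isomorphism $\Psi\colon\pi_1(\Gamma,Y,T)\xrightarrow{\sim}\pi_1(S,p)$.

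\emph{Step 2: normalizing $\Psi$.} The van Kampen isomorphism is canonical only after choosing, for each vertex $v_j$, a path in $S$ from $p_1$ to $p_j$ along which $\pi_1(S_j,p_j)$ is conjugated into $\pi_1(S,p_1)$. Taking this path to be $z_j$, the concatenation of connecting arcs along the unique tree geodesic from $v_1$ to $v_j$, makes $\Psi$ restrict on the vertex group $\pi_1(S_j,p_j)$ to the map $\psi_j$ of \eqref{eq:definition of inclusion of vertex groups in graph of groups}. With this normalization $\Psi(g_{y_{i_l}})$ is forced to be the loop at $p$ that runs out along the tree path to $p_{t(y_{i_l})}$, crosses $a_{i_l}$ via $\nu^{i_l}_{t(y_{i_l})}$ then $(\nu^{i_l}_{o(y_{i_l})})^{-1}$, and returns along the tree path from $p_{o(y_{i_l})}$; this is precisely the loop $\eta_l$ of \eqref{eq:definition gluing loops eta}.

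\emph{Step 3: verifying the relations directly.} Independently of Step 1, I would check that the assignment $\gamma\mapsto\psi_j(\gamma)$ on each $\pi_1(S_j,p_j)$, together with $g_{y_{i_l}}\mapsto\eta_l$ (so $g_{\overline{y_{i_l}}}\mapsto\eta_l^{-1}$) and $g_y\mapsto1$ for $y\in T_u$, respects the defining relations \eqref{relation:graph of groups relation 1}--\eqref{relation:graph of groups relation 2}. Relations \eqref{relation:graph of groups relation 1 prime} and \eqref{relation:graph of groups relation 2} hold by construction. For a tree edge $y_j$ one verifies $\psi_{o(y_j)}(\alpha^j_{o(y_j)})=\psi_{t(y_j)}(\alpha^j_{t(y_j)})$ in $\pi_1(S,p)$: since $z_{t(y_j)}$ differs from $z_{o(y_j)}$ exactly by the arc $\nu^j_{o(y_j)}*(\nu^j_{t(y_j)})^{-1}$, both sides unwind, using $\alpha^j_i=[\nu^j_i*a_j*(\nu^j_i)^{-1}]$, to the class of $z_{o(y_j)}*\nu^j_{o(y_j)}*a_j*(\nu^j_{o(y_j)})^{-1}*z_{o(y_j)}^{-1}$. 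For a non-tree edge $y_{i_l}$ one verifies the HNN-type relation $\eta_l\,\psi_{t(y_{i_l})}(\alpha^{i_l}_{t(y_{i_l})})\,\eta_l^{-1}=\psi_{o(y_{i_l})}(\alpha^{i_l}_{o(y_{i_l})})$ by expanding $\eta_l$, the $\psi$'s and the $\alpha$'s in terms of the arcs $z_\bullet$, $\nu^{i_l}_\bullet$, $a_{i_l}$ and telescoping. This shows the assignment is a well-defined homomorphism from the presented group; together with Step 2 it is $\Psi$, hence an isomorphism, and since $\Psi$ sends the vertex groups onto the $\Gamma_j=\psi_j(\pi_1(S_j,p_j))$ and the gluing loops onto the $\eta_l$, the presentation in the statement follows.

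\emph{Main obstacle.} The conceptual input is classical Bass--Serre and van Kampen theory; the real work, and the place a sign or basepoint convention is most likely to bite, is Step 3 --- the path bookkeeping showing that conjugation by the explicitly written $\eta_l$ carries the realization $\alpha^{i_l}_{t(y_{i_l})}$ of the cutting curve inside $\pi_1(S_{t(y_{i_l})},p_{t(y_{i_l})})$ onto the realization $\alpha^{i_l}_{o(y_{i_l})}$ inside $\pi_1(S_{o(y_{i_l})},p_{o(y_{i_l})})$, while simultaneously arranging that every $\psi_j$ is the correct vertex inclusion for a single normalization of $\Psi$. A secondary point to treat carefully is that $\psi_j$ need not be injective on $\pi_1(S_j,p_j)$ (Remark~\ref{rmk : Gamma_j not necessarily isomorphic to fundamental group of subsurface}): the conclusion must be read as a presentation whose vertex groups are the images $\Gamma_j$, and the van Kampen argument should be set up with the $\Gamma_j$ directly.
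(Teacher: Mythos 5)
Your proposal is correct and follows essentially the same route as the paper: the paper likewise invokes the abstract identification of $\pi_1(S,p)$ with $\pi_1(\Gamma,Y,T)$ (citing Serre and the corollary of Lemma 5.3 in Johnson--Millson, which is exactly the van Kampen/graph-of-spaces fact you re-derive in Step 1), matches the vertex groups to the $\Gamma_j=\psi_j(\pi_1(S_j,p_j))$ and the non-tree gluing generators to the $\eta_l$ via the tree paths $z_j$, and then carries out the same arc-telescoping computation to verify relation \eqref{relation:graph of groups relation 1}. The only cosmetic differences are that you spell out the graph-of-spaces construction and the tree-edge compatibility check explicitly, whereas the paper cites the former and leaves the latter implicit.
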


\begin{proof}
By \cite[Section I.5]{Serre_Trees} and the corollary of Lemma 5.3 in \cite{JohnsonMillson}, the fundamental group $\pi_1(\Gamma,Y,T)$ corresponding to the graph of groups and tree we chose is isomorphic to $\pi_1(S,p)$. We thus need to compare the generators in the proposition to the generators from Definition \ref{def:graph of groups and fundamental group}, as well as the relations. All the vertex groups are in the statement of the proposition. By relations \eqref{relation:graph of groups relation 1 prime} and \eqref{relation:graph of groups relation 2}, we only need to consider the generators corresponding to edges in $T_u^c$. The generators $\eta_1,\dots,\eta_N$ correspond to the gluing loops in Definition \ref{def:graph of groups and fundamental group}. Thus, we only need to check that relation \eqref{relation:graph of groups relation 1} holds.\\

We require that for $l = 1\dots,N$,
\[
\eta_l\cdot \psi_{o(y_{i_l})}(1^{y_{i_l}})\cdot\eta_l^{-1} = \psi_{t(y_{i_l})}(1^{\overline{y}_{i_l}}).
\]
Starting from the left hand side, 
\begin{gather*}
    \eta_l\cdot 1^{y_{i_l}}\cdot\eta_l^{-1} = \eta_l\cdot\coai{\psi}{i_l}(\coai{\alpha}{i_l}^{i_l})\cdot \eta_l^{-1} = \\
    (\ctai{z}{i_l}*\ctai{\nu}{i_l}^{i_l}*(\coai{\nu}{i_l}^{i_l})^{-1}*\coai{z}{i_l}^{-1})*(\coai{z}{i_l}*(\coai{\nu}{i_l}^{i_l}*a_{i_l}*(\coai{\nu}{i_l}^{i_l})^{-1})*\coai{z}{i_l}^{-1})*\\
    *(\coai{z}{i_l}*\coai{\nu}{i_l}^{i_l}*(\ctai{\nu}{i_l}^{i_l})^{-1}*\ctai{z}{i_l}^{-1})=\\
    =\ctai{z}{i_l}*\ctai{\nu}{i_l}^{i_l}*a_{i_l}*(\ctai{\nu}{i_l}^{i_l})^{-1}*\ctai{z}{i_l}^{-1}.
\end{gather*}
For the right hand side, we have that
\begin{gather*}
    \psi_{t(y_{i_l})}(1^{\overline{y}_{i_l}}) = \ctai{\psi}{i_l}(\alpha_{o(\overline{y}_{i_l})}^{i_l})=\ctai{\psi}{i_l}(\ctai{\alpha}{i_l}^{i_l}) =\\
   =\ctai{\psi}{i_l}(\ctai{\nu}{i_l}^{i_l}*a_{i_l}*(\ctai{\nu}{i_l}^{i_l})^{-1})
    =\ctai{z}{i_l}*\ctai{\nu}{i_l}^{i_l}*a_{i_l}*(\ctai{\nu}{i_l}^{i_l})^{-1}*\ctai{z}{i_l}^{-1}.
\end{gather*}
This finishes the proof.
\end{proof}

\subsection{Graph of groups decomposition associated to a subsurface, and the Zariski tangent spaces to the corresponding leaves}\label{sec : "Relative character varieties"}
In this section we focus on the particular case when the surface $S$ is decomposed by cutting out a connected essential subsurface. We describe the corresponding graph of groups decomposition, the Zariski tangent spaces to the leaves in this case, as well as the dimension of the Zariski tangent spaces. The lemmata here will be used mainly in the proof of Theorem \ref{thm : hamiltonian vector field is infinitesimal subsurface deformation} where we show that the Hamiltonian vector field of smooth invariant functions is an infinitesimal subsurface deformation.\\

For this subsection, fix a connected essential subsurface $S_0\subseteq S$ which is not in the same homotopy class of $S$. Otherwise, the associated graph of groups decomposition has a single vertex with $\pi_1(S,p)$ as the vertex group. Let $S_0$ have genus $g_0\geq 0$ and $n_0> 0$ boundary components. Orient the boundary components of $S_0$ such that $S_0$ lies to the left of the boundary curves. Choose $p\in S_0$ and generators of the fundamental group of $S_0$ so that it has the presentation
\begin{equation}\label{eq : presentation of fundamental group of S'}
\pi_1(S_0,p) = \left\langle a_1,b_1,\dots, a_{g_0},b_{g_0},c_1,\dots,c_{n_0}\bigg|\prod_{i = 1}^{g_0}[a_i,b_i]\prod_{j = 1}^{n_0}c_j=1\right\rangle,
\end{equation}
where the curves $c_j$ correspond to the boundary curves of $S_0$.\\

Now we move on to the decomposition of the fundamental group of $S$ associated to $S_0$. Cutting out $S_0$ from $S$, we obtain a decomposition of $S$ into subsurfaces $S_0,S_1,\dots,S_m$ for some $n_0\geq m\geq 1$. Choose points $p_1,\dots,p_m$ in the interiors of $S_1,\dots,S_m$ respectively, as well as paths from the base points to the boundary curves. This allows us to construct the graph of groups $(\Gamma,Y)$ with vertices $\{v_0,v_1,\dots,v_m\}$ as in Section \ref{sec : construction for graph of groups}, where the vertex groups are $\pi_1(S_j,p_j)$ for $j =1,\dots,m$ and $\pi_1(S_0,p)$ and the edge groups are $\Gamma_{y_j}=\langle c_j\rangle\cong \Z$. In this case, the monomorphisms associated to the edges $\overline y_j$ are taken to be the identity. We will make some abuse of notation and write $c_j^{\overline y_j} = c_j$.\\

 After choosing a maximal tree $T$ in $Y$ and paths between the base points, we obtain $N\coloneqq n_0-m\geq 0$ gluing loops as defined in Equation \eqref{eq:definition gluing loops eta}, as well as homomorphisms $\psi_j\colon \pi_1(S_j,p_j)\to\pi_1(S,p)$. Using Proposition \ref{prop: fundamental group of graph of groups form of gluing loops}, and using Convention \ref{convention : Gammaj is image of psij}, the fundamental group $\pi_1(S,p)$ is generated by
\[
	\Gamma_0 = \psi_0(\pi_1(S_0,p)),\Gamma_1 = \psi_1(\pi_1(S_1,p_1)),\dots,\Gamma_m=\psi_m(\pi_1(S_m,p_m)),\eta_1,\dots,\eta_N;
\]
and we call these the \textit{generators associated to the graph of groups decomposition arising from $S_0$}. The generators also depend on the choice of maximal tree $T$ and the chosen paths between base points and boundary curves, but we supress this data from the definition. Using this notation, we will denote representations $\rho \in\Hom(\pi_1(S,p),G)$ by tuples $(\rho_\vv,\rho_\yy)$, where $\rho_\vv=\{\rho_i\colon\Gamma_i\to G \,|\, i = 0,\dots,m\}$ and $\rho_\yy = \{\rho_{\eta_i}\colon \langle\eta_i \rangle\to G\,|\,i = 1,\dots,N \}$. Similarly, we denote by $(u_\vv,u_\yy)\in Z^1(\pi_1(S,p),\gfr_{\Ad(\rho_\vv,\rho_\yy)})$ a pair where $u_\vv = \{u_i\in Z^1(\Gamma_i,\gfr_{\Ad\rho_i})\st i = 0,\dots,m\}$ and $u_\yy = \{u_{\eta_i}\in Z^1(\langle\eta_i\rangle,\gfr_{\rho_{\eta_i}})\st i=1,\dots,N\}$.\\

Associated to the subsurface $S_0$, we obtain a decomposition into leaves $\widehat\leaf(S_0) = \{\widehat\leaf_{v_0,\mathcal R}\st[\mathcal R]\in\mathbf H_{v_0}\}$ associated to $S_0$ as in \eqref{eq : def of decomposition into leaves associated to w}. For the rest of the section, we will abuse notation and write $\widehat\leaf_{S_0,\mathcal R}$ or $\widehat\leaf_{\mathcal R}$ for $\widehat\leaf_{v_0,\mathcal R}$. We obtain respectively the decompositions $\leaf(S_0)$ into leaves $\leaf_\mathcal R$ of the character variety.

\begin{remark}\label{rmk : notation on base points and deck transformations}
	In Section \ref{sec : subsurface deformations}, we will be working with different base points, as well as the group of deck transformations $\pi$ simultaneously. Let $r\in S$ be another base point. Choosing a path between $p$ and $r$ to obtain an isomorphism between fundamental groups. We will denote by $\Gamma_j^r$ and $\eta_1^r,\dots,\eta_N^r$ the images under this map of the generators associated to the graph of groups decomposition arising from $S_0$ and $T$. Moreover, after choosing an isomorphism between $\pi_1(S,p)$ and the group of deck transformation $\pi$, we will denote by $\Gamma_i^\pi$ and $\eta_1^\pi,\dots,\eta_N^\pi$ the generators. We will often make some abuse of notation when it is clear, and denote also by $\Gamma_i$ the image of the vertex groups, and by $\eta_1,\dots,\eta_N$ the image of the gluing loops. Moreover, the isomorphisms also induce corresponding projection maps $\widehat p_w$ and $p_w$ on the corresponding representation varieties for $\pi_1(S,r)$ and $\pi$ as well as their corresponding quotients by $G$. Hence, given a subsurface $S_0$, we also obtain decompositions into leaves $\widehat\leaf^r(S_0)$ with leaves $\widehat\leaf^r_{\mathcal R}$ (and similarly $\leaf^r(S_0)$ with leaves $\leaf^r_\mathcal R$), and respectively $\widehat\leaf^\pi(S_0)$ and $\leaf^\pi_{\mathcal R}$. In the case of the group of deck transformations $\pi$, we will often abuse notation and drop the superscript $\pi$ in the notation for the leaves and decompositions when it is clear from the context.
\end{remark}

In the following lemma, we describe the Zariski tangent space to the leaves in $\widehat \leaf(S_0)$. This will be one of the main ingredients in the proof of Theorem \ref{thm : hamiltonian vector field is infinitesimal subsurface deformation}, where we show that the Hamiltonian flow of an induced invariant function is an infinitesimal subsurface deformation (Definition \ref{def : inf and subsurface deformation}).\\

\begin{lemma}\label{lem: tangent space "relative representation variety" in larger variety}
	Let $\rho\in\Hom(\pi_1(S,p),G)$ and $(\varphi_\vv,\varphi_\yy)\in \widehat\leaf_{\rho}$. The Zariski tangent space is identified with
	\begin{gather}
		\T_{(\varphi_\vv,\varphi_\yy)}\widehat\leaf_{\rho}\cong\nonumber\\
		\big\{(u_\vv,u_\yy)\in Z^1(\pi_1(S,p),\gfr_{\Ad(\varphi_\vv,\varphi_\yy)})\st u_{i}\in B^1(\Gamma_{i},\gfr_{\Ad\varphi_{i}}) \textnormal{ for }i\neq 0\big\}.
	\end{gather}
	In particular, for any $(u_\vv,u_\yy)\in \T_{(\varphi_\vv,\varphi_\yy)}\widehat\leaf_{\rho}$, we have that $u_{0}$ is a \textit{parabolic cocycle} in the sense of  \cite[Section 3]{ParabolicCohomology_GHJW}, i.e. there exist $X_1,\dots,X_{n_0}\in\gfr$ such that
	\[
		u_{0}(c_j) = \Ad_{\varphi_{0}(c_j)}(X_j)-X_j
	\]
	for every $j = 1,\dots,n_0$.
\end{lemma}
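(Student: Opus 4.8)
The plan is to work with the graph of groups decomposition associated to $S_0$ described in Section~\ref{sec : "Relative character varieties"}, using the explicit generators $\Gamma_0,\Gamma_1,\dots,\Gamma_m,\eta_1,\dots,\eta_N$ and Lemma~\ref{lemma : conditions for tangent space graph of groups representations}, which identifies the Zariski tangent space to $\Hom(\pi_1(S,p),G)$ with pairs of cocycles $(u_\vv,u_\yy)$ satisfying conditions \eqref{tangent space condition graph of groups conjugation part}, \eqref{tangent space condition graph of groups orientation flip part}, and \eqref{tangent space condition graph of groups tree edges}. First I would identify $\widehat\leaf_\rho$ as the fiber of the projection $\widehat p_{v_0}$ over $\widehat p_{v_0}(\rho)$. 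The key observation is that for $i\neq 0$ the restriction map $(\varphi_\vv,\varphi_\yy)\mapsto\varphi_i$ composed with the projection $\Hom(\Gamma_i,G)\to\Hom(\Gamma_i,G)/G$ has differential at $(\varphi_\vv,\varphi_\yy)$ whose kernel on the $u_i$-component is exactly $B^1(\Gamma_i,\gfr_{\Ad\varphi_i})$: this is the standard fact that the tangent space to a conjugacy orbit inside $\Hom(\Gamma_i,G)$ consists of coboundaries, using the hypothesis that the points involved are in $\Reps{\pi}$ (so that the relevant orbits are embedded submanifolds and the relevant quotients are manifolds). Since $\widehat\leaf_\rho$ is cut out inside $\Hom(\pi_1(S,p),G)$ by the conditions $[\varphi_i]=[\rho_i]$ for $i\neq 0$, differentiating these conditions gives precisely that $u_i\in B^1(\Gamma_i,\gfr_{\Ad\varphi_i})$ for $i\neq 0$, while the $u_0$ and $u_\yy$ components remain free (subject to the ambient cocycle conditions). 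This proves the claimed identification; care is needed to note that the edge conditions \eqref{tangent space condition graph of groups conjugation part}--\eqref{tangent space condition graph of groups tree edges} are already subsumed in the requirement $(u_\vv,u_\yy)\in Z^1(\pi_1(S,p),\gfr_{\Ad(\varphi_\vv,\varphi_\yy)})$.

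For the "in particular" statement, I would use that each boundary curve $c_j$ of $S_0$ is, after applying the appropriate conjugation $\psi_0$ and the gluing relations \eqref{relation:graph of groups relation 1}, identified with an element of one of the adjacent vertex groups $\Gamma_i$ for some $i\neq 0$ (if $c_j$ is glued to one of the complementary pieces $S_1,\dots,S_m$) or else, if two boundary curves of $S_0$ are glued to each other along a gluing loop $\eta_l$, with a conjugate of another $c_{j'}$. In the first case: since $c_j$ viewed in $\Gamma_i$ satisfies $u_i\in B^1(\Gamma_i,\gfr_{\Ad\varphi_i})$, there is $X_0\in\gfr$ with $u_i(a)=\Ad_{\varphi_i(a)}(X_0)-X_0$ for all $a\in\Gamma_i$; transporting back through the gluing relation (which conjugates $c_j^{y_j}\in\Gamma_i$ to $c_j\in\Gamma_0$) and using the cocycle identity one extracts an element $X_j\in\gfr$ with $u_0(c_j)=\Ad_{\varphi_0(c_j)}(X_j)-X_j$, exactly the parabolic condition of \cite[Section 3]{ParabolicCohomology_GHJW}. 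In the second case one does the same computation but picks up the contribution of $u_{\eta_l}(\eta_l)$, which is still of the required $\Ad$-difference form because conjugating a coboundary by any fixed group element again yields a coboundary. Either way the conclusion $u_0(c_j)=\Ad_{\varphi_0(c_j)}(X_j)-X_j$ holds for every $j=1,\dots,n_0$.

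The main obstacle I expect is bookkeeping rather than conceptual: carefully matching the abstract gluing-loop relations of the graph of groups with the geometric boundary curves $c_j$ — in particular handling the two cases (boundary curve glued to a complementary piece versus two boundary curves of $S_0$ glued to each other via a gluing loop $\eta_l$) uniformly, and tracking all the conjugations $\psi_j$ and connecting paths $\nu_i^j$ so that the element "$X_j$" produced in $\gfr$ is well-defined. A secondary technical point is justifying that the tangent space to the conjugation orbit of $[\varphi_i]$ inside $\Hom(\Gamma_i,G)$ really is $B^1$ and that the fiber $\widehat\leaf_\rho$ is smooth enough at $(\varphi_\vv,\varphi_\yy)$ for the differential argument to be valid; this follows from the standing assumptions defining $\Reps{\pi}$ together with Lemma~\ref{lem: "relative representation varieties" are analytic algebraic or semialgebraic}, which gives $\widehat\leaf_\rho$ the structure of an analytic variety whose tangent space is computed by exactly these linearized equations.
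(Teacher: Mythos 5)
Your proposal is correct and takes essentially the same route as the paper: differentiate the condition that the restrictions to $\Gamma_i$, $i\neq 0$, stay in fixed conjugacy classes to force $u_i\in B^1(\Gamma_i,\gfr_{\Ad\varphi_i})$, then deduce the parabolic condition on each boundary curve $c_j$ from the graph-of-groups edge relations, handling tree edges and non-tree edges (gluing loops) separately, exactly as the paper does when it produces $X_j=\Ad_{\varphi_y(g_y)^{-1}}\bigl(Y_{o(y)}+u_y(g_y)\bigr)$. One minor slip of description only: in this decomposition no two boundary curves of $S_0$ are glued to each other (each $c_j$ abuts a complementary piece, and the gluing loops $\eta_l$ appear when a complementary piece meets $S_0$ along more than one boundary curve), but the computation you invoke in that case --- the same cocycle manipulation with the extra term $u_{\eta_l}(\eta_l)$ absorbed into $X_j$ --- is precisely the paper's non-tree-edge computation, so the argument goes through.
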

\begin{proof}
	Consider a variation $(\varphi^s_\vv,\varphi^s_\yy)\in \widehat \leaf_{\rho}$ given as in \eqref{eq: variation of graph of groups representations with cocycles}. In order to remain in $\widehat\leaf_{\rho}$, we need variations of the form
	\[
		\varphi^s_{i}(\gamma_{i}) = (g^s_i)^{-1}\varphi_{i}(\gamma_{i})g_i^s
	\]
	for some paths in $G$ which up to first order are given by $g_i^s = \exp(s Y_i+\mathcal O(s^2))$ for some $Y_i\in \gfr$, for any $\gamma_{i}\in\Gamma_i$ and for $i = 1,\dots,m$. Differentiating the above expression, we see that 
	\begin{equation}\label{eq: outer vertex and edge groups look like coboundaries}
		u_{i}(\gamma_{i}) = \frac{d}{ds}\bigg|_{s=0}\varphi_i^s(\gamma_i)\varphi^0_i(\gamma_i)^{-1} =  \Ad_{\varphi_{i}(\gamma_{i})}(Y_i) - Y_i.
	\end{equation}
	which is precisely the coboundary condition, namely $u_{i}\in B^1(\Gamma_{i},\gfr_{\Ad\varphi_{i}})$.\\
	
	To get the second part of the lemma, note that the cocycles must also satisfy conditions \eqref{tangent space condition graph of groups conjugation part},\eqref{tangent space condition graph of groups orientation flip part},\eqref{tangent space condition graph of groups tree edges} in Lemma \ref{lemma : conditions for tangent space graph of groups representations}. We now have two cases depending on whether an edge is in the (unoriented) tree $T_u$ or not. Consider first an edge $y\in T_u^c$ (that is, it is not in the unoriented tree) such that\footnote{There is no loss of generality here since the graph of groups in the case of cutting along a single subsurface, there are no edges between subsurface $S_i$ and $S_j$ for $i\neq j\in\{1,\dots,m\}$, see for example Figure \ref{fig : graph of groups associated to S prime}.} $t(y) = v_0$ and let $a\in \Gamma_y$. Then using \eqref{eq: outer vertex and edge groups look like coboundaries}, condition \eqref{tangent space condition graph of groups conjugation part} can be rearranged to read
	\begin{gather*}
		u_{0}(a^y) = \Ad_{\varphi_y(g_y)^{-1}}\left(u_{o(y)}(a^{\overline y})-u_y(g_y)+\Ad_{\varphi_{o(y)}(a^{\overline y})}(u_y(g_y))\right)\\
		=\Ad_{\varphi_y(g_y)^{-1}}\left(\Ad_{\varphi_{o(y)}(a^{\overline y})}(Y_{o(y)})-Y_{o(y)}-u_y(g_y)+\Ad_{\varphi_{o(y)}(a^{\overline y})}(u_y(g_y))\right)\\
		= \Ad_{\varphi_y(g_y)^{-1}\varphi_{o(y)}(a^{\overline y})}(Y_{o(y)}) - \Ad_{\varphi_y(g_y)^{-1}}(Y_{o(y)}) -\Ad_{\varphi_y(g_y)^{-1}}(u_y(g_y))+\\
		+ \Ad_{\varphi_y(g_y)^{-1}\varphi_{o(y)}(a^{\overline y})}(u_y(g_y))\\
		= \Ad_{\varphi_{0}(a^y)}(\Ad_{\varphi_y(g_y)^{-1}}(Y_{o(y)}+u_y(g_y))-(\Ad_{\varphi_y(g_y)^{-1}}(Y_{o(y)}+u_y(g_y))),
	\end{gather*}
	where we used condition \eqref{condition: representation on graph of groups non tree edges} in the last line. Thus setting 
	\[
		X_j= \Ad_{\varphi_y(g_y)^{-1}}(Y_{o(y)}+u_y(g_y)),
	\]
	where $o(y) = v_j$ and noting that in this case $\Gamma_{y} = \langle c_j\rangle$, we have that $c_j^y = c_j$, settles the first case.\\
	
	The second case is when $y\in T_u$. We may assume that $t(y) = v_0$. Then condition \eqref{tangent space condition graph of groups tree edges} together with \eqref{eq: outer vertex and edge groups look like coboundaries} implies that 
	\begin{align*}
		u_{0}(a^y) &= u_{o(y)}(a^{\overline y}) = \Ad_{\varphi_{o(y)}(a^{\overline y})}(Y_{o(y)}) - Y_{o(y)}\\
		&=\Ad_{\varphi_{0}(a^y)}(Y_{o(y)}) - Y_{o(y)}.
	\end{align*}
	Setting 
	\[
		X_j =Y_{o(y)}
	\]
	where again $o(y) = v_j$, $\Gamma_y = \langle c_j\rangle$ and $c_j^y = c_j$, we get the result.
\end{proof}

Using this lemma, we can compute dimensions of Zariski tangent spaces. The formula is used in Lemma \ref{lem : leaves are smooth for hitchin} to show that the decomposition $\leaf(S_0)$ associated to a connected essential subsurface $S_0$ is a smooth foliation of the space of Hitchin representation $\Hit d$.
We will be primarily interested in the case that $G= \SL d$, which is a real algebraic group.  Thus $\Hom(\pi_1(S,p),G)$ has the structure of a real affine algebraic variety.

\begin{lemma}\label{lem: dimension of "relative representation variety"}
    Let $G$ be a real algebraic group.
	Let $\rho=(\rho_\vv,\rho_\yy)\in\Hom(\pi_1(S,p),G)$ and let $(\varphi_\vv,\varphi_\yy)\in \widehat\leaf_{\rho}$, with $\varphi_\vv=\{\varphi_i\colon\Gamma_i\to G \,|\, i = 0,\dots,m\}$ and $\varphi_\yy = \{\varphi_{\eta_i}\colon \langle\eta_i \rangle\to G\,|\,i = 1,\dots,N \}$. The dimension of the Zariski tangent space of $\widehat\leaf_{\rho}$ at $(\varphi_\vv,\varphi_\yy)$ is
	\begin{align*}
		n_0\dim G - \sum_{i = 1}^m\dim Z(\rho_i)
		 + (2g_0-1)\dim G + \sum_{j = 1}^{n_0}\dim C_j + \dim Z(\varphi_{0}),
	\end{align*}
	where $C_j$ is the conjugacy class of $\rho_{0}(c_j)$ in $G$. In particular, the dimension of the Zariski tangent space changes only in the $\dim Z(\varphi_{0})$ summmand and the dimension is minimized when $\dim Z(\varphi_{0})$ is minimized.
\end{lemma}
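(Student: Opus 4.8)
The plan is to compute the dimension directly from the cocycle description of the Zariski tangent space furnished by Lemma \ref{lem: tangent space "relative representation variety" in larger variety}: a tangent vector $(u_\vv,u_\yy)\in\T_{(\varphi_\vv,\varphi_\yy)}\widehat\leaf_{\rho}$ consists of cocycles $u_0\in Z^1(\Gamma_0,\gfr_{\Ad\varphi_0})$, $u_i\in B^1(\Gamma_i,\gfr_{\Ad\varphi_i})$ for $i=1,\dots,m$, and $u_{\eta_l}\in Z^1(\langle\eta_l\rangle,\gfr_{\Ad\varphi_{\eta_l}})$ for $l=1,\dots,N$, all subject to the gluing conditions \eqref{tangent space condition graph of groups conjugation part} and \eqref{tangent space condition graph of groups tree edges}. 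I would count the dimension by splitting this space into the contributions of the three kinds of building blocks --- the central vertex group $\Gamma_0$, the outer vertex groups $\Gamma_1,\dots,\Gamma_m$, and the $N$ gluing loops --- and invoke the identity $N=n_0-m$, which turns $m\dim G+N\dim G$ into $n_0\dim G$.

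The outer contributions are immediate. For $i\geq 1$ the coboundary space $B^1(\Gamma_i,\gfr_{\Ad\varphi_i})$ is the image of the linear map $\gfr\to Z^1(\Gamma_i,\gfr_{\Ad\varphi_i})$, $X\mapsto(\gamma\mapsto\Ad_{\varphi_i(\gamma)}X-X)$, whose kernel is the Lie algebra of the centralizer $Z(\varphi_i)$; since $(\varphi_\vv,\varphi_\yy)\in\widehat\leaf_\rho$ forces $\varphi_i$ to be conjugate to $\rho_i$, this gives $\dim B^1(\Gamma_i,\gfr_{\Ad\varphi_i})=\dim G-\dim Z(\rho_i)$. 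Each gluing-loop space $Z^1(\langle\eta_l\rangle,\gfr_{\Ad\varphi_{\eta_l}})$ is a copy of $\gfr$, contributing $\dim G$. Together these account for $n_0\dim G-\sum_{i=1}^m\dim Z(\rho_i)$ of the formula.

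For the central contribution I would identify the relevant space of $u_0$'s with the space of \emph{parabolic cocycles} $Z^1_{\mathrm{par}}(\pi_1(S_0),\gfr_{\Ad\varphi_0})$ in the sense of \cite[Section 3]{ParabolicCohomology_GHJW} --- by the second part of Lemma \ref{lem: tangent space "relative representation variety" in larger variety}, each occurring $u_0$ satisfies $u_0(c_j)\in\mathrm{im}(\Ad_{\varphi_0(c_j)}-\identity)$ at every boundary curve. Since $\pi_1(S_0)$ is free of rank $2g_0+n_0-1$ by the presentation \eqref{eq : presentation of fundamental group of S'}, a cocycle is a choice of $2g_0+n_0-1$ elements of $\gfr$; imposing the parabolic constraint at each of the $n_0$ boundary curves cuts down by $\dim Z(\varphi_0(c_j))=\dim G-\dim C_j$ at each, and these constraints are independent except for a defect of $\dim H^0(\pi_1(S_0),\gfr_{\Ad\varphi_0})=\dim Z(\varphi_0)$ at the last boundary curve, by twisted Poincar\'e--Lefschetz duality for the bordered surface $S_0$. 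This gives $\dim Z^1_{\mathrm{par}}(\pi_1(S_0),\gfr_{\Ad\varphi_0})=(2g_0+n_0-1)\dim G-\sum_{j=1}^{n_0}(\dim G-\dim C_j)+\dim Z(\varphi_0)=(2g_0-1)\dim G+\sum_{j=1}^{n_0}\dim C_j+\dim Z(\varphi_0)$, and adding the outer contribution yields exactly the asserted total.

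The step I expect to be the main obstacle is verifying that the gluing conditions \eqref{tangent space condition graph of groups conjugation part} and \eqref{tangent space condition graph of groups tree edges} make the count \emph{additive}, i.e. that the assignment $(u_\vv,u_\yy)\mapsto\big(u_0,(u_i)_{i\geq 1},(u_{\eta_l})_l\big)$ is a linear isomorphism from $\T_{(\varphi_\vv,\varphi_\yy)}\widehat\leaf_\rho$ onto $Z^1_{\mathrm{par}}(\pi_1(S_0),\gfr_{\Ad\varphi_0})\oplus\bigoplus_{i=1}^m B^1(\Gamma_i,\gfr_{\Ad\varphi_i})\oplus\bigoplus_{l=1}^N\gfr$; surjectivity is the subtle point, since one must realize, for arbitrary outer data, a parabolic parameter solving the gluing condition at each $c_j$ --- possible because $\varphi_i(\tilde c_j)$ and $\varphi_0(c_j)$ are conjugate, $(\varphi_\vv,\varphi_\yy)$ being a genuine representation satisfying \eqref{condition: representation on graph of groups non tree edges} --- while distinguishing the curves that are tree edges (handled by \eqref{tangent space condition graph of groups tree edges}) from those that carry a gluing loop (handled by \eqref{tangent space condition graph of groups conjugation part}). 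Granting this, the dimension formula is immediate; and since the summands $n_0\dim G$, $\sum_i\dim Z(\rho_i)$, $(2g_0-1)\dim G$, and $\sum_j\dim C_j$ all depend only on $\rho$ (the classes $[\rho_i]$ and $C_j=[\rho_0(c_j)]$ being fixed along the leaf), only $\dim Z(\varphi_0)$ varies with $(\varphi_\vv,\varphi_\yy)$, giving the final minimization assertion.
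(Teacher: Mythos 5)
The step you yourself flag as the main obstacle is exactly where the argument fails, and it does not go through: the assignment $(u_\vv,u_\yy)\mapsto\bigl(u_0,(u_i)_{i\geq 1},(u_{\eta_l})_l\bigr)$ is injective but \emph{not} surjective onto $Z^1_{\mathrm{par}}\oplus\bigoplus_{i=1}^m B^1(\Gamma_i,\gfr_{\Ad\varphi_i})\oplus\bigoplus_{l=1}^N\gfr$. The gluing conditions \eqref{tangent space condition graph of groups conjugation part} and \eqref{tangent space condition graph of groups tree edges} do not merely force $u_0$ to be parabolic; they pin the value $u_0(c_j)$ to a specific element determined by the outer data. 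For a tree-edge boundary curve $c_j$ of $S_i$ the condition is literally $u_0(c_j)=u_i(c_j)$ (both sides are the value of one cocycle on $\pi$ at the same element), so for fixed $u_0$ the admissible $u_i$ form an affine subspace of $B^1(\Gamma_i,\gfr_{\Ad\varphi_i})$ of codimension $\dim C_j$, not all of it; for instance the tuple with $u_0=0$, $u_{\eta_l}=0$ and $u_i(\gamma)=\Ad_{\varphi_i(\gamma)}Y_i-Y_i$ chosen with $u_i(c_j)\neq 0$ lies in your direct sum but is not a tangent vector. Carrying out the count honestly---fibering the tangent space over $u_0\in Z^1_{\mathrm{par}}$, where at each tree edge the admissible $u_i$ contribute $\dim B^1(\Gamma_i)-\dim C_j$ and at each non-tree edge the solutions $W_l$ of \eqref{tangent space condition graph of groups conjugation part} form a coset of $\ker(\identity-\Ad_{\varphi_i(c_j)})$ of dimension $\dim G-\dim C_j$---yields $(2g_0-1+n_0)\dim G+\dim Z(\varphi_0)-\sum_{i=1}^m\dim Z(\rho_i)$, i.e.\ the displayed formula \emph{minus} $\sum_{j=1}^{n_0}\dim C_j$. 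A concrete check: $S$ of genus two cut into two one-holed tori $S_0,S_1$, $G=\SL 2$, $\varphi$ a Fuchsian lift; the space of cocycles $u$ on $\pi$ with $u|_{\Gamma_1}\in B^1(\Gamma_1,\gfr_{\Ad\varphi_1})$ is $6$-dimensional (the restriction $Z^1(\pi)\to Z^1(\Gamma_1)$ is onto with $3$-dimensional kernel and $\dim B^1(\Gamma_1)=3$), while the additive count gives $5+3=8$.

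You should be aware that the paper's own proof performs the same additive count: it subtracts only the orientation-flip relations and reads \eqref{tangent space condition graph of groups conjugation part} and \eqref{tangent space condition graph of groups tree edges} as saying no more than that $u_0$ is parabolic, so in substance your proposal reproduces the paper's argument (your one genuine departure---deriving the parabolic-cocycle dimension from the long exact sequence of $(S_0,\partial S_0)$ instead of citing \cite{ArnaudThesis}---is fine and even makes that ingredient self-contained). But as written, neither version deduces the stated formula from the tangent-space description of Lemma \ref{lem: tangent space "relative representation variety" in larger variety}: the matching at the boundary curves is a genuine constraint beyond parabolicity, worth $\dim C_j$ at each $c_j$. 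What survives, and is all that is used downstream in Lemma \ref{lem : leaves are smooth for hitchin}, is the qualitative conclusion that every term except $\dim Z(\varphi_{0})$ is constant along $\widehat\leaf_\rho$, so the Zariski tangent space dimension varies only through $\dim Z(\varphi_0)$; this holds equally for the corrected count. To repair your write-up, replace the direct-sum isomorphism by the fibered count above (equivalently, subtract the $\dim C_j$ matching conditions at each boundary curve) and adjust the closed formula accordingly.
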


\begin{remark}\label{rmk : smooth point when dimension of centralizer is small }
    In particular, in the case when $\widehat \leaf_\rho$ is an algebraic variety, it is a smooth variety when the dimension of the Zariski tangent spaces $ \T_{(\varphi_\vv,\varphi_\yy)}\widehat\leaf_\rho$ is constant on $\widehat\leaf_\rho$.
\end{remark}

\begin{proof}
	By Lemma \ref{lem: tangent space "relative representation variety" in larger variety}, the $u_{i}$ component of a tangent cocycle $(u_{\vv},u_\yy)\in \T_{(\varphi_\vv,\varphi_\yy)}\widehat\leaf_{\rho}$ is a coboundary for $i\neq 0$. As in \cite{SymplecticNature_Goldman}, it is easily seen that the space of coboundaries $B^1(\Gamma_i,\gfr_{\Ad\varphi_{i}})$ is isomorphic to $\gfr/\mathfrak{Z}(\varphi_{i})$, where $\mathfrak{Z}(\varphi_{i})$ is the Lie algebra of the centralizer $Z(\varphi_{i})$. Moreover, the action by conjugation does not change the dimension of the centralizer, that is $\dim Z(\varphi_{i}) = \dim Z(\rho_{i})$ for each $i = 1,\dots,m$. Thus 
	\begin{equation}\label{eq: dimension of coboundaries at v_i}
		\dim B^1(\Gamma_i,\gfr_{\Ad\varphi_{i}}) = \dim G - \dim Z(\varphi_{i}) = \dim G - \dim Z(\rho_{i}).
	\end{equation}
	Next we deal with the dimension of $Z^1(\langle \eta_i\rangle,\gfr_{\Ad\varphi_{\eta_i}})$. By the cocycle condition and the fact that $\langle \eta_i\rangle$ is cyclic, a cocycle is fully determined by a single element of $\gfr$. Moreover, any such element gives rise to a cocycle. Hence
	\begin{equation}\label{eq: dim of cocycles for gy cyclic part}
		\dim Z^1(\langle \eta_i\rangle,\gfr_{\Ad\varphi_{\eta_i}}) = \dim G.
	\end{equation}
	By the second part of Lemma \ref{lem: tangent space "relative representation variety" in larger variety}, we have that $u_{0}$ is a parabolic cocycle. Proposition 2.4.9 in \cite{ArnaudThesis} states that the dimension of the space of such cocycles is equal to 
	\begin{equation}\label{eq: dimension of parabolic cocycles}
(2g_0-1)\dim G + \sum_{j = 1}^{n_0}\dim C_j+\dim Z(\varphi_{0}),
	\end{equation}
	where $C_j$ is the conjugacy class of $\varphi_{0}(c_j)$. By Remark \ref{rmk : alternative definition for leaves}, the conjugacy class of $\varphi_0(c_j)$ is equal to the conjugacy class of $\rho_0(c_j)$ for every $j = 1,\dots,n_0$.\\
	
	Now we turn to the linear relations \eqref{tangent space condition graph of groups conjugation part},\eqref{tangent space condition graph of groups orientation flip part},\eqref{tangent space condition graph of groups tree edges}. Equations \eqref{tangent space condition graph of groups orientation flip part} and \eqref{tangent space condition graph of groups tree edges} say that $u_y$ is determined fully by $u_{\overline y}$ for every $y\in E$. Thus, by \eqref{eq: dim of cocycles for gy cyclic part},  the dimension is cut down by $\dim G|T_u^c|/2$.\\
	
	As seen in the proof of Lemma \ref{lem: tangent space "relative representation variety" in larger variety}, Equations \eqref{tangent space condition graph of groups conjugation part} and \eqref{tangent space condition graph of groups tree edges} imply that $u_{0}$ is a parabolic cocycle. Combining \eqref{eq: dimension of coboundaries at v_i}, \eqref{eq: dim of cocycles for gy cyclic part}, \eqref{eq: dimension of parabolic cocycles} and the above paragraph, we see that the dimension of the Zariski tangent space is
	\begin{gather*}
		\sum_{i = 1}^m (\dim G - \dim Z(\rho_{i})) + |T_u^c|\dim G + \\
		(2g_0-1)\dim G + \sum_{j = 1}^{n_0}\dim C_j+\dim Z(\varphi_{0}) - \frac{|T_u^c|}{2}\dim G\\
		=\left(m+ |T_u^c| - \frac{|T_u^c|}{2}\right)\dim G - \sum_{i = 1}^m\dim Z(\rho_{i})\\
		+(2g_0-1)\dim G + \sum_{j = 1}^{n_0}\dim C_j+\dim Z(\varphi_{0})=\\
		=n_0\dim G - \sum_{i = 1}^m\dim Z(\rho_{i})
		 + (2g_0-1)\dim G + \sum_{j = 1}^{n_0}\dim C_j + \dim Z(\varphi_{0}),
			\end{gather*}
			where in the second to last line we use that $|T_u^c| = 2(n_0-m)$, which in turn follows from the fact that $|E|=2n_0$ is twice the number of boundary components and $|T_u|=2m$ is twice the number of subsurfaces.
\end{proof}

\section{Subsurface deformations}\label{sec : subsurface deformations}

We begin by introducing the main definition of this section.

\begin{definition}\label{def : inf and subsurface deformation}
    Let $S_0\subseteq S$ be a connected essential subsurface of $S$, and consider the corresponding decomposition into leaves $\leaf(S_0)$ of $\CharVar{\pi}$. 
    \begin{enumerate}
        \item \label{def : infinitesimal subsurface deformation}A vector field $\mathsf V$ on $\CharVar{\pi}$ is said to be an \textit{infinitesimal subsurface deformation along }$S_0$ if 
        \[
        \mathsf{V}([\rho])\in \T_{[\rho]}\leaf_\rho(S_0)
        \]
        for every $[\rho]\in\CharVar{\pi}$.
        \item\label{def : subsurface deformation} A $C^1$ path of conjugacy classes of representations $[\rho_\cdot]\colon I\subset \R\to\CharVar{\pi}$ is said to be a \textit{subsurface deformation along $S_0$}  if $[\rho_t]\in \leaf_{\rho_0}(S_0)$ for every $t\in I$. Equivalently, the projection $p_{S_0}([\rho_t])$ is constant in $t$.
    \end{enumerate}
    
\end{definition}

The following result describes the structure of Hamiltonian flows corresponding to induced invariant functions supported on a proper subsurface of $S$.\\

\begin{theorem}\label{thm : hamiltonian vector field is infinitesimal subsurface deformation}
	Let $f\colon G^k\to \R$ be an invariant function, $\ub\alpha\in\pi^k$, and $S_0$ a supporting subsurface for $\ub\alpha$. Then the Hamiltonian vector field $\Hm f_{\ub\alpha}$ is an infinitesimal subsurface deformation along $S_0$, i.e.
	\[
	\Hm f_{\ub\alpha}\in\T_{[\rho]}\leaf_\rho(S_0)
	\]
	for every $[\rho]\in \CharVar{\pi}$.
\end{theorem}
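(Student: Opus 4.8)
The goal is to show that the Hamiltonian vector field of $f_{\ub\alpha}$, viewed as a cohomology class in $H^1(\pi,\gfr_{\Ad\rho})$, lies in the tangent space to the leaf $\leaf_\rho(S_0)$. The natural approach combines two ingredients already available in the excerpt: the explicit Poincaré dual of $\Hm f_{\ub\alpha}$ from Proposition \ref{prop: explicit cycle for Poincare dual of Hamiltonian vector field}, and the characterization of the tangent space to a leaf from Lemma \ref{lem: tangent space "relative representation variety" in larger variety}. The plan is:

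First, I would fix a supporting subsurface $S_0$ and use the remark after Definition \ref{def : supportin subsurface}: up to conjugation we may assume $\ub\alpha = (\gamma_1,\dots,\gamma_k)$ with all $\gamma_i \in \pi_1(S_0,p) = \Gamma_0$ (the conjugation by $\gamma$ just conjugates the whole picture and does not affect the leaf structure). Second, I would set up the graph-of-groups decomposition of $\pi = \pi_1(S,p)$ associated to $S_0$ as in Section \ref{sec : Constructing graph of groups}, with vertex groups $\Gamma_0 = \pi_1(S_0), \Gamma_1,\dots,\Gamma_m$ and gluing loops $\eta_1,\dots,\eta_N$. By Lemma \ref{lem: tangent space "relative representation variety" in larger variety}, a cocycle $u \in Z^1(\pi,\gfr_{\Ad\rho})$ represents a tangent vector to $\leaf_\rho(S_0)$ precisely when its restriction $u_i$ to each $\Gamma_i$ with $i\neq 0$ is a \emph{coboundary} on $\Gamma_i$. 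So the heart of the matter is: the Hamiltonian vector field admits a representative cocycle whose restriction to $\Gamma_i$ ($i \neq 0$) is a coboundary.

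Third, I would produce such a representative. The Poincaré dual of $\Hm f_{\ub\alpha}([\rho])$ is the cycle $\sum_{i=1}^k \alpha_i \otimes F_i(\rho(\ub\alpha))_{\tilde p} \in Z_1(S,\xi_\rho)$. Since each $\alpha_i$ is (freely homotopic into, hence can be taken supported in) $S_0$, this $1$-cycle is supported in $S_0$ — more precisely, it is homologous to a cycle whose image lies in $S_0$. The key duality tool is Lemma \ref{lem : Poincare duality}: a cocycle $u$ is Poincaré dual to a cycle $A$ iff $\langle [u] \frown [C]\rangle_* = [A]\bullet_{\langle\cdot,\cdot\rangle}[C]$ for every $C \in Z_1(S,\xi_\rho^*)$. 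Using this, if $C$ is a cycle supported in $S_i$ ($i\neq 0$), then since $S_i$ and $S_0$ meet only along the boundary curves $c_j$, the intersection $A \bullet C$ can be pushed entirely onto the boundary curves $c_j$. This forces the pairing of $[u]$ with any class in the image of $H_1(\Gamma_i,\gfr_{\Ad\rho}^*) \to H_1(\pi,\gfr_{\Ad\rho}^*)$ to factor through the boundary subgroups, which (dualizing) is exactly the statement that $[u|_{\Gamma_i}]$ is trivial in $H^1(\Gamma_i,\gfr_{\Ad\rho_i})$ modulo the parabolic/boundary part; combined with a choice of representative adapted to the decomposition, this yields that $u|_{\Gamma_i}$ is a coboundary. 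Concretely, I expect to argue directly on chains: choose the representative $1$-cycle for the Poincaré dual so its support avoids $S_i^\circ$ for all $i \neq 0$, then transfer via Lemma \ref{lem : Poincare duality} to a cocycle representative $u$ whose evaluation on any loop contained in $S_i$ vanishes after the appropriate conjugation — i.e. $u|_{\Gamma_i} \in B^1(\Gamma_i, \gfr_{\Ad\rho_i})$.

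The main obstacle, I expect, is the bookkeeping translating between the \emph{homological} statement "the Poincaré dual cycle is supported in $S_0$" and the \emph{cohomological} statement "the dual cocycle restricted to $\Gamma_i$ is a coboundary." This is not purely formal: Poincaré duality is an isomorphism on cohomology of the \emph{closed} surface $S$, not of the pieces $S_i$, so one must carefully use the long exact sequences of the pairs $(S_i,\partial S_i)$ (or Lemma \ref{lem : Poincare duality} applied to cycles $C$ supported in $S_i$) together with the fact that $A\bullet C$ localizes on $\partial S_i \subset \partial S_0$. I would handle this by testing $[u]$ against all classes $[C]$ with $C$ supported in $S_i$: the pairing $[A]\bullet_{\langle\cdot,\cdot\rangle}[C]$ only sees intersection points, all of which lie outside $S_i^\circ$ once $A$ is pushed into $S_0$, so the pairing vanishes or is controlled by boundary data; by the duality of Lemma \ref{lem : Poincare duality} this pins down $[u|_{\Gamma_i}]$ to lie in the boundary-coboundary part, and then invoking the parabolic cocycle description from Lemma \ref{lem: tangent space "relative representation variety" in larger variety} closes the argument. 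Everything else — the reduction to $\ub\alpha \subset \Gamma_0$, the graph-of-groups setup, the identification of the tangent space to the leaf — is already packaged in the lemmata of Section \ref{sec : graph of groups}.
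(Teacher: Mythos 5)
Your proposal is correct in outline, but it takes a genuinely different route through the key step than the paper does. Both arguments share the same skeleton: the graph-of-groups decomposition associated to $S_0$, the identification of $\T_{[\rho]}\leaf_\rho(S_0)$ with classes whose restriction to each $\Gamma_i$, $i\neq 0$, is a coboundary (Lemma \ref{lem: tangent space "relative representation variety" in larger variety}), and the Poincar\'e dual cycle $\sum_i\alpha_i\otimes F_i(\rho(\ub\alpha))$ from Proposition \ref{prop: explicit cycle for Poincare dual of Hamiltonian vector field}. Where you diverge is in how the coboundary condition is verified: the paper computes an explicit representative cocycle of $\Hm f_{\ub\alpha}([\rho])$ on all generators adapted to the decomposition (Proposition \ref{prop : Hamiltonian vector field for general subsurface}, proved by a lengthy intersection-pairing computation with curves adapted to a second basepoint), and then simply reads off from Equation \eqref{eq : Ham tangent cocycle for vertex groups} that the restriction to each $\Gamma_i$, $i\neq 0$, has coboundary form. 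You instead argue abstractly: represent the Poincar\'e dual cycle inside $S_0$, represent classes of $H_1(\Gamma_i,\gfr^*_{\Ad^*\rho})$ by cycles inside $S_i$, observe the intersection pairing vanishes by disjointness, and conclude via Lemma \ref{lem : Poincare duality} and duality that the restriction of the class to $H^1(\Gamma_i,\gfr_{\Ad\rho})$ vanishes. This works and is shorter for the tangency statement alone, provided you make two things precise that the paper's computation handles implicitly: the naturality of the pairing under $\Gamma_i\hookrightarrow\pi$ together with the basepoint/parallel-transport bookkeeping needed to realize classes of $H_1(\Gamma_i,\gfr^*)$ by cycles genuinely supported in $S_i$ (elements of $\Gamma_i=\psi_i(\pi_1(S_i,p_i))$ are based at $p$ and travel through $S_0$), and the perfectness of the Kronecker pairing between $H^1(\Gamma_i,\gfr_{\Ad\rho})$ and $H_1(\Gamma_i,\gfr^*_{\Ad^*\rho})$ (which holds since $\Gamma_i$ is free and the coefficients are a field). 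The trade-off is that the paper's explicit cocycle is reused later (e.g.\ for the figure-eight computation in Section \ref{sec : self intersecting curves}), so the computational route buys more than the theorem itself.

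One caveat: at the point where you hedge between ``the pairing vanishes'' and ``is controlled by boundary data,'' only the stronger statement suffices. Knowing $[u|_{\Gamma_i}]$ is trivial ``modulo the parabolic/boundary part'' would not place the class in $\T_{[\rho]}\leaf_\rho(S_0)$: Lemma \ref{lem: tangent space "relative representation variety" in larger variety} demands an honest coboundary on each $\Gamma_i$ with $i\neq 0$, and its parabolic-cocycle clause concerns $\Gamma_0$ and is a consequence of tangency, not a substitute for the coboundary condition. Fortunately the stronger statement is available: pushing the dual cycle into the interior of $S_0$ and the test cycle into the interior of $S_i$ makes the supports disjoint, so the pairing is identically zero and $u|_{\Gamma_i}$ is a genuine coboundary, which closes the argument as you intend.
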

We prove this by explicitly computing the cocycle $\Hm f_{\ub\alpha}([\rho])$ in Proposition \ref{prop : Hamiltonian vector field for general subsurface} and show that the cocycle restricted to each vertex group $\Gamma_i$ (see Section \ref{sec : "Relative character varieties"}) is a coboundary.  Theorem \ref{thm : hamiltonian vector field is infinitesimal subsurface deformation} then follows by applying Lemma \ref{lem: tangent space "relative representation variety" in larger variety}. 
The computation is quite technical and is deferred to Section \ref{Appendix: proof of prop about infinitesimally subsurface deformation}.\\

The case when the supporting subsurface is a cylinder is much simpler since it reduces to the setting of Goldman's invariant functions in \cite{InvFct_Goldman}. From his results, we compute the Hamiltonian flow in Theorem \ref{thm : Hamiltonian flow when supporting subsurface is a cylinder}.\\

For the case when $S_0$ has negative Euler characteristic, if the decomposition into leaves is sufficiently smooth, the Hamiltonian flow of $f_{\ub\alpha}$ is a subsurface deformation.  This is recorded as Theorem \ref{thm : Hamiltonian flow is a subsurface deformation}, whose hypotheses are satisfied on the space of Hitchin representations $\Hit d$; see Section \ref{sec : Hamiltonian flow is a subsurface deformation for Hitchin representations}.\\ 

\begin{remark}
    As explained in Section \ref{sec : examples}, all regular functions (when $G$ is a complex Lie group), as well as coordinates for Hitchin representations can be realized as induced invariant functions.  This means that since induced invariant functions form a large class of functions on the character variety, it is in general hard to say anything geometric about the Hamiltonian flow of such a function.
\end{remark}

For the remainder of the section, we make the following choices. Fix a point $p\in S$, a lift $\tilde p\in\tilde S$ of $p$, and the corresponding isomorphism from $\pi_1(S,p)$ to $\pi$. Recall that we will only be working with the smooth points of the character variety $\CharVar{\pi}$ to work with the Goldman symplectic form.\\

\subsection{The Hamiltonian flow of \texorpdfstring{$f_{\protect\ub\alpha}$}{f alpha} when the supporting subsurface is a cylinder}\label{sec : Hamiltonian flow when supporting subsurface is a cylinder}
We first deal with the case when the supporting subsurface $S_0$ of a tuple $\ub\alpha = (\alpha_1,\dots,\alpha_k)\in\pi^k$ is a cylinder. Without loss of generality, we assume that the cylinder $S_0$ is incompressible. Otherwise, all the curves in $\ub\alpha$ are homotopically trivial in this case, and hence the induced function $f_{\ub\alpha}$ on the character variety is constant. In turn, this means that the Hamiltonian flow of $f_{\ub\alpha}$ is trivial.\\

The case when $S_0$ is an incompressible cylinder then reduces to Goldman's setting in \cite[Section 4]{InvFct_Goldman} as we now explain. Let $a\in\pi_1(S,p)$ be a generator for the fundamental group of $S_0$. Let $\alpha_i\colon G\to G$ be the word map arising from presenting the curve $\alpha_i$ in terms of the generator $a$ for $i = 1,\dots,k$. Letting
\begin{align}
    f'\colon G&\to\R\nonumber\\
    h&\mapsto f(\alpha_1(h),\dots,\alpha_k(h)),\label{eq : definition f' for when supporting subsurface is a cylinder}
\end{align}
we have that $f_{\ub\alpha} = f'_a$. Since the curve $a$ is simple, Goldman already described the full Hamiltonian flow in \cite[Section 4]{InvFct_Goldman}, which we will restate here. To do this, we have the case when $S_0$ is separating, and when it is non-separating. In the separating case, let $S_1$ be the subsurface of $S$ to the left of $a$, and $S_2$ the subsurface to the right. Picking an identification of $\pi_1(S,p)$ with $\pi$, and letting $\Gamma_1\cong\pi_1(S_1,p)$, $\Gamma_2\cong\pi_1(S_2,p)$ under this identification, the group of deck transformations $\pi$ is then an amalgamated product
\begin{equation}\label{eq : amalgamated product}
\pi = \Gamma_1*_{\langle a\rangle}\Gamma_2,
\end{equation}
where we abuse notation and also write $a$ for its image in $\pi$. In the case when $S_0$ is non$-$separating, the fundamental group can be written as an HNN extension. Let $\eta$ be a loop intersecting $a$ transversely with positive intersection number and let $S_1$ be the subsurface which is the complement of $S_0$. Again pick an identification of $\pi_1(S,p)$ with $\pi$ and let $\Gamma_1$ be the image of $\pi_1(S_1,p)$ and abusing notation also let $\eta$ be the image of $\eta$. Then
\begin{equation}\label{eq : HNN extensions expression}
\pi = \left\langle\Gamma_1,\eta\,|\, \eta a_+\eta^{-1}=a_-\right\rangle,
\end{equation}
where $a_-$ and $a_+$ are the boundary curves of $S_1$ which are freely homotopic to $a$ in $S$. We can now state the result, which follows from Theorem 4.5 and Theorem 4.7 in \cite{InvFct_Goldman}.
\begin{theorem}\label{thm : Hamiltonian flow when supporting subsurface is a cylinder}
    Let $f\colon G^k\to\R$ be an invariant function, $f'\colon G\to\R$ as in Equation \eqref{eq : definition f' for when supporting subsurface is a cylinder}, and let $\ub\alpha\in\pi^k$ such that the supporting subsurface $S_0$ is an incompressible cylinder.
    \begin{enumerate}
        \item If $S_0$ is separating, the Hamiltonian flow of $f_{\ub\alpha}$ is covered by the flow of representations $\Xi_\rho\colon\R\to\Reps\pi$ given by
        \[
       \Xi_\rho(t)(\gamma)\coloneqq \begin{cases}
            \rho(\gamma),&\gamma\in\Gamma_1\cong\pi_1(S_1,p)\\
            \exp\left(t F'(\rho(a))\right)\rho(\gamma)\exp\left(-t F'(\rho(a))\right), & \gamma\in\Gamma_2\cong\pi_1(S_2,p).
        \end{cases}
        \]
        \item If $S_0$ is non$-$separating, the Hamiltonian flow of $f_{\ub\alpha}$ is covered by the flow of representations $\Xi_\rho\colon\R\to\Reps\pi$ given by
        \[
        \Xi_\rho(t)(\gamma) = \begin{cases}
            \rho(\gamma),&\gamma\in\Gamma_1\cong\pi_1(S_1,p),\\
            \rho(\eta)\exp\left(t F'(\rho(a))\right), &\gamma = \eta.
        \end{cases}
        \]
    \end{enumerate}
    In particular, the Hamiltonian flow of $f_{\ub\alpha}$ is a subsurface deformation along $S_0$ and is defined for all time $t\in\R$.
\end{theorem}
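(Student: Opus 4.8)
The plan is to reduce the statement to Goldman's description of the Hamiltonian flow of an invariant function evaluated on a simple closed curve, i.e.\ to Theorems~4.5 and~4.7 of \cite{InvFct_Goldman}, and then to read off the two remaining assertions (completeness and the subsurface--deformation property) directly from the explicit formulas.

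First I would record the reduction to a one--variable invariant function. By the remark following Definition~\ref{def : supportin subsurface} and the conjugation--invariance of induced functions, after replacing $\ub\alpha$ by a conjugate we may assume $\ub\alpha=(\alpha_1,\dots,\alpha_k)\in\pi_1(S_0,p)^k$; since the incompressible cylinder $S_0$ has $\pi_1(S_0,p)=\langle a\rangle$, each $\alpha_i$ is a power $a^{n_i}$, and the associated word map $\alpha_i\colon G\to G$, $h\mapsto h^{n_i}$, is smooth and satisfies $\alpha_i(hgh^{-1})=h\,\alpha_i(g)\,h^{-1}$. Hence $f'\colon G\to\R$ from \eqref{eq : definition f' for when supporting subsurface is a cylinder} is a $C^1$ invariant function, with variation function $F'$ the one attached to it in Section~\ref{sec : invariant functions}, and by construction $f_{\ub\alpha}([\rho])=f(\rho(a)^{n_1},\dots,\rho(a)^{n_k})=f'(\rho(a))=f'_a([\rho])$ for every $[\rho]\in\CharVar\pi$. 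Consequently $\Hm f_{\ub\alpha}=\Hm f'_a$, and it suffices to treat $f'_a$.

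Next I would invoke Goldman's theorems. The curve $a$ is simple, so $f'_a$ is exactly of the type analysed in \cite[Section~4]{InvFct_Goldman}. In the separating case $\pi$ is the amalgam \eqref{eq : amalgamated product} over $\langle a\rangle$, and \cite[Theorem~4.5]{InvFct_Goldman} gives that the Hamiltonian flow of $f'_a$ is covered by the family $\Xi_\rho$ that fixes $\Gamma_1$ and conjugates $\Gamma_2$ by $\exp(tF'(\rho(a)))$; in the non--separating case $\pi$ is the HNN extension \eqref{eq : HNN extensions expression}, and \cite[Theorem~4.7]{InvFct_Goldman} gives the stated formula $\eta\mapsto\rho(\eta)\exp(tF'(\rho(a)))$. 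The one point worth highlighting is that $F'(\rho(a))$ centralizes $\rho(a)$ — a consequence of the invariance of $f'$ — which is precisely what makes each $\Xi_\rho(t)$ a well--defined homomorphism (it respects the relation $g a g^{-1}=a$ in the amalgam, resp.\ $\eta a_+\eta^{-1}=a_-$ in the HNN extension) and what makes the formulas meaningful for every $t\in\R$, so that the flow is complete.

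Finally, the flow is a subsurface deformation along $S_0$ in the sense of Definition~\ref{def : inf and subsurface deformation}: the complement of $S_0$ is $S_1$ (resp.\ $S_1$ and $S_2$), and $\Xi_\rho(t)$ restricted to $\Gamma_1\cong\pi_1(S_1,p)$ equals $\rho|_{\Gamma_1}$ identically, while in the separating case $\Xi_\rho(t)|_{\Gamma_2}$ is a conjugate of $\rho|_{\Gamma_2}$ and so has constant class in $\Hom(\Gamma_2,G)/G$; thus $p_{S_0}([\Xi_\rho(t)])$ is independent of $t$ and $[\Xi_\rho(t)]\in\leaf_\rho(S_0)$ for all $t$. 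I do not expect a genuine obstacle here: the mathematical content is entirely in \cite{InvFct_Goldman}, and the only care needed is bookkeeping — matching the left/right orientation of $a$, the orientation of $\eta$, and the sign convention in the variation function between our graph--of--groups conventions in Section~\ref{sec : graph of groups} and Goldman's — together with the already--noted exclusion of the compressible case, where $f_{\ub\alpha}$ is constant and the flow trivial.
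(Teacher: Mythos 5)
Your proposal is correct and takes essentially the same route as the paper: conjugate $\ub\alpha$ into the cylinder, write each $\alpha_i$ as a word in the generator $a$ so that $f_{\ub\alpha}=f'_a$ for the one-variable invariant function $f'$ of Equation \eqref{eq : definition f' for when supporting subsurface is a cylinder}, and then quote Goldman's Theorems 4.5 and 4.7 for the amalgamated product, respectively HNN, decomposition, from which completeness and the subsurface-deformation property are read off directly.
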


\subsection{The Hamiltonian flow of \texorpdfstring{$f_{\protect\ub\alpha}$}{f alpha} when the supporting subsurface has negative Euler characteristic} 
We now deal with the case when the supporting subsurface $S_0$ has negative Euler characteristic. We say that a foliation of a smooth manifold is \textit{smooth} if every leaf of the foliation is a smooth submanifold. Moreover, given any connected essential subsurface $S_0$ and a subset $\mathscr O\subset \CharVar\pi$, we let $\leaf^{\mathscr O}(S_0)$ be the following decomposition into leaves:
\[
\leaf^{\mathscr O}(S_0) = \{\leaf^{\mathscr O}_{\mathcal R}\coloneqq \leaf_{\mathcal R}\cap\mathscr O\st [\mathcal R]\in\mathbf H_{S_0}\}.
\]

We also fix some notation for the Hamiltonian flow. For $[\rho]\in\CharVar{\pi}$, let $I([\rho])\subset \R$ be the interval of definition (including $0$) of the Hamiltonian flow of $f_{\ub\alpha}$ starting at $[\rho]$. The Hamiltonian flow is a curve $\Phi^{f_{\ub\alpha}}_{[\rho]}\colon I([\rho])\to\CharVar\pi$ satisfying
\[
	\frac{d}{dt}\bigg|_{t = s}\Phi^{f_{\ub\alpha}}_{[\rho]}(t) = \Hm f(\Phi^{f_{\ub\alpha}}_{[\rho]}(s))
\]
for every $s\in I([\rho])$.
\begin{theorem}\label{thm : Hamiltonian flow is a subsurface deformation}
	Let $f\colon G^k\to\R$ be an invariant function, let $\ub\alpha\in\pi^k$, and let $S_{0}$ be a supporting subsurface for $\ub\alpha$. Consider an open subset $\mathscr O\subset \CharVar{\pi}$ and assume that the decomposition into leaves $\leaf^\mathscr O(S_{0})$ is a smooth foliation of $\mathscr O$. Then the Hamiltonian flow of $f_{\ub\alpha}$ is a subsurface deformation along $S_{0}$. That is, for every $[\rho]\in\mathscr O$, the Hamiltonian flow $\Phi^{f_{\ub\alpha}}_{[\rho]}\colon I([\rho])\to\mathscr O$ is a subsurface deformation along $S_0$.
\end{theorem}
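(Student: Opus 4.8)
The plan is to deduce this from Theorem~\ref{thm : hamiltonian vector field is infinitesimal subsurface deformation} together with the standard principle that the flow of a vector field tangent to a foliation preserves its leaves. Fix $[\rho]\in\mathscr O$ and write $\gamma\coloneqq\Phi^{f_{\ub\alpha}}_{[\rho]}$ for the Hamiltonian flow line, which we consider on the connected component $I_0\subseteq I([\rho])$ of $0$ on which $\gamma$ stays in $\mathscr O$ (a relatively open subinterval, since $\mathscr O$ is open and $\gamma$ is continuous). Let $L\coloneqq\leaf_\rho(S_0)\cap\mathscr O$ be the leaf of the smooth foliation $\leaf^{\mathscr O}(S_0)$ through $\gamma(0)=[\rho]$. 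It suffices to prove $\gamma(I_0)\subseteq L$, since this says precisely that $p_{S_0}(\gamma(t))$ is constant, i.e.\ that $\gamma$ is a subsurface deformation along $S_0$ in the sense of Definition~\ref{def : inf and subsurface deformation}; note also that $\gamma$ is automatically $C^1$ because $\gamma'(t)=\Hm f_{\ub\alpha}(\gamma(t))$ depends continuously on $t$.

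First I would record that the restriction map $\Hom(\pi_1(S,p),G)\to\prod_{v\neq v_0}\Hom(\Gamma_v,G)$ is continuous and equivariant for the conjugation actions, hence descends to a continuous map $p_{S_0}$ on the quotients; consequently $L=\mathscr O\cap p_{S_0}^{-1}(p_{S_0}([\rho]))$ is closed in $\mathscr O$. Therefore $J\coloneqq\{t\in I_0\st\gamma(t)\in L\}=\gamma^{-1}(L)\cap I_0$ is relatively closed in $I_0$, and it is nonempty because $0\in J$.

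Next I would show that $J$ is open in $I_0$. Given $t_0\in J$, choose a foliation chart $\phi\colon U\to V\times W\subseteq\R^d\times\R^{n-d}$ for $\leaf^{\mathscr O}(S_0)$ centered at $\gamma(t_0)$, with $U\subseteq\mathscr O$ and the plaques being the slices $\phi^{-1}(V\times\{w\})$. By continuity of $\gamma$ there is $\eps>0$ with $\gamma\big((t_0-\eps,t_0+\eps)\cap I_0\big)\subseteq U$. Writing $\phi\circ\gamma=(\xi,\zeta)$ and invoking Theorem~\ref{thm : hamiltonian vector field is infinitesimal subsurface deformation}, the Hamiltonian vector field is tangent to the plaques throughout $U$, so in this chart it has vanishing $W$-component; hence $\zeta'\equiv 0$ near $t_0$, so $\zeta$ is constant there and $\gamma(t)$ remains in the plaque through $\gamma(t_0)$. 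That plaque lies in the leaf of $\leaf^{\mathscr O}(S_0)$ through $\gamma(t_0)$, which equals $L$ because $t_0\in J$ and distinct leaves are disjoint. Thus $t\in J$ for all $t$ near $t_0$, so $J$ is open; by connectedness of $I_0$ we conclude $J=I_0$, which proves the theorem. (Alternatively: since $L$ is a smooth submanifold and $\Hm f_{\ub\alpha}$ restricts to a vector field on $L$ by Theorem~\ref{thm : hamiltonian vector field is infinitesimal subsurface deformation}, one may integrate that restriction and use uniqueness of integral curves in $\mathscr O$ to identify the resulting curve with $\gamma$.)

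The substance here is carried entirely by Theorem~\ref{thm : hamiltonian vector field is infinitesimal subsurface deformation}; granting it, the proposition reduces to the textbook fact that a flow tangent to a foliation is leaf-preserving, so I do not anticipate a genuine obstacle. The only points needing a little care are the continuity of $p_{S_0}$ (so that leaves are closed in $\mathscr O$), the bookkeeping around the maximal interval of definition $I([\rho])$ and the restriction $I_0$ to where the flow remains in $\mathscr O$, and---for the cleanest statement---noting that the flow line is $C^1$.
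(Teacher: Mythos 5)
Your overall strategy is exactly the paper's: combine Theorem \ref{thm : hamiltonian vector field is infinitesimal subsurface deformation} with the fact that a vector field tangent to a smooth foliation has flow lines staying in the leaves (the paper simply cites Proposition 19.19 and Theorem 19.12 of Lee's \emph{Smooth Manifolds} for the latter, whereas you write out the standard chart argument). The openness half of your open/closed argument is fine.

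The gap is in the closedness half. You claim that $L=\mathscr O\cap p_{S_0}^{-1}(p_{S_0}([\rho]))$ is closed in $\mathscr O$ ``by continuity of $p_{S_0}$'', but continuity only gives that preimages of \emph{closed} sets are closed, and the singleton $\{p_{S_0}([\rho])\}$ need not be closed in the target: $p_{S_0}$ takes values in $\prod_{v}\Hom(\Gamma_v,G)/G$, a quotient by conjugation which is not Hausdorff, and the restriction of a representation in $\Reps\pi$ to a subsurface group $\Gamma_v$ can fail to be reductive (the paper itself remarks that the projection of a smooth point need not consist of smooth points), in which case its conjugation orbit is not closed. Nor does the foliation hypothesis rescue you: leaves of a smooth foliation need not be closed subsets (think of an irrational linear foliation of a torus, whose leaves are also fibers of a continuous map to a non-Hausdorff quotient). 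So $J=\gamma^{-1}(L)\cap I_0$ is not known to be relatively closed by your argument. The repair is standard and is exactly what the textbook proof does: at a limit point $t_0$ of $J$, take a foliation chart around $\gamma(t_0)$, use the tangency of $\Hm f_{\ub\alpha}$ to conclude that $\gamma$ stays in the single plaque through $\gamma(t_0)$ on a neighborhood of $t_0$, observe that this plaque meets $L$ (at some $\gamma(t_n)$ with $t_n\in J$ close to $t_0$) and is contained in the leaf through $\gamma(t_n)$, hence in $L$ by disjointness of leaves, so $\gamma(t_0)\in L$. With that substitution your proof goes through. (Your parenthetical alternative has a separate soft spot: $f$ is only assumed $C^1$, so $\Hm f_{\ub\alpha}$ is merely continuous and uniqueness of integral curves is not automatic, and one would still need an escape-time argument to see that the restricted flow on $L$ exhausts $I_0$; this caveat applies to the paper's citation as well, but it means the ``uniqueness'' route is not a free pass.)
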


\begin{proof}
	Since the foliation $\leaf^{\mathscr O}(S_{0})$ is smooth, any vector field which is tangent to the foliation has flow lines which remain in the leaves (this follows for example from Proposition 19.19 in \cite{Lee_SmoothManifolds} combined with Frobenius' Theorem \cite[Theorem 19.12]{Lee_SmoothManifolds}). Using Theorem \ref{thm : hamiltonian vector field is infinitesimal subsurface deformation}, we have $\Hm f_{\ub\alpha}\in \T_{[\rho]}\leaf_\rho^{\mathscr O}(S_0)$ for every $[\rho]$. The theorem follows. 
\end{proof}

In the special case when the supporting surface $S_0$ is an $m-$holed sphere that is \textit{fully separating}, that is, $S\smallsetminus S_0$ has $m$ components, we have the following corollary.

\begin{corollary}\label{cor : hamiltonian flow for fully separating subsurface}
	Let $f\colon G^k\to\R$ be an invariant function, and suppose $\ub\alpha\in\pi^k$ admits a supporting subsurface $S_0$ which is a fully separating $m-$holed sphere. Let $\mathscr O\subset\CharVar\pi$ be an open subset such that $\leaf^{\mathscr O}(S_0)$ is a smooth foliation. Let $\Phi^{f_{\ub\alpha}}_{[\rho]}\colon I(\rho)\subseteq\R\to \mathscr O$ be the Hamiltonian flow of $f_{\ub\alpha}$ starting at $[\rho] = [\rho_\vv,\rho_\yy]$, where $\rho_\vv=\{\rho_i\colon\Gamma_i\to G \,|\, i = 0,\dots,m\}$. Then there exists a family of paths $g^t_1,\dots,g^t_m\in G$ with $g_i^0 = e$ such that for every $\gamma\in\Gamma_i=\pi_1(S_i,p)$, the flow $\Xi_\rho\colon I(\rho)\to\Reps\pi$ given by
	\[
		\Xi_{\rho}(t)(\gamma) = g_i^t\rho_i(\gamma)(g_i^t)^{-1},
	\]
	for $i = 1,\dots,m$ covers the Hamiltonian flow $\Phi^{f_{\ub\alpha}}_{[\rho]}$ restricted to $\Gamma_i$. In particular, calling the boundary curves $c_1,\dots,c_m$, with $c_i$ being a boundary of $S_i$, we have that
	\[
	\Xi_{\rho}(t)(c_i) = g_i^t\rho_0(c_i)(g_i^t)^{-1}.
	\]
\end{corollary}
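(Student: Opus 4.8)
The plan is to derive the corollary from Theorem~\ref{thm : Hamiltonian flow is a subsurface deformation} by a lifting argument; the remaining work is bookkeeping with the graph-of-groups decomposition. First note that since $S_0$ is a fully separating $m$-holed sphere, the underlying graph of the associated graph of groups is already a tree ($N=n_0-m=0$, so there are no gluing loops), and by Proposition~\ref{prop: fundamental group of graph of groups form of gluing loops} a representation $\varphi\in\Hom(\pi,G)$ is exactly the data of a tuple $(\varphi_0,\varphi_1,\dots,\varphi_m)$ with $\varphi_i\colon\Gamma_i\to G$ subject only to the edge relations $\varphi_0(c_i)=\varphi_i(c_i)$ for $i=1,\dots,m$. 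Since by hypothesis $\leaf^{\mathscr O}(S_0)$ is a smooth foliation, Theorem~\ref{thm : Hamiltonian flow is a subsurface deformation} applies and the Hamiltonian flow $t\mapsto\Phi^{f_{\ub\alpha}}_{[\rho]}(t)$ is a subsurface deformation along $S_0$; in particular $\bigl[\Phi^{f_{\ub\alpha}}_{[\rho]}(t)|_{\Gamma_i}\bigr]=[\rho_i]$ for all $t\in I(\rho)$ and all $i=1,\dots,m$.

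Second, I would lift the flow to the representation variety. On the smooth locus $\CharVar\pi$ the projection $\Reps\pi\to\CharVar\pi$ is a principal $\Inn(G)$-bundle, so the $C^1$ integral curve $\Phi^{f_{\ub\alpha}}_{[\rho]}\colon I(\rho)\to\mathscr O\subset\CharVar\pi$ lifts to a $C^1$ curve $\hat\rho\colon I(\rho)\to\Reps\pi$ with $\hat\rho_0=\rho$ (pull the bundle back over the contractible interval $I(\rho)$ and choose a section). One then sets $\Xi_\rho(t):=\hat\rho_t$; by construction $[\Xi_\rho(t)]=\Phi^{f_{\ub\alpha}}_{[\rho]}(t)$, so $\Xi_\rho$ covers the Hamiltonian flow.

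Third, for each $i=1,\dots,m$ the path $t\mapsto\Xi_\rho(t)|_{\Gamma_i}$ is a $C^1$ curve in $\Hom(\Gamma_i,G)$ which, by the first paragraph, stays inside the single $\Inn(G)$-orbit of $\rho_i$. The orbit map $G\to\Hom(\Gamma_i,G)$, $g\mapsto g\rho_i g^{-1}$, has constant rank $\dim G-\dim Z(\rho_i)$ (its differential is, up to translation and sign, a coboundary operator, as in Section~\ref{sec : subsurface deformations}), so its image is an immersed submanifold and the map is a fiber bundle onto it with fiber $Z(\rho_i)$; pulling this bundle back along $t\mapsto\Xi_\rho(t)|_{\Gamma_i}$ produces a principal $Z(\rho_i)$-bundle over the interval $I(\rho)$, which is trivial, hence admits a $C^1$ section $t\mapsto g_i^t$ with $g_i^t\rho_i(g_i^t)^{-1}=\Xi_\rho(t)|_{\Gamma_i}$. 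Since $\hat\rho_0=\rho$ we get $g_i^0\in Z(\rho_i)$, and replacing $g_i^t$ by $g_i^t(g_i^0)^{-1}$ arranges $g_i^0=e$ without disturbing the identity $\Xi_\rho(t)(\gamma)=g_i^t\rho_i(\gamma)(g_i^t)^{-1}$ for $\gamma\in\Gamma_i$. Finally, because $\Xi_\rho(t)$ is a genuine representation of $\pi$ it satisfies $\Xi_\rho(t)(c_i)=\Xi_\rho(t)|_{\Gamma_i}(c_i)$, and using $\rho_i(c_i)=\rho_0(c_i)$ this is the asserted formula $\Xi_\rho(t)(c_i)=g_i^t\rho_0(c_i)(g_i^t)^{-1}$.

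The step I expect to be the main obstacle is the third one: promoting the pointwise conjugacy $\Xi_\rho(t)|_{\Gamma_i}\sim\rho_i$ to a $C^1$ (or even merely continuous) family $t\mapsto g_i^t$. This requires the $\Inn(G)$-orbit of $\rho_i$ to be regular enough to carry out the bundle pullback — in practice one invokes the constant-rank property of the orbit map, or a local slice for the conjugation action near each $\Xi_\rho(t_0)|_{\Gamma_i}$, patched over $I(\rho)$ — and the normalization $g_i^0=e$ then relies on the residual ambiguity of the lift lying in the stabilizer $Z(\rho_i)$. The regularity of the Hamiltonian flow itself and of its lift to $\Reps\pi$ is comparatively harmless, since $\Phi^{f_{\ub\alpha}}_{[\rho]}$ is by definition a $C^1$ integral curve of the (continuous) Hamiltonian vector field on the smooth locus.
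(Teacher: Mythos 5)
Your proposal is correct and follows essentially the same route as the paper: the smoothness hypothesis feeds into Theorem~\ref{thm : Hamiltonian flow is a subsurface deformation}, the tree structure of the graph of groups together with the leaf description (Remark~\ref{rmk : alternative definition for leaves}) shows the restrictions to each $\Gamma_i$ stay in the conjugacy class of $\rho_i$, and the identification of the edge groups gives the statement on the boundary curves $c_i$. The extra care you take in lifting the flow to $\Reps\pi$ and extracting continuous conjugators $g_i^t$ (a step the paper leaves implicit) is sound, provided you invoke that orbits of the conjugation action are weakly embedded (initial) submanifolds, so that your $C^1$ path genuinely factors through the orbit and the principal $Z(\rho_i)$-bundle can be pulled back and trivialized over the interval.
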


\begin{proof}
	In the case when $S_0$ is fully separating, the corresponding graph of groups decomposition is made into a tree by choosing the outgoing edges from $S_0$ as in Figure \ref{fig : graph of groups for fully separating m holed sphere} where the generators associated to the graph of groups decomposition are given by 
	\[
	\Gamma_0=\pi_1(S_0,p),\Gamma_1=\pi_1(S_1,p),\dots,\Gamma_m=\pi_1(S_m,p).
	\]
	Recall here from Section \ref{sec : "Relative character varieties"} that $\Gamma_i = \psi_i(\pi_1(S_i,p_i))$. Since $S_0$ is a fully separating $m-$holed sphere, the surfaces $S_i$ are incompressible and hence $\Gamma_i= \pi_1(S_i,p)$.   For $[\rho]\in\mathscr O$, Theorem \ref{thm : Hamiltonian flow is a subsurface deformation} gives that the Hamiltonian flow $\Phi_{[\rho]}^{f_{\ub\alpha}}$ is completely contained in $\leaf^\mathscr O_\rho$. Thus by Remark \ref{rmk : alternative definition for leaves} and the fact that the fundamental group is generated by the vertex groups omitting $\Gamma_0$, the Hamiltonian flow is covered by a conjugation of the vertex groups not associated to $S_0$. This gives the first part of the corollary.\\
	
	For the second part of the corollary, note that using the graph of groups relation \eqref{relation:graph of groups relation 1} and the fact that all edges of the graph are in $T_u$, the unoriented tree, the image of the edge monomorphisms $\langle c_i\rangle\to\Gamma_i$ and $\identity \colon\langle c_i\rangle\to \Gamma_0$ are identified. The first part of the corollary then implies the second using this fact.
	\begin{figure}[ht]
		\centering
		\includesvg[scale=0.4]{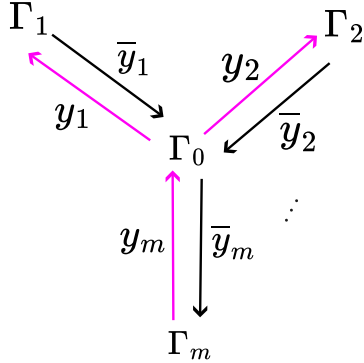}
		\caption{Graph of groups arising from a fully separating $m-$holed sphere.}
		\label{fig : graph of groups for fully separating m holed sphere}
	\end{figure}
\end{proof}

\subsection{Proof of Theorem \ref{thm : hamiltonian vector field is infinitesimal subsurface deformation}}\label{Appendix: proof of prop about infinitesimally subsurface deformation}

We first give a setup to have an explicit expression for the Hamiltonian vector field. Throughout, let $S_0$ be the supporting subsurface for the tuple $\ub\alpha\in\pi^k$ and assume that it is not a cylinder.\\

Assume that $S_0$ is not in the same homotopy class as $S$. Let the genus of $S_0$ be $g_0\geq 0$ and $n_0\geq 0$ the number of boundary components. Let $S = \bigcup_{i=0}^m S_i$ be the corresponding decomposition of the surface $S$, and $\Gamma_0,\Gamma_1,\dots,\Gamma_m<\pi$ the associated vertex groups (recall Convention \ref{convention : Gammaj is image of psij}). For each $i = 1,\dots,m$, let $K_i$ be the number of boundary components of the surface $S_i$, so that $K_1+\cdots+K_m = n_0$. Denote by $\zeta^i_1,\dots,\zeta^i_{K_i}$ the boundary curves of $S_i$ oriented so that $S_0$ lies to the left of each $\zeta^i_l$, where $i = 1,\dots,m$ and $l=1,\dots,K_i$. Let $p$ be a point in the interior of $S_0$ and choose a presentation of $\Gamma_0$ 

\begin{equation}\label{presentation of fundamental group of S' for subsurface deformations}
	\pi_1(S_0,p) = \left\langle a_1,b_1,\dots, a_{g_0},b_{g_0},c_1^1,\dots,c^{m}_{K_m}\bigg|\prod_{j = 1}^{g_0}[a_j,b_j]\prod_{i = 1}^m\prod_{l = 1}^{K_i}c_l^i=1\right\rangle,
\end{equation}
where under the isomorphism with $\pi_1(S,p)$, the curves $c_l^i$ are freely homotopic to $\zeta_l^i$ as shown for example in Figure \ref{fig : setup for fundamental group of S'} and Figure \ref{fig : zoom in Si}. The generators $a_j,b_j$ are shown in Figure \ref{fig : generators for genus}. We will abuse notation and also denote with the same letters the generators in $\pi$.\\

\begin{figure}[ht]
	\centering
	\includesvg[scale=0.8]{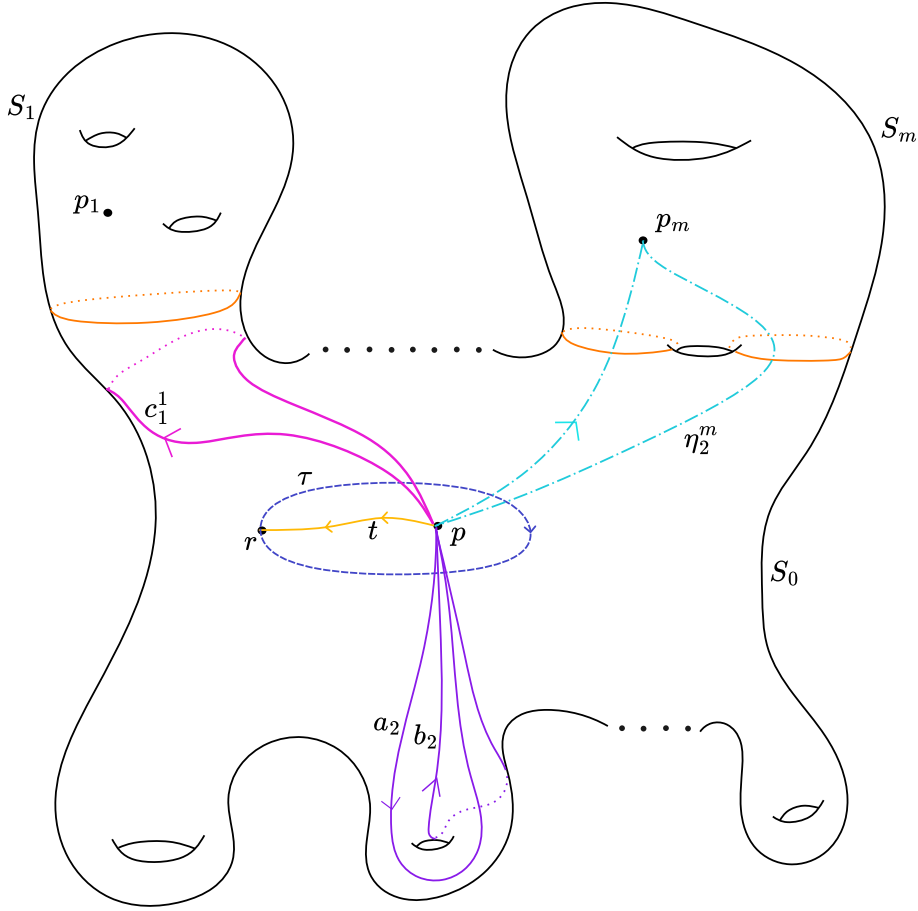}
	\caption{The solid pink and purple lines are some of the generators $c^i_l,a_j,b_j$ for the fundamental group $\Gamma_0$. The dashed blue loop $\tau$ will be used to adapt the curves in the proof of Proposition \ref{prop : Hamiltonian vector field for general subsurface}. The dotted/dashed light blue loop $\eta^m_2$ is an example of a gluing loop arising from the graph of groups decomposition.}
	\label{fig : setup for fundamental group of S'}
\end{figure}

\begin{figure}[ht]
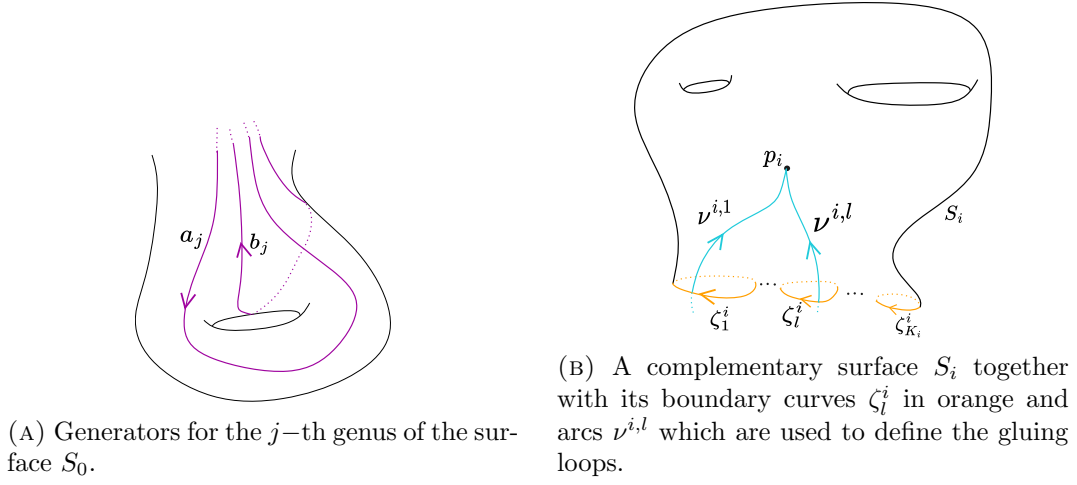

    \centering
    \begin{subfigure}[b]{0.45\textwidth}
    \centering
    \includesvg[width=0.5\textwidth]{genuspart.svg}
    \caption{Generators for the $j-$th genus of the surface $S_0$.}
    \label{fig : generators for genus}
    \end{subfigure}
    \quad
    \begin{subfigure}[b]{0.45\textwidth}
    \centering
    \includesvg[width=0.7\textwidth]{Silabels.svg}
    \caption{A complementary surface $S_i$ together with its boundary curves $\zeta^i_l$ in orange and arcs $\nu^{i,l}$ which are used to define the gluing loops.}
    \label{fig : zoom in Si}
    \end{subfigure}
    \caption{Detail on the setup for curves in the surface in Figure \ref{fig : setup for fundamental group of S'}.}
\end{figure}

Next we define paths between base points and boundary curves. For $i\in\{1,\dots,m\}$ and $l\in\{1,\dots,K_i\}$, let $\nu^{i,l}$ be an arc from $p$ to $p_i$ so that its intersection with the generators in \eqref{presentation of fundamental group of S' for subsurface deformations} is $p$, it intersects $\zeta^i_l$ exactly once, as shown in light blue in Figure \ref{fig : setup for fundamental group of S'} and in more detail in Figure \ref{fig : zoom in Si}. Arrange moreover for $\nu^{i,l}$ to be disjoint from all boundary curves except $\zeta^i_l$. The subarcs of these arcs define paths between base points and boundary curves. We denote by $\eta^i_l$ the deck transformation in $\pi$ corresponding to the curve $\nu^{i,1}*(\nu^{i,l})^{-1}$, see Figure \ref{fig : setup for fundamental group of S'}. \\ 

From the graph of groups associated to $S_0$, we obtain a graph of groups as in Figure \ref{fig : graph of groups associated to S prime}. We choose a maximal tree $T$ consisting of edges corresponding to $\zeta_1^i$ for $i = 1,\dots,m$, shown in pink in Figure \ref{fig : graph of groups associated to S prime}. We therefore get that, following Convention \ref{convention : Gammaj is image of psij}, the generators associated to the subsurface $S_0$ and the maximal tree $T$ are
\begin{gather*}
	\Gamma_0=\psi_0(\pi_1(S_{0},p)),\Gamma_1=\psi_1(\pi_1(S_1,p_1)),\dots,\Gamma_m = \psi_m(\pi_1(S_m,p_m)),\\
 \eta_2^1,\dots,\eta^1_{K_1},\dots,\eta^m_2,\dots,\eta_{K_m}^m.
\end{gather*}

\begin{figure}[ht]
	\centering
	\includesvg[scale=0.4]{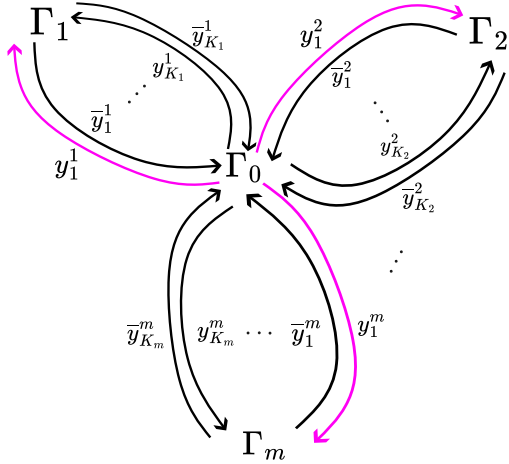}
	\caption{Graph of groups associated to the decomposition induced by $S_0$. The maximal tree we choose is in pink.}
	\label{fig : graph of groups associated to S prime}
\end{figure}

Now let $f\colon G^k\to \R$ be an invariant function. For each $i=1,\dots,k$, choose a presentation of $\alpha_i$ as a word in terms of the generators in \eqref{presentation of fundamental group of S' for subsurface deformations}. With such a representation, each $\alpha_i$ gives rise to a word map $\alpha_i\colon G^{2g_0+n_0}\to G$. As usual, define $f'\colon G^{2g_0+n_0}\to\R$  by $f'(h_1,\dots,h_{2g_0+n_0})\coloneqq f(\alpha_1(h_1,\dots,h_{2g_0+n_0}),\dots,\alpha_k(h_1,\dots,h_{2g_0+n_0}))$; this function is well adapted to our graph of groups decomposition. By construction, we have that
\begin{equation}\label{eq : equality between falpha and f' on generators}
	f_{\ub\alpha} = f'_{(a_1,b_1,\dots,a_{g_0},b_{g_0},c_1^1,\dots,c_{K_m}^m)}
\end{equation}
as functions on the character variety. 
We make use of the following notation regarding the variation functions. Namely, let $X_j = F'_{2j-1}$ and $Y_j = F'_{2j}$ for $j = 1,\dots, g_0$, which correspond to varying the $a_i$ and $b_i$ components respectively. Also let $Z^i_l = F'_{2g_0+l+K_1+\cdots K_{i-1}}$, which corresponds to varying the $c^i_l$ component. We will use $f'$ and its variation functions to compute the Hamiltonian vector field of $f_{\ub\alpha}$. Finally, given a representation $\rho\in\Reps\pi$, we write
\begin{align*}
	 X_j(\rho)&\coloneqq X_j(\rho(a_1),\dots,\rho(b_{g_0}),\rho(c^1_1),\dots\rho(c^{m}_{K_m})),\\
	 Y_j(\rho)&\coloneqq Y_j(\rho(a_1),\dots,\rho(b_{g_0}),\rho(c^1_1),\dots\rho(c^{m}_{K_m})),\\
	 Z^i_l(\rho)&\coloneqq Z^i_l(\rho(a_1),\dots,\rho(b_{g_0}),\rho(c^1_1),\dots\rho(c^{m}_{K_m})).
\end{align*}

\begin{proposition}\label{prop : Hamiltonian vector field for general subsurface}
Let $f\colon G^k\to\R$ be an invariant function, $\ub\alpha\in\pi^k$, and $S_0$ a supporting subsurface. Using the above setup and notation, let
\begin{gather*}
	\Gamma_0=\psi_0(\pi_1(S_{0},p)),\Gamma_1=\psi_1(\pi_1(S_1,p_1)),\dots,\Gamma_m = \psi_m(\pi_1(S_m,p_m)),\\
 \eta_2^1,\dots,\eta^1_{K_1},\dots,\eta^m_2,\dots,\eta_{K_m}^m.
\end{gather*}
be the generators of $\pi$ associated to the graph of groups decomposition arising from the supporting subsurface $S_0$ and the maximal tree $T$ in Figure \ref{fig : graph of groups associated to S prime}. Then the Hamiltonian vector field $\Hm f_{\ub\alpha}$ at an equivalence class $[\rho]\in\CharVar{\pi}$ is the cocycle $[u]\in H^1(\pi,\gfr_{\Ad\rho})$ with representative defined on the generators by
\begin{align}
	u(\gamma) &= \left(\identity - \Ad_{\rho(\gamma)}\right)\left( \left(\sum_{h = 1}^{i-1}\sum_{l = 1}^{K_h}\Ad_{\rho(c^h_l)^{-1}}(Z_l^h(\rho))-Z^h_l(\rho)\right)-Z^i_1(\rho)\right)\label{eq : Ham tangent cocycle for vertex groups}
	\end{align}
for $\gamma\in\Gamma_i$ with $i = 1,\dots,m$, and
\begin{align*}
    u(\eta^i_l) &= \left(\identity - \Ad_{\rho(\eta^i_l)}\right)\left(-\left(\sum_{h = 1}^{i-1}\sum_{n = 1}^{K_h}\Ad_{\rho(c^h_n)^{-1}}(Z^h_n(\rho))-Z^h_n(\rho)\right)\right)\\
    &-Z^i_1(\rho) + \Ad_{\rho(\eta^i_l)}\left(Z^i_l(\rho) -\left(\sum_{n = 1}^{l-1}\Ad_{\rho(c^h_n)^{-1}}(Z^i_n(\rho))-Z^i_n(\rho)\right) \right)
\end{align*}
where $i = 1,\dots,m$ and $l=1,\dots,K_i$. For the subgroup $\Gamma_0=\psi_0(\pi_1(S_0,p))$, the Hamiltonian vector field in terms of the standard generators in \eqref{presentation of fundamental group of S' for subsurface deformations} is given by
\begin{align*}
	u(a_j) &= \left(\sum_{h = 1}^{j-1}X_h(\rho)-\Ad_{\rho(a_h^{-1})}(X_h(\rho))+Y_h(\rho)-\Ad_{\rho(b_h^{-1})}(Y_h(\rho))\right)+Y_j(\rho)+\\
	&-\Ad_{\rho(a_j)}\left(\sum_{h = 1}^{j}X_h(\rho)-\Ad_{\rho(a_h^{-1})}(X_h(\rho))+Y_h(\rho)-\Ad_{\rho(b_h^{-1})}(Y_h(\rho))\right)\end{align*}
and
\begin{align*}
	u(b_j) &=\Ad_{\rho(b_j)}\left(\sum_{h=1}^{j-1}\Ad_{\rho(a_h^{-1})}(X_h(\rho))-X_h(\rho)+\Ad_{\rho(b_h^{-1})}(Y_h(\rho))-Y_h(\rho)\right)\\
	&+\Ad_{\rho(b_j)}(X_j(\rho))-\left(\sum_{h=1}^{j}\Ad_{\rho(a_h^{-1})}(X_h(\rho))-X_h(\rho)+\Ad_{\rho(b_h^{-1})}(Y_h(\rho))-Y_h(\rho)\right)
\end{align*}
for $j = 1,\dots,g_0$.

\end{proposition}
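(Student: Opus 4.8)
The strategy is to feed the explicit homology representative of the Poincaré dual of the Hamiltonian vector field, obtained in Proposition~\ref{prop: explicit cycle for Poincare dual of Hamiltonian vector field}, through the combinatorics of the graph-of-groups generators. By Proposition~\ref{prop: explicit cycle for Poincare dual of Hamiltonian vector field}, the dual of $\Hm f_{\ub\alpha}([\rho])$ is represented by the $1$-cycle $\sum_{i=1}^k \alpha_i\otimes F_i(\rho(\ub\alpha))_{\tilde p}$. Using the chosen presentation of each $\alpha_i$ as a word in the generators $a_1,b_1,\dots,a_{g_0},b_{g_0},c_1^1,\dots,c_{K_m}^m$ of $\Gamma_0$ and the identity $f_{\ub\alpha}=f'_{(a_1,b_1,\dots,c_{K_m}^m)}$ from Equation~\eqref{eq : equality between falpha and f' on generators}, I would first rewrite this cycle, via the chain-level identity in Remark~\ref{prop : Variation function for reducible functions} (the homology between $\sum_i\alpha_i\otimes F_i$ and a single term, up to an explicit boundary), as the cycle
\[
\sum_{j=1}^{g_0}\big(a_j\otimes X_j(\rho) + b_j\otimes Y_j(\rho)\big) + \sum_{i=1}^m\sum_{l=1}^{K_i} c_l^i\otimes Z_l^i(\rho),
\]
where $X_j,Y_j,Z_l^i$ are the components of the variation function $F'$ as named just before the proposition. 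This is legitimate because each generator of $\Gamma_0$ is a simple closed curve (or boundary curve) disjoint from the others in the standard picture of Figure~\ref{fig : setup for fundamental group of S'}, so reassembling $\alpha_i$ from its letters only changes the representing cycle by boundaries, exactly as in the reducible case.

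**Extracting the cocycle.** Next I would invert Poincaré duality: given the $1$-cycle above, I must produce a group cocycle $u\in Z^1(\pi,\gfr_{\Ad\rho})$ that is Poincaré dual to it, using Lemma~\ref{lem : Poincare duality}. The mechanism is the standard one, also used in \cite{InvFct_Goldman} and \cite{JohnsonMillson}: on the universal cover, a $1$-cycle supported on a based loop $\delta$ carrying the Lie algebra value $W$ is Poincaré dual to the cocycle that, on a generator $\gamma$, counts the (signed) intersections of a transverse representative of $\gamma$ with the lifts of $\delta$, weighted by parallel transports of $W$. Concretely, for each generator $\gamma$ in $\Gamma_i$ ($i\ge 1$) or each $\eta_l^i$, one intersects a transverse representative of $\gamma$ with the curves $c_n^h$ (these are the only curves in the supporting cycle that $\gamma$ or $\eta_l^i$ can meet, since $\Gamma_i$ lives in the complementary piece $S_i$), tracks which lifts are hit and with what sign and conjugating element, and sums. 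The telescoping sums $\sum_{h=1}^{i-1}\sum_l (\Ad_{\rho(c_l^h)^{-1}}(Z_l^h(\rho))-Z_l^h(\rho))$ appearing in the formulas are precisely the cumulative contributions of the boundary curves that the loop must cross to travel from the base point $p\in S_0$ into $S_i$ and back; the factor $(\identity-\Ad_{\rho(\gamma)})$ reflects that such contributions enter the cocycle value $u(\gamma)$ through the coboundary pattern $\Ad_{\rho(\gamma)}(c)-c$, i.e., $\gamma$ acts as a twist. For the generators $a_j,b_j$ of $\Gamma_0$ itself, the intersections are among the $a_j,b_j$ in the standard genus picture (Figure~\ref{fig : generators for genus}), producing the symplectic-pairing pattern $a_j\leftrightarrow b_j$ visible in the last two displayed formulas, again telescoped over $h<j$ by the relator $\prod[a_h,b_h]\prod c_l^i=1$.

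**The main obstacle.** The hard part is bookkeeping, not conceptual: one must choose transverse representatives of every generator and gluing loop carefully (the auxiliary loop $\tau$ of Figure~\ref{fig : setup for fundamental group of S'} and the arcs $\nu^{i,l}$ are introduced exactly for this), lift everything consistently to $\tilde S$ relative to the fixed lift $\tilde p$, and then verify that the resulting function $u$ on generators actually satisfies the cocycle identity $u(\gamma\delta)=u(\gamma)+\Ad_{\rho(\gamma)}(u(\delta))$ and is compatible with all relators — the surface relator in $\Gamma_0$, the graph-of-groups conjugation relations $\eta_l^i$-conjugation identifying $c_l^i$-images, and the tree relations. I would organize this as: (1) fix representatives and lifts; (2) compute intersection numbers of each generator with each $c_n^h$ and with the $a_j,b_j$, recording signs and conjugators; (3) write down $u$ on generators by Equation~\eqref{eq : formula for intersection pairing} / Lemma~\ref{lem : Poincare duality}, which yields the stated formulas after telescoping; (4) check well-definedness on the relators, which is where the $(\identity-\Ad)$ structure and the cumulative sums must conspire correctly; (5) conclude via Lemma~\ref{lem : Poincare duality} that this $u$ represents $\Hm f_{\ub\alpha}([\rho])$. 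Steps (2) and (4) are the error-prone ones and constitute the bulk of the deferred computation; everything else is a direct appeal to the machinery already assembled in Sections~\ref{sec : Poisson bracket} and~\ref{sec : graph of groups}.
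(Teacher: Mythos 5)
Your plan follows the paper's proof in all essentials: the dual cycle written in the generators comes from Proposition~\ref{prop: explicit cycle for Poincare dual of Hamiltonian vector field} applied to the adapted function $f'$ via Equation~\eqref{eq : equality between falpha and f' on generators} (no chain-level rewriting \`a la Remark~\ref{prop : Variation function for reducible functions} is needed, and the simplicity or disjointness of the generators plays no role in that step), and the cocycle is then pinned down through Lemma~\ref{lem : Poincare duality} by intersection-pairing computations with carefully adapted transverse representatives (the loop $\tau$, the arcs $\nu^{i,l}$, curves pushed off to a second basepoint), with the cycle condition producing the telescoping sums. The paper organizes the duality step as a verification of the stated $u$ against arbitrary test cycles valued in the dual bundle rather than deriving $u(\gamma)$ from intersections of the generators with the cycle, but this is the same computation read in the opposite direction, so your proposal is essentially the paper's argument.
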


We can now deduce Theorem \ref{thm : hamiltonian vector field is infinitesimal subsurface deformation}.\\

\begin{proof}[of Theorem \ref{thm : hamiltonian vector field is infinitesimal subsurface deformation}]
	In Proposition \ref{prop : Hamiltonian vector field for general subsurface}, we see in \eqref{eq : Ham tangent cocycle for vertex groups} that the cocycle $\Hm f_{\ub\alpha}([\rho])$ is a coboundary when restricted to each vertex group $\Gamma_i$ for $i\neq 0$. Thus by Lemma \ref{lem: tangent space "relative representation variety" in larger variety},
	\[
	\Hm f_{\ub\alpha}([\rho])\in\T_{[\rho]}\leaf_\rho(S_0),
	\]
 completing the proof.
\end{proof}

The proof of Proposition \ref{prop : Hamiltonian vector field for general subsurface} is an application of Lemma \ref{lem : Poincare duality}. That is, it is a computation in the homology of the flat vector bundle associated to a representation $\rho$ as described in Section \ref{sec : cohomology with local coefficients}. Before providing a detailed proof of the proposition, we briefly explain how to make the computation of the intersection pairing described in Section \ref{sec : cohomology with local coefficients} more explicit.  The reader is advised to consult Figure \ref{fig : How to compute intersection pairing} for the explanation. \\

Consider the pairing of flat vector bundles $\langle\cdot,\cdot\rangle\colon\xi_\rho\times\xi_\rho^*\to \xi_0 = S\times \R$ induced by the pairing $\langle\cdot,\cdot\rangle\colon\gfr\times\gfr^*\to\R$. Note that this pairing is invariant under the action by $\pi$ induced by $\Ad$ and $\Ad^*$.
Let $X\in\gfr, Y\in\gfr^*$ and suppose that chains $\sigma\otimes X_{\widetilde{\sigma}(0)}\in C_1(S,\xi_\rho)$ and $\tau\otimes Y_{\widetilde \tau(0)}\in C_1(S,\xi_\rho^*)$ intersect transversely at double points. Let $q\in S$ be such a point of intersection.
Let $t$ be a path from $\sigma(0)$ to $\tau(0)$. Specify the lift $\tilde\tau(0)$ of $\tau(0)$ to be the endpoint in $\tilde S$ of the lift of $t$ starting at $\tilde\sigma(0)$. Let $\tau^q$ be the subarc of $\tau$ from $\tau(0)$ to $q$ and $\sigma^q$ be the subarc of $\sigma$ from $\sigma(0)$ to $q$.
Let $\eta_q\in \pi$ be the deck transformation corresponding to the loop
\[
t*\tau^q*(\sigma^q)^{-1}\in\pi_1(S,\sigma_i(0))
\]
under the isomorphism $\pi_1(S,\sigma(0))\cong \pi$ coming from the choice of lift $\tilde\sigma(0)$. Using Equation \eqref{eq : intersection in universal cover} we have that
\begin{equation}\label{eq : intersection pairing explicit}
\langle X_{\widetilde \sigma(0)}(q),Y_{\widetilde \tau(0)}(q)\rangle = \langle\Ad_{\rho(\eta_q)}(X),Y\rangle.
\end{equation}

\begin{figure}[h]
    \centering
    \includesvg[scale=0.6]{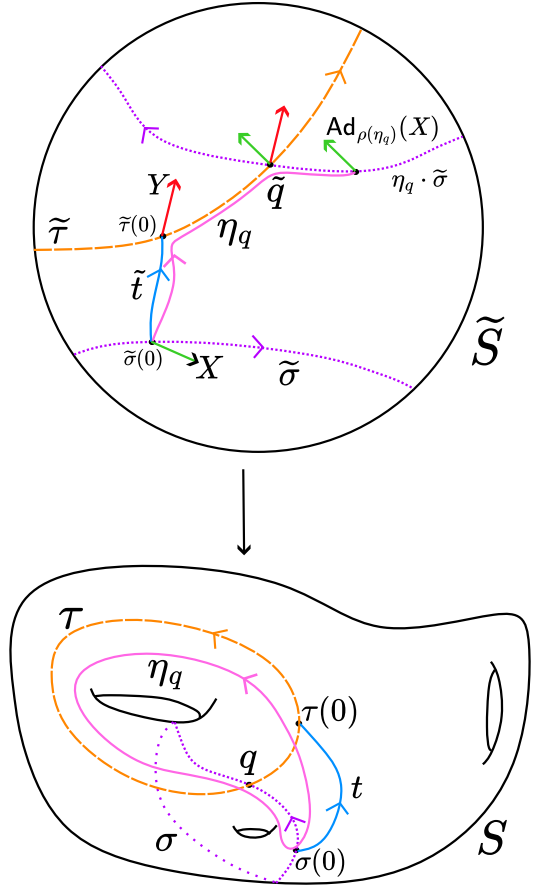}
    \caption{How to compute the intersection pairing between two flat sections. The element $Y$ is associated to the red arrow based at $\widetilde \tau(0)$, and the element $X$ is the green arrow based at $\widetilde \sigma(0)$. The curve $\tau$ is the orange dashed line, the curve $\sigma$ is dotted purple line, and the loop $\eta_q$ is the loop in pink.}
    \label{fig : How to compute intersection pairing}
\end{figure}

We will now prove Proposition \ref{prop : Hamiltonian vector field for general subsurface}.\\

\begin{proof}[of Proposition \ref{prop : Hamiltonian vector field for general subsurface}]
We compute the Hamiltonian vector field of the function $f'_{(a_1,\dots,b_{g_0},c_1^1,\dots,c^m_{K_m})}$ adapted to our choice of generators (see \eqref{eq : equality between falpha and f' on generators}). By Proposition \ref{prop: explicit cycle for Poincare dual of Hamiltonian vector field} and the isomorphism between $\pi$ and $\pi_1(S,p)$, we have that 
	\[
	M\coloneqq \Hm f_{\ub\alpha}([\rho])\frown [S] = \left[\sum_{i = 1}^m\sum_{l = 1}^{K_i}c_l^i\otimes Z_l^i(\rho) + \sum_{j = 1}^{g_0} a_j\otimes X_j(\rho) + b_j\otimes Y_j(\rho) \right],
	\]
where we are using Notation \ref{notation : flat sections} for the flat sections. To make the notation a little easier to deal with, we make the following abbreviations: $A_j\coloneqq \rho(a_j), B_j \coloneqq \rho(b_j), C_l^i\coloneqq \rho(c_l^i)$  for $j = 1,\dots,g_0$, $i = 1,\dots,m$ and $l = 1,\dots,K_i$. For $i = 1,\dots,m$ and $l = 2,\dots,K_i$, let $D^i_l \coloneqq \rho(\eta^i_l)$. 
Applying the boundary operator, we see that
\begin{gather}   
    \sum_{j = 1}^{g_0} \Ad_{A_j^{-1}}(X_j(\rho)) - X_j(\rho) + \Ad_{B_j^{-1}}(Y_j(\rho)) - Y_j(\rho)=\nonumber\\
    =\sum_{i = 1}^m\sum_{l = 1}^{K_i}Z_l^i(\rho)-\Ad_{(C^i_l)^{-1}}(Z^i_l(\rho))\label{eq: cycle condition for M}
\end{gather}
We now use Lemma \ref{lem : Poincare duality}, and show that for any cycle $W\in Z_1(S,\xi_\rho^*)$, $\langle [u]\frown [W]\rangle_*=[M]\bullet_{\langle\cdot,\cdot\rangle}[W]$, where $u$ is the cocycle in the statement of Proposition \ref{prop : Hamiltonian vector field for general subsurface}. To use the intersection pairing, we need that $W$ be composed of curves based at a new base point $r\in S_0$ so that we have transverse intersections at double points. Let $t$ be a path from $p$ to $r$.  Let $\tilde r$ be the lift of $r$ corresponding to the endpoint of the lift of $t$ starting at $\tilde p$.\\

Consider the generators 
\[
	\Gamma_0^r,\Gamma_1^r,\dots,\Gamma_m^r,(\eta_2^1)^r,\dots,(\eta^1_{K_1})^r,\dots,(\eta^m_2)^r,\dots,(\eta_{K_m}^m)^r
\]
coming from the isomorphism between $\pi_1(S,p)$ and $\pi_1(S,r)$ induced by $t$ (see Remark \ref{rmk : notation on base points and deck transformations}). There is an isomorphism $H_1(S,\xi_\rho^*)\cong H_1(\pi_1(S,r),\gfr_{\Ad\rho^r})$, with corresponding representation $\rho^r \colon\pi_1(S,r)\to\pi_1(S,p)\to\pi\xrightarrow{\rho} G$. 
We may assume that
\begin{gather*}
	W = \sum_{i=1}^mW_i + \sum_{i = 1}^m\sum_{l = 2}^{K_i}(\eta^i_l)^r\otimes w((\eta^i_l)^r)+\sum_{j = 1}^{g_0}\left(a_j^r\otimes w(a_j^r)+b_j^r\otimes w(b_j^r)\right),
\end{gather*}
where
\[
	W_i=\sum_{s = 1}^{n_i}(\gamma_s^i)^r\otimes w((\gamma_s^i)^r)
\]
for some function $w\colon\pi_1(S,r)\to\gfr$, curves $(\gamma_s^i)^r\in\Gamma_i^r$ and some $n_i\in\N$. Let also $V_s^i\coloneqq \rho^r((\gamma_s^i)^r)$. We will abuse notation throughout this proof, and denote by the same symbols elements in $Z_1(S,\xi_\rho^*)$ and elements in $Z_1(\pi_1(S,p),\gfr_{\Ad\rho^p})\cong Z_1(\pi_1(S,r),\gfr_{\Ad\rho^r})\cong Z_1(\pi,\gfr_{\Ad\rho})$, and do similarly with the cocycles, where once again $\rho^p\colon\pi_1(S,p)\to\pi\xrightarrow{\rho}G$ comes from the isomorphism between $\pi_1(S,p)$ and $\pi$. Finally, we also fix a trivial loop $\tau$ based at $r$ which intersects all generators of $\Gamma_0$ transversely and has positive intersection number with $c_1^1$ as in shown as the dashed blue curve in Figure \ref{fig : Adapted curves}. We will use this to further adapt the curves appearing in $W$, making it easier to understand the intersection pattern with the generators of $\Gamma_0$. We can now compute the pairing of each term appearing in $W$ by using the loops appearing in Equation \eqref{eq : intersection pairing explicit}.\\

\begin{figure}[h]
    \centering
    \includesvg[scale=0.8]{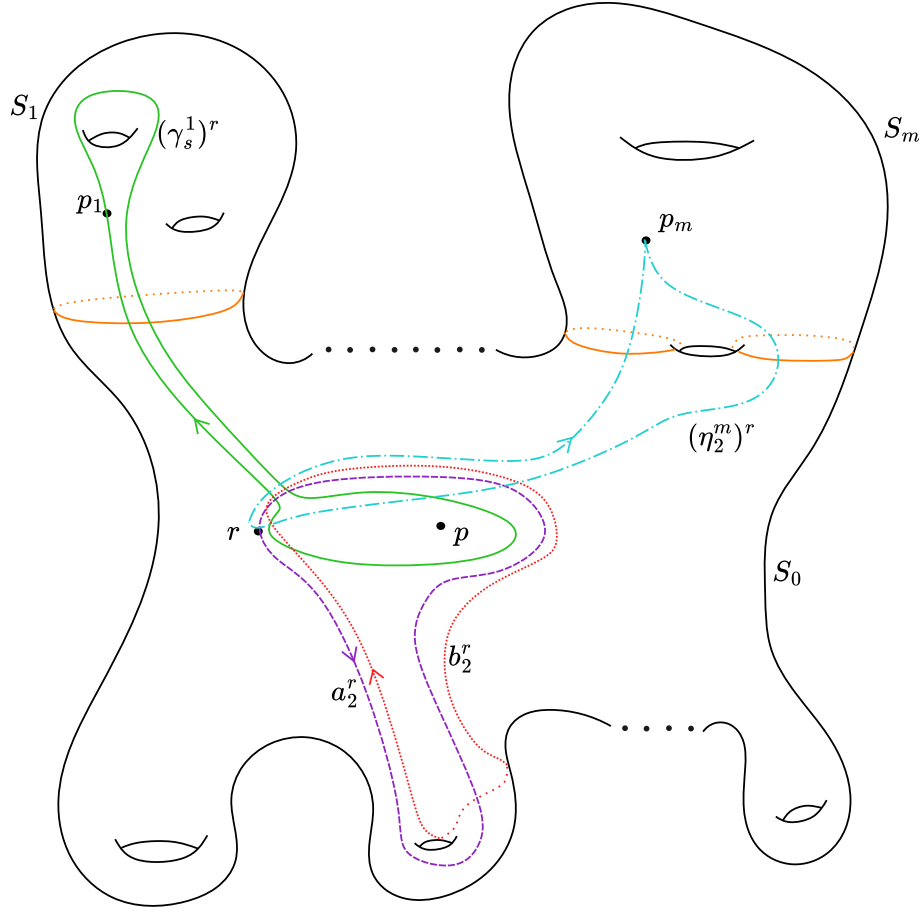}
    \caption{Here are the adapted curves for the computations in the proof of Proposition \ref{prop : Hamiltonian vector field for general subsurface}. In solid light green is the curve $(\gamma_s^1)^r$, in dashed purple is the curve $a^r_j$, in dotted red is the curve $b^r_j$, and in dotted/dashed light blue the curve $(\eta^m_2)^r$.}
    \label{fig : Adapted curves}
\end{figure}

\textbf{The pairing $M\bullet_{\langle\cdot,\cdot\rangle}W_i$.} 
Begin by further adapting the curves $(\gamma^i_s)^r$ as follows:
\begin{enumerate}
    \item Follow $\tau$ until intersecting the generator $c^i_1$.
    \item Then, by keeping the intersection with $c^i_1$ transverse, reach $p_i$ without intersecting any of the other generators.
    \item For the return, re$-$enter the surface $S_0$ and follow a path that does not intersect any of the generators until it reaches $\tau$.
    \item Finally, close the loop by following $\tau$ until reaching $r$.
\end{enumerate}
See for example the light green curve in Figure \ref{fig : Adapted curves}. Using $\tau$ allows us to use the cocycle condition in Equation \eqref{eq: cycle condition for M} to replace the terms coming from intersections with the generators $\{a_1,b_1,\dots,a_{g_0},b_{g_0}\}$ in the second equality below. We compute that
\begin{align*}
	M\bullet_{\langle\cdot,\cdot\rangle}W_i &= \sum_{s = 1}^{n_i}\Bigg\langle \left(\sum_{h=1}^{i-1}\sum_{l=1}^{K_h}\Ad_{(C^h_l)^{-1}}(Z^h_l(\rho))-Z^h_l(\rho)\right)-Z^i_1(\rho)+\Ad_{V_s^i(C_1^i)^{-1}}(Z^i_1(\rho))\\
	&+\Ad_{V_s^i}\Bigg(\left(\sum_{l=2}^{K_i}\Ad_{(C^h_l)^{-1}}(Z^h_l(\rho))-Z^h_l(\rho)\right)+\\
	&+\left(\sum_{h=i+1}^{m}\sum_{l=1}^{K_h}\Ad_{(C^h_l)^{-1}}(Z^h_l(\rho))-Z^h_l(\rho)\right)+\\&+\left(\sum_{j=1}^{g_0}\Ad_{A_j^{-1}}(X_j(\rho)) - X_j(\rho) + \Ad_{B_j^{-1}}(Y_j(\rho)) - Y_j(\rho)\right)\Bigg),w((\gamma_s^i)^r)\Bigg\rangle\\
	&=\sum_{s = 1}^{n_i}\Bigg\langle \left(\sum_{h=1}^{i-1}\sum_{l=1}^{K_h}\Ad_{(C^h_l)^{-1}}(Z^h_l(\rho))-Z^h_l(\rho)\right)-Z^i_1(\rho)\\
	&-\Ad_{V^i_s}\left(\left(\sum_{h=1}^{i-1}\sum_{l=1}^{K_h}\Ad_{(C^h_l)^{-1}}(Z^h_l(\rho))-Z^h_l(\rho)\right)-Z^i_1(\rho)\right),w((\gamma_s^i)^r)\Bigg\rangle\\
	&=\langle [u]\frown [W_i]\rangle_*,
\end{align*}
where the second equality follows from the cycle condition \eqref{eq: cycle condition for M}.

\textbf{The pairing $M\bullet_{\langle\cdot,\cdot\rangle}\left(a_j^r\otimes w(a_j^r)\right)$.} Once again, we further adapt $a_j^r$ as follows:
\begin{enumerate}
    \item Follow $\tau$ backwards until intersecting $b_{j-1}$.
    \item Loop around the genus and reach $\tau$ by intersecting only the generator $b_j$ of $\Gamma_0$.
    \item Finally, follow $\tau$ backwards until reaching $r$.
\end{enumerate}
See for example the dotted purple curve in Figure \ref{fig : Adapted curves}. We compute that
\begin{align*}
	M\bullet_{\langle\cdot,\cdot\rangle}\left(a_j^r\otimes w(a_j^r)\right)=\Bigg\langle\left(\sum_{h = 1}^{j-1}X_h(\rho)-\Ad_{A_h^{-1}}(X_h(\rho))+Y_h(\rho)-\Ad_{B_h^{-1}}(Y_h(\rho))\right)+\\
	+Y_j(\rho) + \Ad_{A_j}\Bigg(\left(\sum_{h = j+1}^{g_0}X_h(\rho)-\Ad_{A_h^{-1}}(X_h(\rho))+Y_h(\rho)-\Ad_{B_h^{-1}}(Y_h(\rho))\right)+\\
	+\sum_{i=1}^m\sum_{l=1}^{K_i}Z^i_l(\rho)-\Ad_{(C^i_l)^{-1}}(Z^i_l(\rho))\Bigg),w(a_j^r)\Bigg\rangle\\
	=\Bigg\langle\left(\sum_{h = 1}^{j-1}X_h(\rho)-\Ad_{A_h^{-1}}(X_h(\rho))+Y_h(\rho)-\Ad_{B_h^{-1}}(Y_h(\rho))\right)+Y_j(\rho)+\\
	-\Ad_{A_j}\left(\sum_{h = 1}^{j}X_h(\rho)-\Ad_{A_h^{-1}}(X_h(\rho))+Y_h(\rho)-\Ad_{B_h^{-1}}(Y_h(\rho))\right),w(a_j^r)\Bigg\rangle,\\
	=\langle[u]\frown[a_j\otimes w(a_j^r)]\rangle_*
\end{align*}
where similarly as above, we use the cycle condition \eqref{eq: cycle condition for M} to get rid of the terms coming from the intersections with $c_s^i$ in the second equality.

\textbf{The pairing $M\bullet_{\langle\cdot,\cdot\rangle}\left(b_j^r\otimes w(b_j^r)\right)$.} We adapt the curve $b_j^r$ as follows:
\begin{enumerate}
\item Follow $\tau$ until intersecting $a_j$.
\item Then loop through the genus and reach $\tau$ without intersecting any of the generators of $\Gamma_0$.
    \item Finally, follow $\tau$ until reaching $r$.
\end{enumerate}
See for example the squiggly orange curve in Figure \ref{fig : Adapted curves}. We compute that
\begin{align*}
	M\bullet_{\langle\cdot,\cdot\rangle}\left(b_j^r\otimes w(b_j^r)\right)=\Bigg\langle \left(\sum_{i = 1}^m\sum_{l=1}^{K_i}\Ad_{(C_l^i)^{-1}}(Z_l^i(\rho))-Z^i_l(\rho)\right)+\\
	+\left(\sum_{h=j+1}^{g_0}\Ad_{A_h^{-1}}(X_h(\rho))-X_h(\rho) + \Ad_{B_j^{-1}}(Y_h(\rho))-Y_h(\rho)\right)-\Ad_{B_j}(X_j(\rho))+\\
	+\Ad_{B_j}\left(\sum_{h=1}^{j-1}\Ad_{A_h^{-1}}(X_h(\rho))-X_h(\rho)+\Ad_{B_h^{-1}}(Y_h(\rho))-Y_h(\rho)\right),w(b_j^r)\Bigg\rangle\\
	=\Bigg\langle\Ad_{B_j}\left(\sum_{h=1}^{j-1}\Ad_{A_h^{-1}}(X_h(\rho))-X_h(\rho)+\Ad_{B_h^{-1}}(Y_h(\rho))-Y_h(\rho)\right)\\
	+\Ad_{B_j}(X_j(\rho))-\left(\sum_{h=1}^{j}\Ad_{A_h^{-1}}(X_h(\rho))-X_h(\rho)+\Ad_{B_h^{-1}}(Y_h(\rho))-Y_h(\rho)\right),w(b_j^r)\Bigg\rangle\\
	=\langle[u]\frown[b_j\otimes w(b_j^r)]\rangle_*
\end{align*}
where similarly as above, we use the cycle condition \eqref{eq: cycle condition for M} to get rid of the terms coming from the intersections with $c_s^i$ in the second equality.\\

\textbf{The pairing $M\bullet_{\langle\cdot,\cdot\rangle}\left((\eta^i_l)^r\otimes w((\eta_l^i)^r)\right)$.} In this case, we adapt the curve $\eta^i_l$ as follows:
\begin{enumerate}
    \item Follow $\tau$ until its intersection with $c^i_1$.
    \item Then follow $\nu^{i,1}*\nu^{i,l}$.
    \item Upon re$-$entering $S_0$, reach $\tau$ without intersecting any of the generators of $\Gamma_0$.
    \item Finally follow $\tau$ backwards until reaching $r$.
\end{enumerate}
See for example the thick dashed light blue curve in Figure \ref{fig : Adapted curves}. We compute that
\begin{align*}
	M\bullet_{\langle\cdot,\cdot\rangle}\left((\eta^i_l)^r\otimes w((\eta_l^i)^r)\right)=\Bigg\langle\left(\sum_{h = 1}^{i-1}\sum_{n = 1}^{K_h}\Ad_{(C^h_n)^{-1}}(Z^h_n(\rho))-Z^h_n(\rho)\right)\\
	-Z^i_1(\rho)+\Ad_{D^i_l}\Bigg(-\left(\sum_{h = 1}^{i-1}\sum_{n = 1}^{K_h}\Ad_{(C^h_n)^{-1}}(Z^h_n(\rho))-Z^h_n(\rho)\right)+Z^i_l(\rho)+\\
	-\left(\sum_{n = 1}^{l-1}\Ad_{(C_n^i)^{-1}}(Z^i_n(\rho))-Z^i_n(\rho)\Bigg)\right),w((\eta^i_l)^r)\Bigg\rangle\\
	=\langle[u]\frown [\eta^i_l\otimes w((\eta^i_l)^r)]\rangle_*
\end{align*}
which is the assignment $u(\eta^i_l)$.
\end{proof}

\subsection{The case of Hitchin representations}\label{sec : Hamiltonian flow is a subsurface deformation for Hitchin representations}
The hypothesis of Theorem \ref{thm : Hamiltonian flow is a subsurface deformation} are satisfied whenever we restrict to the Hitchin component for $\SL d,\Sp d,\mathsf{SO}_0(d,d+1)$, which is the content of the following lemma, where we denote by $\mathsf{Hit}(G,S)$ the Hitchin component associated to the group $G$ and the surface $S$.

\begin{lemma}\label{lem : leaves are smooth for hitchin}
Let $S_0\subseteq S$ be an essential connected subsurface inducing a decomposition $S = \bigcup_{i = 0}^mS_i$. Let $\leaf^{\mathsf{Hit}}(S_0)$ denote the decomposition into leaves of $\mathsf{Hit}(G,S)$, where $G$ is one of $\SL d,\Sp d,\mathsf{SO}_0(d,d+1)$. Then $\leaf^{ \mathsf{Hit}}(S_0)$ is a smooth foliation of the Hitchin component whose leaves all have the same dimension.
\end{lemma}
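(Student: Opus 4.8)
The plan is to combine the dimension formula of Lemma~\ref{lem: dimension of "relative representation variety"} with the rigidity of the boundary holonomies of Hitchin representations. First I would recall that the Hitchin component $\mathsf{Hit}(G,S)$ is a smooth manifold of known dimension, that restriction to an incompressible subsurface sends Hitchin representations of $S$ to Hitchin (or at least positive) representations of the pieces $S_i$, and — crucially — that for a Hitchin representation the holonomy $\rho_0(c_j)$ of every essential simple closed curve is \emph{purely loxodromic}, i.e. conjugate into the interior of a Weyl chamber; in particular $Z(\rho_0(c_j))$ is a maximal torus, of fixed dimension $\mathrm{rk}\,G$, and the conjugacy class $C_j$ has the maximal possible dimension $\dim G - \mathrm{rk}\,G$. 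I would then invoke Lemma~\ref{lem: dimension of "relative representation variety"}: the only term in the dimension of $\T_{(\varphi_\vv,\varphi_\yy)}\widehat\leaf_\rho$ that can vary is $\dim Z(\varphi_0)$, and all the conjugacy-class terms $\dim C_j$ are now constant along $\widehat\leaf_\rho\cap\mathsf{Hit}$ because they equal the boundary-holonomy conjugacy classes of the fixed restrictions, which are maximal.

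The second step is to show that $\dim Z(\varphi_0)$ is also constant — indeed minimal — on the part of each leaf lying in the Hitchin component. This follows because if $[\rho]\in\mathsf{Hit}(G,S)$ then the restriction $[\varphi_0]$ to $\pi_1(S_0)$ is a positive (Hitchin-type) representation of a surface with boundary, hence Zariski-dense or at least irreducible, so its centralizer is exactly the center $Z(G)$; thus $\dim Z(\varphi_0)=\dim Z(G)$ is the same for every point of $\widehat\leaf_\rho\cap\mathsf{Hit}$. Combining the two steps, Lemma~\ref{lem: dimension of "relative representation variety"} gives that $\dim\T_{(\varphi_\vv,\varphi_\yy)}\widehat\leaf_\rho$ is constant along each leaf, and moreover equals the same value for every leaf meeting $\mathsf{Hit}$ (since $n_0,g_0,\sum\dim Z(\rho_i),\dim C_j$ are all determined by the topological decomposition and the loxodromic boundary behavior). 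By Remark~\ref{rmk : smooth point when dimension of centralizer is small } — recalling that $G=\SL d$, $\Sp d$ or $\mathsf{SO}_0(d,d+1)$ is a real algebraic group so the leaves are real algebraic varieties (Lemma~\ref{lem: "relative representation varieties" are analytic algebraic or semialgebraic}, and in fact algebraic since the fixed conjugacy classes of loxodromic elements are algebraic) — constancy of the Zariski tangent dimension forces each leaf to be smooth of the expected dimension.

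The final step is to upgrade ``each leaf is a smooth submanifold of constant dimension'' to ``the decomposition $\leaf^{\mathsf{Hit}}(S_0)$ is a smooth foliation''. Here I would use that the leaves are the fibers of the restriction map $p_{S_0}\colon \mathsf{Hit}(G,S)\to \prod_{i=1}^m \mathsf{Hit}(G,S_i)$ (or rather into the relevant product of character varieties of the pieces), that its target is a smooth manifold — the relative character varieties of the incompressible pieces with loxodromic boundary are smooth because the boundary conjugacy classes are regular semisimple — and that $p_{S_0}$ is a submersion: by Theorem~\ref{thm : hamiltonian vector field is infinitesimal subsurface deformation} and the explicit cocycles of Proposition~\ref{prop : Hamiltonian vector field for general subsurface} (together with standard Fenchel--Nielsen/bending deformations) one can exhibit enough tangent vectors transverse to the leaves, or alternatively a dimension count comparing $\dim\mathsf{Hit}(G,S)$ with $\dim$(leaf)$+\dim$(base) using Lemma~\ref{lem: dimension of "relative representation variety"}. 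A constant-rank map has a foliation by its fibers, which are then smooth submanifolds of constant dimension; this is exactly $\leaf^{\mathsf{Hit}}(S_0)$.

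The main obstacle I anticipate is the second and third steps taken together: one must be sure that the restriction $[\varphi_0]$ of a Hitchin representation to the subsurface $S_0$, and the restrictions $[\rho_i]$ to the complementary pieces $S_i$, genuinely land in loci where the centralizer dimension is constant (the minimal value) — this uses that Hitchin representations restrict to positive, hence irreducible, representations of subsurface groups and that boundary holonomies are regular loxodromic, facts which hold for $\SL d$, $\Sp d$ and $\mathsf{SO}_0(d,d+1)$ by the theory of positivity (Labourie, Fock--Goncharov, Guichard--Wienhard) but which should be cited carefully. Once that input is in place, the rest is a bookkeeping application of the dimension formula and the constant-rank theorem.
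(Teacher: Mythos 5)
Your core strategy coincides with the paper's: show the leaf upstairs is a real algebraic variety (the boundary holonomies are loxodromic, so their conjugacy classes are closed, and Lemma \ref{lem: "relative representation varieties" are analytic algebraic or semialgebraic} applies), then use the dimension count of Lemma \ref{lem: dimension of "relative representation variety"} and argue that the only variable term $\dim Z(\varphi_0)$ is constant because the restriction of a Hitchin representation to $S_0$ is irreducible, concluding smoothness via Remark \ref{rmk : smooth point when dimension of centralizer is small }. However, your key step 2 is false in a case the hypotheses allow: $S_0$ may be an essential cylinder. Then $\Gamma_0$ is cyclic, the restriction $\varphi_0$ is never irreducible (and ``positive representation of $S_0$'' does not even make sense), and $Z(\varphi_0)$ is a maximal torus of dimension $\mathrm{rk}\,\gfr$, not of dimension $\dim Z(G)$. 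The constancy you need still holds — loxodromic holonomy forces the centralizer to be a Cartan — but this requires the separate argument the paper gives for the cylinder case; as written your argument breaks there. (A smaller imprecision: irreducibility only gives that the centralizer is a finite extension of the center, which suffices for the dimension count but is not literally ``exactly the center'' over $\R$.)

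Your final step is also a genuine gap. You propose to realize the leaves as fibers of a submersion $p_{S_0}$ onto a smooth product of character varieties of the pieces and invoke the constant-rank theorem, but the submersion property is asserted, not proved, and the transversality input you cite is the wrong one: Theorem \ref{thm : hamiltonian vector field is infinitesimal subsurface deformation} and Proposition \ref{prop : Hamiltonian vector field for general subsurface} produce vectors \emph{tangent} to the leaves, so they cannot supply directions transverse to them; surjectivity of $\D p_{S_0}$ would need a separate cohomological or deformation-theoretic argument. In fact, with the paper's definition a ``smooth foliation'' only requires each leaf to be a smooth submanifold, so this step is unnecessary; what is actually missing after your smoothness argument upstairs is the descent from $\widehat\leaf^{\mathsf{Hit}}_\rho$ to its image in $\mathsf{Hit}(G,S)$, which the paper handles by noting that the conjugation action of $\Inn(G)$ on the Hitchin locus is free and proper — a point your proposal never addresses. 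Finally, the paper treats $\Sp d$ and $\mathsf{SO}_0(d,d+1)$ by including their Hitchin components into those of $\SL{2d}$ and $\SL{2d+1}$, whereas you argue directly via positivity for each group; that route is viable but depends on the careful citations you yourself flag as needed.
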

Since there are natural inclusions
\[
\mathsf{Hit}(\Sp d,S)\subset \Hit{2d},\quad\textnormal{and}\quad \mathsf{Hit}(\mathsf{SO}_0(d,d+1),S)\subset \Hit{2d+1},
\]
we will prove the lemma only in the case of $G = \SL d$. The main fact we need in the proof, is that restrictions of Hitchin representations to subsurfaces with negative Euler characteristic have the same center as the Lie group. For this reason, we do not treat the case of Hitchin representations in $\mathsf{SO}_0(d,d)$ here, where there is no natural inclusion of the Hitchin component of $\mathsf{SO}_0(d,d)$ into the Hitchin component of $\SL {2d}$.\\

For the proof, denote by $\widehat{\mathsf {Hit}}(d,S)$ the space of Hitchin representations in $\SL d$, and by $\Hit d$ the quotient under conjugation. Even though when the supporting subsurface is a cylinder Theorem \ref{thm : Hamiltonian flow when supporting subsurface is a cylinder} already gives us that the Hamiltonian flow is a subsurface deformation and we do not need any additional smoothness conditions, we still state the result for a general essential subsurface.\\

\begin{proof}
	We begin by showing that the leaves $\widehat\leaf^{\mathsf{Hit}}_{\rho}$ of $\widehat\leaf^{\mathsf{Hit}}(S_0)$ for any $\rho=(\rho_\vv,\rho_\yy)\in\widehat{\mathsf {Hit}}(d,S)$ are smooth. We do this by showing that the leaf is algebraic and then that the dimension of the Zariski tangent space is constant to use Remark \ref{rmk : smooth point when dimension of centralizer is small }. As in Section \ref{sec : "Relative character varieties"}, let $\rho_\vv=\{\rho_i\colon\Gamma_i\to G \,|\, i = 0,\dots,m\}$.  By \cite{Labourie_AnosovReps}, the holonomy of any nontrivial element of $\pi$ is hyperbolic. That is, it is conjugate to a diagonal matrix with all distinct eigenvalues.  By Theorem III.9.2 in \cite{BorelLinearAlgGroups}, the conjugacy classes $C_{i}$ of the boundary curves of $S_{0}$ are closed, that is, they are algebraic sets. From Lemma \ref{lem: "relative representation varieties" are analytic algebraic or semialgebraic}\eqref{lemma : if conjugacy classes are algebraic then algebraic}, we obtain that $\widehat\leaf^{\mathsf{Hit}}_\rho$ is an algebraic variety.\\
	
	 We now need to show that every point in the algebraic variety $\widehat\leaf^{\mathsf{Hit}}_\rho$ is smooth. As in Remark \ref{rmk : smooth point when dimension of centralizer is small }, we will show that the dimension of the Zariski tangent spaces $\T_{\varphi} \widehat\leaf^{\mathsf{Hit}}_{\rho}$ is constant. The proof will be split into two cases; first we deal with the case when $S_0$ is a cylinder, and then we deal with all the other cases.\\
	 
	When $S_0$ is a cylinder, $\Gamma_0$ is a cyclic group. Let $\varphi=(\varphi_\vv,\varphi_\yy)\in \widehat\leaf^{\mathsf{Hit}}_{\rho}$. Since $\Gamma_0$ is cyclic, $Z(\varphi_0) = Z(\varphi_0(\gamma))$, where $\gamma\in\Gamma_0$ is a generator. The element $\varphi_0(\gamma)$ is hyperbolic since $\varphi_0$ is Hitchin, and therefore $\dim Z(\varphi_0(\gamma))=\dim \afr$, where $\afr$ is a Cartan subalgebra in $\gfr$. This means that the dimension of $\T_{\varphi} \widehat\leaf^{\mathsf{Hit}}_{\rho}$ is constant as $\rho$ and $\varphi$ vary.\\
	
	Now assume that $S_0$ has negative Euler characteristic. Let $\varphi=(\varphi_\vv,\varphi_\yy)\in \widehat\leaf^{\mathsf{Hit}}_{\rho}$. Hitchin representations for surfaces with boundary and negative Euler characteristic are introduced in \cite{LabourieMcShane}, and by Theorem 1.2 in the same reference, we have that $\varphi_{0}\colon\Gamma_0=\psi_0(\pi_1(S_0,p))\to G$ is a Hitchin representation. Then by Theorem 1.4 in \cite{CanaryZhangZimmer_cuspedHitchin} and the discussion preceding it, $\varphi_{0}$ is irreducible. Proposition 15 in \cite{Sikora_CharacterVarieties} implies that the centralizer $Z(\varphi_{0})$ is a finite extension of the center $Z(G)$ of $G$. This in turn means that $\dim Z(\varphi_{0}) = \dim Z(G)$. We now turn to $Z(\rho_i)$ for $i\neq 0$. One can see that since we are cutting out $S_0$ from the closed surface $S$ and $S_0$ is essential, none of the remaining subsurfaces $S_1,\dots,S_m$ are cylinders. Hence by the above discussion,
	\[
		\dim Z(\rho_{1}) = \cdots = \dim Z(\rho_{m}) = \dim Z(\varphi_0) = \dim Z(G).
	\]
	The only terms missing in the dimension count of Lemma \ref{lem: dimension of "relative representation variety"} that depend on $\rho$ are the dimensions of the conjugacy classes $C_j$ of the boundary curves. Recall that the sets $C_j$ are conjugacy classes of hyperbolic elements. The centralizer of a hyperbolic element is isomorphic to a Cartan subalgebra $\mathfrak a$ in $\gfr$. Thus, the conjugacy class of any loxodromic element is locally isomorphic to $\gfr/\mathfrak a$. In particular, all conjugacy classes $C_j$ from Lemma \ref{lem: dimension of "relative representation variety"} have the same dimension, regardless of $\rho$ and $\varphi$. We have now shown that the dimension of the Zariski tangent spaces $\T_{\varphi} \widehat\leaf^{\mathsf{Hit}}_{\rho}$ all have the same dimension, which by Remark \ref{rmk : smooth point when dimension of centralizer is small } implies that each $\widehat\leaf^{\mathsf{Hit}}_\rho$ is smooth.\\
	
	We finish by noting that he action of $\mathsf{Inn}(G)\cong\PSL d$ by conjugation is free and proper on the Hitchin component\footnote{This is a well known fact and follows from the following. Labourie showed in \cite[Lemma 10.1]{Labourie_AnosovReps} that Hitchin representations are irreducible, which by \cite[Proposition 1.1]{JohnsonMillson} implies that the action by conjugation is proper. Freeness follows from the fact that $Z(\rho) = Z(G)$ for any Hitchin representation (see for example the proof of Theorem 1.2 in \cite{MostHitchinDense_LongReidWolff}).}. Thus $\leaf^{\mathsf{Hit}}_\rho$ is a smooth manifold and the foliation of $\Hit d$ by the leaves $\leaf_\rho^{\mathsf{Hit}}$ is smooth. 
\end{proof}

\begin{corollary}\label{cor : subsurface deformations in Hitchin}
	Let $G$ be one of $\SL d,\Sp d,\mathsf{SO}_0(d,d+1)$. For an invariant function $f\colon G^k\to\R$, let $\ub\alpha\in\pi^k$, and let $S_{0}$ be a supporting subsurface.  Then the Hamiltonian flow of $f_{\ub\alpha}$ restricted to the Hitchin component $\mathsf{Hit}(G,S)$, is a subsurface deformation along the supporting subsurface $S_{0}$.
\end{corollary}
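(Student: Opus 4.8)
The plan is to read the corollary off directly from Theorem \ref{thm : Hamiltonian flow is a subsurface deformation} together with Lemma \ref{lem : leaves are smooth for hitchin}; essentially all of the genuine work has already been done in the latter, so what remains is to check that the hypotheses line up.

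First I would record that a supporting subsurface $S_0$ for $\ub\alpha$ is, by Definition \ref{def : supportin subsurface}, an essential subsurface, and that it is connected because it carries a bouquet of curves based at a single point. Hence $S_0$ induces a decomposition $S=\bigcup_{i=0}^m S_i$ of exactly the kind to which Lemma \ref{lem : leaves are smooth for hitchin} applies. I would also observe that the Hitchin component $\mathsf{Hit}(G,S)$ is a connected component of $\CharVar{\pi}$, hence an open and closed subset; in particular every flow line of $\Hm f_{\ub\alpha}$ issuing from a point of $\mathsf{Hit}(G,S)$ stays inside $\mathsf{Hit}(G,S)$ throughout its interval of definition. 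This makes the phrase ``the Hamiltonian flow of $f_{\ub\alpha}$ restricted to $\mathsf{Hit}(G,S)$'' unambiguous, and it coincides with taking $\mathscr O=\mathsf{Hit}(G,S)$ in Theorem \ref{thm : Hamiltonian flow is a subsurface deformation}.

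Next I would apply Lemma \ref{lem : leaves are smooth for hitchin}, which — uniformly for $G=\SL d$, $\Sp d$, and $\mathsf{SO}_0(d,d+1)$, and including the degenerate case where $S_0$ is a cylinder — gives that the decomposition into leaves $\leaf^{\mathsf{Hit}}(S_0)$ is a smooth foliation of $\mathsf{Hit}(G,S)$ whose leaves all have the same dimension. This is precisely the hypothesis of Theorem \ref{thm : Hamiltonian flow is a subsurface deformation} with $\mathscr O=\mathsf{Hit}(G,S)$. Therefore, for every $[\rho]\in\mathsf{Hit}(G,S)$, the Hamiltonian flow $\Phi^{f_{\ub\alpha}}_{[\rho]}\colon I([\rho])\to\mathsf{Hit}(G,S)$ is a subsurface deformation along $S_0$, which is the assertion of the corollary.

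There is no real obstacle here beyond this bookkeeping: the only subtlety worth a sentence is confirming that restricting attention to the clopen subset $\mathsf{Hit}(G,S)$ changes neither the Hamiltonian flow nor the notion of a subsurface deformation (Definition \ref{def : inf and subsurface deformation}), and the actual difficulty — smoothness of the leaf foliation, which rests on hyperbolicity of boundary holonomies and on irreducibility of restrictions of Hitchin representations to essential subsurfaces of negative Euler characteristic — is entirely encapsulated in Lemma \ref{lem : leaves are smooth for hitchin}, whose proof we may assume.
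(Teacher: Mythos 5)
Your argument is correct and is essentially the paper's own proof: the paper likewise takes $\mathscr O=\mathsf{Hit}(G,S)$, uses that the Hitchin component is open (and closed), invokes Lemma \ref{lem : leaves are smooth for hitchin} to verify the smooth-foliation hypothesis, and then applies Theorem \ref{thm : Hamiltonian flow is a subsurface deformation}. Your extra remarks (connectedness of $S_0$ and the cylinder case being covered by the lemma) are consistent with the paper's treatment and add nothing problematic.
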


\begin{proof}
	Since the Hitchin component is open (and closed), by Lemma \ref{lem : leaves are smooth for hitchin}, the hypothesis of Theorem \ref{thm : Hamiltonian flow is a subsurface deformation} are satisfied, from which the corollary follows.
\end{proof}

In \cite{InvFct_Goldman}, Goldman described the Hamiltonian flow of an invariant function induced by a simple closed curve as a generalized twist flow. The setup in this article allows us to say the following about Hamiltonian flows of non-simple closed curves.

\begin{corollary}\label{cor : twist flow for non simple curve is subsurface deformation}
    Let $\alpha$ be any closed curve in $S$ and let $f\colon G\to\R$ be an invariant function. Then the Hamiltonian flow of $f_\alpha$ restricted to the Hitchin component $\mathsf{Hit}(G,S)$ where $G$ is one of $\SL d,\Sp d,\mathsf{SO}_0(d,d+1)$, is a subsurface deformation along a supporting subsurface of $\alpha$.
\end{corollary}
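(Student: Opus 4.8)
The plan is to deduce Corollary~\ref{cor : twist flow for non simple curve is subsurface deformation} as an essentially immediate consequence of Corollary~\ref{cor : subsurface deformations in Hitchin}, after checking that the data of a single invariant function $f\colon G\to\R$ evaluated at a (possibly non-simple) closed curve $\alpha$ fits into the framework of an induced invariant \emph{multi}-function evaluated at a tuple $\ub\alpha$ with a supporting subsurface. First I would recall the simple packaging step from Section~\ref{sec : invariant functions through word maps}: writing $\alpha$ as a word in some chosen generators, or more economically as a product $\alpha = \alpha_1\cdots\alpha_k$ of simple closed curves $\alpha_i$ (compare Lemma~\ref{lemma : polynomial in traces is invariant in simple} and the Example around Figure~\ref{fig : figure 8 curve}), we define $f'\colon G^k\to\R$ by $f'(g_1,\dots,g_k) = f(g_1^{\eps_1}\cdots g_k^{\eps_k})$, so that $f'$ is an invariant multi-function in the sense of Section~\ref{sec : invariant functions}, and with $\ub\alpha = (\alpha_1,\dots,\alpha_k)$ (based at a common point, after fixing a base point on $\alpha$) we have the equality of induced functions $f_\alpha = f'_{\ub\alpha}$ on $\CharVar{\pi}$. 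Strictly speaking one does not even need to resolve into simple curves: picking a base point $p$ on $\alpha$ and a free generating set of $\pi_1(S,p)$ realized by simple curves, the word expressing $\alpha$ gives a word map and hence $f'$; but the resolution into simple curves is cleaner and is already in the paper.

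The second step is to identify a supporting subsurface. By Definition~\ref{def : supportin subsurface}, the bouquet of curves associated to $\ub\alpha$ is, up to based homotopy, just the curve $\alpha$ itself (a single loop), so a supporting subsurface $S_0$ of $\ub\alpha$ is precisely a smallest essential subsurface into which $\alpha$ can be freely homotoped — i.e.\ a supporting subsurface of the closed curve $\alpha$. Such an $S_0$ exists (take a regular neighborhood of a geodesic or taut representative of $\alpha$, or the subsurface filled by $\alpha$); when $\alpha$ is simple it is a cylinder, and when $\alpha$ has self-intersections it is a subsurface of negative Euler characteristic. In either case $S_0$ is a supporting subsurface for the tuple $\ub\alpha$ in the sense required by Corollary~\ref{cor : subsurface deformations in Hitchin}.

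The third and final step is to invoke Corollary~\ref{cor : subsurface deformations in Hitchin} directly: applied to the invariant function $f'\colon G^k\to\R$, the tuple $\ub\alpha\in\pi^k$, and the supporting subsurface $S_0$, it gives that the Hamiltonian flow of $f'_{\ub\alpha}$ restricted to $\mathsf{Hit}(G,S)$ is a subsurface deformation along $S_0$, for $G$ one of $\SL d,\Sp d,\mathsf{SO}_0(d,d+1)$. Since $f_\alpha = f'_{\ub\alpha}$ as functions on the character variety, their Hamiltonian vector fields and flows coincide, so the Hamiltonian flow of $f_\alpha$ restricted to the Hitchin component is a subsurface deformation along the supporting subsurface $S_0$ of $\alpha$, which is exactly the assertion. (If $S_0$ is a cylinder one may alternatively cite Theorem~\ref{thm : Hamiltonian flow when supporting subsurface is a cylinder} to avoid the smoothness hypothesis, but Lemma~\ref{lem : leaves are smooth for hitchin} handles the cylinder case uniformly anyway.)

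I do not expect a genuine obstacle here: the content is entirely in the reduction, and every ingredient — the word-map construction of $f'$, the equality $f_\alpha = f'_{\ub\alpha}$, the existence of a supporting subsurface for a single closed curve, and Corollary~\ref{cor : subsurface deformations in Hitchin} — is already available. The only point requiring a sentence of care is that the notion of ``supporting subsurface of $\ub\alpha$'' for the length-one-bouquet case is literally the notion of supporting subsurface of the closed curve $\alpha$, so that the hypothesis of Corollary~\ref{cor : subsurface deformations in Hitchin} is met and the conclusion is phrased in terms of a supporting subsurface of $\alpha$.
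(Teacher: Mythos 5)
Your route is, at bottom, the same as the paper's: Corollary~\ref{cor : subsurface deformations in Hitchin} is stated for arbitrary $k$, so it already covers $k=1$, and the corollary in question is literally the special case $f'=f$, $\ub\alpha=(\alpha)$, for which ``supporting subsurface of the tuple'' means exactly ``supporting subsurface of the closed curve $\alpha$.'' Your closing remark about the length-one bouquet is precisely this observation, and it is all that is needed.

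The detour through a resolution $\alpha=\alpha_1\cdots\alpha_k$ into simple closed curves, however, introduces a genuine error if taken at face value. The identity $f_\alpha=f'_{\ub\alpha}$ for $f'(g_1,\dots,g_k)=f(g_1^{\eps_1}\cdots g_k^{\eps_k})$ is fine, but the bouquet attached to $(\alpha_1,\dots,\alpha_k)$ is a wedge of $k$ circles and is \emph{not} based-homotopic to the single loop $\alpha$, contrary to what your second step asserts; its supporting subsurface can be strictly larger than that of $\alpha$. For instance, if $x,y$ are simple closed curves intersecting once on an embedded one-holed torus and $\alpha=xy$ (a simple curve), then $(x,y)$ is supported on the one-holed torus, while $\alpha$ is supported on an annulus. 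Applying Corollary~\ref{cor : subsurface deformations in Hitchin} to the resolved tuple therefore only shows that the flow is a subsurface deformation along this possibly larger subsurface, which is strictly weaker than the stated conclusion (if $S_0\subset S_0'$ then $\leaf_\rho(S_0)\subseteq\leaf_\rho(S_0')$, so deformations along the larger subsurface are the weaker condition). The repair is simply to delete the resolution step and invoke Corollary~\ref{cor : subsurface deformations in Hitchin} with $k=1$ directly. A further minor slip: a supporting subsurface of a non-simple curve need not have negative Euler characteristic (powers of simple curves are supported on cylinders), but this is harmless since Lemma~\ref{lem : leaves are smooth for hitchin} and Theorem~\ref{thm : Hamiltonian flow when supporting subsurface is a cylinder} cover the cylinder case as well.
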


\section{Hamiltonian flows associated to self-intersecting curves}\label{sec : self intersecting curves}
In this section we draw some consequences for the Hamiltonian flows of functions $f_\gamma$ on the character variety for the case when $\gamma$ is a self intersecting curve. We first find the Hamiltonian flow when $\gamma$ is contained in a cylinder, i.e. $\gamma$ is a power of a simple closed curve, similarly to the discussion in Section \ref{sec : Hamiltonian flow when supporting subsurface is a cylinder}. Finding the Hamiltonian flow for a general self--intersecting curve is more complicated, and we find some insights by focusing on one of the simplest cases, namely when $\gamma$ is the figure 8 curve in a pair of pants, the group $G$ is $\SL d$ and $f$ is the trace function.

\subsection{Hamiltonian flow of a power of a simple closed curve}
Let $\alpha$ be a simple closed curve. Then the for any $n\in\Z_{>0}$, the curve $\alpha^n$ has self intersection number $n-1$. The supporting subsurface $S_0$ of $\alpha^n$ is a cylinder, and we can therefore use the discussion in Section \ref{sec : Hamiltonian flow when supporting subsurface is a cylinder} and Goldman's results in \cite[Section 4]{InvFct_Goldman} to describe the Hamiltonian flow for $f_{\alpha^n}$ for any invariant function $f\colon G\to\R$. In the following, we keep the notation in \eqref{eq : amalgamated product}, \eqref{eq : HNN extensions expression} for the decomposition of $\pi$ as either an amalgamated product over $\alpha$ of $\Gamma_1\cong\pi_1(S_1,p)$ and $\Gamma_2\cong\pi_1(S_2,p)$ when $\alpha$ is separating, or an HNN extensions with $\Gamma_1\cong\pi_1(S_1,p)$ and a gluing loop $\eta$ when $\alpha$ is non-separating.

\begin{theorem}\label{thm : Hamiltonian flow of power of simple closed curve}
    Let $f\colon G\to\R$ be an invariant function and $\alpha$ a simple closed curve.
    \begin{enumerate}
        \item If $\alpha$ is separating, then the Hamiltonian flow of $f_{\alpha^n}$ is covered by the flow of representations $\Xi_\rho\colon\R\to\Reps\pi$ given by
        \[
        \Xi_\rho(t)(\gamma)\coloneqq \begin{cases}
            \rho(\gamma),&\gamma\in\Gamma_1\cong\pi_1(S_1,p),\\
            \exp\left(tnF(\rho(\alpha^n))\right)\rho(\gamma)\exp\left(-tnF(\rho(\alpha^n))\right), &\gamma\in\Gamma_2\cong\pi_1(S_2,p),
        \end{cases}
        \]
        \item If $\alpha$ is non-separating, the Hamiltonian flow of $f_{\alpha^n}$ is covered by the flow of representations $\Xi_\rho\colon\R\to\Reps\pi$ given by
        \[
        \Xi_\rho(t)(\gamma) = \begin{cases}
            \rho(\gamma),&\gamma\in\Gamma_1\cong\pi_1(S_1,p),\\
            \rho(\eta)\exp\left(t nF(\rho(\alpha^n))\right), &\gamma = \eta.
            \end{cases}
        \]
    \end{enumerate}
    In particular, the flows are defined for all time $t\in\R$.
\end{theorem}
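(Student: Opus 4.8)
The plan is to recognize $\alpha^n$ as a curve whose supporting subsurface is a cylinder, reduce to the already-established cylinder case (Theorem \ref{thm : Hamiltonian flow when supporting subsurface is a cylinder}), and then carry out the one genuine computation: re-express the variation function appearing there in terms of $F$.

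First I would observe that, since $\alpha$ is a simple closed curve, an (incompressible) regular neighborhood $S_0$ of $\alpha$ is a supporting subsurface of $\alpha^n$, with cyclic fundamental group generated by some $a$ mapping to $\alpha$, and the word map $G\to G$ expressing $\alpha^n$ in terms of $a$ is $h\mapsto h^n$. Thus, in the notation of Section \ref{sec : Hamiltonian flow when supporting subsurface is a cylinder}, $f_{\alpha^n}=f'_a$ for the invariant function $f'\colon G\to\R$, $f'(h)=f(h^n)$. According to whether $\alpha$ is separating or not, $\pi$ takes the amalgamated-product form \eqref{eq : amalgamated product} or the HNN form \eqref{eq : HNN extensions expression}, and Theorem \ref{thm : Hamiltonian flow when supporting subsurface is a cylinder} applies directly: the Hamiltonian flow of $f_{\alpha^n}$ is covered by the flow $\Xi_\rho$ that fixes $\Gamma_1$ and conjugates $\Gamma_2$ (respectively, right-translates the gluing loop $\eta$) by $\exp\!\big(tF'(\rho(a))\big)=\exp\!\big(tF'(\rho(\alpha))\big)$, and it is defined for all $t\in\R$ because the same is true of Goldman's flows.

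It then remains to identify $F'(\rho(\alpha))$ with $nF(\rho(\alpha^n))$. Using the implicit characterization of variation functions (Lemma \ref{lemma:variation function of multi-function  is given implicitly wrt to orthogonal structure}), I would differentiate $f'(\exp(tX)h)=f\big((\exp(tX)h)^n\big)$ by the Leibniz rule, carry the resulting tangent vector at $h^n$ back to $\gfr$ via the right-invariant trivialization, and use invariance of $\B$ to obtain
\[
F'(h)=\sum_{j=0}^{n-1}\Ad_{h^{-j}}\big(F(h^n)\big).
\]
To collapse this sum I would invoke the fact — obtained by differentiating the conjugation-invariance of $f$ in a radial direction — that the variation function of an invariant function takes values in centralizers, so that $F(h^n)$ is fixed by $\Ad_{h^n}$, and in the situations at hand $F(h^n)$ is in fact fixed by $\Ad_h$ itself (for $G=\SL d$ and the other groups considered, invariant functions are polynomials in trace functions, so $F(g)$ is a polynomial in $g$ and $g^{-1}$, hence $F(h^n)$ commutes with $h$); each summand then equals $F(h^n)$ and $F'(h)=nF(h^n)$. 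Evaluating at $h=\rho(\alpha)$ gives the stated formula, and since $\exp(tF'(\rho(\alpha)))$ then commutes with $\rho(\alpha)$, the maps $\Xi_\rho(t)$ are indeed well-defined representations.

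The only real obstacle is this last identification: passing from Theorem \ref{thm : Hamiltonian flow when supporting subsurface is a cylinder}, phrased through $F'$, to the formula phrased through $F$ requires the centralizer-valuedness of variation functions, and the clean coefficient $n$ hinges on $F(\rho(\alpha^n))$ being invariant under $\Ad_{\rho(\alpha)}$ and not merely under $\Ad_{\rho(\alpha^n)}$. Everything else is a direct application of the cylinder case together with the description of $\pi$ as an amalgam or HNN extension along $\alpha$.
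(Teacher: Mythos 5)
Your overall strategy is sound and close in spirit to the paper's: the paper also reduces to Goldman's twist-flow theorems, but it deliberately avoids the one-variable composition $f'(h)=f(h^n)$ (this is exactly the route it footnotes as unnecessary) and instead writes $f_{\alpha^n}=f'_{(\alpha,\dots,\alpha)}$ for the multi-function $f'(h_1,\dots,h_n)=f(h_1\cdots h_n)$, computes via Lemma \ref{lemma: variation function of reducible function} that every variation component equals $F(\rho(\alpha^n))$ on the diagonal, reads off the Poincar\'e dual $\alpha\otimes nF(\rho(\alpha^n))$ from Proposition \ref{prop: explicit cycle for Poincare dual of Hamiltonian vector field}, and then quotes Goldman's Theorems 4.5 and 4.7. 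Your chain-rule computation $F'(h)=\sum_{j=0}^{n-1}\Ad_{h^{-j}}\bigl(F(h^n)\bigr)$ is correct, and reducing to Theorem \ref{thm : Hamiltonian flow when supporting subsurface is a cylinder} is a legitimate alternative packaging of the same argument.

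There is, however, a real flaw in the one step you yourself identify as the crux: your justification that $F(h^n)$ is fixed by $\Ad_h$. The theorem is stated for an arbitrary $C^1$ invariant function $f\colon G\to\R$ on an arbitrary group $G$ with an orthogonal structure; such an $f$ need not be a polynomial in trace functions (that description applies to regular invariant functions on complex classical groups, not to general $C^1$ invariant functions, and $G$ is not assumed to be $\SL{d}$), so ``$F(g)$ is a polynomial in $g$ and $g^{-1}$, hence $F(h^n)$ commutes with $h$'' does not follow as argued, and centralizer-valuedness only gives invariance under $\Ad_{h^n}$, which, as you note, is not enough. Fortunately the needed fact is true in complete generality and follows in one line from the $\Ad$-equivariance of the variation function: $F(h^n)=F\bigl(h\,h^n\,h^{-1}\bigr)=\Ad_h\bigl(F(h^n)\bigr)$. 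With this substitution your sum collapses to $nF(h^n)$, the flow is well defined on the amalgam (resp.\ HNN extension), and the rest of your argument goes through; this is also implicitly how the paper's route gets the commutation for free, since on the diagonal $F_i'(h,\dots,h)=\Ad_{h^{-(i-1)}}\bigl(F(h^n)\bigr)=F(h^n)$ for every $i$.
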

\begin{proof}
    Instead of working with $f$, let $f'\colon G^n\to\R$ be given by $f'(h_1,\dots,h_n) = f(h_1\cdots h_n)$\footnote{In this proof we do not use the function $f'$ we used in Section \ref{sec : Hamiltonian flow when supporting subsurface is a cylinder} since there is an easier way to compute the Hamiltonian vector field in this setting.}. Then $f_{\alpha^n} = f'_{(\alpha,\dots,\alpha)}$. By Lemma \ref{lemma: variation function of reducible function}, the variation functions $F_i'\colon G^n\to\gfr$ are given by $F_i'(h_1,\dots,h_n) = F(h_1\cdots h_n)$ for all $i = 1,\dots,n$. By Proposition \ref{prop: explicit cycle for Poincare dual of Hamiltonian vector field} the Poincar\'e dual to the Hamiltonian vector field $\Hm f'_{(\alpha,\dots,\alpha)}([\rho])$ has representative $[\alpha\otimes n F(\rho(\alpha^n))]$. Then the result follows by \cite[Theorem 4.5 and Theorem 4.7]{InvFct_Goldman}.
\end{proof}

\subsection{Hamiltonian flow of the trace of the figure 8 curve in \texorpdfstring{$\SL{d}$}{SLd} }
Let $S_0$ be a fully separating pair of pants in the surface $S$. Let $a,b,c$ be the boundary curves of $S_0$ oriented so that $S_0$ lies to the left of each curve and such that (after picking representatives) $abc = 1\in\pi_1(S,p)$ as in Figure \ref{fig : Fully Separating pair of pants}. In this section, we will confuse $a,b,c$ with elements in $\pi_1(S,p)$ and with the (disjoint) boundary curves themselves, which will be clear from the context. We consider the group $G = \SL{d}$ and from now on focus on the Hitchin component $\Hit d$. We will endow the space of smooth points of the character variety with the Goldman symplectic form coming from the orthogonal form $\B\colon\gfr\times\gfr\to\R$ given by the trace $\B(X,Y)=\trace(XY)$. In the case when $d = 2$, this symplectic structure restricted to the Teichm\"uller component is a constant multiple of the Weil-Petersson symplectic form \cite{SymplecticNature_Goldman}. Let 
\begin{align*}
    f\colon G&\to\R\\
    g&\mapsto \trace(g). 
\end{align*}
Let $ \figeight= ab^{-1}$, the figure $8$ curve on the pair of pants (see Figure \ref{fig : Fully Separating pair of pants}), which is a self intersecting curve and has $S_0$ as a supporting subsurface. The goal of this section is to make remarks about the Hamiltonian flow of $f_\figeight$. This is in the setting Goldman considers in \cite{InvFct_Goldman}. However, Goldman only describes the Hamiltonian flow for functions induced by simple closed curves. \\

We use our setup of invariant functions and define
\begin{align*}
f'\colon G^2&\to\R\\
(g,h)&\mapsto \trace(gh^{-1}).
\end{align*}
With this function, we have that $f_\figeight = f'_{(a,b)}$. The function $f'$ satisfies the hypothesis of Lemma \ref{lemma: variation function of reducible function}, and hence we have that the variation functions $F_i'$ are given by
\begin{align*}
F_1'(g,h) = F(gh^{-1}) = gh^{-1} - \frac{\trace(gh^{-1})}{d}\identity\\
F_2'(g,h)=-F(gh^{-1}) = -gh^{-1}+\frac{\trace(gh^{-1})}{d}\identity,
\end{align*}
where the variation function $F\colon G\to\gfr$ is given by $F(g) = g-\frac{\trace(g)}{d}\identity$ by Corollary 1.8 in \cite{InvFct_Goldman}. Now let $S_x$ be the subsurface lying to the right of the curve $x$ for $x\in{a,b,c}$ and with that $\Gamma_x$ the vertex group associated to $S_x$. By Proposition \ref{prop : Hamiltonian vector field for general subsurface}, letting
\[
F_1’(\rho) = F_1'(\rho(a),\rho(b)) = F(\rho(ab^{-1})),\textrm{ and } F_2’(\rho) = F_2'(\rho(a),\rho(b)) = -F(\rho(ab^{-1})),
\]
we have that the Hamiltonian vector field $\Hm f_{\figeight}([\rho])$ of $f_\figeight$ is has representative $u$ given by
\begin{align}
u(\gamma) &= \begin{cases}
\left(\identity - \Ad_{\rho(\gamma)}\right)(-F_1(\rho))&\gamma\in \Gamma_a,
    \\
    \left(\identity - \Ad_{\rho(\gamma)}\right)\left(\Ad_{\rho(a^{-1})}(F_1’(\rho)) - F_1’(\rho) - F_2’(\rho)\right),&\gamma\in \Gamma_b,\\
    \left(\identity - \Ad_{\rho(\gamma)}\right)\left(\Ad_{\rho(a^{-1})}(F_1’(\rho))-F_1’(\rho) + \Ad_{\rho(b^{-1})}(F_2’(\rho))-F_2’(\rho)\right),&\gamma\in \Gamma_c
\end{cases}\nonumber\\
&=\begin{cases}
    \left(\identity - \Ad_{\rho(\gamma)}\right)(-F_1(\rho)),&\gamma\in \Gamma_a,\\
    \left(\identity - \Ad_{\rho(\gamma)}\right)\left(-\Ad_{\rho(b^{-1})}(F_2’(\rho))\right),&\gamma\in \Gamma_b,\\
    0,&\gamma\in \Gamma_c,
\end{cases}\nonumber\\
&=\begin{cases}
    \left(\identity - \Ad_{\rho(\gamma)}\right)\left(-F(\rho(ab^{-1}))\right), &\gamma\in\Gamma_a\\
    \left(\identity - \Ad_{\rho(\gamma)}\right)\left(F(\rho(b^{-1}a))\right),&\gamma\in\Gamma_b\\
    0,&\gamma\in\Gamma_c,
\end{cases}\label{eq : cocycle fully separating pair of pants figure 8 curve}
\end{align}

where the first equality for $u$ follows from the fact that by Proposition \ref{prop: explicit cycle for Poincare dual of Hamiltonian vector field}
\[
a\otimes F_1’(\rho) + b\otimes F_2’(\rho)
\]
is a cycle, i.e. 
\[
\Ad_{\rho(a^{-1})}(F_1’(\rho))-F_1’(\rho) + \Ad_{\rho(b^{-1})}(F_2’(\rho)) - F_2’(\rho) = 0.
\]
The second equality follows from the equivariance of $F$.\\

The supporting subsurface of $(a,b)$ is the pair of pants $S_0$ in Figure \ref{fig : Fully Separating pair of pants} and hence by Corollary \ref{cor : subsurface deformations in Hitchin}, the Hamiltonian flow of $f_{\figeight}$ is a subsurface deformation along $S_0$. Moreover, by Corollary \ref{cor : hamiltonian flow for fully separating subsurface}, the Hamiltonian flow is covered by conjugations of $\Gamma_x$ for $x\in\{a,b,c\}$.\\

\begin{remark}\label{rmk : X(rho) not in centralizer}
    We expect that the conjugating elements $g_x^t$ (following the notation of Corollary \ref{cor : hamiltonian flow for fully separating subsurface}) are usually not in the centralizer of $\rho(x)$ for $x\in\{a,b,c\}$. From the representative $u$ in Equation \eqref{eq : cocycle fully separating pair of pants figure 8 curve} for the Hamiltonian vector field, we see that $F_1’(\rho)$ is not in general in the centralizer of $\rho(a)$. Indeed, the centralizer $Z(\rho(a))$ contains all the matrices that are diagonal in the same basis in which $\rho(a)$ is diagonal. The matrix $\rho(ab^{-1})$ is diagonal, but not in the same basis as $\rho(a)$ (this follows from properties of the limit map of Hitchin representations). In particular, this means that in general, $u(a)\neq 0$. Similarly $u(b)\neq 0$ in general.
\end{remark}

\subsection{The case of $\SL{2}$}
In the case of $\SL{2}$, we can find the explicit flow by using trace identities special to $\SL 2$. Denote by $\LiftTeich$ the quotient by conjugation of the set of representations $\rho\colon\pi\to \SL{2}$ that are lifts to $\SL{2}$ of holonomy representations $\pi\to\PSL{2}$ of complete hyperbolic structures on $S$. The function in this case simplifies significantly since we can use the two trace identities
\begin{align}
\trace(gh^{-1}) &= \trace(g)\trace(h)-\trace(gh),\label{eq : trace identity product SL2}\\
\trace(g) &= \trace(g^{-1})\quad\textnormal{ for all }g,h\in\SL{2}\label{eq : trace is trace inverse SL2}.
\end{align}
Hence, the induced function on $\LiftTeich$ reduces to
\begin{align*}
f_\figeight([\rho]) &= f_{a}([\rho])f_{b}([\rho])-f_{c^{-1}}([\rho])\\
&= f_{a}([\rho])f_{b}([\rho])-f_{c}([\rho])
\end{align*}
where $[\rho]\in\LiftTeich$, and by the relation on the curves $a,b,c$, we have that $ab = c^{-1}$. Taking the differential, we have that for any $[\rho]\in\LiftTeich$ and a tangent cocycle $[u]\in H^1(\pi,\mathfrak{sl}(2,\R)_{\Ad\rho})$,
\[
d_{[\rho]}f_\figeight([u]) = f_b([\rho])d_{[\rho]}f_a([u]) + f_a([\rho])d_{[\rho]}f_b([u]) - d_{[\rho]}f_{c}([u]).
\]
By linearity of the symplectic form, we have that
\[
\Hm f_\figeight([\rho]) = f_b([\rho])\Hm f_a([\rho]) + f_a([\rho])\Hm f_b([\rho]) - \Hm f_{c}([\rho]).
\]
By \cite[proof of Theorem 4.3]{InvFct_Goldman}, since the curves $a,b,c$ are separating, we have that a representative cocycle for the Hamiltonian vector field is\footnote{It is a computation with $2\times 2$ matrices to see that this cocycle is cohomologous to the cocycle in \eqref{eq : cocycle fully separating pair of pants figure 8 curve} by adding to it the coboundary 
\begin{gather*}\gamma\mapsto (\identity - \Ad_{\rho(\gamma)})\left(f(\rho(a))F(\rho(b))+f(\rho(b))F(\rho(a))\right)\end{gather*} for every $\gamma\in\pi$.}
\[
u(\gamma) = \begin{cases}
    \left(\identity - \Ad_{\rho(\gamma)}\right)\left(-F(\rho(c))+f(\rho(a))F(\rho(b))\right),&\gamma\in \Gamma_a\\
    \left(\identity - \Ad_{\rho(\gamma)}\right)\left(-F(\rho(c))+f(\rho(b))F(\rho(a))\right),&\gamma\in \Gamma_b\\
    \left(\identity - \Ad_{\rho(\gamma)}\right)\left(f(\rho(a))F(\rho(b))+f(\rho(b))F(\rho(a))\right),&\gamma\in \Gamma_c.
\end{cases}
\]
By adding the coboundary $w(\gamma) = \left(\identity - \Ad_{\rho(\gamma)}\right)(F(\rho(c))-f(\rho(a))F(\rho(b))-f(\rho(b))F(\rho(a)))$ to $u$, we get that $u$ is cohomologous to the cocycle
\[
v(\gamma) = \begin{cases}
    \left(\identity - \Ad_{\rho(\gamma)}\right)\left(-f(\rho(b))F(\rho(a))\right),&\gamma\in \Gamma_a\\
    \left(\identity - \Ad_{\rho(\gamma)}\right)\left(-f(\rho(a))F(\rho(b))\right),&\gamma\in \Gamma_b\\
    \left(\identity - \Ad_{\rho(\gamma)}\right)\left(F(\rho(c))\right),&\gamma\in \Gamma_c.
    \end{cases}
\]
By Goldman's Poisson formula \cite[Theorem 3.5]{InvFct_Goldman}, we have that since the curves $a,b,c$ are simple and pairwise disjoint, the Hamiltonian flows of $f_a, f_b$ and $f_{c}$ pairwise commute and are twist flows (see Section 4 in \cite{InvFct_Goldman}). The Hamiltonian flow of $f_\figeight$ is therefore a product of pairwise commuting twist flows. By \cite[Theorem 4.3]{InvFct_Goldman}, we get the following.

\begin{corollary}\label{cor : Hamiltonian flow of figure 8 in Teich}
    Let $\delta$ be the figure $8$ curve inside a fully separating pair of pants in a surface $S$ as in Figure \ref{fig : Fully Separating pair of pants}. Then the Hamiltonian flow of the function $f_\delta = \trace_\delta$ on $\LiftTeich$ is covered by the flow of representation given by
    \[
\Xi_\rho(t)(\gamma) = \begin{cases}
    \exp(-tf(\rho(b))F(\rho(a)))\rho(\gamma)\exp(tf(\rho(b))F(\rho(a))),&\gamma\in \Gamma_a\\
    \exp(-tf(\rho(a))F(\rho(b)))\rho(\gamma)\exp(tf(\rho(a))F(\rho(b))),&\gamma\in \Gamma_b\\
    \exp(tF(\rho(c)))\rho(\gamma)\exp(-tF(\rho(c))),&\gamma\in \Gamma_c.
\end{cases}
\]
In particular, the flow is defined for all time $t\in\R$.
\end{corollary}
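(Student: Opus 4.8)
The plan is to assemble the flow of $f_\figeight$ from the three twist flows of $f_a$, $f_b$, and $f_c$, which Goldman has already identified. First I would record the reduction already performed in the text: on $\LiftTeich$ the $\SL2$-trace identities \eqref{eq : trace identity product SL2} and \eqref{eq : trace is trace inverse SL2} give $f_\figeight = f_a f_b - f_c$ (using $ab=c^{-1}$), and hence by bilinearity of $\omega$ and the Leibniz rule for Hamiltonian vector fields, $\Hm f_\figeight([\rho]) = f_b([\rho])\,\Hm f_a([\rho]) + f_a([\rho])\,\Hm f_b([\rho]) - \Hm f_c([\rho])$. This is the key structural observation: the Hamiltonian vector field is a \emph{linear combination with $[\rho]$-dependent coefficients} of the three twist vector fields.

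Next I would invoke Goldman's explicit cocycle representatives for $\Hm f_a$, $\Hm f_b$, $\Hm f_c$ from \cite[proof of Theorem 4.3]{InvFct_Goldman}, plug them into the above linear combination, and then clean up by subtracting an explicit coboundary (the one written in the excerpt, $w(\gamma) = (\identity - \Ad_{\rho(\gamma)})(F(\rho(c)) - f(\rho(a))F(\rho(b)) - f(\rho(b))F(\rho(a)))$) to arrive at the cocycle $v$ displayed just before the corollary, namely $v(\gamma) = (\identity-\Ad_{\rho(\gamma)})(-f(\rho(b))F(\rho(a)))$ on $\Gamma_a$, $(\identity-\Ad_{\rho(\gamma)})(-f(\rho(a))F(\rho(b)))$ on $\Gamma_b$, and $(\identity-\Ad_{\rho(\gamma)})(F(\rho(c)))$ on $\Gamma_c$. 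The point of this normalization is that on each of the three vertex groups $\Gamma_a, \Gamma_b, \Gamma_c$ the cocycle $v$ now looks exactly like a single twist cocycle along the corresponding separating simple closed curve, with a scalar speed depending only on $[\rho]$: speed $-f(\rho(b))$ for the $a$-twist, $-f(\rho(a))$ for the $b$-twist, and $+1$ for the $c$-twist (the signs and the choice of $F(\rho(c))$ versus $F(\rho(a)), F(\rho(b))$ should be double-checked against the orientation conventions for $a$, $b$, $c$ in Figure~\ref{fig : Fully Separating pair of pants}).

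Then I would argue that the flow integrates termwise. Since $a$, $b$, $c$ are pairwise disjoint simple closed curves, Goldman's Poisson formula \cite[Theorem 3.5]{InvFct_Goldman} gives $\{f_a,f_b\} = \{f_b,f_c\} = \{f_a,f_c\} = 0$, so the three twist flows commute, and moreover each $f_x$ is constant along the flow of every $f_y$ (and hence along the flow of $f_\figeight$, by linearity: $\{f_x, f_\figeight\}$ is a combination of vanishing brackets). Therefore the coefficients $f(\rho(a))$, $f(\rho(b))$ are constants of motion, the vector field $\Hm f_\figeight$ equals a fixed (i.e.\ $t$-independent) linear combination of the commuting twist vector fields along the flow line through $[\rho]$, and its time-$t$ map is the composition of the three twist-time-$t'$ maps with $t' = -t f(\rho(b))$, $-t f(\rho(a))$, $t$ respectively. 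Unwinding Goldman's description of a twist flow along a separating simple closed curve (conjugate the subsurface on one side by $\exp(t' F(\rho(x)))$, fix the other side, as recalled in Theorem~\ref{thm : Hamiltonian flow when supporting subsurface is a cylinder}) on each of the three boundary curves of $S_0$ gives exactly the formula $\Xi_\rho$ in the statement; completeness for all $t\in\R$ is immediate because each twist flow is complete and the speeds are constant.

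The main obstacle is bookkeeping rather than conceptual: one must be careful that the three twist flows really can be ``run simultaneously'' on the three distinct boundary curves of the single pair of pants $S_0$ without the conjugating elements interfering. This works because $S_0$ is \emph{fully separating} — its complement has three components $S_a$, $S_b$, $S_c$, each glued to $S_0$ along one boundary curve — so $\Xi_\rho(t)$ is well-defined by declaring it to conjugate $\Gamma_x = \pi_1(S_x)$ by the appropriate path and to be trivial on $\Gamma_0 = \pi_1(S_0)$, matching on each boundary curve via the graph-of-groups relation as in Corollary~\ref{cor : hamiltonian flow for fully separating subsurface}; consistency on $\Gamma_0$ forces the three conjugations to each fix $\rho(a)$, $\rho(b)$, $\rho(c)$ up to the stated conjugacy, which is precisely what the twist description delivers. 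The remaining care is to confirm the precise signs and the $F(\rho(c))$-versus-$F(\rho(a)),F(\rho(b))$ asymmetry against the orientation conventions and against Equation~\eqref{eq : cocycle fully separating pair of pants figure 8 curve} via the indicated explicit coboundary; these are routine $2\times 2$ matrix computations.
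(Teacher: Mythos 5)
Your proposal is correct and follows essentially the same route as the paper: reduce via the $\SL2$ trace identities to $f_\figeight = f_af_b - f_c$, use linearity to write $\Hm f_\figeight$ as a combination of the twist vector fields of $f_a,f_b,f_c$ with coefficients $f_b,f_a,-1$, invoke Goldman's Poisson formula for disjoint simple curves and his twist-flow description to integrate, and conclude completeness. Your explicit observation that $\{f_a,f_\figeight\}=\{f_b,f_\figeight\}=0$ makes the coefficients constants of motion is exactly the point the paper leaves implicit in asserting the flow is a product of pairwise commuting twist flows, so the argument matches.
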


As opposed to Remark \ref{rmk : X(rho) not in centralizer}, we see that in this special case, the flow is a combination of twists along $\Gamma_a,\Gamma_b$ and $\Gamma_c$, i.e. the subgroups $\Gamma_x$ are conjugated by an element in the centralizer of $\rho(x)$. This can also be seen by the fact that the tangent cocycle $v$ evaluated at $x$ for each $x\in\{a,b,c\}$, is $0$. It is important to note here however, that the fact that $v$ vanishes on the boundary components can only be seen by picking a specific representative in the cohomology class (compare with Remark \ref{rmk : X(rho) not in centralizer}). This particular example is special to the $\SL 2$ case, since the trace identities \eqref{eq : trace identity product SL2} and \eqref{eq : trace is trace inverse SL2} do not hold in general in $\SL d$.

	\begin{figure}[ht]
		\centering
		\includesvg[scale=0.5]{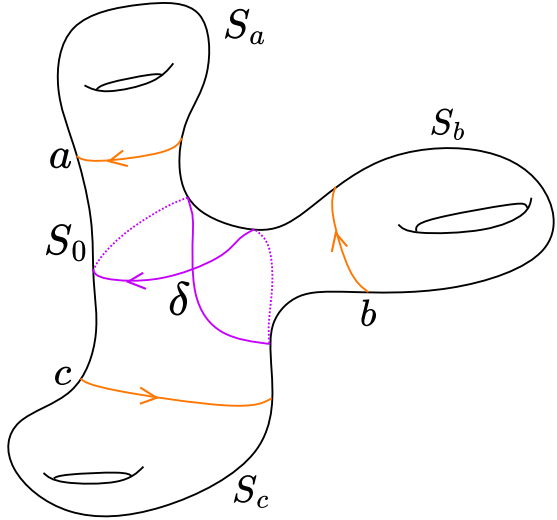}
		\caption{Fully separating pair of pants with figure $8$ curve.}
		\label{fig : Fully Separating pair of pants}
	\end{figure}

\subsection{Limits in Thurston's compactification of Teichm\"uller space}\label{sec : limits in thurstons compactification}
Corollary \ref{cor : Hamiltonian flow of figure 8 in Teich} gives us an explicit flow in $\LiftTeich$. By further quotienting the equivalence classes of representations to pass from $\SL 2$ to $\PSL 2$, we obtain a flow in the Teichm\"uller space $\Teich$, a connected component of $\mathscr X(\pi,\PSL 2)$. 
Note that since $a$ and $b$ lie in the commutator subgroup of $\pi_1(S)$ (since $S_0$ was assumed to be fully separating) $f_\delta$, $f_a$, $f_b$, and $f_c$ all descend to well-defined functions on $\Teich$ (as opposed to being defined up to sign).
With a flow on Teichm\"uller space, we may ask about the asymptotics of the flow.\\

Thurston in \cite{ThurstonCompactification} compactified Teichm\"uller space using the space of (positive) projective measured laminations, which we write as $\pml$. The space $\pml$ contains the (positive) projectivization of the space of weighted multicurves as a dense open set \cite[Proposition 4.2]{Bonahon86}.\\

If $\gamma$ is a simple closed curve, then the twist flow $\tw_t^\gamma\colon\Teich\to\Teich$ at time $t$ has the property that for every $[\rho]\in\Teich$,
\[
\tw_t^\gamma([\rho])\xrightarrow[]{t\to\pm\infty}[\gamma]\in\pml,
\]
where we use the notation $[\gamma]$ to be the projectivization of the multicurve $\gamma$ with weight one (see for example \cite[Proposition 8.2.38]{GeomTop_Martelli}). This is essentially because the lengths of curves intersecting $\gamma$ tend to infinity at a linear rate depending on the intersection number as $t$ tends to infinity and otherwise remain bounded.\\

Since the Hamiltonian flow of $f_\delta$ is a composition of pairwise commuting twist flows, we can deduce what the limit of the Hamiltonian flow in $\pml$ is. For the statement, let 
\[
\Delta(a,b,c)\coloneqq \left\{[x_1 a + x_2 b + x_3 c]\in\pml\st x_i\in\R_{\geq 0}\right\},
\]
and with interior given by
\[
\interior{\Delta(a,b,c)}\coloneqq \left\{[x_1 a + x_2 b + x_3 c]\in\Delta(a,b,c)\st x_i\in\R_{>0}\right\},
\]
where the square brackets denote projectivization.\\

By Corollary \ref{cor : Hamiltonian flow of figure 8 in Teich}, we see that the length of curves intersecting $a,b$ and $c$ grow at rates depending on which of $a,b$ or $c$ they intersect, and that the twist flow is defined for all time $t\in\R$. We can therefore deduce the following.

\begin{corollary}\label{cor : limits in thurstons compactification}
    Let $\delta$ be the figure $8$ curve inside a fully separating pair of pants in a surface $S$ as in Figure \ref{fig : Fully Separating pair of pants}. For each $[\rho]\in\Teich$, denote by $[\rho_t]$ the Hamiltonian flow of $f_\delta$ at $[\rho]$ and at time $t$.
    \begin{enumerate}
        \item The flow $[\rho_t]$ converges in $\pml$ as $t\to\pm\infty$ and has limit \[
        [\rho_t]\xrightarrow[]{t\to\pm\infty} [x_1 a+ x_2b + x_3c]\in \Delta(a,b,c),
        \]
        where
        \begin{gather*}
        x_1 = 2\cosh\left(\frac{\ell_{[\rho]}(b)}{2}\right)\sinh \left(\frac{\ell_{[\rho]}(a)}{2}\right), \quad x_2 = 2\cosh\left(\frac{\ell_{[\rho]}(a)}{2}\right)\sinh \left(\frac{\ell_{[\rho]}(b)}{2}\right),\\
        x_3 = \sinh \left(\frac{\ell_{[\rho]}(c)}{2}\right).
        \end{gather*}
        \item Conversely, for every $[x_1 a + x_2 b + x_3 c]\in\interior{\Delta(a,b,c)}$, there exists $[\rho]\in\Teich$ such that
        \[
         [\rho_t]\xrightarrow[]{t\to\pm\infty} [x_1 a + x_2 b + x_3c].
        \]
    \end{enumerate}
\end{corollary}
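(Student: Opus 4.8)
The plan is to combine the explicit description of the Hamiltonian flow obtained in Corollary~\ref{cor : Hamiltonian flow of figure 8 in Teich} with the classical behaviour of Fenchel--Nielsen twist flows in Thurston's compactification. Write $[\rho_t]$ for the flow of $f_\delta$ issued from $[\rho]\in\Teich$. By Corollary~\ref{cor : Hamiltonian flow of figure 8 in Teich}, $[\rho_t]$ is covered by $\Xi_\rho(t)$, which conjugates the complementary vertex groups $\Gamma_a,\Gamma_b,\Gamma_c$ by $\exp(-tf(\rho(b))F(\rho(a)))$, $\exp(-tf(\rho(a))F(\rho(b)))$ and $\exp(tF(\rho(c)))$ respectively. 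Since $\Gamma_x$ contains the boundary curve $x$, each $f_x([\rho_t])=\trace\rho(x)$, hence each $\ell_{[\rho_t]}(x)$, is independent of $t$; abbreviate $\ell_x\coloneqq\ell_{[\rho]}(x)$. First I would identify each of these conjugations as a Fenchel--Nielsen twist along the corresponding curve: diagonalising $\rho(a)=\diag(e^{\ell_a/2},e^{-\ell_a/2})$ and using $F(g)=g-\tfrac{\trace g}{2}\identity$ gives $F(\rho(a))=\diag(\sinh(\ell_a/2),-\sinh(\ell_a/2))$, which lies in the Lie algebra of the centraliser $Z(\rho(a))$, so $\exp(-tf(\rho(b))F(\rho(a)))$ effects a twist along $a$ whose hyperbolic magnitude is proportional to $|t|\,|f(\rho(b))|\,\sinh(\ell_a/2)=2|t|\cosh(\ell_b/2)\sinh(\ell_a/2)$; likewise the $b$-- and $c$--conjugations are twists whose magnitudes are proportional, \emph{with the same constant}, to $2|t|\cosh(\ell_a/2)\sinh(\ell_b/2)$ and $|t|\sinh(\ell_c/2)$. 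Thus $[\rho_t]$ is $[\rho]$ twisted simultaneously along the pairwise disjoint curves $a,b,c$ by magnitudes proportional to $|t|x_1,|t|x_2,|t|x_3$, with $x_1,x_2,x_3$ as in the statement.

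Next I would invoke the asymptotic length estimate underlying Thurston's compactification: if a hyperbolic surface is twisted along pairwise disjoint simple closed curves $\gamma_1,\dots,\gamma_r$ by distances $d_1,\dots,d_r$, then $\ell(\eta)=\sum_i|d_i|\,i(\gamma_i,\eta)+O(1)$ for every simple closed curve $\eta$, the implied constant being independent of $(d_1,\dots,d_r)$ (this refines the single--curve statement used in the text; cf.\ \cite[Proposition~8.2.38]{GeomTop_Martelli} and \cite{ThurstonCompactification}). Applying this with $d_i(t)$ proportional to $|t|x_i$, dividing by $|t|$ and letting $t\to\pm\infty$ yields $\tfrac{1}{|t|}\,\ell_{[\rho_t]}(\eta)\to x_1\,i(a,\eta)+x_2\,i(b,\eta)+x_3\,i(c,\eta)=i(\mu,\eta)$ for the weighted multicurve $\mu=x_1a+x_2b+x_3c$, for every simple closed curve $\eta$. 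As $x_1,x_2,x_3>0$, this is the marked length functional of $\mu$, so by the definition of Thurston's compactification via projectivised length spectra $[\rho_t]\to[\mu]\in\Delta(a,b,c)$ in both time directions; this is part~(1).

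For part~(2), given $[x_1a+x_2b+x_3c]\in\interior{\Delta(a,b,c)}$ I would solve for boundary lengths of $S_0$ realising the projective triple $(x_1:x_2:x_3)=\big(2\cosh(\ell_b/2)\sinh(\ell_a/2):2\cosh(\ell_a/2)\sinh(\ell_b/2):\sinh(\ell_c/2)\big)$. The first ratio reads $\tanh(\ell_b/2)=(x_2/x_1)\tanh(\ell_a/2)$, which has a unique solution $\ell_b>0$ once $\ell_a>0$ is chosen small enough that $\tanh(\ell_a/2)<x_1/x_2$; the second ratio then determines a unique $\ell_c>0$. Since $\{a,b,c\}$ extends to a pants decomposition of $S$, every triple $(\ell_a,\ell_b,\ell_c)\in\R_{>0}^3$ is realised by some $[\rho]\in\Teich$, and part~(1) shows that its flow line limits to $[x_1a+x_2b+x_3c]$.

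The step I expect to demand the most care is the multi--curve twist asymptotic: one must ensure the $O(1)$ error is uniform as all twist parameters tend to infinity together (so it vanishes after dividing by $|t|$), and that the universal constant relating Goldman's Hamiltonian flow of $\ell_\gamma$ to the unit--speed Fenchel--Nielsen twist is the same for $a$, $b$ and $c$, so that it drops out upon projectivising. Both points become transparent by working in Fenchel--Nielsen coordinates adapted to a pants decomposition containing $a,b,c$, along which the flow merely translates the three twist coordinates at constant speed and fixes all other coordinates.
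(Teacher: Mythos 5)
Your proposal is correct and follows essentially the same route as the paper: it rests on Corollary~\ref{cor : Hamiltonian flow of figure 8 in Teich} to identify the flow as a composition of pairwise commuting Fenchel--Nielsen twists along $a,b,c$ with constant speeds proportional (with a common constant) to $x_1,x_2,x_3$, then applies the standard asymptotics of weighted multicurve twist paths in Thurston's boundary, and uses the freedom of Fenchel--Nielsen length coordinates for part (2). You merely flesh out details the paper's one-paragraph argument leaves implicit, namely the translation lengths of the conjugating one-parameter subgroups, the uniformity of the $O(1)$ error in the multicurve twist estimate, and the explicit solvability of the equations realizing a given point of $\interior{\Delta(a,b,c)}$.
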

To see this, we use the following relation:
\[
\abs{\trace{(\rho(\gamma)})} = 2\cosh\left(\frac{\ell_{[\rho]}(\gamma)}{2}\right),
\]
where $\ell_{[\rho]}(\gamma)$ is the length of the curve $\gamma$ in the hyperbolic metric defined by $[\rho]$. The first part then follows by Corollary \ref{cor : Hamiltonian flow of figure 8 in Teich} noting that $\sinh \left(\frac{\ell}{2}\right) = v'(\ell)$, where $v(\ell) = 2\cosh\left(\frac{\ell}{2}\right)$. The second fact follows since in $\Teich$, the values $\ell_{[\rho]}(a),\ell_{[\rho]}(b),\ell_{[\rho]}(c)>0$ can be chosen freely in $\Teich$ (this follows for example by using Fenchel-Nielsen coordinates on $\Teich$ adapted to the pants decomposition containing the fully separating pair of pants containing $\delta$).

\begin{remark}
    In the case of the twist flow along a single simple closed curve, every $[\rho]\in\Teich$ converges to the same point in $\pml$ for both $t\to+\infty$ and $t\to-\infty$. In the case of the Hamiltonian flow of $f_\delta$, the flow also converges to the same point for both $t\to+\infty$ and $t\to-\infty$. However, we can see a different behaviour in terms of the limits, since the limit point depends on the representation one starts with.
\end{remark}

\bibliographystyle{hamsalpha}
\bibliography{main}

\newcommand{\etalchar}[1]{$^{#1}$}
\providecommand{\bysame}{\leavevmode\hbox to3em{\hrulefill}\thinspace}
\providecommand{\href}[2]{#2}
\providecommand{\arXiv}[1]{\eprint{arXiv:#1}}
\providecommand{\eprint}{\begingroup \urlstyle{rm}\Url}
\begin{thebibliography}{GGKW17}

\bibitem[AB83]{AtiyahBott}
Michael~F. Atiyah and Raoul Bott, \emph{{The {Yang}-{Mills} equations over {Riemann} surfaces}}, Philos. Trans. R. Soc. Lond., Ser. A \textbf{308} (1983), 523--615.

\bibitem[Arn78]{MathMethodsOfClassicalMechanics_Arnold}
V.~I. Arnold, \emph{{Mathematical methods of classical mechanics. {Translated} by {K}. {Vogtman} and {A}. {Weinstein}}}, Grad. Texts Math., vol.~60, Springer, Cham, 1978.

\bibitem[BCL20]{SimpleLengthRigidity_BCL}
Martin Bridgeman, Richard Canary, and Fran{\c{c}}ois Labourie, \emph{{Simple length rigidity for {Hitchin} representations}}, Adv. Math. \textbf{360} (2020), 61, Id/No 106901.

\bibitem[BD14]{BonahonDreyer_FiniteLaminations}
Francis Bonahon and Guillaume Dreyer, \emph{{Parameterizing {Hitchin} components}}, Duke Math. J. \textbf{163} (2014), no.~15, 2935--2975.

\bibitem[BD17]{BonahonDreyer_GeneralLaminations}
\bysame, \emph{{Hitchin characters and geodesic laminations}}, Acta Math. \textbf{218} (2017), no.~2, 201--295.

\bibitem[BGL{\etalchar{+}}24]{CollarTheta}
Jonas Beyrer, Olivier Guichard, François Labourie, Beatrice Pozzetti, and Anna Wienhard, \emph{{Positivity, cross-ratios and the Collar Lemma}}, 2024, \eprint{2409.06294}.

\bibitem[BL23]{GhostPolygons}
Martin Bridgeman and François Labourie, \emph{{Ghost polygons, Poisson bracket and convexity}}, 2023, \eprint{arXiv:2307.04380}.

\bibitem[Bon86]{Bonahon86}
Francis Bonahon, \emph{{Bouts des Variétés Hyperboliques de Dimension 3}}, Annals of Mathematics \textbf{124} (1986), no.~1, 71--158.

\bibitem[Bon88]{TeichviaCurrents_Bonahon}
Francis Bonahon, \emph{{The geometry of {Teichm{\"u}ller} space via geodesic currents}}, Invent. Math. \textbf{92} (1988), no.~1, 139--162.

\bibitem[Bor91]{BorelLinearAlgGroups}
Armand Borel, \emph{{Linear algebraic groups.}}, 2nd enlarged ed. ed., Grad. Texts Math., vol. 126, New York etc.: Springer-Verlag, 1991.

\bibitem[BP21]{CollarLemmaHyperconvex_BeyrerPozzetti}
Jonas Beyrer and Beatrice Pozzetti, \emph{{A collar lemma for partially hyperconvex surface group representations}}, Trans. Am. Math. Soc. \textbf{374} (2021), no.~10, 6927--6961.

\bibitem[Bro82]{Brown_CohomologyOfGroups}
Kenneth~S. Brown, \emph{{Cohomology of groups}}, Graduate Texts in Mathematics, vol.~87, Springer-Verlag, New York-Berlin, 1982.

\bibitem[Coh78]{Cohen}
Joel~M. Cohen, \emph{{Poincar\'e 2-Complexes II}}, Chinese Journal of Mathematics \textbf{6} (1978), no.~1, 25--44.

\bibitem[CZZ22]{CanaryZhangZimmer_cuspedHitchin}
Richard Canary, Tengren Zhang, and Andrew Zimmer, \emph{{Cusped {Hitchin} representations and {Anosov} representations of geometrically finite {Fuchsian} groups}}, Adv. Math. \textbf{404} (2022), 67, Id/No 108439.

\bibitem[DKS23]{DKS:webs}
Daniel~C. Douglas, Richard Kenyon, and Haolin Shi, \emph{{Dimers, webs, and local systems}}, 2023, \eprint{2205.05139}.

\bibitem[Far23]{Farre:hamiltonian}
James Farre, \emph{Hamiltonian flows for pseudo-{A}nosov mapping classes}, Comment. Math. Helv. \textbf{98} (2023), no.~1, 135--194.

\bibitem[FG06]{FockGoncharov}
Vladimir Fock and Alexander Goncharov, \emph{{Moduli spaces of local systems and higher {Teichm{\"u}ller} theory}}, Publ. Math., Inst. Hautes {\'E}tud. Sci. \textbf{103} (2006), 1--211.

\bibitem[GGKW17]{GGKW_17}
Fran{\c{c}}ois Gu{\'e}ritaud, Olivier Guichard, Fanny Kassel, and Anna Wienhard, \emph{{Anosov representations and proper actions}}, Geom. Topol. \textbf{21} (2017), no.~1, 485--584.

\bibitem[GHJW97]{ParabolicCohomology_GHJW}
K.~Guruprasad, J.~Huebschmann, L.~Jeffrey, and A.~Weinstein, \emph{{Group systems, groupoids, and moduli spaces of parabolic bundles}}, Duke Math. J. \textbf{89} (1997), no.~2, 377--412.

\bibitem[Gol84]{SymplecticNature_Goldman}
William~M. Goldman, \emph{{The symplectic nature of fundamental groups of surfaces}}, Advances in Mathematics \textbf{54} (1984), no.~2, 200--225.

\bibitem[Gol86]{InvFct_Goldman}
\bysame, \emph{{Invariant functions on {L}ie groups and {H}amiltonian flows of surface group representations}}, Invent. Math. \textbf{85} (1986), no.~2, 263--302.

\bibitem[GW12]{AnosovReps_GuichardWienhard}
Olivier Guichard and Anna Wienhard, \emph{{Anosov representations: domains of discontinuity and applications.}}, Invent. Math. \textbf{190} (2012), no.~2, 357--438.

\bibitem[HS12]{HartnickStrubel_CrossMaximal}
Tobias Hartnick and Tobias Strubel, \emph{{Cross ratios, translation lengths and maximal representations}}, Geom. Dedicata \textbf{161} (2012), 285--322.

\bibitem[JM87]{JohnsonMillson}
Dennis Johnson and John~J. Millson, \emph{{Deformation Spaces Associated to Compact Hyperbolic Manifolds}}, pp.~48--106, Birkh{\"a}user Boston, Boston, MA, 1987.

\bibitem[Lab06]{Labourie_AnosovReps}
Fran{\c{c}}ois Labourie, \emph{{Anosov flows, surface groups and curves in projective space}}, Inventiones mathematicae \textbf{165} (2006), 51--114.

\bibitem[Lab07]{CrossRatios_Labourie}
\bysame, \emph{{Cross ratios, surface groups, {{\(\text{PSL}(n,\mathbb R)\)}} and diffeomorphisms of the circle}}, Publ. Math., Inst. Hautes {\'E}tud. Sci. \textbf{106} (2007), 139--213.

\bibitem[Lab18]{Labourie_SwappingAlgebra}
\bysame, \emph{{Goldman algebra, opers and the swapping algebra}}, Geom. Topol. \textbf{22} (2018), no.~3, 1267--1348.

\bibitem[Lee13]{Lee_SmoothManifolds}
John~M. Lee, \emph{{Introduction to smooth manifolds}}, 2nd revised ed ed., Grad. Texts Math., vol. 218, New York, NY: Springer, 2013.

\bibitem[LM09]{LabourieMcShane}
Fran{\c{c}}ois Labourie and Gregory McShane, \emph{{Cross ratios and identities for higher {Teichm{\"u}ller}-{Thurston} theory}}, Duke Math. J. \textbf{149} (2009), no.~2, 279--345.

\bibitem[LRW22]{MostHitchinDense_LongReidWolff}
D.~D. Long, A.~W. Reid, and M.~Wolff, \emph{{Most Hitchin representations are strongly dense}}, 2022, \eprint{2202.09306}.

\bibitem[Mar22a]{ArnaudThesis}
Arnaud Maret, \emph{{The symplectic geometry of surface group representations in genus zero}}, PhD Thesis (2022).

\bibitem[Mar22b]{GeomTop_Martelli}
Bruno Martelli, \emph{{An Introduction to Geometric Topology}}, 2022, \eprint{1610.02592}.

\bibitem[Pro76]{Procesi}
C.~Procesi, \emph{{The invariant theory of {{\(n\times n\)}} matrices}}, Adv. Math. \textbf{19} (1976), 306--381.

\bibitem[RS90]{RichardsonSlodowy}
R.~W. Richardson and P.~J. Slodowy, \emph{{Minimum vectors for real reductive algebraic groups}}, J. London Math. Soc. (2) \textbf{42} (1990), no.~3, 409--429.

\bibitem[Ser80]{Serre_Trees}
Jean-Pierre Serre, \emph{Trees}, Springer Berlin Heidelberg, 1980.

\bibitem[Sik01]{Sikora:graphs}
Adam~S. Sikora, \emph{{{${\rm SL}_n$}-character varieties as spaces of graphs}}, Trans. Amer. Math. Soc. \textbf{353} (2001), no.~7, 2773--2804.

\bibitem[Sik12]{Sikora_CharacterVarieties}
\bysame, \emph{{Character varieties}}, Trans. Am. Math. Soc. \textbf{364} (2012), no.~10, 5173--5208.

\bibitem[SWZ20]{FlowsPGLVHitchin_SWZ}
Zhe Sun, Anna Wienhard, and Tengren Zhang, \emph{{Flows on the {{\(\mathrm{PGL}(V)\)}}-{Hitchin} component}}, Geom. Funct. Anal. \textbf{30} (2020), no.~2, 588--692.

\bibitem[SZ23]{DarbouxOnHitchin_SunZhang}
Zhe Sun and Tengren Zhang, \emph{{The {G}oldman symplectic form on the {PGL(V)-Hitchin} component}}, 2023, \eprint{1709.03589}.

\bibitem[Thu88]{ThurstonCompactification}
William~P. Thurston, \emph{{On the geometry and dynamics of diffeomorphisms of surfaces}}, Bulletin (New Series) of the American Mathematical Society \textbf{19} (1988), no.~2, 417 -- 431.

\bibitem[Wol81]{Wolpert_FNtwist}
Scott Wolpert, \emph{{An elementary formula for the {Fenchel}-{Nielsen} twist}}, Comment. Math. Helv. \textbf{56} (1981), 132--135.

\bibitem[Wol82]{Wolpert_TheFNDef}
\bysame, \emph{{The {Fenchel}-{Nielsen} deformation}}, Ann. Math. (2) \textbf{115} (1982), 501--528.

\bibitem[Wol83]{Wolpert_Symplectic}
\bysame, \emph{{On the symplectic geometry of deformations of a hyperbolic surface}}, Ann. Math. (2) \textbf{117} (1983), 207--234.

\end{thebibliography}
\end{document}